\theoremstyle{plain}
\newtheorem{lemma}{Lemma}[section]
\newtheorem{theorem}[lemma]{Theorem}
\newtheorem{corollary}[lemma]{Corollary}
\newtheorem{proposition}[lemma]{Proposition}
\newtheorem{definition}[lemma]{Definition}
\newtheorem*{hypothesis}{Hypothesis}
\theoremstyle{remark}
\newtheorem{remark}[lemma]{Remark}
\newtheorem{example}[lemma]{Example}
\newtheorem{observation}[lemma]{Observation}
\def\eps{\varepsilon}
\def\si{\sigma}         
\def\Sn{\mathfrak{S}}   
\def\xx{\bm{x}}
\def\yy{\bm{y}}
\def\XX{\vec{\mathbf{x}}}
\def\YY{\vec{\mathbf{y}}}
\def\TT{\mathbb{T}}
\def\CCC{\mathcal{C}}  
\def\EEE{\mathcal{E}}   
\def\SSS{\mathcal{S}}   
\def\TTT{\mathcal{T}}   
\def\HHH{\mathcal{H}}   
\def\DDD{\mathcal{D}}   
\def\gauche{\mathrm{left}}
\def\droite{\mathrm{right}}
\def\proba{\mathbb{P}}
\def\patterntree{{t_0}}     
\def\One{\bm{1}}
\def\nonp{{\scriptscriptstyle{\mathrm{not}} {\oplus}}}
\def\nonm{{\scriptscriptstyle{\mathrm{not}} {\ominus}}}
\def\O{\mathcal{O}}
\def\Si{\mathsf{Si}}
\def\QE{\mathsf{QE}}
\def\Sep{\mathcal T_{\text{sep}}}
\def\SsEnsemble{\mathcal{I}}
\def\tkin{\bm t^{[k]}_{i,n}}
\def\tkinhead{\bm t^{[k,\text{\footnotesize head}]}_{i,n}}
\newcommand{\Internal}[1]{\mathrm{Int}(#1)}
\newcommand{\Leaves}[1]{\mathrm{Lf}(#1)}
\newcommand{\transpose}[1]{\prescript{\intercal}{}{#1}}
\DeclareMathOperator{\Id}{Id}
\DeclareMathOperator{\perm}{perm}
\DeclareMathOperator{\Perm}{Perm}
\DeclareMathOperator{\pat}{pat}
\DeclareMathOperator{\Cat}{Cat}
\DeclareMathOperator{\Reduc}{Red}
\DeclareMathOperator{\Crit}{Crit}
\DeclareMathOperator{\SpectralRadius}{SR}
\DeclareMathOperator{\diag}{diag}
\DeclareMathOperator{\Com}{Com}
\newcommand{\InducedPerm}{\mathbf{Perm}}
\newcommand{\set}[1]{\left\{#1\right\}}
\newcommand\restr[2]{{%
		\left.\kern-\nulldelimiterspace %
		#1 %
		\right|_{#2} %
	}}
\def\Av{\mathrm{Av}}
\title[Scaling limits of permutation classes]{Scaling limits of permutation classes \\ with a finite specification: a dichotomy}
\author[F. Bassino]{Frédérique Bassino}
       \address[FB]{Université Paris 13, Sorbonne Paris Cité, LIPN, CNRS UMR 7030, F-93430 Villetaneuse, France}
       \email{bassino@lipn.univ-paris13.fr}
 \author[M. Bouvel]{Mathilde Bouvel}
 \author[V. Féray]{Valentin Féray}
       \address[MB,VF]{Institut für Mathematik, Universität Zürich, Winterthurerstr. 190, CH-8057 Zürich, Switzerland}
       \email{mathilde.bouvel@math.uzh.ch}
       \email{valentin.feray@math.uzh.ch}
 \author[L. Gerin]{Lucas Gerin}
       \address[LG]{CMAP, \'Ecole Polytechnique, CNRS, Route de Saclay, 91128 Palaiseau Cedex, France}
       \email{gerin@cmap.polytechnique.fr}
  \author[M. Maazoun]{Mickaël Maazoun}
   \address[MM]{École Normale Supérieure de Lyon, UMPA UMR 5669 CNRS, 46 allée d’Italie, 69364 Lyon Cedex 07, France}
   \email{mickael.maazoun@ens-lyon.fr}
 \author[A. Pierrot]{Adeline Pierrot}
 \address[AP]{LRI, Université Paris-Sud, Bat. 650 Ada Lovelace, 91405 Orsay Cedex, France}
       \email{adeline.pierrot@lri.fr}
\keywords{scaling limits of combinatorial structures, Brownian limiting objects, analytic combinatorics, permutation patterns, permutation classes, permutons}
\subjclass[2010]{60C05,05A05}
\begin{document}

\begin{abstract}

We consider uniform random permutations in classes having a finite combinatorial specification for the substitution decomposition. These classes include (but are not limited to) all permutation classes with a finite number of simple permutations.  Our goal is to study their limiting behavior in the sense of permutons.
  
The limit depends on the structure of the specification restricted to families with the largest growth rate.
When it is strongly connected, two cases occur.
If the associated system of equations is linear, the limiting permuton is a deterministic $X$-shape.
Otherwise, the limiting permuton is the Brownian separable permuton, a random object that already appeared as the limit of most substitution-closed permutation classes, among which the separable permutations.
Moreover these results can be combined to study some non strongly connected cases. 

To prove our result, we use a characterization of the convergence
of random permutons by the convergence of random subpermutations.
Key steps are the combinatorial study, via substitution trees,
of families of permutations
with marked elements inducing a given pattern,
and the singularity analysis of the corresponding generating functions.
\end{abstract}

\maketitle

\tableofcontents

\section{Introduction}

\subsection{Context and background} 
In this paper we consider sets of permutations (of all sizes), called {\em classes},
which are classical objects in enumerative combinatorics \cite{VatterSurvey}.
By definition, a permutation class is a set of permutations downward closed
with respect to a natural notion of substructures, called patterns
(see \cref{ssec:basics_patterns} for the relevant definitions).
The general question we are interested in is the description of the asymptotic properties of
a uniform random permutation of large size in a class.
The literature on the subject has developed quickly in the past few years
with a variety of approaches,
see for example \cite{Nous2,Jacopo,HoffmanBrownian1,Janson321,MadrasPehlivan,MinerPak}.
A detailed presentation of this literature can be found for example in \cite[Section~1.1]{Nous1}.

Permutation classes are most often studied with an enumerative perspective, 
and among the combinatorial tools introduced to enumerate permutation classes
is the so-called {\em substitution decomposition}. 
We present briefly this notion here in an informal way,
precise statements will be given in \cref{sec:framework}.

We see a permutation $\sigma$ (of size $n$) as its {\em diagram}, \emph{i.e.}  a square grid with dots
at coordinates $(i,\sigma(i))$ (for $i$ in $\{1,\dots,n\}$).
For $\theta$ a permutation of size $d$, the substitution $\theta[\pi^{(1)},\dots,\pi^{(d)}]$
is obtained by inflating each point $\theta(i)$
of $\theta$ by a square containing the diagram of $\pi^{(i)}$, see \cref{fig:Subst_Intro}. 
\begin{figure}[htbp]
\[\includegraphics[width=95mm]{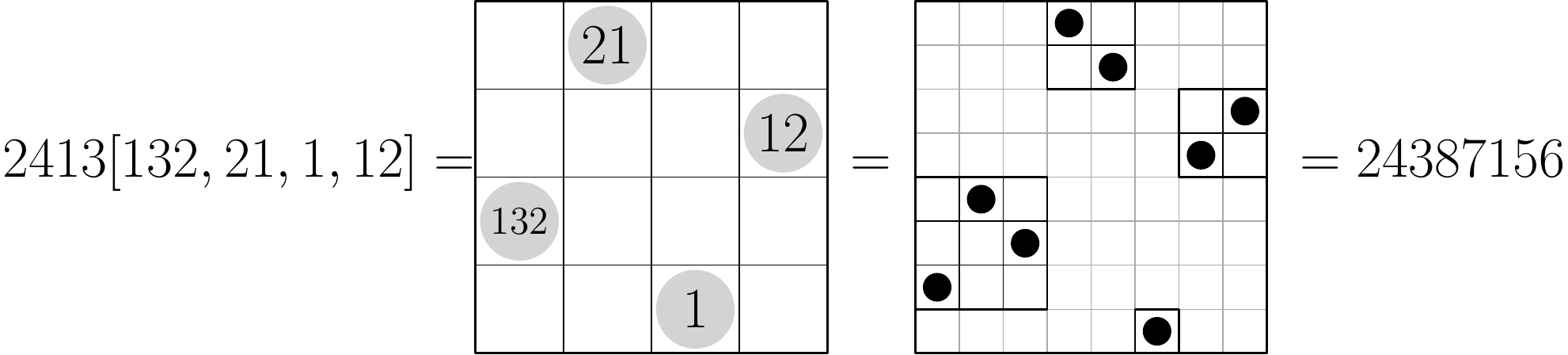}\]
  \caption{Example of substitution of permutations.}
  \label{fig:Subst_Intro}
\end{figure}

Each permutation can be decomposed in a canonical way
as successive substitutions, starting from the indecomposable elements,
which are called {\em simple permutations} (defined in~\cite{AlbertAtkinsonKlazar}).
This allows to encode bijectively permutations by trees, called {\em substitution trees}.
In the sequel, classes of permutations are identified with the set of their substitution trees,
and therefore denoted by $\mathcal T$.
We are interested in classes $\mathcal T$ with a nice recursive description, 
namely a {\em finite} system of combinatorial equations for $\mathcal T$,
called {\em specification}.

To fix the ideas, we explain how such a specification 
can be obtained for the famous class of {\em separable permutations}.
One way to define the class $\Sep$ of separable permutations
is as the smallest set of permutations containing $1$ and stable by taking substitutions
in $12$ and $21$.
Therefore $\Sep$ satisfies 
\[\Sep=\{\bullet\}\ \uplus\ 12[\Sep,\Sep]\ \uplus\ 21[\Sep,\Sep].\]
This defines recursively the elements of $\Sep$, this is however {\em not} a combinatorial
specification, since 
some separable permutations have several decompositions witnessing their membership to $12[\Sep,\Sep]$ (or to $21[\Sep,\Sep]$). 
To express $\Sep$ in a way that allows only one decomposition of any separable permutation 
(and make the tree decomposition unique), we need to consider
the subsets $\Sep^{\nonp}$ (resp. $\Sep^{\nonm}$) consisting in separable permutations
that cannot be written as $12[\pi^{(1)},\pi^{(2)}]$ (resp. $21[\pi^{(1)},\pi^{(2)}]$).
It can easily be shown that these three families satisfy the following
combinatorial specification
\begin{equation}
\begin{cases}
\quad\Sep &= \{\bullet\} \  \biguplus \ 
\oplus[\Sep^\nonp,\Sep]
\  \biguplus \ 
\ominus[\Sep^\nonm,\Sep];
\\
\ \Sep^{\nonp}  &= \{\bullet\} \  \biguplus \ 
\ominus[\Sep^\nonm,\Sep];
\\
\ \Sep^{\nonm}  &= \{\bullet\} \  \biguplus \ 
\oplus[\Sep^\nonp,\Sep].
\end{cases}
\end{equation}

This example is a particular case of a more general family of permutation classes,
that of {\em substitution-closed classes}.
All these classes have combinatorial specifications with three equations
(given below in \cref{eq:SpecifClosedClasses}).
In \cite{Nous2},
we obtained all the possible limiting shapes for such classes
with a unified combinatorial approach and a careful generating function analysis.

Another sufficient condition for having a specification
 is that the class contains finitely many {\em simple permutations}. 
It was proved by \cite{AA05} that such a class $\mathcal T$ always has an algebraic generating function,
and  \cite{BBPPR} provides  an algorithmic way to compute
a specification for $\mathcal T$.
Unlike for substitution-closed classes, the number of equations is not fixed
(and grows quickly in examples), making a unified analysis much harder.
We also note that a class $\mathcal T$ may admit such a finite specification,
while containing infinitely many simple permutations.
This is the case of the class of {\em pin-permutations} \cite{BHV,PinPerm}.

A combinatorial specification for the class $\mathcal T$ provides in an automatic way a random sampler \cite{FlajoletZimmerman,Boltzmann}
of objects in $\mathcal T$.
We show in \cref{fig:PleinDeSimus} large permutations in several classes
obtained in this way (using Boltzmann generators).
Permutations are here represented by their diagrams.
As we see on these examples, various qualitative asymptotic behaviors occur.
The results of the present paper apply in particular to each of these four cases,
giving an explicit limit shape result.
\begin{figure}[htbp]
\begin{tabular}{cccc}
 \includegraphics[width = 0.231\linewidth]{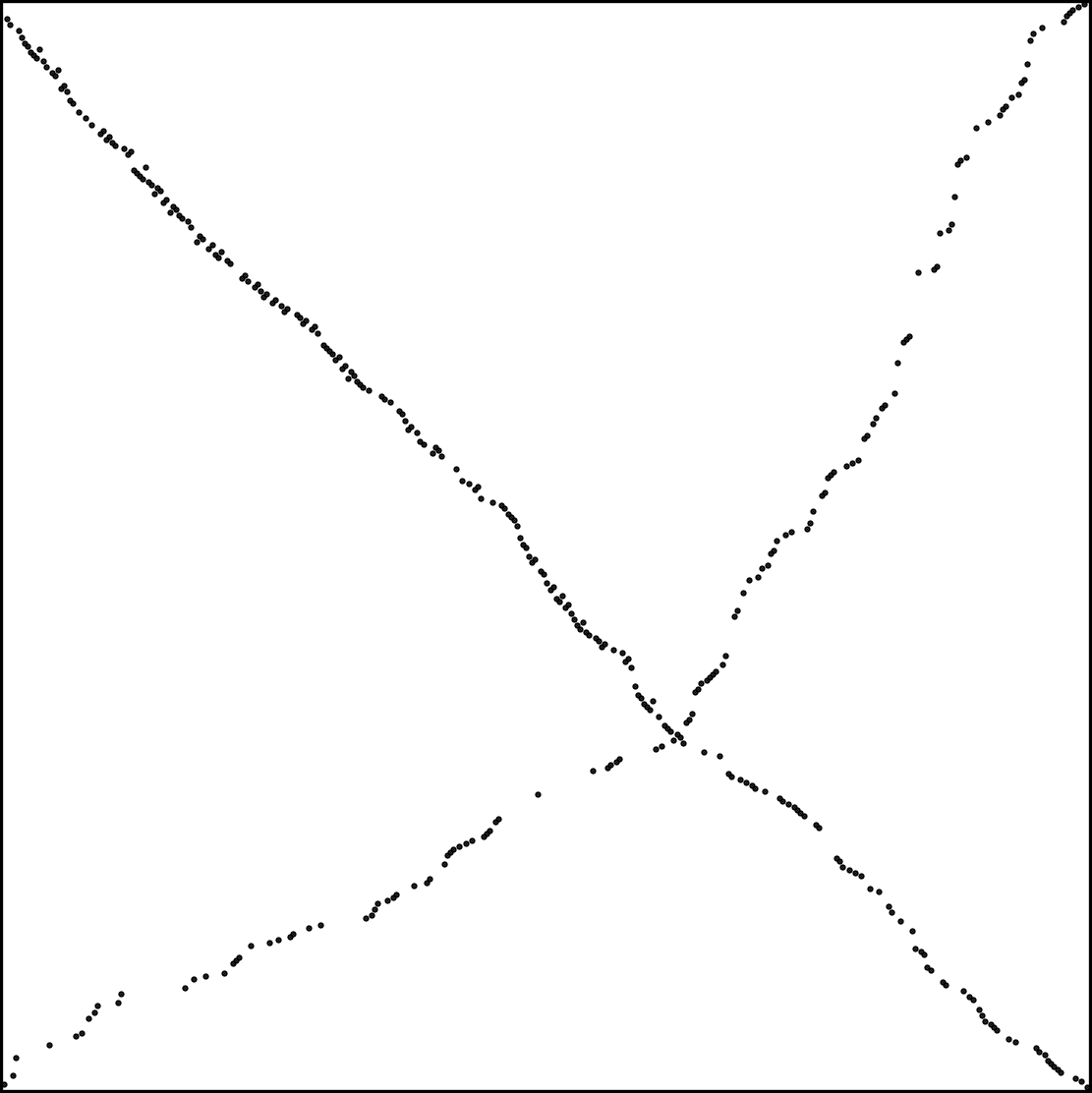} & \includegraphics[width = 0.238\linewidth]{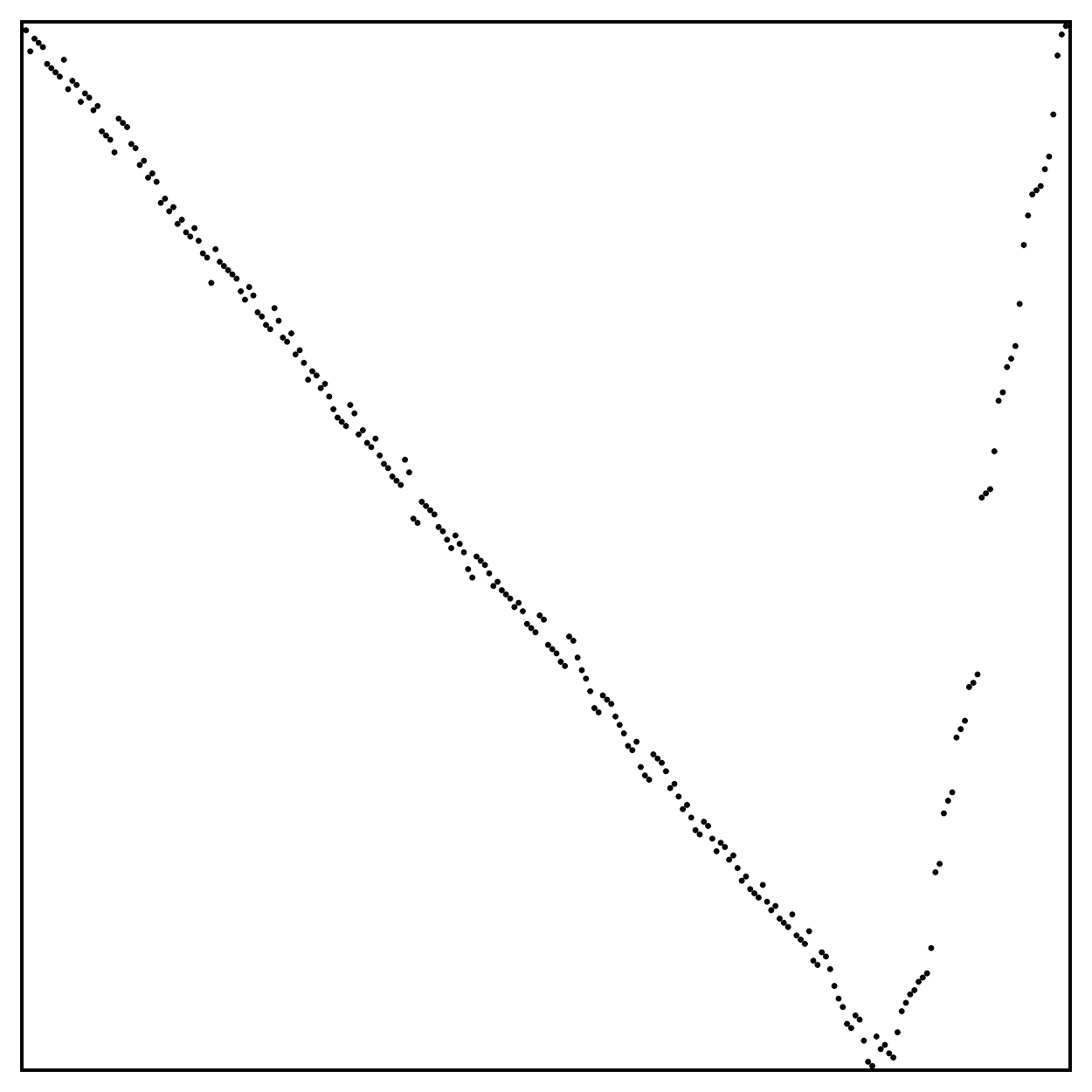}& \includegraphics[width = 0.24\linewidth]{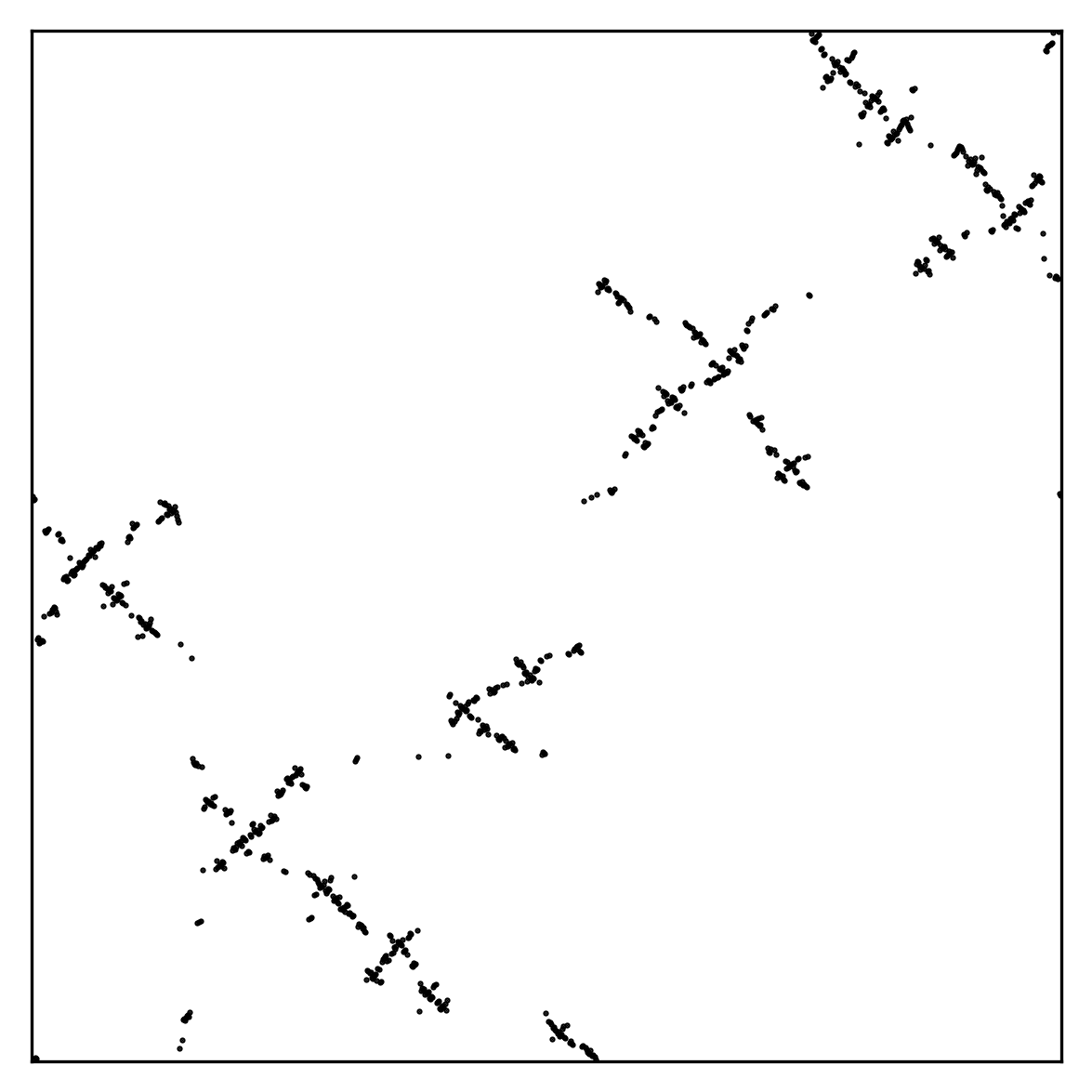} & \includegraphics[width = 0.238\linewidth]{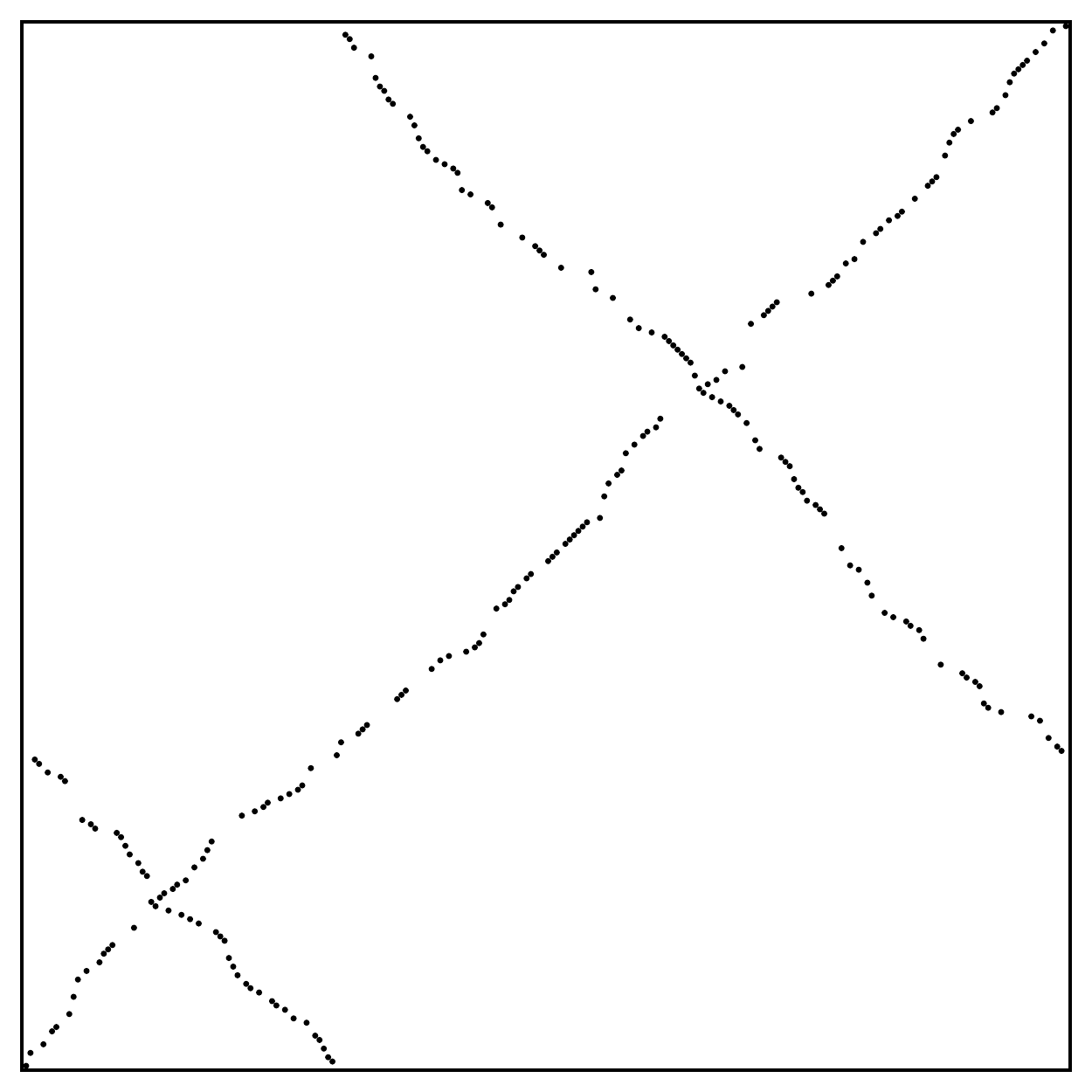}\\
 (a) & (b) & (c) & (d)
\end{tabular}
\caption{Large uniform random permutations in four different finitely specified classes. These four cases are covered by the present paper.}
\label{fig:PleinDeSimus}
\end{figure}

Our limiting results are phrased in the framework of {\em permutons}, 
which can be thought of as infinite rescaled permutations.
A permuton is a measure on $[0,1]^2$, whose projections on the horizontal and vertical axes
are the uniform measure on $[0,1]$.
Every permutation defines a permuton, by considering its rescaled diagram.
The set of permutons is endowed with the weak convergence topology of measures,
providing a natural notion of convergence for permutations. 
We review this setting in further details in \cref{ssec:permutons_intro}.

\subsection{Presentation of the results}
We consider a permutation class $\mathcal T$ with a specification.
This specification involves several families of permutations 
$\mathcal T_0=\mathcal T$, $\mathcal T_1$, \dots, $\mathcal T_d$.
Among these families, the ones with the smallest radius of convergence play a prominent role
in the asymptotics; we call such families {\em critical}.
In our case, the class $\mathcal T$ is always critical 
and we assume that the other critical families are $\mathcal T_1$, \dots, $\mathcal T_c$ for some $c \le d$.

An important information to study  $\mathcal T$ through its specification is to know
which families appear in the equation defining each $\mathcal T_i$ in the specification.
This is traditionally encoded in a directed graph with vertex set $\{\mathcal T_0,\dots,\mathcal T_d\}$,
called {\em dependency graph} of the specification.
A standard assumption to study combinatorial specifications
is that this graph is strongly connected (see \cite[Thm. VII.6, p. 489]{Violet}, \cite[Thm. 2.33]{Drmota} or \cite[Lemma 2]{BanderierDrmota}), 
implying in particular that all families are critical.
This assumption is too strong in our context.
We shall instead assume that the dependency graph {\em restricted to the critical families}
is strongly connected.
We will discuss later some methods to relax this assumption.

Under the strong connectivity assumption above, 
there are two possible asymptotic behaviors for a uniform random permutation $\bm \sigma_n$ in $\mathcal T$. 
\begin{itemize}
\item Either the combinatorial equation defining each {\em critical} family $\mathcal T_i$ 
is linear in every {\em critical} family (it may depend nonlinearly on non-critical families). 
This is referred to as the {\em essentially linear case}. 
In this case, we prove in \cref{Th:linearCase} the convergence of $\bm \sigma_n$ in distribution
towards a deterministic permuton,
that has a shape of an $X$, \emph{i.e.} is supported by four line segments from the corners of $[0,1]^2$
to a common central point.
This permuton depends on the class $\mathcal T$ only through a quadruple $\bm p$ 
whose components are in $[0,1]$, sum up to $1$ and indicate the mass of the four line segments
(thus determining the coordinates of the central point). 
The simulations (a) and (b) of \cref{fig:PleinDeSimus} fit in this framework.
In the second case, the limiting $X$-permuton is in some sense degenerate:
only two components of its quadruple $\bm p$ are nonzero,
explaining the $V$-shape.
The statements regarding those two classes may be found in Sections  \ref{sec:ClasseXTilde} and \ref{sec:ClasseV}.
\item The other possibility (called {\em essentially branching case}) 
is that the equation defining some {\em critical} family $\mathcal T_i$ 
involves a product of at least two {\em critical} families (which may be the same).
In this case, we prove in \cref{Th:branchingCase} that $\bm \sigma_n$ converges 
in distribution towards a biased Brownian separable permuton,
as introduced in \cite{Nous2,MickaelConstruction}.
In this case, the limit depends only on $\mathcal T$ through a single real parameter $p \in [0,1]$.
The simulation (c) of \cref{fig:PleinDeSimus} illustrates this behavior, and the corresponding formal statement regarding this class may be found in \cref{sec:Exemple_branching}.
 \end{itemize}
Unlike the $X$-permuton, the Brownian separable permuton already appeared
in our previous works \cite{Nous2,Nous1} as a universal limit 
of substitution-closed permutation classes.
The second item above shows that the universality class
of the Brownian separable permuton
extends further than the substitution-closed classes.
The first item reveals another (new) universality class,
with a simple limiting object: the $X$-permuton.

As the readers will have noticed,
the simulation (d) of \cref{fig:PleinDeSimus} does not fit in any of the two above situations.
The reason is that the dependency graph of the underlying specification restricted to the critical families
is {\em not} strongly connected. 

Our main results (\cref{Th:linearCase} and \cref{Th:branchingCase}) do not apply to
the not strongly connected case.
However, in \cref{Sec:CouteauSuisse}, we describe a strategy
to reduce the study of such cases to the strongly connected one.
This strategy applies in particular to the class in the simulation (d) above,
and the limit in this case is a juxtaposition of two $X$-permutons of random relative sizes.
This statement is proved in \cref{ssec:compound}.

\subsection{Relation with our previous works}

The present paper is the third article in the line that we started with \cite{Nous1}. We first obtained the asymptotic behavior of separable permutations (separable permutations form the iconic class of the branching case). In \cite{Nous2} we proposed a first extension towards substitution-closed classes. We identified three distinct asymptotic behaviors according to some technical conditions (H1), (H2) and (H3) related to the generating function of the family of simple permutations in the class (we refer to \cite{Nous2} for precise statements). 

We propose in the present paper another extension,
namely to permutation classes with a finite specification.
This contains the case of substitution-closed classes, as we will see p.\pageref{eq:SpecifClosedClasses}.
We restrict ourselves to specifications satisfying an analytic condition -- that we denote (AR) --,
which informally says that the equations appearing in this system are all analytic at the radius of convergence.
In the case of substitution-closed classes, this is equivalent to condition (H1).

\begin{center}
\includegraphics[width=0.80\linewidth]{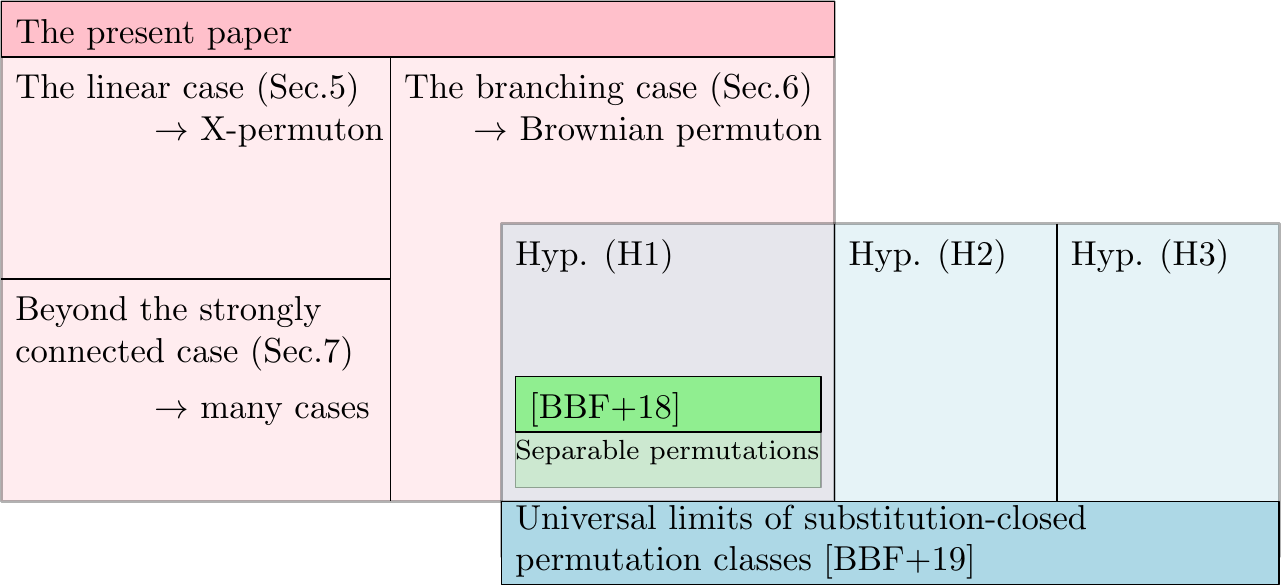}
\end{center}

\subsection{Proof tools: analytic combinatorics of algebraic systems}
\label{ssec:ToolsIntro}
Our main results are convergence results of random permutations in some class $\mathcal T$
in the topology of permutons.
A general result relates such convergence to the convergence, for each $k\ge 1$,
of the substructure, \emph{i.e.} the {\em pattern}, induced by $k$ random elements of the permutation.
The latter can be proved by enumerating, for each $\pi$, the family $\mathcal T_\pi$ of permutations
in $\mathcal T$ with $k$ marked elements inducing the pattern $\pi$.
It turns out that the combinatorial specification for $\mathcal T$ can be refined
to a combinatorial specification for $\mathcal T_\pi$.

We analyze the resulting specifications with tools of analytic combinatorics.
Namely, we classically translate combinatorial specifications into systems of equations
for the associated generating series.
When the equations are analytic on a sufficiently large domain
and when the dependency graph of the system is strongly connected,
two different kinds of behavior might happen:
\begin{itemize}
  \item either the system is linear, and the series 
    have all polar singularities at their radius of convergence \cite{BanderierDrmota};
  \item or the system is called {\em branching}, and the series have all square-root singularities
    (this is known as Drmota-Lalley-Woods theorem in the literature \cite{Violet,Drmota}).
\end{itemize}
We need however to adapt the hypotheses of these theorems to our setting, and more importantly,
to make explicit the coefficients in the first-order asymptotic expansion of the series;
this is done in \cref{sec:complex_analysis}.

We will apply these theorems to the critical series in our (refined) tree-specifications,
considering the non-critical series as parameters.
Once we know the singular behavior of the series,
the transfer theorem of analytic combinatorics \cite{Violet} 
gives us the asymptotic number of elements in $\mathcal T$ and $\mathcal T_\pi$ for all $\pi$.
We deduce from this the probability that $k$ marked elements in a uniform permutation in $\mathcal T$
induce a given pattern $\pi$.
Comparing these probabilities to those in the candidate limiting permutons,
this proves the desired convergence.

In \cref{Subsec:outline_proof}, we present a precise outline of the proof.

\subsection{Probabilistic lens on the linear/branching dichotomy}

Before going into the details of our results, we briefly shed a probabilistic light on the linear/branching dichotomy. The specification of $\TTT$ gives a natural encoding of a random  $t\in\TTT$ as a random multitype tree, whose types are given by $\TTT_0,\TTT_1,\dots,\TTT_d$.

For multitype Galton-Watson trees the research efforts have been mostly concentrated on the case where the matrix of types is \emph{irreducible} (see \cite{GregoryMultitype,Stephenson}), this corresponds in our setting to the subcase where the whole dependency graph is strongly connected.
Under this hypothesis, the linear case is trivial: the tree is just a line and the theory boils down to the analysis of finite irreducible Markov chains.
In the branching case, the behavior is well-understood too: it is shown in \cite{GregoryMultitype} that (critical, finite-variance) multitype Galton–Watson trees counted by their number of nodes converge after rescaling to Aldous's Brownian Continuum Random Tree (CRT).

Without the irreducibility condition, there is no treatment in the literature:
in full generality many different cases could happen.
For instance this may be illustrated by \emph{triangular P\'{o}lya urns} \cite{JansonTriangular},
which model two-type reducible branching processes.

In our setting, where the dependency graph {\em restricted to critical series} is assumed to be strongly connected,
here is what we expect.
The tree contains a subtree starting at the root formed by nodes of critical types,
on which fringe subtrees with nodes of subcritical types (called bushes below) are grafted.
We expect the critical part to be of linear size, while bushes are all of size $O(1)$.
In the essentially linear case, the tree should therefore look like
a long line to which small bushes are grafted,          
while in the essentially branching case we have a tree close to the Brownian CRT.    
This dichotomy is confirmed by simulations, see \cref{fig:SimusArbres}.
This explains why we get in one case a deterministic permuton, 
and in the other case a Brownian object.
\begin{figure}[bhtp]
	\begin{tabular}{cc}
		\includegraphics[height = 7cm]{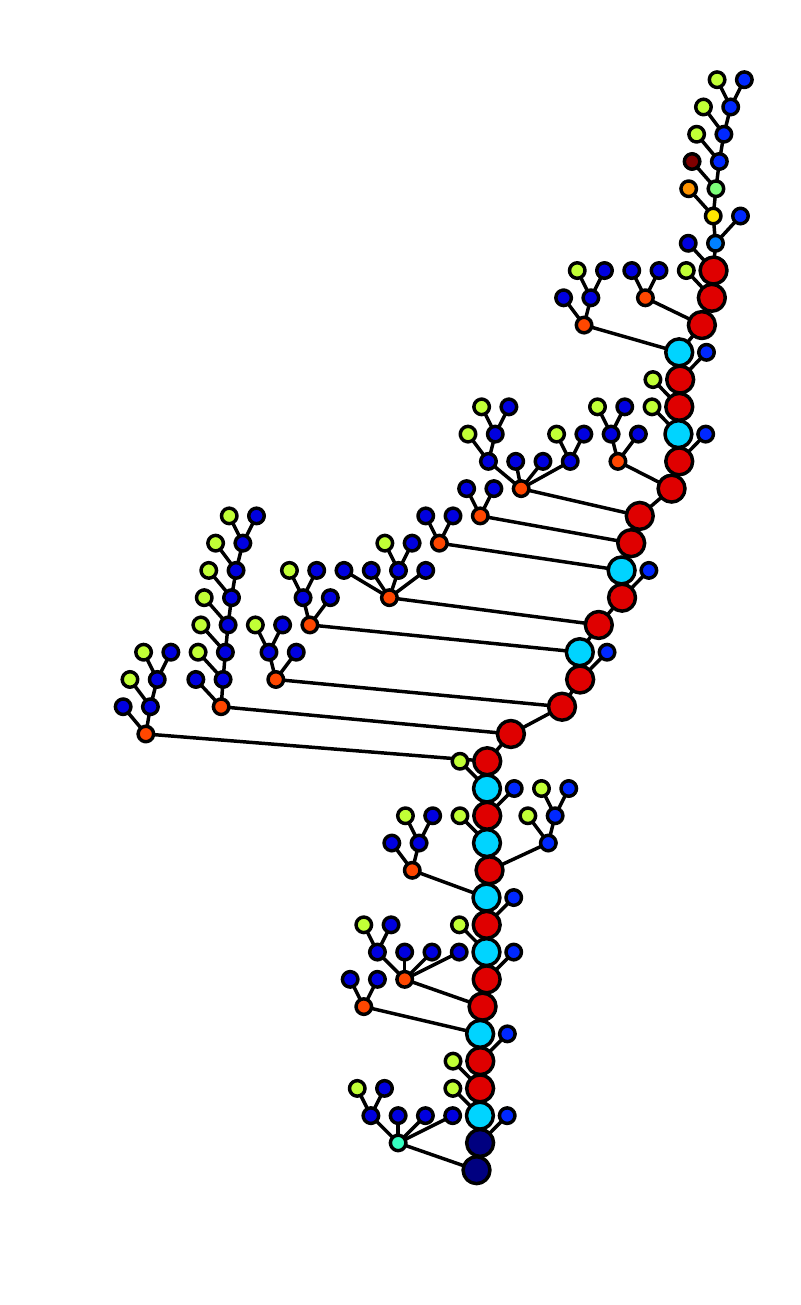}& \includegraphics[height = 7cm]{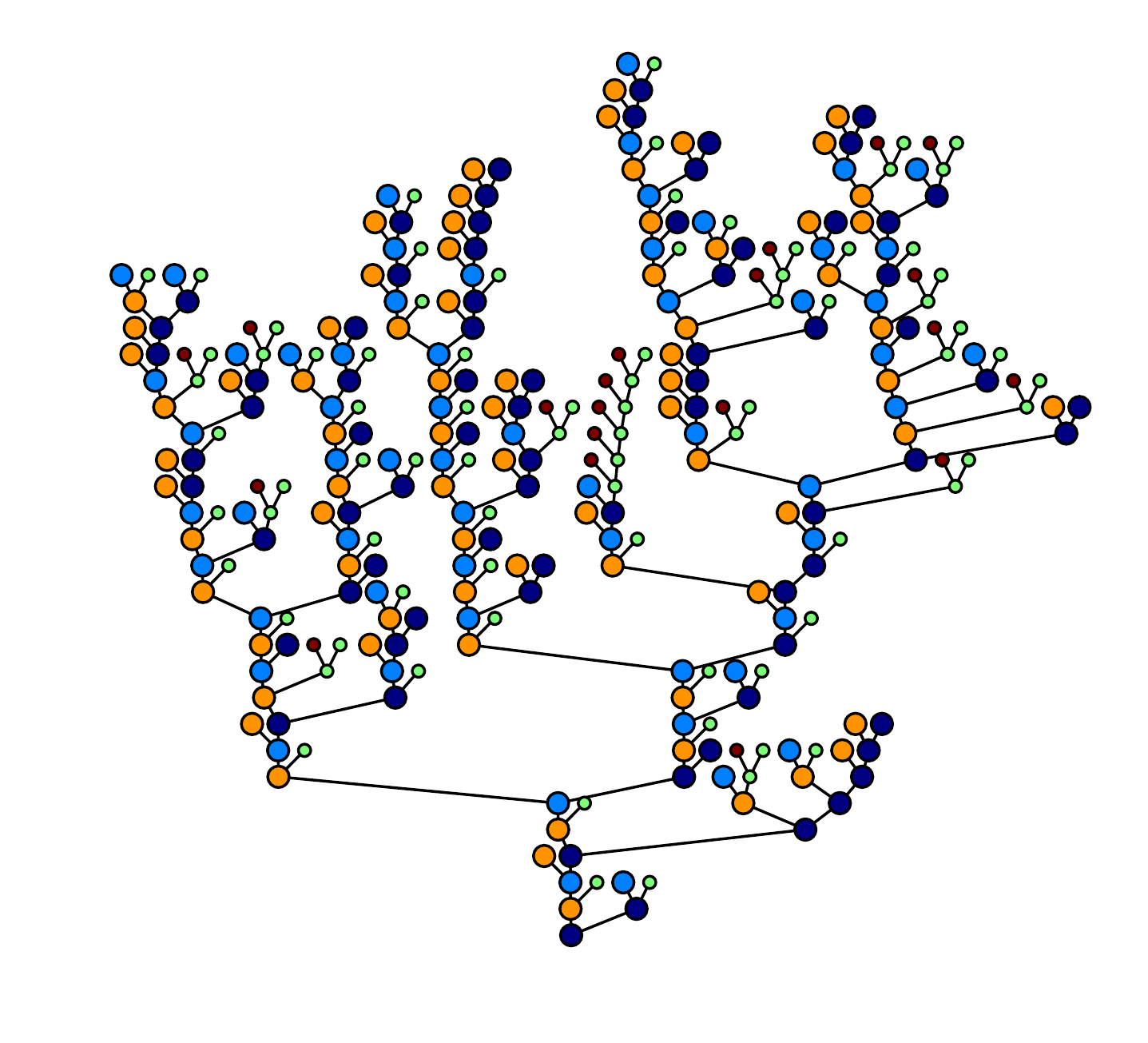} 
	\end{tabular}
	\caption{Substitution trees of uniform random permutations in finitely specified classes. Vertices are colored according to their type in the specification, and critical types have a bigger marker. 
	Left: the essentially linear case (for the class $\Av(2413,1243,2341,41352,531642)$, see \cref{sec:ClasseV}). Right: the essentially branching case (for the class $\Av(132)$, see \cref{sec:Av(132)_debut,sec:Av(132)_suite}). 
		\label{fig:SimusArbres}}
\end{figure}

It might be possible to follow this intuition to prove our results:
first proving convergence results for the (decorated) trees,
and then showing continuity properties of the tree-to-permutation map
to deduce the convergence of the associated permutations.
This raises however many difficulties, like defining a good topology for decorated trees
and proving convergence results for reducible multitype trees in this new topology.
Therefore we have preferred to work directly on permutations,
with combinatorial methods, as explained in \cref{ssec:ToolsIntro}.

We finally mention that the recent paper~\cite{WithBenedikt}, 
which reproves and strengthens the Brownian separable permuton limit result for substitution-closed classes of~\cite{Nous2}, 
uses the above approach of proving convergence results on trees, and then translating them to permutations. 
The approach of~\cite{WithBenedikt} relies on the following fact:
in the context of substitution-closed classes,
thanks to a further encoding, the trees representing 
permutations are distributed as conditioned \emph{monotype} Galton-Watson trees, 
which are better understood than their multitype analogues.
This reduction to monotype trees does not seem to extend 
to the general context of classes with a finite specification studied in the present paper.

\subsection{Simulations and examples}
To apply our results to a specific permutation class,
a finite specification needs to be computed and analyzed,
to identify under which case it falls down,
check the relevant hypotheses,
and compute the parameters of the limiting permuton if applicable.

For classes with a finite number of simple permutations,
we provide an implementation
of the algorithm of \cite{BBPPR} for the computer algebra system Sage. 
This implementation is available on-line \cite{Program}.
It allows to compute the specification of a given class, and to deduce a system of equations for the series enumerating the various families in the specification. 
It can also output a Boltzmann sampler of the class and run it.
Simulations in \cref{fig:PleinDeSimus} were obtained this way.

The next step to apply our results is to identify the critical series.
Unfortunately, as far as we are aware of, there is no automatic way to perform this step.
When the system of equations we obtain is solvable,
it is usually easy to see from the analytic formulas for the generating functions
which are the critical families.
It is also sometimes possible to identify them even in non-solvable cases,
using the dependency graph of the system and estimates on the growth rates of the various families;
see \cref{lem:MonotonieDesRho} for the relationship between critical series and dependency graphs
and \cref{sec:Exemple_branching_annexe} for an example of the identification of critical series
in a non-solvable case.

Once critical series have been identified, the following conditions need to be checked
\begin{enumerate}
\item whether the dependency graph restricted to these critical series
is strongly connected;
\item an aperiodicity condition;
\item whether the system is essentially linear or essentially branching.
\end{enumerate}
This is usually straightforward from definitions.
When items i) and ii) above are fulfilled, our results apply and the limiting permuton is either an $X$-permuton
or a biased Brownian separable permuton, depending on item iii) above.
One still needs to compute the parameter(s).
To this end, the program \cite{Program} contains some useful functions,
in particular evaluating the matrices and eigenvectors appearing in formula 
\eqref{eq:DefProbaCaterpillar} p.\pageref{eq:DefProbaCaterpillar}.

Most of the examples given in this paper were treated this way. 
For each of them, an accompanying Jupyter notebook is provided\footnote{All
available from this address: \url{http://mmaazoun.perso.math.cnrs.fr/pcfs/}}.

\subsection{Outline of the paper}
\label{Subsec:outline_paper}
\begin{itemize}
\item We present in \cref{sec:framework} (\textsc{Our framework}) the combinatorial specifications of permutation classes (where permutations are represented by their \emph{standard trees} -- see \cref{defintro:StandardTree}), and the terminology essentially linear/essentially branching case.
\item In \cref{sec:OurResults} we give our main results: \cref{Th:linearCase} and \cref{Th:branchingCase}. We provide several applications to particular permutation classes.
\item In \cref{Sec:TreeToolbox} (\textsc{Tree Toolbox}, which is useful for both the essentially linear and the essentially branching case), we gather useful definitions and properties regarding the families of trees induced by our combinatorial decompositions. In particular we define in \cref{def:CriticalSpine} the \emph{critical subtree}  $\Crit_i(t)$ of a standard tree in $\TTT_i$. Critical subtrees play an important role in the analysis.
\item In \cref{Sec:ProofsLinear} (\textsc{The Essentially Linear Case}), we do the analysis which leads to the proof of \cref{Th:linearCase}. As the limiting object is in this case the $X$-permuton, we also state and prove in \cref{SsSec:MarginalesXPermutons} some of its properties. 
\item \cref{sec:branching} (\textsc{The essentially branching case}) is devoted to the proof of \cref{Th:branchingCase}. 
\item Our main theorems are stated under Hypothesis (SC), ensuring that $G^\star$ (the dependency graph of the underlying specification restricted to the critical families) is strongly connected. We explain in \cref{Sec:CouteauSuisse} (\textsc{Beyond the strongly connected case}) how to apply \cref{Th:linearCase,Th:branchingCase} in several situations where the graph $G^\star$ is not strongly connected.
\begin{itemize}
\item In \cref{Sec:SubTree} we give sufficient conditions under which there typically exists a \emph{giant} component in a standard tree in $\TTT_0$. It follows that we obtain the same limiting objects as in \cref{Sec:ProofsLinear,sec:branching}.
\item In \cref{ssec:SeveralSubstructures} we show that several macroscopic substructures can appear in a typical large tree of $\TTT_0$. 
  In that case, the limiting object is an assembling of Brownian separable permutons, or $X$-permutons, depending on the case. 
\end{itemize}
\item \cref{sec:complex_analysis} is a \emph{complex analysis toolbox}. 
We analyze, near their dominant singularity,
solutions $\mathbf Y = (Y_1,\ldots Y_c)$ of systems of equations of the form
$$
\mathbf Y(z) = \mathbf \Phi(z,\mathbf Y(z)), 
$$
where $\mathbf \Phi(z,\mathbf y) = (\Phi_1(z,\mathbf y),\ldots ,\Phi_c(z,\mathbf y))$ 
is a vector of multivariate power series of $(z,\mathbf y)$ with nonnegative integer coefficients. 
This is a standard problem in analytic combinatorics
(see, {\em e.g.}, \cite{Drmota97,Drmota,Violet,BanderierDrmota})
but we need variants or more precise/general versions of the statements
we could find in the literature.
These results could be useful independently of the present article.
\item In \cref{Sec:AppendixExamples} we work out several examples of specifications and their analysis.
  In particular we discuss the computational details. 
\end{itemize}

\section{Our framework} 
\label{sec:framework}

The starting point of our analysis of a permutation class is a \emph{(combinatorial) specification} for this class. 
We collect here the necessary definitions to set the framework of our study, 
and recall results from the literature that yield specifications of permutation classes. 
The results we obtain (presented in Section~\ref{sec:OurResults}) depend on the type of the specification we have, 
and we also present these different types of specifications in this section. 

\subsection{Permutations, patterns, and classes}
\label{ssec:basics_patterns}

For any positive integer $n$, the set of permutations of $[n]:= \{1,2,\ldots, n\}$ is denoted by $\Sn_n$. 
We write permutations of $\Sn_n$ in one-line notation as $\sigma = \sigma(1) \sigma(2) \dots \sigma(n)$. 
For a permutation $\sigma$ in $\Sn_n$, the \emph{size} $n$ of $\sigma$ is denoted by $|\sigma|$. 
We often view a permutation $\sigma$ of size $n$ as its \emph{diagram}: 
it is (up to rescaling) the set of points of coordinates $(i,\sigma(i))_{1\leq i\leq n}$ in the Cartesian plane. 

For $\sigma\in \Sn_n$, and $\SsEnsemble\subset [n]$ of cardinality $k$, let $\pat_\SsEnsemble(\sigma)$ be the permutation of $\Sn_k$ induced by $\{\sigma(i) : i\in \SsEnsemble\}$. 
For example for $\sigma=65831247$ and $\SsEnsemble=\{2,5,7\}$ we have
$$
\pat_{\{2,5,7\}}\left(6\mathbf{5}83\mathbf{1}2\mathbf{4}7\right)=312
$$
since the values in the subsequence $\sigma(2) \sigma(5) \sigma(7)=514$
are in the same relative order as in the permutation $312$. 
A permutation $\pi = \pat_\SsEnsemble(\sigma)$ is a \emph{pattern} involved (or contained) in $\sigma$, 
and the subsequence $(\sigma(i))_{i \in \SsEnsemble}$ is an \emph{occurrence} of $\pi$ in $\sigma$. 
When a pattern $\pi$ has no occurrence in $\sigma$, we say that $\sigma$ \emph{avoids} $\pi$. 
The pattern containment relation defines a partial order on $\mathfrak{S} = \cup_n \mathfrak{S}_n$: 
we write $\pi \preccurlyeq \sigma$ if $\pi$ is a pattern of $\sigma$.

\medskip

A \emph{permutation class}, $\CCC$, is a subset of $\mathfrak{S}$ which is downward closed under $\preccurlyeq$. 
Namely, for every $\sigma \in \CCC$, and every $\pi  \preccurlyeq \sigma$, it holds that $\pi \in \CCC$. 
It is known (see for example \cite[Paragraph 5.1.2]{BonaBook}) that permutation classes may equivalently be defined as subsets of $\mathfrak{S}$ characterized by the avoidance of a (finite or infinite) family of patterns. 
For every class $\CCC$, there is a unique such family, $B$, consisting of elements incomparable for $\preccurlyeq$. It is called the \emph{basis} of $\CCC$, 
and we write $\CCC = \Av(B)$.

\subsection{Substitution of permutations and encoding by trees}\label{sec:OperatorsPermutations} 

We now define formally the notion of substitution, already presented in the introduction
\begin{definition}%
Let $\theta=\theta(1)\cdots \theta(d)$ be a permutation of size $d$, and let $\pi^{(1)},\dots,\pi^{(d)}$ be $d$ other permutations. 
The \emph{substitution} of $\pi^{(1)},\dots,\pi^{(d)}$ in $\theta$ is the permutation of size $|\pi^{(1)}|+ \dots +|\pi^{(d)}|$ 
obtained by replacing each $\theta(i)$ by a sequence of integers isomorphic to $\pi^{(i)}$ while keeping the relative order induced by $\theta$ between these subsequences.\\
This permutation is denoted by $\theta[\pi^{(1)},\dots,\pi^{(d)}]$. 
\end{definition}
Examples of substitution are conveniently presented representing permutations by their diagrams
(see \cref{fig:sum_and_skew} below, or \cref{fig:Subst_Intro} in the introduction).
\begin{figure}[htbp]
    \begin{center}
      \includegraphics[width=7cm]{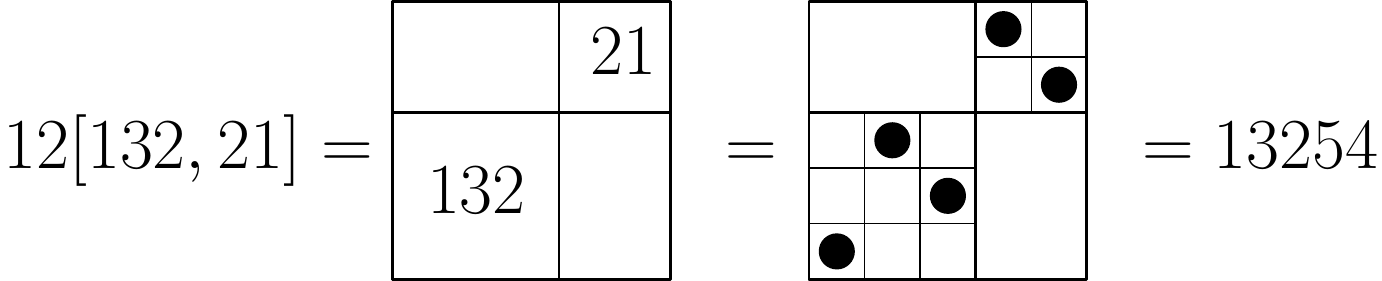} \qquad 
      \includegraphics[width=7cm]{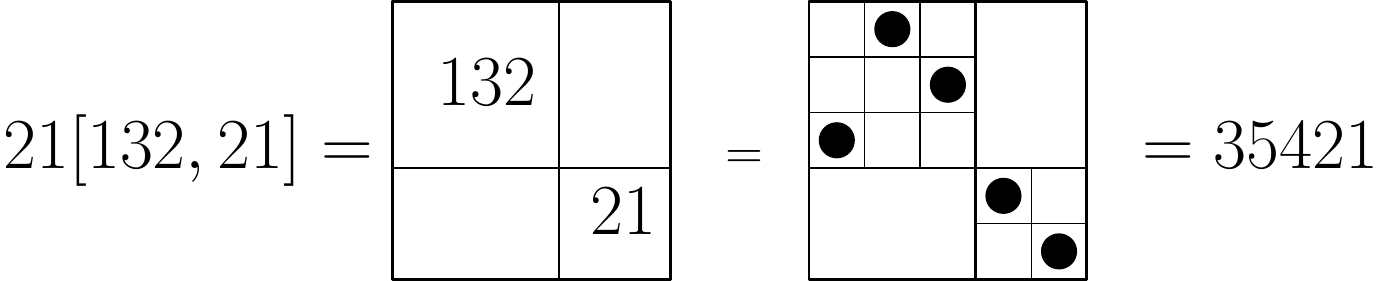}\\
    \end{center}
\caption{Substitution of permutations.\label{fig:sum_and_skew}}
\end{figure}

It will be interesting to consider nested substitutions,
starting from permutations of size $1$.
The corresponding succession of operations is then encoded by a tree,
called {\em substitution tree}.
\begin{definition}\label{def:SubstitutionTree}
A \emph{substitution tree} of size $n$ is a rooted plane tree with $n$ leaves,
where any internal node with $k \ge 2$ children is labeled by a permutation of size $k$.
Internal nodes with only one child are forbidden. 
The labels $12$ (resp. $21$) of internal nodes are often replaced by $\oplus$ (resp. $\ominus$).
\end{definition}

Given any tree $t$, we denote by $\Internal{t}$ the set of internal nodes of $t$ 
and by $\Leaves{t}$ the set of leaves of $t$. 
Also, given a tree $t$ and a node $v$ in $t$, 
we call \emph{fringe subtree} of $t$ rooted at $v$ the subtree of $t$ 
whose nodes are $v$ and all its descendants. 

\begin{definition}\label{Def:PermTree}
Let $t$ be a substitution tree. We define inductively the permutation $\perm(t)$ associated with $t$:
\begin{itemize}
	\item if $t$ is just a leaf, then $\perm(t)=1$;
	\item if the root of $t$ has $r\geq 2$ children with corresponding fringe subtrees $t_1,\ldots,t_r$ (from left to right), and is labeled with the permutation $\theta$, then $\perm(t)$ is the permutation obtained as the substitution of $\perm(t_1),\dots,\perm(t_r)$ in $\theta$:
	\[\perm(t) = \theta[\perm(t_1),\ldots,\perm(t_r)].\]
\end{itemize}
\end{definition}

\begin{figure}[htbp]
    \begin{center}
      \includegraphics[width=10cm]{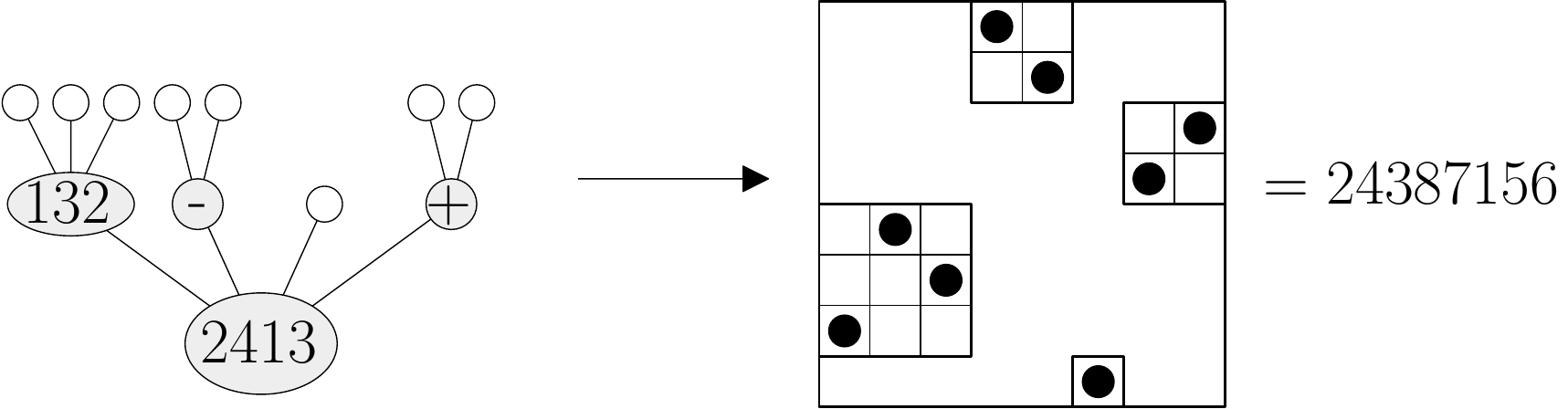}
    \end{center}
    \caption{A substitution tree encoding a permutation.}
    \label{fig:ExemplePermutationAssocieeAUnArbre}
\end{figure}

\cref{fig:ExemplePermutationAssocieeAUnArbre} illustrates this construction. 
When $\perm(t)=\sigma$, we say that $t$ is a tree that \emph{encodes} $\sigma$, or a tree \emph{associated with} $\sigma$. By construction, any tree associated with $\sigma$ has exactly $|\sigma|$ leaves.

In general, permutations may be encoded by several substitution trees.
In what follows, we recall how to exhibit a particular substitution tree associated with each permutation $\sigma$.
To this end, we need the notion of simple permutations.

\begin{definition}%
A {\em simple permutation} is a permutation $\sigma$ of size $n > 2$
that does not map any nontrivial interval 
(\emph{i.e.} a range in $[n]$ containing at least two and at most $n-1$ elements) onto an interval.
\end{definition}
For example, $451326$ is not simple as it maps $[3;5]$ onto $[1;3]$. 
The smallest simple permutations are $2413$ and $3142$ (there is no simple permutation of size $3$). 
We can now define the notion of standard trees.
\begin{definition}\label{defintro:StandardTree}
A \emph{standard tree}  is a substitution tree in which internal nodes satisfy the following constraints:
\begin{itemize}
\item Internal nodes are labeled by $\oplus$ (representing $12$), $\ominus$ (representing $21$), or by a simple permutation.
\item Every node labeled  by $\oplus,\ominus$ has degree\footnote{Throughout the paper, by \emph{degree} of a node in a tree, we mean the number of its children (which is sometimes called arity or out-degree in other works). 
Note that it is different from the graph-degree: for us, the edge to the parent (if it exists) is not counted in the degree.} two.
The left-child of a node labeled by  $\oplus$ (resp. $\ominus$) cannot be labeled by  $\oplus$ (resp. $\ominus$).
\item A node labeled by a simple permutation $\alpha$ has degree $|\alpha|$.
\end{itemize}
\end{definition}

The following proposition is an easy consequence of \cite[Proposition 2]{AA05}.
\begin{proposition}\label{prop:AA05_as_bijection}
  The mapping $\perm$ of \cref{Def:PermTree}
  defines a bijection from standard trees to permutations
  that maps the number of leaves of the tree to
  the size of the permutation.
\end{proposition}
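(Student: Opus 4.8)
The plan is to establish the bijection by induction on the size of the permutation, exploiting the canonical form of the substitution decomposition provided by \cite[Proposition 2]{AA05}. Recall that this cited result states that every permutation $\sigma$ of size $n \geq 2$ can be written uniquely in one of the following forms: as $\oplus[\pi^{(1)}, \pi^{(2)}, \dots, \pi^{(r)}]$ where the $\pi^{(i)}$ are $\oplus$-indecomposable and $r \geq 2$; as $\ominus[\pi^{(1)}, \pi^{(2)}, \dots, \pi^{(r)}]$ where the $\pi^{(i)}$ are $\ominus$-indecomposable and $r \geq 2$; or as $\alpha[\pi^{(1)}, \dots, \pi^{(d)}]$ where $\alpha$ is a simple permutation of size $d$ and the $\pi^{(i)}$ are arbitrary. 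The first step is to translate this ``$r$-ary'' form for $\oplus$ and $\ominus$ into the binary right-comb form demanded by \cref{defintro:StandardTree}: a sequence of $\oplus$-indecomposable blocks $\pi^{(1)}, \dots, \pi^{(r)}$ corresponds bijectively to the binary tree whose root is $\oplus$, whose left child is the standard tree of $\pi^{(1)}$, and whose right child is the standard tree of $\oplus[\pi^{(2)}, \dots, \pi^{(r)}]$ (the empty product when $r=1$ being handled at the leaf level). The constraint ``the left child of an $\oplus$-node is not an $\oplus$-node'' is exactly the encoding of ``$\pi^{(1)}$ is $\oplus$-indecomposable'', and symmetrically for $\ominus$.

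First I would prove surjectivity and well-definedness of an inverse map $\Psi$ sending each permutation to a standard tree. Define $\Psi$ recursively: a permutation of size $1$ maps to the single leaf; for $\sigma$ of size $\geq 2$, apply \cite[Proposition 2]{AA05} to get its unique top-level decomposition, and then build the root node accordingly (binary $\oplus$ or $\ominus$ node in the right-comb shape described above, or a simple node of degree $d$), recursing on each block. An easy induction on size shows $\Psi(\sigma)$ is a valid standard tree with $|\sigma|$ leaves and that $\perm(\Psi(\sigma)) = \sigma$, using \cref{Def:PermTree} and the associativity-type identity $\oplus[\pi^{(1)}, \dots, \pi^{(r)}] = \oplus[\pi^{(1)}, \oplus[\pi^{(2)}, \dots, \pi^{(r)}]]$ (and likewise for $\ominus$). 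This gives surjectivity of $\perm$ restricted to standard trees, and the leaf-count statement.

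Next I would prove injectivity, i.e.\ that $\perm$ is one-to-one on standard trees. Let $t$ be a standard tree with $\perm(t) = \sigma$, $|\sigma| \geq 2$. By case analysis on the label of the root of $t$ and the structure of standard trees, I show that the top-level decomposition of $\sigma$ read off from $t$ must coincide with the canonical one of \cite[Proposition 2]{AA05}: if the root is simple with label $\alpha$, then $\sigma = \alpha[\ldots]$ is the simple-substitution form; if the root is $\oplus$ (necessarily binary with non-$\oplus$ left child), then reading the maximal chain of $\oplus$-nodes down the right spine of $t$ recovers $\sigma$ as $\oplus[\pi^{(1)}, \dots, \pi^{(r)}]$ with each $\pi^{(i)}$ being $\oplus$-indecomposable (the indecomposability of the first factor comes from the left-child constraint, of the later factors from the maximality of the spine and, for the last one, from the fact that a leaf or a non-$\oplus$-rooted subtree encodes an $\oplus$-indecomposable permutation), and symmetrically for $\ominus$. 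Uniqueness in \cite[Proposition 2]{AA05} then forces the decomposition to be canonical, so the root label and the list of fringe subtrees (up to the same right-comb translation) are determined by $\sigma$; applying the induction hypothesis to each block finishes the argument. One then checks $\Psi \circ \perm = \id$ on standard trees and $\perm \circ \Psi = \id$ on permutations, so $\perm$ is a bijection.

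The main obstacle is the bookkeeping around the binary-versus-$r$-ary encoding of linear nodes: one must be careful that the ``right-comb'' translation is itself a bijection between standard trees rooted at a (possibly empty) chain of $\oplus$-nodes and finite sequences of $\oplus$-indecomposable standard trees, and in particular that the no-$\oplus$-left-child rule precisely captures $\oplus$-indecomposability at every level. Once this translation lemma is isolated and checked, the rest is a clean structural induction driven entirely by the existence-and-uniqueness statement of \cite[Proposition 2]{AA05}; no genuinely new combinatorial input is needed, which is why the proposition is labelled ``an easy consequence''.
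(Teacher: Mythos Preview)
Your proposal is correct and is exactly the argument the paper has in mind: the paper gives no proof at all, merely stating that the proposition ``is an easy consequence of \cite[Proposition 2]{AA05}'', and your inductive construction of the inverse map $\Psi$ together with the right-comb translation between the $r$-ary $\oplus/\ominus$ decomposition of \cite{AA05} and the binary standard-tree convention is precisely how one unpacks that citation. The one point worth making fully explicit (you allude to it) is the lemma that a standard tree whose root is not labeled $\oplus$ encodes a $\oplus$-indecomposable permutation; this requires checking that $\ominus[\cdot,\cdot]$ and $\alpha[\cdot,\ldots,\cdot]$ for simple $\alpha$ are never $\oplus$-decomposable, which is immediate from the definition of simple permutations but should be stated.
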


From now on, we identify a permutation $\sigma$ and its associated standard tree.

\begin{remark}
  [regarding the terminology]
  In most papers in the literature, simple permutations may have size $2$ or more.
  With this definition, $12$ and $21$ are both simple permutations.
  In the context of substitution trees, they however play a different role than other simple permutations.
  This explains why we take another convention here.

  The standard trees that we consider here are a variant of the canonical trees 
  considered in \cite{Nous2}; in the latter,
  nodes labeled by $\ominus$ (resp. $\oplus$) can be of any degree (representing respectively permutations $12\dots k$ and $k\dots 21$ for any $k \geq 2$)
  but none of their children may have a label $\ominus$ (resp. $\oplus$).
  Going from one to the other is straightforward.
\end{remark}

\subsection{Combinatorial specifications for families of permutations}

The starting point of our study of a permutation class $\CCC$ is a \emph{combinatorial specification} for $\CCC$, 
or rather for the family of standard trees of permutations of $\CCC$. 
The specifications we will consider involve not only permutation classes, but also more general families of permutations (see \cref{dfn:classesGeneralized}), 
and we may as well consider specifications for these more general families. 
We identify any such family of permutations with the family of corresponding standard trees, $\TTT$. 
For any such $\TTT$, we denote by $\SSS_{\TTT}$ the set of simple permutations in $\TTT$. 
Throughout this article we will only consider families of permutations 
with a particular type of specification, called a \emph{tree-specification}, which we now define. 

\begin{definition}[Tree-specifications]\ \\
  \label{dfn:tree_specification}
Let $\TTT_0,\dots,\TTT_d$ be $d+1$ families of permutations.
A \emph{tree-specification} of $(\TTT_0,\dots,\TTT_d)$ is a system of combinatorial equations
\begin{equation}
\TTT_i\ =\ \eps_i\{\bullet \} \ \uplus \ \biguplus_{\pi\in\,\SSS_{\TTT_i} \uplus\set{\oplus,\ominus}} \ \biguplus_{(k_1,\dots,k_{|\pi|})\in K_\pi^i}
\pi[\TTT_{k_1},\dots,\TTT_{k_{|\pi|}}]
\qquad (0\leq i\leq d)
\tag{$\EEE_\TTT$}
\label{eq:Specif}
\end{equation}
where the symbol $\uplus$ denotes disjoint union,
$\bullet$ is the permutation of size $1$ and 
for every $i\leq d$, $\eps_i\in\{0,1\}$ (so that $\eps_i\{\bullet \}$ is either $\emptyset$ or $\{\bullet \}$) and $K_\pi^i$ is a subset of $\{0,\dots,d\}^{|\pi|}$. 
\end{definition}

Note that we extended the notation for substitution to sets of permutations in the obvious way: 
$\pi[\TTT_{k_1},\dots,\TTT_{k_{|\pi|}}]$ is the set of permutations $\pi[\theta^{(1)}, \dots, \theta^{(|\pi|)}]$
where for each $i$, $\theta^{(i)} \in \TTT_{k_i}$.

In order to avoid trivial cases, in this article we consider only tree-specifications such that every family $\TTT_i$ is nonempty,
at least one family $\TTT_i$ is infinite and at least one $\eps_i$ is nonzero.

\begin{definition}
Given a permutation class $\CCC$, a \emph{specification} for $\CCC$ is a tree-specification as above such that $\TTT_0$ is (the set of standard trees of) $\CCC$. 
\end{definition}

We present some cases where it is known that a specification for $\CCC$ exists.

\subsubsection*{The substitution-closed case.}

\begin{definition}%
A permutation class $\CCC$ is \emph{substitution-closed} if, for every $\theta, \pi^{(1)},\dots,\pi^{(d)}$ in $\CCC$, the substitution
$\theta[\pi^{(1)},\dots,\pi^{(d)}]$ also belongs to $\CCC$.
\end{definition}

A characterization of substitution-closed classes which is very convenient in some of our examples in the following, proved in~\cite[Proposition 1]{AA05}: 
a permutation class is substitution-closed if and only if
its basis contains only simple permutations. 

A specification for a substitution-closed class $\CCC$ (assuming that $\CCC$ contains $12$ and $21$) 
is easily obtained from \cite[Proposition 2]{AA05}, 
which we rephrased as \cref{prop:AA05_as_bijection} above. 
Indeed, in this case, $\CCC = \TTT$ is simply the set of standard trees such that all nodes carry labels
from $\SSS_\TTT \uplus \set{\oplus,\ominus}$. 
Then, denoting $\TTT^{\nonp}$ (resp. $\TTT^{\nonm}$) the subset of these standard trees 
whose root is {\em not} labeled by $\oplus$ (resp. $\ominus$), 
we have the tree-specification
\begin{equation}\label{eq:SpecifClosedClasses}
\begin{cases}
\quad\TTT &= \{\bullet\} \  \biguplus \ 
\oplus[\TTT^\nonp,\TTT]
\  \biguplus \ 
\ominus[\TTT^\nonm,\TTT]
\  \biguplus \ 
\bigg(\biguplus_{\pi \in \SSS_\TTT} 
\pi[\TTT, \dots, \TTT]
\bigg)\vspace{2mm}
\\
\ \TTT^{\nonp}  &= \{\bullet\} \  \biguplus \ 
\ominus[\TTT^\nonm,\TTT]
\  \biguplus \ 
\bigg(\biguplus_{\pi \in \SSS_\TTT} 
\pi[\TTT, \dots, \TTT]
\bigg)\vspace{2mm}
\\
\ \TTT^{\nonm}  &= \{\bullet\} \  \biguplus \ 
\oplus[\TTT^\nonp,\TTT]
\  \biguplus \ 
\bigg(\biguplus_{\pi \in \SSS_\TTT} 
\pi[\TTT, \dots, \TTT]
\bigg).
\end{cases}
\end{equation}

As already mentioned in the Introduction we proved \cite{Nous2} that under a mild sufficient condition the limiting permuton of a substitution-closed class is a biased Brownian separable permuton.

\subsubsection*{The general case.}

Assume now that $\CCC$ is a permutation class (still assumed to contain $12$ and $21$) which is not substitution-closed. 
Finding a specification for $\CCC=\TTT$ can be more complicated 
since $\TTT$ is only a subset of the standard trees with node labels
in $\SSS_\TTT \uplus \set{\oplus,\ominus}$.
Using the representation of permutations as standard trees, one can prove however that, when $\SSS_\TTT$ is finite, 
a tree-specification for $\TTT$ always exists -- see~\cite{BHV,BBPPR}. 
The main result of \cite{BBPPR} is that such a specification can be obtained algorithmically, given the basis $B$ of $\TTT$. Note that~\cite{AA05} ensures that $B$ is necessarily finite, since $\SSS_\TTT$ is finite.

In the resulting specification, the families $(\TTT_i)_{0\leq i\leq d}$ are sets of permutations 
defined by avoidance and containment of patterns and restrictions on the root label.
We introduce notation for such classes. 

\begin{definition}
\label{dfn:classesGeneralized}
For any set $\TTT$ of permutations (most often, a permutation class), 
for any sets of patterns $\{\sigma_1,\dots ,\sigma_k\}$ and $\{\tau_1,\dots ,\tau_\ell\}$, 
and, optionally, for any $\delta \in \{ \oplus, \ominus \}$, 
we define $\TTT^{\mathrm{not} \delta}_{\langle \sigma_1,\dots ,\sigma_k\rangle,( \tau_1,\dots ,\tau_\ell)}$ to be the subset of $\TTT$ such that
\begin{itemize}
\item the patterns  $ \sigma_1,\dots ,\sigma_k$ are excluded from every permutation,
\item the patterns $\tau_1,\dots ,\tau_\ell$  have to occur in every permutation,
\item the superscript $\mathrm{not} \delta$ (for $\delta = \oplus, \ominus$) is optional and indicates that permutations in this family are \emph{$\delta$-indecomposable} permutations,
\emph{i.e.} that the root of associated standard trees are not labeled with $\delta$.
\end{itemize}
\end{definition}

We will assume throughout the paper that we are given a tree-specification of $(\TTT_0,\dots, \TTT_d)$.
We will see later a few examples of specifications such that $\TTT_0$ is a permutation class. 

\subsection{System of equations, critical series, and dependency graph}

The specification \eqref{eq:Specif} of \cref{dfn:tree_specification} induces a system of $d+1$ equations for the generating functions $T_i$ of $\TTT_i$, of the form

\begin{equation}
\begin{cases}
T_0(z)&=\eps_0z+ F_0(T_0,T_1,\dots,T_d)\\
T_1(z)&=\eps_1z+F_1(T_0,T_1,\dots,T_d)\\
 & \dots \\
T_d(z)&=\eps_dz+F_d(T_0,T_1,\dots,T_d),
\end{cases}
\tag{$E_T$}
\label{eq:SystemeSeries}
\end{equation}
where $F_0,\dots,F_d$ are $d+1$ multivariate formal power series with nonnegative integer coefficients (whose variables are denoted $y_0, \dots, y_d$). The valuation of each $F_i$ with respect to the $(y_j)$'s all together is  greater than or equal to $2$.
Moreover, the solutions of this system can be computed recursively:  $(T_0,T_1,\dots,T_d)$ is the unique solution of \eqref{eq:SystemeSeries} in which all the $T_i$'s are power series with nonnegative integer coefficients and without constant term (by convention there is no permutation of size $0$).

Note that $F_i$ is a polynomial when the set of simple permutations of $\TTT_i$ is finite. 
\smallskip

For $0\leq i\leq d$, let $\rho_i \in[0,+\infty]$ be the radius of convergence of $T_i$. We set $\rho = \min_i\{\rho_i\}$.

\begin{definition}
The family $\TTT_i$ and its generating series $T_i$ are said \emph{critical} if $\rho_i=\rho$.
On the contrary, we say that $\TTT_i$ and $T_i$ are \emph{subcritical} if $\rho_i>\rho$.
\end{definition}

Denote by $I^\star \subseteq [ 0,d ]$ the set of indices of critical series. By abuse of notation we say that $i$ is critical if $i\in I^\star$. We can assume that $I^\star$ is of the form  $[ 0,c ]$.
In the case of a specification for a permutation class $\mathcal{C}$ obtained by the algorithm of \cite{BBPPR},
$\mathcal{C}$ is always critical. That is why we focus on critical families.

It is convenient to consider the dependency graph $G_{\eqref{eq:Specif}}$ of the specification \eqref{eq:Specif}. 
As we shall see with \cref{lem:MonotonieDesRho}, this graph will help us 
identify the critical series.
Informally, $G_{\eqref{eq:Specif}}$ contains an edge from $\TTT_j$ to $\TTT_i$
when $\TTT_i$ depends on $\TTT_j$.
\begin{definition}
  \label{def:DependencyGraph}
The \emph{dependency graph} $G_{\eqref{eq:Specif}}$ is  the directed graph with $d+1$ vertices labeled by $\TTT_0,\TTT_1,\dots,\TTT_d$, and whose edges are $\TTT_j\to \TTT_i$ for every $i,j$ such that $\TTT_j$ appears in the equation of $\eqref{eq:Specif}$ whose left-hand side is $\TTT_i$.
\end{definition}

Since we are interested in critical families, we also assume without loss of generality that 
for each subcritical family there is a directed path in $G_{\eqref{eq:Specif}}$ from that vertex to a critical family.
Indeed, we can simply remove the other subcritical families.

The dependency graph $G_{\eqref{eq:Specif}}$ of the specification can be used to identify critical families $\TTT_i$.

\begin{lemma}
\label{lem:MonotonieDesRho}
If there is an edge $\TTT_j\to \TTT_i$ in the dependency graph $G_{\eqref{eq:Specif}}$, then $\rho_i \leq \rho_j$.
Consequently, if $\TTT_j$ is critical and if there is an edge $\TTT_j\to \TTT_i$, then $\TTT_i$ is critical.
\end{lemma}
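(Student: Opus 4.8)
The plan is to prove the first assertion, the monotonicity $\rho_i \le \rho_j$ whenever there is an edge $\TTT_j \to \TTT_i$, and then observe that the second assertion (criticality propagates along edges) is an immediate consequence: if $\TTT_j$ is critical then $\rho_j = \rho = \min_k \rho_k$, so $\rho_i \le \rho_j = \rho$, and since $\rho$ is the minimum we get $\rho_i = \rho$, i.e. $\TTT_i$ is critical. So the whole content is in the first inequality, and I would isolate it as the single thing to prove.

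\textbf{Main step: $\rho_i \le \rho_j$.} By \cref{def:DependencyGraph}, the edge $\TTT_j \to \TTT_i$ means that $\TTT_j$ appears in the equation of \eqref{eq:Specif} defining $\TTT_i$; equivalently, the variable $y_j$ occurs with a nonzero coefficient somewhere in the series $F_i$ of \eqref{eq:SystemeSeries}. Since all the $F_k$ have nonnegative integer coefficients, and $T_i = \eps_i z + F_i(T_0, \dots, T_d)$, the idea is a domination argument: I would extract a single monomial of $F_i$ that genuinely involves $y_j$, say a term $c\, y_0^{a_0} \cdots y_d^{a_d}$ with $c \ge 1$ and $a_j \ge 1$. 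Because all the $T_k$ are power series with nonnegative coefficients and no constant term, and because (by the nonemptiness/infiniteness assumptions on the specification, together with the assumption that every family can reach a critical one) the relevant $T_k$ are nonzero, substituting gives, coefficientwise,
\[
T_i(z) \;\succcurlyeq\; c \, T_0(z)^{a_0} \cdots T_d(z)^{a_d} \;\succcurlyeq\; T_j(z) \cdot \big(\text{a nonzero power series with nonnegative coefficients}\big),
\]
where $\succcurlyeq$ denotes coefficientwise domination. A product of nonzero power series with nonnegative coefficients has radius of convergence at most that of either factor (the coefficients of the product dominate a shift of the coefficients of each factor, up to a positive constant). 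Hence the radius of convergence of $T_i$ is at most that of $T_j \cdot (\cdots)$, which is at most $\rho_j$. This yields $\rho_i \le \rho_j$.

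\textbf{Where the care is needed.} The only delicate point is making the domination genuinely informative, i.e. ensuring the ``extra'' power series multiplying $T_j$ is nonzero, and that $T_j$ itself is nonzero, so that $T_j$ really controls $T_i$ from below. This is where I would invoke the standing assumptions recalled just before the lemma: every $\TTT_k$ is nonempty, at least one $\eps_k$ is nonzero, and every subcritical family has a directed path to a critical one — which, iterating the edge relation, forces the families entering $F_i$ to have nonzero generating series (they contain permutations of arbitrarily small size once one unfolds the specification, or at worst a nonzero monomial). If some $T_k$ appearing in the chosen monomial were identically zero, that monomial would contribute nothing, so I would instead choose the monomial of $F_i$ witnessing the edge more carefully — among all monomials of $F_i$ involving $y_j$, pick one all of whose variables $y_k$ correspond to nonzero $T_k$; such a monomial must exist, for otherwise $\TTT_j$ would not effectively appear in the equation for $\TTT_i$ (one could delete it), contradicting the edge. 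Apart from this bookkeeping, the argument is the standard coefficientwise-domination comparison of radii of convergence, and I do not expect any real obstacle.
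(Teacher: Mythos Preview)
Your proof is correct and follows the same approach as the paper's: coefficientwise domination coming from the nonnegativity of the coefficients of $F_i$. The paper's proof is just a one-sentence version of your argument; your extra care about the auxiliary factors being nonzero is already handled by the standing assumption that every $\TTT_k$ is nonempty, so that part can be shortened.
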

\begin{proof}
As the series $F_i$ involved in the system \eqref{eq:SystemeSeries} have nonnegative coefficients, 
the radius of convergence of the left hand side $T_i$ of an equation of \eqref{eq:SystemeSeries} 
is smaller than the radius of convergence of any $T_j$ 
appearing in the right hand side of the equation defining $T_i$ in \eqref{eq:SystemeSeries}.
\end{proof}

Not only criticality, but also aperiodicity (which will appear in the hypotheses of our main theorems), follows along the edges of the graph.
\begin{definition}
A series $A(z)=\sum_{n\geq 0}a_nz^n$ is said \emph{periodic} if there exist integers $r\in \mathbb{Z}_{\geq 0}$, $d\geq 2$ such that
$$
\{n,\ a_n\neq 0\} \subset r+d\mathbb{Z}_{\geq 0}.
$$ 
On the contrary, $A$ is \emph{aperiodic} if it is not periodic.
\end{definition}

\begin{lemma}
\label{lem:MonotonieAperiodicite}
If $\TTT_j$ is aperiodic and there is an edge $\TTT_j\to \TTT_i$ in the dependency graph of $G_{\eqref{eq:Specif}}$, then $\TTT_i$ is aperiodic.
\end{lemma}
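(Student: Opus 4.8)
The plan is to mirror the proof of \cref{lem:MonotonieDesRho}: aperiodicity, like criticality, propagates along the edges of $G_{\eqref{eq:Specif}}$ because the series $F_i$ have nonnegative integer coefficients, so no cancellation can occur. Concretely, suppose $\TTT_j \to \TTT_i$ is an edge, meaning $y_j$ appears nontrivially in $F_i$. I would write out the equation $T_i(z) = \eps_i z + F_i(T_0, \dots, T_d)$ and isolate a monomial of $F_i$ in which $y_j$ occurs; since the valuation of $F_i$ is at least $2$, this monomial is of the form $c \cdot y_j \cdot \prod_{\ell} y_{\ell}^{a_\ell}$ with $c \geq 1$ and at least one more factor (possibly $y_j$ again). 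Substituting the series $T_\ell$, this contributes a subseries of $T_i$ with nonnegative coefficients of the shape $c \cdot T_j(z) \cdot \prod_\ell T_\ell(z)^{a_\ell}$, and because everything has nonnegative coefficients, the support of $T_i$ contains the support of this product.

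The key step is then the elementary fact about supports: if $A$ is aperiodic and $B$ is any nonzero power series with nonnegative coefficients and nonzero valuation, then $A \cdot B$ is aperiodic. Indeed, writing $v$ for the valuation of $B$ and $b_v \neq 0$ for its leading coefficient, every $n$ in the support of $A$ gives $n + v$ in the support of $AB$ (again, no cancellation since coefficients are nonnegative); so if $AB$ were periodic with modulus $d \geq 2$, then $v + \{n : a_n \neq 0\}$ would be contained in an arithmetic progression $r + d\mathbb{Z}_{\geq 0}$, forcing $\{n : a_n \neq 0\} \subseteq (r - v) + d\mathbb{Z}_{\geq 0}$, contradicting aperiodicity of $A$. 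Applying this with $A = T_j$ and $B$ the remaining product $c \prod_\ell T_\ell(z)^{a_\ell}$ (which has valuation at least $1$ since the chosen monomial has a factor beyond $y_j$, and each $T_\ell$ has zero constant term) shows that this subseries of $T_i$ is aperiodic, hence $T_i$ itself is aperiodic.

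One subtlety to handle carefully: I should make sure the monomial of $F_i$ I select genuinely has a factor other than $y_j$, so that $B$ has strictly positive valuation — this is exactly where the hypothesis ``valuation of $F_i$ is $\geq 2$'' from \eqref{eq:SystemeSeries} is used. If for some reason one wanted to be safe, one can simply take $B = c \, y_j^{m-1} \prod_{\ell \neq j} y_\ell^{a_\ell}$ evaluated at the $T$'s, where $m \geq 1$ is the power of $y_j$ in the monomial and $m + \sum_{\ell\neq j} a_\ell \geq 2$; in all cases $B$ evaluated at the series has valuation $\geq 1$ because every $T_\ell$ has no constant term. The only real ``obstacle'' — and it is a very mild one — is being precise about the support/valuation bookkeeping so that the no-cancellation argument is airtight; there is no analytic or structural difficulty here, and the whole proof is a couple of lines once the support lemma is stated.
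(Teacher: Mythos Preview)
Your proof is correct and follows the same approach as the paper's: both arguments observe that, because $F_i$ has nonnegative coefficients, $T_i$ coefficient-wise dominates a shift of $T_j$, so aperiodicity transfers. The paper states this in one line (``up to a constant shift, a term-by-term domination of $T_j$ by $T_i$''), while you spell out the support-shift lemma explicitly. One minor remark: your ``subtlety'' paragraph about ensuring $B$ has strictly positive valuation is unnecessary, since the support lemma you proved works verbatim for any valuation $v \geq 0$; the hypothesis that $F_i$ has valuation $\geq 2$ is not actually needed here.
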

\begin{proof}
Assume that there is an edge $\TTT_j\to \TTT_i$. As the series  $F_i$ in the system \eqref{eq:SystemeSeries} has nonnegative integer coefficients, this implies, up to a constant shift, a term-by-term domination of $T_j$ by $T_i$. Hence $T_i$ is aperiodic.
\end{proof}

In order to separate difficulties, we will often make the following strong assumption. 
Let $G^\star$ denote the subgraph  of $G_{\eqref{eq:Specif}}$ consisting of all critical families $\TTT_i$ .
\begin{hypothesis}[SC]
  \label{Hyp:StronglyConnected}
We assume that $G^\star$ is strongly connected.
\end{hypothesis}
In \cref{Sec:CouteauSuisse} we will see how to combine our results
in each strongly connected component in order to relax Hypothesis (SC).

\subsection{Essentially linear and essentially branching specifications}\label{Sec:DefLinearBranching}

In the following, we adopt some notational convention to guide the reading.
As above, curly letters (like $\mathcal{T}$) and capital letters (like $T$) denote respectively 
combinatorial families and their generating series. 
Moreover, vectors of generating series are denoted by bold letters (like $\mathbf{T}$) 
and matrices of such by thick letters (like $\mathbb{M}$). 
The superscript $\star$ indicates a restriction to critical families or critical series.

\begin{definition}
	The specification \eqref{eq:Specif} is \emph{essentially branching} if there exist $i,j,j'\in I^\star$ such that the equation defining $\TTT_i$ in \eqref{eq:Specif} involves a term of the form 
$\pi[\dots, \TTT_{j}, \dots, \TTT_{j'}, \dots]$.
	It is \emph{essentially linear} otherwise. 
	\label{Def:EssBranching}
\end{definition}
Equivalently, the specification is essentially branching  when there exist $i,j,j'\in I^\star$ such that  
$\frac {\partial F_i}{\partial y_j\partial y_{j'}} \neq 0$.

Denote by $\mathbf{T}^\star=(T_i)_{i\in I^\star}$ the vector of critical series.
We consider the restriction of the system \eqref{eq:SystemeSeries} to critical series and regard subcritical series as parameters:
\begin{equation}
	\mathbf T^\star(z) = \mathbf \Phi(z,\mathbf T^\star(z)), 
	\label{eq:generic_system}
\end{equation}
where $\mathbf \Phi(z,\mathbf y) = (\Phi_1(z,\mathbf y),\ldots ,\Phi_c(z,\mathbf y))$ is a vector of multivariate power series of $(z,\mathbf y)$ with nonnegative integer coefficients:
for all $i \in I^\star$, $\Phi_i\big(z, (y_j)_{j\in I^\star} \big) = \eps_i z + F_i\big((y_j)_{j\in I^\star}, (T_\ell(z))_{\ell \notin I^\star}\big)$.

\smallskip

In the essentially linear case, this system is linear and can be written as
\begin{equation}
\mathbf T^\star(z) = \mathbb M^\star(z) \, \mathbf T^\star(z) + \mathbf V^\star(z)
\label{eq:LinearSystem}
\end{equation}
where the entries of $\mathbb M^\star(z)$ and $\mathbf V^\star(z)$ involve only the variable $z$ and subcritical series. 

More precisely, for $i,j\in I^\star$, $\left(\mathbf V^\star(z)\right)_i = F_i(0, \ldots, 0, (T_\ell(z))_{\ell \notin I^\star})$, and
$\left(\mathbb M^\star(z)\right)_{i,j}$ is the coefficient of $T_j(z)$ in $F_i(T_0(z),\dots,T_d(z))$, so we can write 
\begin{equation}\label{eq:MDeriveesSecondes}
\mathbb M^\star(z)=\left(\frac{\partial F_i (y_0,\ldots ,y_d)}{\partial y_j}\bigg|_{(T_0(z),\dots,T_d(z))}\right)_{i,j\in I^\star}.
\end{equation}
Since the specification is essentially linear, in the substitution of $y_i$'s with $T_i$'s in \cref{eq:MDeriveesSecondes}, only subcritical series $T_i$'s are effectively substituted.
The analysis of such systems will be discussed in \cref{Sec:ProofsLinear}.

\smallskip

In the essentially branching case, the analysis of the restricted system relies on \cref{Thm:DLW},
a variant of the Drmota-Lalley-Wood theorem \cite[Thm. VII.6, p. 489]{Violet}. This analysis involves  the Jacobian matrix
\begin{equation}\label{eq:Def_Mstar}
\mathbb M^\star \big(z,(y_k)_{k\in I^\star}\big)  = \left(\frac{\partial F_i (y_0,\ldots ,y_d)}{\partial {y_j}} \bigg|_{\big((y_k)_{k\in I^\star} ,(T_\ell(z))_{\ell \notin I^\star}\big)} \right)_{i,j\in I^\star}.
\end{equation}
We observe that the definition of $\mathbb M^\star$ in \cref{eq:Def_Mstar} is consistent with \cref{eq:MDeriveesSecondes}. 
Indeed, in the linear case, $\mathbb M^\star$ does not depend on the $(y_k)'s$ for $k \in I^\star$, and therefore we keep only the first argument,~$z$.

\smallskip

In the essentially linear case, we will use the following assumption whose first item deals with the coefficients of $\mathbf V^\star$ and the second one with the coefficients of $\mathbb M^\star$.
\begin{hypothesis}[RC]
   \label{Hyp:RC}
  We assume that the following conditions are both satisfied
  \begin{enumerate}
  \item For all $i \in I^\star$, $F_i(0, \ldots, 0, (T_\ell(z))_{\ell \notin I^\star})$ has a radius of convergence strictly larger than $\rho$.
  \item For all $i,j \in I^\star$, $\frac{\partial F_i(y_0,\ldots ,y_d)}{\partial y_j} \Big|_{(T_0(z),\dots,T_d(z))}$ 
    has a radius of convergence strictly larger than $\rho$.
  \end{enumerate}
\end{hypothesis}

In the essentially branching case, we need the following assumption.
\begin{hypothesis}[AR]
   \label{Hyp:AR}
  We assume that for all $i \in I^\star$, $\Phi_i\big(z, (y_j)_{j\in I^\star} \big) = \eps_i z +  F_i\big((y_j)_{j\in I^\star}, (T_\ell(z))_{\ell \notin I^\star}\big)$ is analytic around $\big(\rho,(T_j(\rho))_{j \in I^\star}\big)$.
\end{hypothesis}

\begin{observation} \label{obs:polynomial}
When there is a finite number of simple permutations in the $(\TTT_i)$'s, then the $(F_i)$'s are polynomials and Hypotheses (RC) and (AR) are satisfied.
\end{observation}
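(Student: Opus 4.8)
The plan is to show the two assertions in turn: first that finiteness of all $\SSS_{\TTT_i}$ forces each $F_i$ to be a polynomial, and then to read off Hypotheses (RC) and (AR) as immediate consequences of polynomiality. For the first part, I would go back to the specification \eqref{eq:Specif} and the way it translates into the system \eqref{eq:SystemeSeries}. Each equation of \eqref{eq:Specif} is a finite disjoint union over $\pi\in\SSS_{\TTT_i}\uplus\{\oplus,\ominus\}$, and for each such $\pi$ a disjoint union over tuples $(k_1,\dots,k_{|\pi|})\in K^i_\pi\subseteq\{0,\dots,d\}^{|\pi|}$ of the product families $\pi[\TTT_{k_1},\dots,\TTT_{k_{|\pi|}}]$. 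When $\SSS_{\TTT_i}$ is finite, the outer union is over a finite index set; each $K^i_\pi$ is in any case finite (it sits inside $\{0,\dots,d\}^{|\pi|}$); and each term $\pi[\TTT_{k_1},\dots,\TTT_{k_{|\pi|}}]$ contributes the monomial $y_{k_1}\cdots y_{k_{|\pi|}}$ to $F_i$, since substitution into $\pi$ is a size-additive bijective construction on the corresponding tuples of trees. Hence $F_i$ is a finite sum of monomials of degree $|\pi|\ge 2$, i.e.\ a polynomial (in fact with nonnegative integer coefficients and valuation at least $2$), as already noted after \eqref{eq:SystemeSeries}.

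Granting that each $F_i$ is a polynomial in $(y_0,\dots,y_d)$, I would then verify (RC). For item~(i), $F_i(0,\dots,0,(T_\ell(z))_{\ell\notin I^\star})$ is obtained by substituting the subcritical series $T_\ell$ (for $\ell\notin I^\star$) into a polynomial and setting the critical variables to $0$; a polynomial combination of finitely many series, each with radius of convergence $\rho_\ell>\rho$ by subcriticality, has radius of convergence at least $\min_{\ell\notin I^\star}\rho_\ell>\rho$ (here one uses that there are finitely many subcritical families, and that if no subcritical series actually appears the expression is a genuine polynomial in $z$ alone, with infinite radius of convergence). For item~(ii), $\partial F_i/\partial y_j$ is again a polynomial in $(y_0,\dots,y_d)$, and in the essentially linear case only subcritical series are effectively substituted into it (as observed just after \eqref{eq:MDeriveesSecondes}); the same argument as for item~(i) gives a radius of convergence strictly larger than $\rho$.

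Finally, for (AR): $\Phi_i(z,(y_j)_{j\in I^\star})=\eps_i z+F_i((y_j)_{j\in I^\star},(T_\ell(z))_{\ell\notin I^\star})$ is, for $F_i$ a polynomial, a polynomial in $z$ and the $(y_j)_{j\in I^\star}$ whose coefficients are polynomials in the finitely many subcritical series $T_\ell$; each such $T_\ell$ is analytic at $z=\rho$ since $\rho_\ell>\rho$, and a polynomial in analytic functions is analytic, so $\Phi_i$ is analytic in a neighbourhood of $\bigl(\rho,(T_j(\rho))_{j\in I^\star}\bigr)$ (note $T_j(\rho)<\infty$ for all $j$, which is part of the standing assumptions, so the point is interior to the domain of definition). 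This completes the argument. I do not expect a genuine obstacle here; the only point requiring a little care is making the bookkeeping between the combinatorial specification \eqref{eq:Specif} and the monomials of $F_i$ explicit, and being careful that "finitely many simple permutations in the $(\TTT_i)$'s" is exactly what makes the outer union in \eqref{eq:Specif} finite for every $i$ — so that the $F_i$ are honestly polynomials rather than merely power series.
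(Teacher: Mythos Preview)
Your proposal is correct and follows exactly the reasoning the paper has in mind; in fact the paper states this as an observation with no proof, so your write-up simply makes explicit the obvious chain of implications (finite $\SSS_{\TTT_i}$ $\Rightarrow$ finite union in \eqref{eq:Specif} $\Rightarrow$ $F_i$ polynomial $\Rightarrow$ (RC) and (AR) via analyticity of subcritical series at $\rho$). The one remark worth tightening is your parenthetical ``$T_j(\rho)<\infty$ for all $j$, which is part of the standing assumptions'': this is not literally a standing assumption, but since $\Phi_i$ is a polynomial in the $(y_j)_{j\in I^\star}$ with coefficients analytic at $z=\rho$, it is entire in the $y$-variables and analytic near $z=\rho$, hence analytic around \emph{any} point of the form $(\rho,\mathbf y_0)$ --- so the finiteness of $T_j(\rho)$ is not actually needed for the conclusion.
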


\subsection{Examples of tree-specifications}

To illustrate the definitions seen so far, we present a few examples of tree-specifications obtained with the algorithm of \cite{BBPPR}. 
We will return to these examples at later stages of our analysis. 

\subsubsection{The case of substitution-closed classes}
Consider a substitution-closed class $\TTT$.
We introduce the generating series $S(u)=\sum_{\alpha \in\SSS_\TTT} u^{|\alpha|}$ of the set $\SSS_\TTT$ of simple permutations in $\TTT$. 
Recall that the tree-specification \eqref{eq:Specif} is given by \cref{eq:SpecifClosedClasses} p.\pageref{eq:SpecifClosedClasses}.
The associated system \eqref{eq:SystemeSeries} is then given by
\[
\begin{cases}
T&=z+T^{\nonp} T + T^{\nonm} T + S(T)\\
T^{\nonp}&=z+ T^{\nonm} T+S(T)\\
T^{\nonm}&=z+T^{\nonp} T +S(T).
\end{cases}
\]
The dependency graph (represented on the left of \cref{Fig:DependencyGraph}) is strongly connected.
Thanks to \cref{lem:MonotonieDesRho}, this ensures that the three series are critical.
It follows that the specification is essentially branching (although a very special case of such). 
Indeed, a product of two critical series appears in the equation for a critical series 
(\emph{e.g.} the product $T^{\nonp} T$ in the equation defining $T$). 
\begin{figure}[htbp]
\begin{center}
\begin{tabular}{c | c}
 \includegraphics[width=6cm]{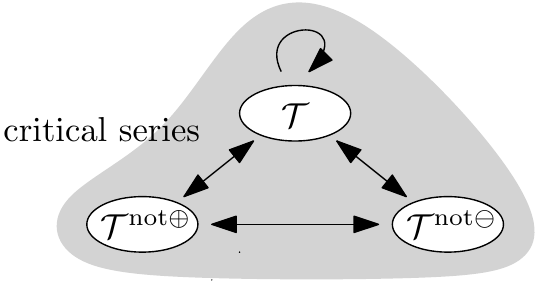} &
 \includegraphics[width=8cm]{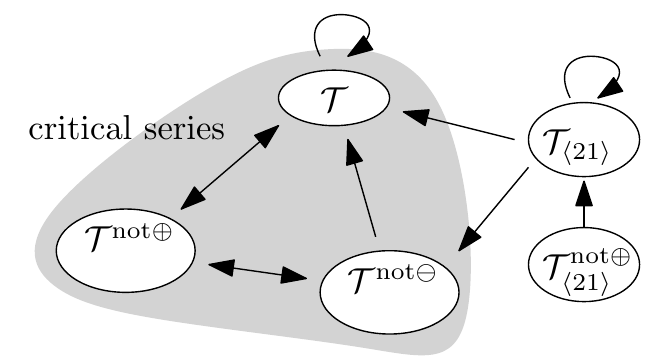}
\end{tabular}
\end{center}
\caption{Left:  The dependency graph in the case of a substitution-closed class. Right: The dependency graph for the specification \eqref{eq:SystemeAv132} of $\Av(132)$.}
\label{Fig:DependencyGraph}
\end{figure}

\subsubsection{An example of class having an essentially branching specification: $\Av(132)$} \label{sec:Av(132)_debut}
We consider  $\TTT= \Av(132)$, which is not substitution-closed, as $132$ is not simple. 
One can check that there is no simple permutation in $\TTT$. The algorithm of \cite{BBPPR} gives the following specification
\footnote{See the \href{http://mmaazoun.perso.math.cnrs.fr/pcfs/} {companion Jupyter notebook}  \texttt{examples/Av132.ipynb}}:

\begin{equation}
\begin{cases}
\quad\TTT &= \{\bullet\} \quad  \biguplus \quad
\oplus[\TTT^\nonp, \TTT_{\langle 21 \rangle}]
\quad \biguplus \quad
\ominus[\TTT^\nonm,\TTT] \\
\quad\TTT^\nonp &= \{\bullet\} \quad  \biguplus \quad
\ominus[\TTT^\nonm,\TTT] \\
\quad\TTT^{\nonm}  &= \{\bullet\} \quad \biguplus \quad
\oplus[\TTT^\nonp, \TTT_{\langle 21 \rangle}] \\
\quad\TTT_{\langle 21 \rangle}  &= \{\bullet\} \quad \biguplus \quad
\oplus[\TTT_{\langle 21 \rangle}^\nonp, \TTT_{\langle 21 \rangle}]\\
\quad\TTT_{\langle 21 \rangle}^\nonp  &= \{\bullet\}.
\end{cases}
\label{eq:SpecifAv132}
\end{equation}
Translating into series and then solving the system, we get
\begin{equation}
\begin{cases}
T&=z+T^{\nonp} T_{\langle 21\rangle}
+ T^{\nonm} T\\
T^{\nonp}&=z+ T^{\nonm} T\\
T^{\nonm}&=z+T^{\nonp} T_{\langle 21\rangle}\\
T_{\langle 21\rangle}&=z+T_{\langle 21 \rangle}^\nonp T_{\langle 21\rangle}\\
T_{\langle 21 \rangle}^\nonp  &= z.
\label{eq:SystemeAv132}
\end{cases}\qquad \qquad
\begin{cases}
T&=\frac{1-\sqrt{1-4z}}{2z}-1\\
T^{\nonp}&= \frac{1-\sqrt{1-4z}}{2}+z\\
T^{\nonm}&=(1-z)\frac{1-\sqrt{1-4z}}{2z}\\
T_{\langle 21\rangle}&=\frac{z}{1-z}\\
T_{\langle 21 \rangle}^\nonp  &= z.
\end{cases}
\end{equation}
In this case, 
the critical series are $T,T^{\nonp},T^{\nonm}$ with common radius of convergence $\rho=1/4$.
Since the product $T^{\nonm} T$ appears in the equation for $T$ in system \eqref{eq:SystemeAv132}, 
it follows that the specification \eqref{eq:SpecifAv132} is essentially branching.
Moreover, the restriction $G^\star$ of the dependency graph to critical series (see \cref{Fig:DependencyGraph}, right) is strongly connected.

\subsubsection{An example of class having an essentially linear specification: the $X$-class} \label{sec:ClasseX_debut}
We consider next the class $\TTT_0= \Av(2413,3142, 2143,3412)$, 
which is known as the $X$-class~\cite{elizalde:the-x-class-and:,waton:on-permutation-:}. 
This class is not substitution-closed and contains no simple permutation.
The algorithm of \cite{BBPPR} gives the following specification\footnote{See the \href{http://mmaazoun.perso.math.cnrs.fr/pcfs/} {companion Jupyter notebook}  \texttt{examples/X.ipynb}}:
\begin{equation}
\begin{cases}
\begin{array}{rl}
\mathcal T_{0}= &\{ \bullet \} \uplus \oplus[\mathcal T_{1},\mathcal T_{2}]\uplus \oplus[\mathcal T_{1},\mathcal T_{3}]\uplus \oplus[\mathcal T_{4},\mathcal T_{2}]\uplus \ominus[\mathcal T_{1},\mathcal T_{5}]\uplus \ominus[\mathcal T_{1},\mathcal T_{6}]\uplus \ominus[\mathcal T_{7},\mathcal T_{5}]\\
\mathcal T_{1}= &\{ \bullet \} \\
\mathcal T_{2}= &\{ \bullet \} \uplus \oplus[\mathcal T_{1},\mathcal T_{2}]\\
\mathcal T_{3}= &\oplus[\mathcal T_{1},\mathcal T_{3}]\uplus \oplus[\mathcal T_{4},\mathcal T_{2}]\uplus \ominus[\mathcal T_{1},\mathcal T_{5}]\uplus \ominus[\mathcal T_{1},\mathcal T_{6}]\uplus \ominus[\mathcal T_{7},\mathcal T_{5}]\\
\mathcal T_{4}= &\ominus[\mathcal T_{1},\mathcal T_{5}]\uplus \ominus[\mathcal T_{1},\mathcal T_{6}]\uplus \ominus[\mathcal T_{7},\mathcal T_{5}]\\
\mathcal T_{5}= &\{ \bullet \} \uplus \ominus[\mathcal T_{1},\mathcal T_{5}]\\
\mathcal T_{6}= &\oplus[\mathcal T_{1},\mathcal T_{2}]\uplus \oplus[\mathcal T_{1},\mathcal T_{3}]\uplus \oplus[\mathcal T_{4},\mathcal T_{2}]\uplus \ominus[\mathcal T_{1},\mathcal T_{6}]\uplus \ominus[\mathcal T_{7},\mathcal T_{5}]\\
\mathcal T_{7}= &\oplus[\mathcal T_{1},\mathcal T_{2}]\uplus \oplus[\mathcal T_{1},\mathcal T_{3}]\uplus \oplus[\mathcal T_{4},\mathcal T_{2}].
\end{array}
\end{cases}
\label{eq:SpecifClasseX}
\end{equation}
For the sake of readability, 
when examples become more complicated as above, 
we simply denote the families of trees occurring in the specification  by $(\mathcal T_i)_i$.  

The specification~\eqref{eq:SpecifClasseX} translates into a system on the series $(T_i)_{0 \leq i \leq 7}$, whose resolution gives 
\[
\begin{cases}
\begin{array}{rl}
 T_{0}= & \tfrac{-z(2z - 1)}{(2z^2 - 4z + 1)}\\
 T_{1}= &z \\
 T_{2}=T_{5}= &\tfrac{-z}{(z - 1)}\\
 T_{3}=T_{6}= &\tfrac{-z^2}{(z - 1)(2z^2 - 4z + 1)}\\
 T_{4}= T_{7}= &\tfrac{z^2(-z + 1)}{(2z^2 - 4z + 1)}\\
\end{array}
\end{cases}
\]

The factor $2z^2 - 4z + 1$ in the denominator determines the criticality here, and 
the critical series (of radius of convergence $\rho= 1-\sqrt{2}/2 \approx 0.2929 $) are $T_0, T_3, T_4, T_6$ and $ T_7$. 
It is easy to observe that, for any of these critical series $T_i$, 
in the analogue of system~\eqref{eq:SpecifClasseX} on series, the equation defining $T_i$ 
only contains terms involving at most one critical series (\emph{i.e.} no product of such). 
It follows that the specification~\eqref{eq:SpecifClasseX} is essentially linear, 
and that the associated dependency graph restricted to the critical $\mathcal T_i$ has two strongly connected components (see \cref{fig:DependendyGraphX}).

\begin{figure}[htbp]
\includegraphics[width=5cm]{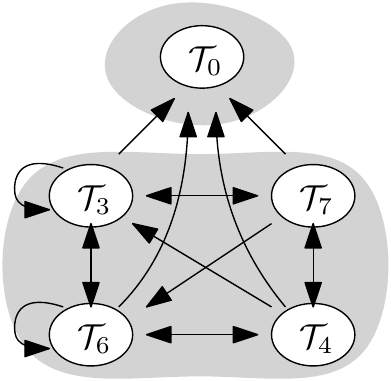}
\caption{The subgraph $G^\star$ restricted to critical families $\mathcal T_i$, for the specification~\eqref{eq:SpecifClasseX} of the class $\Av(2413,3142, 2143,3412)$. In this case, $G^\star$ has two strongly connected components $\{\TTT_0\}$ and $\{\TTT_3,\TTT_4,\TTT_6,\TTT_7\}$.}
\label{fig:DependendyGraphX}
\end{figure}

\begin{remark}
In the above examples, the dependency graph restricted to critical families, $G^\star$,  is very simple: 
  either it is strongly connected, or it has two strongly connected components, one of which consists of $\TTT_0$ alone. 
  To see an example with a much more complicated structure, we refer the reader to \cref{ssec:couteau_suisse_union},
  where $G^\star$ has nine strongly connected components.
\end{remark}

\section{Our results}\label{sec:OurResults}

In order to state our results, we first recall the formal definition of permutons, which are the convenient framework to describe scaling limits of permutations, 
as well as some properties of permutons.

\subsection{Permutons and limits of permutations}\label{ssec:permutons_intro}

Permutons were first considered by Presutti and Stromquist in \cite{PresuttiStromquist} under the name of \textit{normalized measures}. 
Permutations of all sizes are special cases of permutons, and weak convergence of measures allows to define convergent sequences of permutations.
Presutti and Stromquist realized that convergence in the space of permutons implies convergence of pattern densities, and that permutons allow to define natural models of random permutations.
The theory was developed independently by Hoppen, Kohayakawa, Moreira, Rath and Sampaio in \cite{Permutons}. Their main result is the equivalence between convergence to a permuton and convergence of all pattern densities.  The terminology  {\em permuton} was given afterwards by Glebov, Grzesik, Klimo\v{s}ov\'a and Kr{\'a}l \cite{FinitelyForcible}, by analogy with graphons.
The theory of (random) permutons will here allow us to state scaling limit results for sequences of (random) permutations.

Formally, a permuton is a probability measure on the unit square $[0,1]^2$ with both its marginals uniform. 
Permutons generalize permutation diagrams in the following sense:
to every permutation $\sigma\in \Sn_n$, we associate the permuton $\mu_\sigma$ with density 
\[
\mu_\sigma(dxdy) = n\One_{\sigma(\lceil xn \rceil) = \lceil yn \rceil} dxdy.
\]
Note that it amounts to replacing every point $(i,\sigma(i))$ in the diagram of $\sigma$ (normalized to the unit square) 
by a square of the form $[(i-1)/n,i/n]\times [(\sigma(i)-1)/n,\sigma(i)/n]$, which has mass $1/n$ uniformly distributed. 

The space $\mathcal M$ of permutons is equipped with the topology of weak convergence of measures, which makes it a compact metric space (for more details on weak convergence of measures, we refer to \cite{Billingsley}).
This allows to define convergent sequences of permutations: we say that $(\sigma_n)_n$ converges to a permuton $\mu$ when $(\mu_{\sigma_n}) \to \mu$ weakly. 
Similarly, one can define convergence in distribution of random permutations to a random permuton: we say that a random sequence of permutations $\bm \sigma_n$ converges in distribution to a random permuton $\bm \mu$ if $\mu_{\bm \sigma_n} \xrightarrow[n\to\infty]{(d)} \bm \mu$ in the space of permutons.

We now define the permutations induced by a (possibly random) permuton $\bm \mu$. Conditionally on $\bm \mu$, take a sequence of $k$ random points $(\XX,\YY) = ((\xx_1,\yy_1),\dots, (\xx_k,\yy_k))$ in $[0,1]^2$, 
independently with common distribution $\bm \mu$. 
Because $\bm \mu$ has uniform marginals and the $\xx_i$'s (resp. $\yy_i$'s) are independent, 
it holds that the $\xx_i$'s (resp. $\yy_i$'s) are almost surely pairwise distinct.
We denote by $(\xx_{(1)},\yy_{(1)}),\dots, (\xx_{(k)},\yy_{(k)})$ the $x$-ordered sample of $(\XX,\YY)$, \emph{i.e.} the unique reordering of the sequence $((\xx_1,\yy_1),\dots, (\xx_k,\yy_k))$ such that
$\xx_{(1)}<\cdots<\xx_{(k)}$.
Then the values $(\yy_{(1)},\dots,\yy_{(k)})$ are in the same
relative order as the values of a unique permutation, that we denote $\InducedPerm_k(\bm \mu)$.
Since the points are taken at random, $\InducedPerm_k(\bm \mu)$ is a random permutation of size $k$.

In \cite{Nous2} we proved the following criterion which is a stochastic generalization of the one given in \cite{Permutons}.

\begin{theorem}\label{thm:randompermutonthm}
	For any $n$, let $\bm\sigma_n$ be a random permutation of size $n$. 
	Moreover, for any fixed $k$, let ${\bm I}_{n,k}$ be a uniform random subset of $[n]$ with $k$ elements, independent of $\bm \sigma_n$.
	The following assertions are equivalent.
	\begin{enumerate}%
		\item [(a)] $(\mu_{\bm \sigma_n})_n$ converges in distribution for the weak topology to some random permuton $\bm \mu$. 
		\item [(b)]For every $k$, the sequence  $\big(\!\pat_{{\bm I}_{n,k}}(\bm\sigma_n)\big)_n$ of random permutations converges in distribution to some random permutation $\bm \rho_k$.
	\end{enumerate}
	If either condition is satisfied, we have 
 \begin{equation}
   \bm \rho_k \stackrel{(d)}= \InducedPerm_k(\bm \mu) ,\text{ for every } k\geq 1
   \label{eq:LimiteRhoPermuton}
\end{equation}
and the relations \eqref{eq:LimiteRhoPermuton} characterize the distribution of $\bm \mu$ as a random permuton.
\end{theorem}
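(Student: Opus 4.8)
The plan is to derive this stochastic statement from the deterministic pattern-density criterion of \cite{Permutons} by a compactness argument, the one genuinely new ingredient being a ``moment determinacy'' fact: the law of a random permuton is pinned down by the laws of the finite permutations it induces.

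First I would record a quantitative subsampling estimate. For a deterministic permutation $\sigma$ of size $n$ and a pattern $\pi$ of size $k\le n$, comparing the law of $\pat_{\bm I_{n,k}}(\sigma)$ with that of $\InducedPerm_k(\mu_\sigma)$ --- i.e.\ sampling $k$ positions without replacement versus sampling $k$ i.i.d.\ points under the permuton $\mu_\sigma$ --- gives $|\proba(\pat_{\bm I_{n,k}}(\sigma)=\pi) - \proba(\InducedPerm_k(\mu_\sigma)=\pi)| = O(k^2/n)$, uniformly in $\sigma$ and $\pi$. Together with the fact, from \cite{Permutons}, that $\mu\mapsto\proba(\InducedPerm_k(\mu)=\pi)$ is a continuous functional on the compact metric space $\mathcal M$ of permutons, this says that each pattern density is, up to a vanishing error, a bounded continuous functional of the permuton. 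The implication (a)$\,\Rightarrow\,$(b) then follows by conditioning on $\bm\sigma_n$: one has $\proba(\pat_{\bm I_{n,k}}(\bm\sigma_n)=\pi) = \esper[\proba(\InducedPerm_k(\mu_{\bm\sigma_n})=\pi)] + O(k^2/n)$, and the expectation converges because $\mu_{\bm\sigma_n}\to\bm\mu$ in distribution while we integrate a bounded continuous function; taking this for all $\pi\in\Sn_k$ yields (b) together with the identification $\bm\rho_k\stackrel{(d)}{=}\InducedPerm_k(\bm\mu)$, which is \eqref{eq:LimiteRhoPermuton}.

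For (b)$\,\Rightarrow\,$(a) I would use that $\mathcal M$ is compact, so $(\mu_{\bm\sigma_n})_n$ is automatically tight as a sequence of random permutons and every subsequence has a sub-subsequence converging in distribution to some random permuton $\bm\mu'$. Applying (a)$\,\Rightarrow\,$(b) along that sub-subsequence and comparing with hypothesis (b) gives $\InducedPerm_k(\bm\mu')\stackrel{(d)}{=}\bm\rho_k$ for all $k$. It remains to show these laws determine the law of $\bm\mu'$, so that all subsequential limits agree and convergence holds. For this I would consider the subalgebra $\mathcal A\subseteq C(\mathcal M)$ generated by the pattern-density functionals $\mu\mapsto\proba(\InducedPerm_k(\mu)=\pi)$: it contains the constants (since $\sum_{\pi\in\Sn_k}\proba(\InducedPerm_k(\mu)=\pi)=1$) and separates points because a permuton is determined by its pattern densities \cite{Permutons}, hence $\mathcal A$ is dense in $C(\mathcal M)$ by Stone--Weierstrass. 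A short splitting computation --- sample $m=k_1+\dots+k_r$ i.i.d.\ points under $\mu$, condition on the induced order of the merged sample, and note that each group then reads off as a sub-pattern of $\InducedPerm_m(\mu)$ --- expresses $\esper\!\big[\prod_{j}\proba(\InducedPerm_{k_j}(\bm\mu)=\pi_j)\big]$ as an explicit affine function of $\mathrm{law}(\InducedPerm_m(\bm\mu))$. Hence the family $(\mathrm{law}\,\InducedPerm_k(\bm\mu))_{k\ge1}$ determines $\esper[\Psi(\bm\mu)]$ for $\Psi\in\mathcal A$, and by density for every $\Psi\in C(\mathcal M)$, which pins down the law of $\bm\mu$ and simultaneously proves the final characterization clause.

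The routine parts are the subsampling estimate and the tightness/compactness bookkeeping. The step requiring care --- and which I expect to be the main obstacle --- is the determinacy argument: combining Stone--Weierstrass with the product formula for pattern densities, and in particular stating the splitting identity correctly (conditioning on the order statistics of the merged sample so that the $r$ groups become sub-patterns of a single $\InducedPerm_m$), so that products of sampling probabilities are genuinely affine images of one higher-order sampling law.
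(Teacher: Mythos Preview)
The paper does not actually prove this theorem: it is stated as a known result, with the sentence ``In \cite{Nous2} we proved the following criterion which is a stochastic generalization of the one given in \cite{Permutons}.'' So there is no proof in the present paper to compare your proposal against.

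That said, your sketch is correct and is essentially the argument given in \cite{Nous2}. The three ingredients --- the $O(k^2/n)$ subsampling estimate, compactness of the permuton space giving tightness for free, and the Stone--Weierstrass determinacy step using that products of pattern densities are affine images of a single higher-order pattern law --- are exactly the bones of that proof. Your description of the splitting identity (merge $m=k_1+\dots+k_r$ i.i.d.\ points, use exchangeability so that a uniform random partition of $[m]$ into blocks of sizes $k_1,\dots,k_r$ recovers the $r$ independent samples, and read each block as a sub-pattern of $\InducedPerm_m(\bm\mu)$) is the right way to turn $\esper\big[\prod_j\proba(\InducedPerm_{k_j}(\bm\mu)=\pi_j)\big]$ into a linear functional of the law of $\InducedPerm_m(\bm\mu)$, and it is the only step that requires any thought.
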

Thanks to criterion (b), convergence in distribution of permutons may be reduced to combinatorial enumeration.

\subsection{Our results: The essentially linear case}

We introduce the necessary material to state our first main theorem (which will be proved in Section~\ref{Sec:ProofsLinear}). 

\begin{definition}\label{Def:XPermuton}
Let $\mathbf p = (p_+^{\gauche},p_+^{\droite},p_-^{\gauche},p_-^{\droite})\in[0,1]^4$ be a quadruple with sum $1$.
The $X$-permuton with parameter $\mathbf p$ is the following probability measure on the unit square
\[\mu^{X}_{\mathbf p} = \sum_{ \substack{ e\in \{ \gauche,\droite\},\\  \eps \in \{-,+\} }  } p_\eps^{ e }\ \nu(z_\eps^{ e },(a,b)), \]
where 
\begin{eqnarray*}&z_+^{\gauche}=(0,0),\quad z_-^{\gauche}=(0,1),\quad z_-^{\droite}=(1,0),\quad z_+^{\droite}=(1,1),\\
&a = p_+^{\gauche}+p_-^{\gauche}, \quad b=p_+^{\gauche}+p_-^{\droite},
\end{eqnarray*}
and $\nu(X,Y)$ denotes the normalized one-dimensional Lebesgue measure on the segment $(X,Y)$ in the plane (see \cref{fig:X-permuton}).
\end{definition}
\begin{figure}[htbp]
\begin{center} 
\includegraphics[width=6cm]{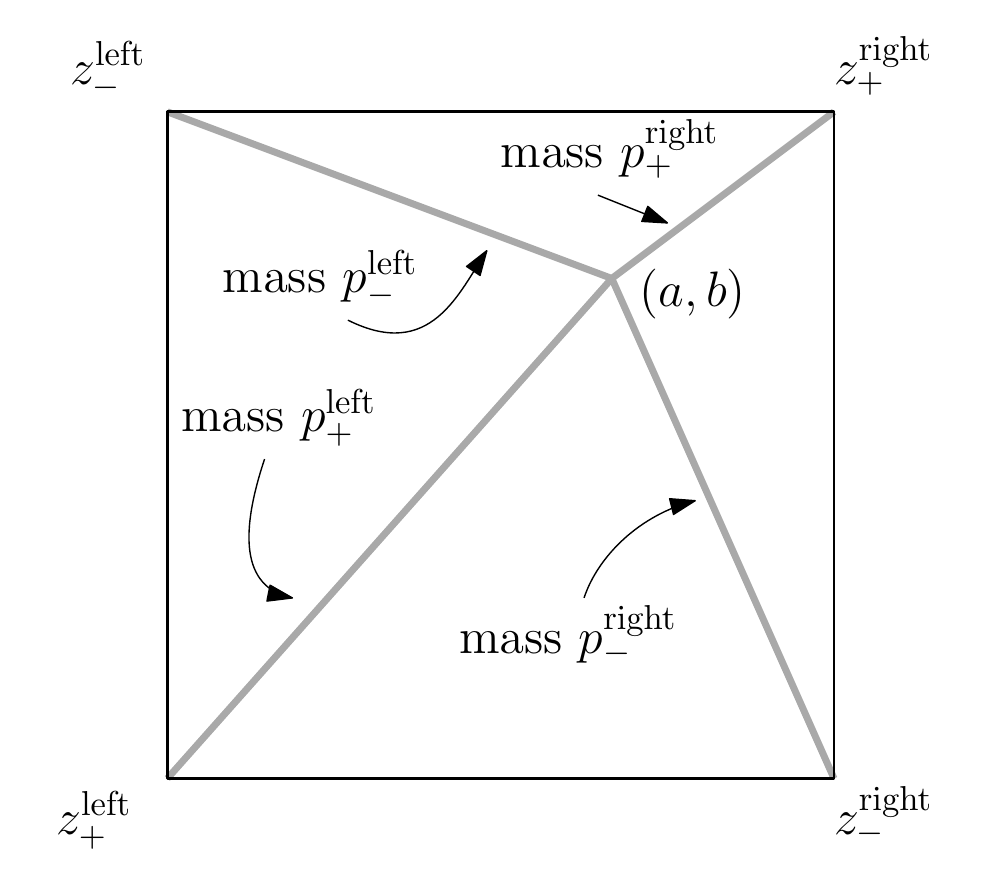}
\hspace{3mm}
\includegraphics[width=6cm]{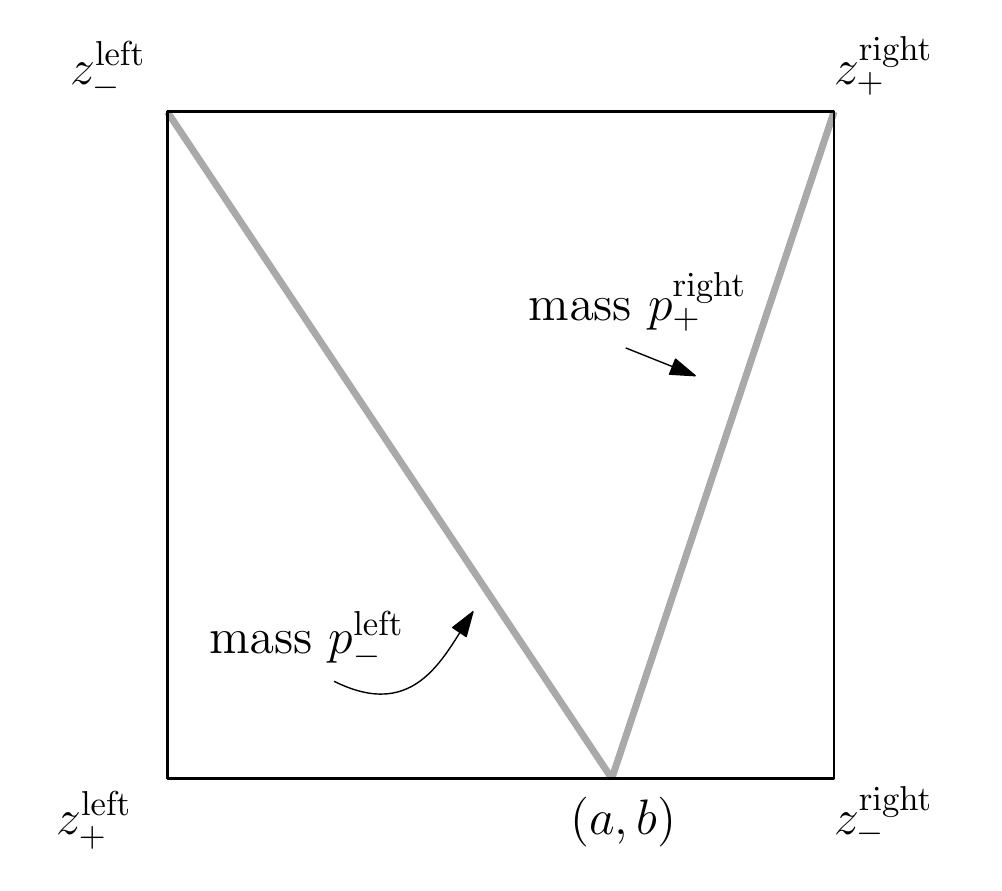}
\end{center}
\caption{The support of the $X$-permuton with parameter $\mathbf p = (p_+^{\gauche},p_+^{\droite},p_-^{\gauche},p_-^{\droite})$,
denoting $a=p_+^{\gauche}+p_-^{\gauche}$ and $b=p_+^{\gauche}+p_-^{\droite}$. Left: The generic case. Right: A degenerate case $b=0$.}
\label{fig:X-permuton}
\end{figure}

Let us verify that the above defined $\mu^{X}_{\mathbf p}$ is indeed a permuton, \emph{i.e.} that its marginals are uniform.
We first observe that $\mu^{X}_{\mathbf p} ([0,a] \times [0,1])=p_+^{\gauche}+p_-^{\gauche}=a$.
By proportionality, for each subinterval $[x_1,x_2]$ of $[0,a]$, we have $\mu^{X}_{\mathbf p} ([x_1,x_2] \times [0,1])=x_2-x_1$.
The same holds for subintervals of $[a,1]$, and hence for any subinterval of $[0,1]$.
This proves that the marginal distribution on the horizontal axis is uniform.
The marginal distribution on the vertical axis is treated similarly.

\begin{theorem}[Main Theorem: the essentially linear case]\label{Th:linearCase}
Consider a tree-specification \eqref{eq:Specif} for $\TTT_0,\dots,\TTT_d$ that verifies Hypothesis (SC) (p.\pageref{Hyp:StronglyConnected}). 
We assume that 
\begin{enumerate}
	\item the specification is essentially linear, 
	\item Hypothesis (RC) (p.\pageref{Hyp:RC}) holds,
	\item there is at least one subcritical series which is aperiodic.
\end{enumerate} 
Then all critical families converge to the same $X$-permuton. More precisely, there exists a parameter ${\bf p}=(p_+^{\gauche},p_+^{\droite},p_-^{\gauche},p_-^{\droite})$ such that for every $i\in I^\star$, letting $\bm \sigma_n$ be a uniform permutation of size $n$ in $\TTT_i$, we have
$$
\mu_{\bm \sigma_n} \stackrel{(d)}{\to} { \mu}^{X}_{\mathbf p}.
$$
Furthermore, ${\bf p}$ can be explicitly computed with \cref{eq:DefProbaCaterpillar} p.\pageref{eq:DefProbaCaterpillar}.
\end{theorem}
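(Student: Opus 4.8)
The plan is to use the criterion of \cref{thm:randompermutonthm}: to show $\mu_{\bm \sigma_n} \xrightarrow{(d)} \mu^X_{\mathbf p}$, it suffices to prove that for every fixed $k$, the random pattern $\pat_{\bm I_{n,k}}(\bm \sigma_n)$ induced by $k$ uniformly chosen elements converges in distribution to $\InducedPerm_k(\mu^X_{\mathbf p})$. So I would first describe the target: a sample of $k$ i.i.d.\ points under $\mu^X_{\mathbf p}$ falls, with probabilities $p_\eps^e$, onto the four segments of the $X$; two points on the same ``increasing'' segment (through $(0,0)$ or $(1,1)$) induce $12$, two on the same ``decreasing'' segment induce $21$, and the relative order of points on different segments is deterministic given which segments they lie on (this is where the coordinates $a,b$ of the center enter). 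Hence $\InducedPerm_k(\mu^X_{\mathbf p})$ is, in law, obtained by independently coloring $k$ linearly ordered leaves with one of four colors according to $\mathbf p$ and then reading off the permutation via the fixed ``$X$-rule''. I would phrase this as an explicit closed form for $\proba(\InducedPerm_k(\mu^X_{\mathbf p}) = \pi)$ for each $\pi \in \Sn_k$ (this is what \cref{SsSec:MarginalesXPermutons} will establish).

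Next, on the combinatorial side, for each $\pi \in \Sn_k$ and each critical $i$ I would introduce the family $\TTT_{i,\pi}$ of standard trees in $\TTT_i$ carrying $k$ marked leaves that induce the pattern $\pi$, and show that the tree-specification \eqref{eq:Specif} refines into a finite specification for the collection $(\TTT_{i,\pi})$; this uses the Tree Toolbox of \cref{Sec:TreeToolbox}, in particular the notion of the critical subtree $\Crit_i(t)$. The point is that, because the specification is essentially linear, a typical large tree in a critical family consists of a long critical ``spine'' (a caterpillar-like structure of critical-type nodes, each carrying $O(1)$-size subcritical bushes), and $k$ marked leaves generically sit in $k$ distinct bushes hanging off this spine; the induced pattern is then read from the positions of the attachment points along the spine and the labels $\oplus/\ominus$ encountered. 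I would then translate this into generating functions: the equation $\mathbf T^\star(z) = \mathbb M^\star(z)\,\mathbf T^\star(z) + \mathbf V^\star(z)$ of \eqref{eq:LinearSystem}, together with its marked analogue, reduces — via Hypothesis (RC) guaranteeing that $\mathbb M^\star$ and $\mathbf V^\star$ are analytic past $\rho$ — to the study of the resolvent $(\Id - \mathbb M^\star(z))^{-1}$ near $z = \rho$. Hypothesis (SC) plus aperiodicity (item iii, via \cref{lem:MonotonieDesRho,lem:MonotonieAperiodicite}) puts us in the Perron--Frobenius regime: $\mathbb M^\star(\rho)$ is irreducible aperiodic with spectral radius $1$, so the $T_i$ have simple poles at $\rho$, with residues governed by the left/right Perron eigenvectors of $\mathbb M^\star(\rho)$. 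The complex-analysis toolbox of \cref{sec:complex_analysis} provides exactly the needed first-order expansions.

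The quantitative heart is then a ratio asymptotics computation. Writing $t_i^{(n)}$ for the number of trees of size $n$ in $\TTT_i$ and $t_{i,\pi}^{(n)}$ for those with $k$ marked leaves inducing $\pi$, the transfer theorem gives $t_i^{(n)} \sim C_i\, \rho^{-n}$ and, because $k$ marks cost $k$ ``extra'' derivatives/markings along the spine, $t_{i,\pi}^{(n)} \sim C_{i,\pi}\, n^{?}\,\rho^{-n}$ with a polynomial correction of the right degree so that $\proba(\pat_{\bm I_{n,k}}(\bm \sigma_n) = \pi) = \binom{n}{k}^{-1} t_{i,\pi}^{(n)} / t_i^{(n)}$ converges. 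The limit, after the dust settles, is a product of a multinomial-type factor in the four masses $p_\eps^e$ and a combinatorial factor counting spine configurations, which I expect to match exactly the $X$-permuton formula from the first paragraph; this is how $\mathbf p$ is read off and yields \eqref{eq:DefProbaCaterpillar}. Concretely, $\mathbf p$ should be expressed through the Perron eigenvectors of $\mathbb M^\star(\rho)$ weighted according to whether each edge of the critical spine corresponds to an $\oplus$-type or $\ominus$-type insertion and whether it is a left- or right-child insertion.

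The main obstacle, I expect, is the bookkeeping of the marked-tree specification and the proof that marks ``spread out'' along the spine — i.e.\ that with probability $1 - o(1)$ the $k$ marked leaves lie in $k$ distinct $O(1)$-sized bushes at $k$ distinct, macroscopically separated spine positions, and that configurations where marks cluster (or sit in the same bush) contribute a negligible polynomial order. Controlling this requires the refined singularity analysis of \cref{sec:complex_analysis} applied uniformly over the finitely many patterns $\pi$ and finitely many critical indices $i$, and a careful identification of which spine edges carry which $\oplus/\ominus$ label in the limit — equivalently, understanding the stationary behavior of the Markov chain on critical types induced by $\mathbb M^\star(\rho)$. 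The identity between the resulting expression and the marginal law of $\InducedPerm_k(\mu^X_{\mathbf p})$ computed in the first step is then a (nontrivial but routine) verification, and establishes that the limit is a permuton of the claimed $X$-shape with a single parameter $\mathbf p$ independent of the critical family $i$.
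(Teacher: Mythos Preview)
Your overall strategy is correct and matches the paper's: invoke \cref{thm:randompermutonthm}, exploit the critical spine and Perron--Frobenius for $\mathbb M^\star(\rho)$ via the linear toolbox in \cref{sec:complex_analysis}, and match against the marginals of $\mu^X_{\mathbf p}$ computed in \cref{SsSec:MarginalesXPermutons}. But you are missing the one device that dissolves the obstacle you yourself flag in your last paragraph, and your proposed route through families $\TTT_{i,\pi}$ indexed by \emph{patterns} $\pi$ together with a ``refined specification'' is not what the paper does.

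The paper instead considers the tree induced by the $k$ marked leaves \emph{together with the head of the critical spine}. Because the head sits at the far end of the spine, every first common ancestor of marked leaves with the head is forced onto the spine, and the induced tree is automatically a \emph{caterpillar} $\patterntree$ with code word $(e_1,\eps_1)\cdots(e_k,\eps_k)\in(\{\gauche,\droite\}\times\{+,-\})^k$. One then works with $\TTT_{i,\patterntree}$ (marks plus head inducing a \emph{given} caterpillar), and the decomposition along the spine yields an \emph{exact} matrix product
\[
\mathbf T^\star_{\patterntree}=\mathbb T^\star_{\to}\,\mathbb D^{e_1,\eps_1}\,\mathbb T^\star_{\to}\cdots\mathbb D^{e_k,\eps_k}\,\mathbf T^\star,
\]
where the $\mathbb D^{e,\eps}$ record the left/right and $\oplus/\ominus$ data at each branching and satisfy $\sum_{e,\eps}\mathbb D^{e,\eps}=(\mathbb M^\star)'$. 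No refined tree-specification is needed. Plugging in the simple-pole asymptotics of $\mathbb T^\star_{\to}=(\Id-\mathbb M^\star)^{-1}$ and $\mathbf T^\star$ gives directly $\proba(\tkinhead=\patterntree)\to\prod_\ell p^{e_\ell}_{\eps_\ell}$, with $p^e_\eps=\transpose{\mathbf u}\mathbb D^{e,\eps}(\rho)\mathbf v/\transpose{\mathbf u}(\mathbb M^\star)'(\rho)\mathbf v$. Since these products sum to $1$ over all caterpillars, the ``spreading out'' you worry about comes for free: non-caterpillar configurations carry vanishing mass simply because the caterpillars already exhaust total probability $1$. Finally, \cref{prop:MarginalsXPermuton} shows $\InducedPerm_k(\mu^X_{\mathbf p})\stackrel{d}{=}\perm(\Reduc(\bm t_0))$ for $\bm t_0$ a caterpillar with i.i.d.\ code word of law $\mathbf p$, which matches the combinatorial limit on the nose. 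So your plan is right, but the execution hinges on the head-of-spine/caterpillar refinement rather than on a per-pattern $\TTT_{i,\pi}$ analysis.
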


\begin{remark}
Recall that Hypothesis (RC) holds in particular if there are only finitely many simple permutations in the $\TTT_i$'s.\\
In item iii), the existence of some subcritical series is necessary for an essentially linear specification. 
Aperiodicity of at least one of them is a weak assumption, and it will be easily checked in all examples of the present paper. 
Indeed, most examples considered are tree-specifications for classes with finitely many simple permutations obtained by the algorithm of~\cite{BBPPR}. 
In such specifications all  $T_i$'s are of the form $T^{\mathrm{not} \delta}_{\langle \sigma_1,\dots ,\sigma_k\rangle,( \tau_1,\dots ,\tau_\ell)}$. 
And it was proved in \cite{DrmotaPierrot} that for such specifications, if $T_i$ is not a polynomial, then it is necessarily aperiodic.
\end{remark}

We now present several examples of classes where \cref{Th:linearCase} applies.
\subsubsection{A centered $X$-permuton: $\mathcal T = \Av(2413,3142, 2143,3412)$}\label{sec:ClasseX}  
We finish here the study of the so-called $X$-class, which we started in \cref{sec:ClasseX_debut}.

The specification of the $X$-class is given by \cref{eq:SpecifClasseX}, p.\pageref{eq:SpecifClasseX}.
We recall that the critical families are $\mathcal T_0$,  $\mathcal T_3$, $\mathcal T_4$, $\mathcal T_6$ and $\mathcal T_7$ and that the specification is essentially linear.
The corresponding dependency graph, already given in \cref{fig:DependendyGraphX}, has two strongly connected components,
one of which being $\TTT_0$ alone.
Removing the equation for $\TTT_0$, 
we obtain a specification for the other families satisfying Hypothesis (SC).
The Hypothesis (RC) holds trivially since we have a polynomial system (\cref{obs:polynomial})
and it is immediate to see that the subcritical series $T_2$ and $T_5$ are aperiodic.
We can therefore apply \cref{Th:linearCase}: there exists a parameter $\mathbf p$
such that a uniform permutation in
any of the class $\mathcal T_3$, $\mathcal T_4$, $\mathcal T_6$ and $\mathcal T_7$
tends towards $\bm \mu^X_{\mathbf p}$.

We now use a little trick to prove that the same holds for $\TTT_0$ as well.
We observe that $\TTT_0 = \TTT_2 \uplus \TTT_3$ and $\TTT_2$ is the set of increasing permutations. Hence when $n$ tends towards $+\infty$, a uniform permutation in $\TTT_0$ belongs to $\TTT_3$ with probability tending to one.
Consequently, a uniform random permutation in the $X$-class $\TTT_0$ 
also converges to the $X$-permuton of parameter $\mathbf p$.

Since the $X$-class has all symmetries of the square,
we necessarily have  $p_+^{\gauche}=p_+^{\droite}=p_-^{\gauche}=p_-^{\droite}=1/4$
(we do not need \cref{eq:DefProbaCaterpillar} to compute the parameter $\mathbf p$ in this case).

\subsubsection{A non-centered $X$-permuton: $\mathcal T = \Av(2413,3142, 2143,34512)$} \label{sec:ClasseXTilde} 
This is a variant of the previous example: 
again, this class is not substitution-closed and contains no simple permutation.
This case is handled as the previous one, except for the computation of the parameter $\mathbf p$,
since the symmetry argument does not apply. In \cref{sec:ClasseXTildeAnnexe}, 
we give a specification for $\mathcal T$ and use \cref{Th:linearCase} and \cref{eq:DefProbaCaterpillar} to show that the limit is the permuton $\bm \mu^X_{\mathbf p}$ where
\[ \mathbf p \approx (0.200258808255625,0.200258808255625,0.431332891374616,0.168149492114135)\]
is a quadruplet of algebraic numbers of degree 3. This is illustrated in \cref{Fig:exemple_Xtilde}

\begin{figure}[htbp] 
	\centering
	\includegraphics[width = 0.3\linewidth]{Simu_XTilde.png} \hspace{0.5cm} \includegraphics[width = 0.3\linewidth]{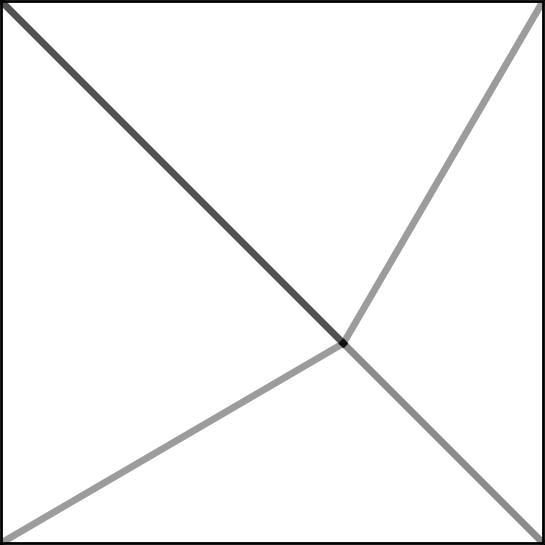}
	\caption{Left: A simulation of a uniform permutation of size 342 in $\Av(2413,3142, 2143,34512)$. Right: The limiting permuton, as predicted by \cref{Th:linearCase}.
		\label{Fig:exemple_Xtilde}}
\end{figure}

\subsubsection{A V shape: $\TTT= \Av(2413,1243,2341,41352,531642)$} \label{sec:ClasseV}

The example we consider next is the one chosen in \cite{BBPPR} 
to illustrate the computation of the specification. 
It is for us a benchmark to test the applicability of our results.

The only simple permutation in the class is $3142$, so that the algorithm of \cite{BBPPR} applies. 
In this case the combinatorial specification gives a system of $13$ equations, 
which we recall in \cref{sec:ClasseVAnnexe}. 
Also in this appendix, we use \cref{Th:linearCase}
to show that the limit is the permuton $\bm \mu^X_{\mathbf p}$
where $p_{\gauche}^+ =  p_{\droite}^-=0$, $p_{\droite}^+ = 1 - p_{\gauche}^-$,
and $p_{\gauche}^- \approx 0.818632668576995$ is the only real root of the polynomial
\[19168 z^{5} - 86256 z^{4} + 155880 z^{3} - 141412 z^{2} + 64394 z - 11773.\]
This is illustrated in \cref{Fig:exemple_V}.

\begin{figure}[htbp] 
	\centering
	\includegraphics[width = 0.3\linewidth]{simu_classeV_n248.pdf} \hspace{0.5cm} \includegraphics[width = 0.3\linewidth]{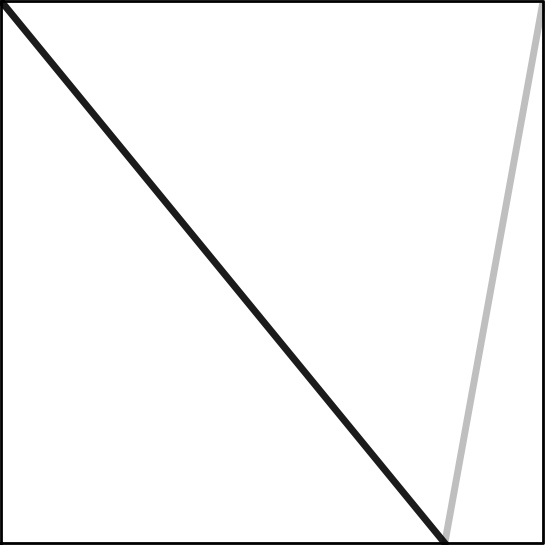}
	\caption{Left: A simulation of a uniform permutation of size 248 in $\Av(2413,1243,2341,41352,531642)$. Right: The limiting permuton, as predicted by \cref{Th:linearCase}.
		\label{Fig:exemple_V}}
\end{figure}

\subsubsection{A diagonal: $\TTT= \Av(231,312)$}\label{ex:layered} 

This is the class of so-called layered permutations. 
It contains no simple permutation and 
admits the following tree-specification:
\begin{equation*}
	\mathcal T_{0}= \{ \bullet \} \uplus \oplus[\mathcal T_{1},\mathcal T_{0}]\uplus \ominus[\mathcal T_{2},\mathcal T_{1}],\quad
	\mathcal T_{1}= \{ \bullet \} \uplus \ominus[\mathcal T_{2},\mathcal T_{1}],\quad
	\mathcal T_{2}= \{ \bullet \}.
\end{equation*}
The associated equations can be solved explicitly and 
$\mathcal T_0$ turns out to be the only critical family. 
So the specification is essentially linear, 
and \cref{Th:linearCase} applies.
We compute the parameters of the limit using \cref{eq:DefProbaCaterpillar}.
Looking at the specification, $D^{\gauche}_{-}=D^{\droite}_{+}=D^{\droite}_{-}=0$, so that the scaling limit for $\Av(231,312)$ is the $X$-permuton with parameters
$$
p^{\gauche}_{+}=1,\quad p^{\gauche}_{-}=p^{\droite}_{+}=p^{\droite}_{-}=0,
$$
\emph{i.e.} the permuton supported by the main diagonal $\{x=y\}$.

This convergence could also be proved easily in a more direct way,
since layered permutations are direct sums of decreasing permutations (\emph{i.e}. $\oplus[d_1,\dots,d_r]$, for decreasing permutations $d_1$, \ldots, $d_r$ of various sizes).
Nevertheless, we briefly commented on this example to illustrate that the diagonal permuton
can appear as a degenerate case of the $X$-permuton.

\subsubsection{An example with infinitely many simple permutations: pin-permutations}
\label{sec:PinPerm}
The class of pin-permutations has been introduced and used in the framework of decision problems
in the papers \cite{Pin1,Pin2}. 
This class contains an infinite number of simple permutations (and has an infinite basis),
so that the algorithm of \cite{BBPPR} does not apply to give a tree-specification.

However, the class was enumerated in \cite[Section 5]{PinPerm}
using a recursive description of their substitution tree.
This recursive description can be translated into a tree-specification.
Note that \cref{obs:polynomial} does not apply and hypothesis (RC) needs to be checked manually.
This is done in \cref{sec:PinPermAnnexe}, where we use \cref{Th:linearCase} to show that
the limiting shape of a uniform random pin-permutation is a centered $X$-permuton.

\subsection{Our results: The essentially branching case}
\
\begin{definition}
	\label{Def:BrownianPermuton}
	Let $p\in[0,1]$. The Brownian separable permuton of parameter $p$ is a random permuton $\bm \mu^p$ whose distribution is characterized by
	\[\InducedPerm_k(\bm \mu^p) = \perm(\bm b_k), \text{ for every } k\geq 1\]
	where $\bm b_k$ is a uniform random binary tree with $k$ leaves, whose internal nodes are independently decorated with i.i.d. signs of bias $p_+$ (namely, $\proba(+)=p_+$ and $\proba(-)=p_-=1-p_+$).
\end{definition}
The existence and uniqueness in distribution of this permuton is shown in \cite[Lemma B.1]{Nous2}. An intrinsic construction of this object is given in \cite{MickaelConstruction}.

\begin{theorem}[Main Theorem: the essentially branching case]\label{Th:branchingCase}
Consider a tree-specification \eqref{eq:Specif} for $\TTT_0,\dots,\TTT_d$ that verifies Hypothesis (SC) (p.\pageref{Hyp:StronglyConnected}). 
We assume that
	\begin{enumerate}
		\item the specification is essentially branching,
		\item Hypothesis (AR) (p.\pageref{Hyp:AR}) holds,
		\item at least one series (either critical or subcritical) is aperiodic.
	\end{enumerate} 
Then all critical families converge to the same Brownian separable permuton. More precisely, there exists $p_+ \in [0,1]$ such that for every $i\in I^\star$, letting $\bm \sigma_n$ be a uniform permutation of size $n$ in $\TTT_i$,
$$
\mu_{\bm \sigma_n} \stackrel{(d)}{\to} \bm \mu^{p_+}.
$$
Furthermore, the bias parameter $p_+$ can be explicitly computed with \cref{eq:DefParametrePermutonBrownien} p.\pageref{eq:DefParametrePermutonBrownien}.
\end{theorem}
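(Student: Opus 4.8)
The plan is to follow the strategy outlined in \cref{ssec:ToolsIntro}: reduce the convergence of permutons to the convergence of patterns induced by $k$ random marked points (via \cref{thm:randompermutonthm}), and compute the limiting probabilities by singularity analysis of refined tree-specifications. First I would fix $k$ and, for each pattern $\pi$ of size $k$ and each critical index $i \in I^\star$, introduce the family $\TTT_{i,\pi}$ of standard trees in $\TTT_i$ carrying $k$ marked leaves that induce $\pi$. The combinatorial heart is to show that the tree-specification \eqref{eq:Specif} lifts to a finite tree-specification for the families $\TTT_{i,\pi}$ (and intermediate marked families), by recording at each internal node how the $k$ marks are distributed among the subtrees and which sub-pattern each group of marks induces; the critical subtree $\Crit_i(t)$ of \cref{def:CriticalSpine} is the skeleton along which the refined analysis is organized. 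This yields a system for the bivariate generating functions $T_{i,\pi}(z,u)$ (with $u$ marking size, say), whose $z$-sections are again power series with nonnegative coefficients.

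Next I would apply the analytic toolbox of \cref{sec:complex_analysis}. Since the specification is essentially branching and Hypothesis (AR) holds, the critical subsystem \eqref{eq:generic_system} falls under the Drmota–Lalley–Wood regime (\cref{Thm:DLW}): by Hypothesis (SC) the dependency graph restricted to critical families $G^\star$ is strongly connected, and aperiodicity of some series together with \cref{lem:MonotonieAperiodicite} propagates aperiodicity to all critical families, so the critical series all have a common square-root singularity at $\rho$, i.e. $T_i(z) = \tau_i - c_i\sqrt{1-z/\rho} + o(\sqrt{1-z/\rho})$. The key point is that the refined marked series $T_{i,\pi}$ inherit this square-root-type behavior but with a \emph{larger} exponent governed by the number of marks: heuristically, marking $k$ leaves on a tree whose mass concentrates on a Brownian-CRT-like skeleton contributes an extra $(1-z/\rho)^{(k-1)/2}$-type factor, reflecting that $k$ generic leaves in a large critical tree typically sit in $k-1$ distinct branching "directions". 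I would extract, via the transfer theorem \cite{Violet}, the asymptotics of $[z^n]T_{i,\pi}$, hence the probability $\proba\big(\pat_{\bm I_{n,k}}(\bm\sigma_n) = \pi\big)$, and identify the leading constant as a sum over binary plane trees with $k$ leaves decorated by signs, matching exactly $\proba(\perm(\bm b_k) = \pi)$ for the appropriate bias $p_+$. This $p_+$ is read off from the first-order data of the critical system — concretely from the eigenvector of the Jacobian $\mathbb M^\star$ at the singularity and the relative weights of $\oplus$- versus $\ominus$-type contributions in the equations — which is the content of \cref{eq:DefParametrePermutonBrownien}. The independence of the limit from the starting critical index $i$ follows because $G^\star$ is strongly connected, so all critical families share the same singular geometry and the same spine statistics.

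Finally, \cref{thm:randompermutonthm} upgrades the convergence of $\pat_{\bm I_{n,k}}(\bm\sigma_n)$ for every $k$ to convergence in distribution of $\mu_{\bm\sigma_n}$ to the random permuton characterized by $\InducedPerm_k = \perm(\bm b_k)$, which by \cref{Def:BrownianPermuton} is precisely the Brownian separable permuton $\bm\mu^{p_+}$. I expect the main obstacle to be the combinatorial bookkeeping and singular analysis of the \emph{marked} systems: one must verify that the refined specifications are genuinely finite and that their dependency structure still satisfies the hypotheses of \cref{Thm:DLW} "relative to" the unmarked critical system (treating lower-order marked series and subcritical series as analytic parameters), and then identify the precise exponent and constant in the expansion of each $T_{i,\pi}$. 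This requires the more precise/general versions of the Drmota–Lalley–Woods statements developed in \cref{sec:complex_analysis}, rather than the textbook version, since we need explicit control of the subleading terms to pin down the limiting pattern probabilities and hence the parameter $p_+$; carefully tracking how the square-root singularities of the critical block induce higher-order branch singularities in the marked block, and showing the constants assemble into the binary-tree sum, is the delicate step.
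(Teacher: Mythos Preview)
Your high-level strategy matches the paper's: reduce to pattern convergence via \cref{thm:randompermutonthm}, analyze marked trees, apply the DLW-type result of \cref{sec:complex_analysis}, and invoke \cref{Def:BrownianPermuton}. However, the concrete execution differs in an important way, and your proposed route has a gap.

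The paper does \emph{not} build a refined tree-specification for $\TTT_{i,\pi}$ indexed by patterns $\pi$. Instead it works at the level of \emph{induced trees}: by \cref{lem:DiagrammeCommutatif}, the pattern induced by $k$ marked leaves is $\perm$ of the induced subtree, so it suffices to compute, for each signed \emph{binary} tree $\patterntree$, the probability that the induced subtree equals $\patterntree$. For such $\patterntree$ the paper gives an \emph{explicit closed-form decomposition} (\cref{prop:EnumerationBranchingCase}): a tree with $k$ marked leaves inducing $\patterntree$ is cut at each first common ancestor and at its two relevant children, yielding a product over $\Internal{\patterntree}$ of blossoming-tree series $T_{\to i}^j$, doubly-blossoming series $E^{\eps}_{ijj'}$, and derivatives $T_j'$. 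Plugging in the asymptotics of \cref{lem:CheckerAppendiceBranching} (from \cref{Thm:DLW}) into this product gives directly the factor $\prod_{v\in\Internal{\patterntree}} p_{\eps(v)}$ with $p_\pm$ as in \cref{eq:DefParametrePermutonBrownien}. No new specification is analyzed; DLW is applied only to the original critical system, and the marked series are built by multiplication from quantities whose singular behavior is already known.

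Your plan to ``lift the specification'' to one for $\TTT_{i,\pi}$ and apply DLW again to the lifted system is where the gap lies. Even granting that such a finite refined specification exists, you would need to verify strong connectivity and the analyticity hypotheses for that new system, and then extract from its singular expansion a constant that you can \emph{identify} as $\sum_{\patterntree:\,\perm(\patterntree)=\pi}\tfrac{1}{\Cat_{k-1}}\prod_v p_{\eps(v)}$. You acknowledge this identification as ``the delicate step'' but give no mechanism for it; the paper's explicit product decomposition is precisely what makes this identification transparent. A second point you miss: the paper does not compute the full probability that the induced tree equals $\patterntree$, but only a lower bound (via the extra condition in \cref{Def:TTT_patterntreeBranching} that first critical ancestors separate marked leaves). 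Since these lower bounds already sum to $1$ over all signed binary $\patterntree$, they must be the actual limits, and as a byproduct non-binary induced trees have vanishing probability. This ``liminf plus sum-to-one'' trick replaces a substantial amount of bookkeeping that your approach would have to carry out directly.
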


\begin{remark}
Recall that Hypothesis (AR) holds in particular if there are only finitely many simple permutations in the $\TTT_i$'s.\\
Item iii) is again a weak assumption. It is automatically satisfied in the case of classes with finitely many simple permutations.
Indeed, at least one series is not a polynomial (otherwise the class itself is finite) and again by \cite{DrmotaPierrot} it has to be aperiodic.
\end{remark}

We show two examples of classes having an essentially branching decomposition, 
whose limits are Brownian separable permutons of explicit parameters. 
The first example is build on purpose to display a limiting behavior of this kind 
for a class which is not substitution-closed. 
The second example is the famous class $Av(132)$. 
Its limiting permuton, which is supported by the antidiagonal,
is a degenerate  Brownian separable permuton. 

\subsubsection{A non-degenerate branching case}
\label{sec:Exemple_branching}\ \\
We consider the class $\TTT_0=\Av(2413,31452,41253,41352,531642)$.
The only simple permutation in the class is $3142$,
so that we apply the algorithm of \cite{BBPPR}.
In \cref{sec:Exemple_branching_annexe}, we give the specification of this class and apply \cref{Th:branchingCase}, 
to get that the limit is the biased Brownian separable permuton of parameter $p_+$, where $p_+ \approx 0.4748692376...$ is the only real root of the polynomial
\[z^{9} - 3 z^{8} + \frac{232819}{62348} z^{7} - \frac{78093}{31174} z^{6} + \frac{243697}{249392} z^{5} - \frac{54293}{249392} z^{4} + \frac{24529}{997568} z^{3} - \frac{125}{62348} z^{2} + \frac{45}{62348} z - \frac{2}{15587}.\]

\subsubsection{A degenerate branching case: $Av(132)$}\label{sec:Av(132)_suite}
We continue the study of this Catalan class, which we started in \cref{sec:Av(132)_debut}. 
Recall that this class has an essentially branching specification, with a single strongly connected component among the critical series. 
Moreover, it involves the subcritical series $T_{\langle 21\rangle}=\frac{z}{1-z}$ which is aperiodic.
Finally, since there is no simple permutation in $Av(132)$, Hypothesis (AR) holds and we can apply \cref{Th:branchingCase}:
there exists some parameter $p_+$ such that the limiting permuton of $Av(132)$ is the Brownian separable permuton of parameter $p_+$.
Moreover, we can read directly from the specification that for all $i,j,j'$, we have $E_{i,j,j'}^+ = 0$ where $E^\eps_{i,j,j'}$ are defined in \cref{def:Eij}. It follows from
 \cref{eq:DefParametrePermutonBrownien} p.\pageref{eq:DefParametrePermutonBrownien} that $p_+ = 0$ and $p_- = 1$: the limiting permuton is the antidiagonal.

We point out that for this particular class $Av(132)$, much more is known regarding the limiting shape \cite{MinerPak,HoffmanBrownian1} and the limiting distributions of pattern occurrences \cite{Janson132}.
We chose to present here this class to show a degenerate example which converges to the main diagonal.

\begin{remark}
	In \cref{ex:layered} we saw another permutation class whose limiting permuton is supported by a diagonal.
	The example $Av(132)$ is however very different: the limit is a degenerate Brownian separable permuton
	while the limit of the layered permutations of \cref{ex:layered} is a degenerate $X$-permuton.
\end{remark}

\subsection{Outline of the proof}
\label{Subsec:outline_proof}

As mentioned in \cref{ssec:ToolsIntro}, we make use of analytic combinatorics tools to establish our results.
To this end, we first note that our hypothesis implies the following
behavior of critical series near the dominant singularity:
\begin{itemize}
  \item in the essentially linear case, all critical series have simple poles;
  \item in the essentially branching case, they have square-root singularities.
\end{itemize}
For details, we refer to \cref{lem:CheckerAppendice,lem:CheckerAppendiceBranching} respectively.

We will use the following characterization of convergence of random permutations to a random permuton: 
it is equivalent to
the convergence of the random patterns of the considered random permutations to the random permutations induced by the permuton (\cref{thm:randompermutonthm}).
Since we view permutations as trees, and we wish to study patterns in permutations, 
we are lead to consider trees with marked leaves (see \cref{Sec:InducedTrees}).
Using a decomposition, we obtain a combinatorial equation describing the family of trees
with $k$ leaves inducing a given tree (\cref{prop:EnumerationCaterpillar,prop:EnumerationBranchingCase}).
Then we perform a careful analysis of the corresponding generating series 
to determine their behavior near the singularity (\cref{eq:Asympt_Tto_Linear,eq:Asympt_Tto_Branching}).

This allows us to compute the limiting distribution of the random subtree induced by $k$ uniform random leaves
in a uniform random tree in any one of the critical families (\cref{prop:probaCaterpillars,prop:ProbaBinaire}).
In the essentially linear case, this limiting distribution is supported by trees called {\em caterpillar} (see \cref{def:caterpillar}).
Since the substitution tree of a random permutation induced by the $X$-permuton 
is a caterpillar with the same distribution (\cref{prop:MarginalsXPermuton}), this concludes the proof of \cref{Th:linearCase}.
On the contrary, in the essentially branching case, the  limiting distribution is supported by signed binary trees.
Since the substitution tree of a random permutation induced by the Brownian separable permuton
is a signed binary tree with the same distribution (\cref{Def:BrownianPermuton}),
this concludes the proof of \cref{Th:branchingCase}.

\section{Tree toolbox}\label{Sec:TreeToolbox}

\subsection{Induced trees}\label{Sec:InducedTrees}

Since permutations are encoded by trees and since we are interested in patterns in permutations,
we consider an analogue of patterns in trees: this leads to the notion of {\em induced trees}.

\begin{definition}[First common ancestor]\label{dfn:common_ancestor}
	Let $t$ be a tree, and $u$ and $v$ be two nodes (internal nodes or leaves) of $t$. 
	The \emph{first common ancestor} of $u$ and $v$ is the node furthest away from the root $\varnothing$ that appears 
	on both paths from $\varnothing$ to $u$ and from $\varnothing$ to $v$ in $t$. 
\end{definition}

\begin{definition}[Induced tree]\label{dfn:induced_subtree}
	Let $t$ be a substitution tree, and let $\SsEnsemble$ be a subset of the leaves of $t$.
	The tree $t_\SsEnsemble$ induced by $\SsEnsemble$ is the substitution tree of size $|\SsEnsemble|$ defined as follows.
	The tree structure of $t_\SsEnsemble$ is given by:
	\begin{itemize}
		\item the nodes of $t_\SsEnsemble$ are in correspondence with
		the union of $\SsEnsemble$ and of the set of first common ancestors of two (or more) nodes in $\SsEnsemble$; 
		\item the ancestor-descendant relation in $t_\SsEnsemble$ is inherited from the one in $t$; 
		\item the order between the children of an internal node of $t_\SsEnsemble$ is inherited from $t$. 
	\end{itemize}
	The label of an internal node $v$ of $t_\SsEnsemble$ is defined as follows:
	\begin{itemize}
		\item if $v$ is labeled by a permutation $\theta$ in $t$, the label of $v$ in $t_\SsEnsemble$ is given by the pattern of $\theta$ 
		induced by the children of $v$ having a descendant that belongs to $t_\SsEnsemble$ (or equivalently, to $\SsEnsemble$).
	\end{itemize}
\end{definition}
In the specific case of a subtree induced by \emph{two} leaves, $\ell_1$ and $\ell_2$, 
the induced subtree may be \begin{tikzpicture}[baseline=0pt, every node/.style={inner sep=0},level distance=4mm, sibling distance=5mm] \node {$\oplus$} [grow=up]
child {node {$\bullet$} } 
child {node {$\bullet$} };\end{tikzpicture} 
or \begin{tikzpicture}[baseline=0pt, every node/.style={inner sep=0},level distance=4mm, sibling distance=5mm] \node {$\ominus$} [grow=up]
child {node {$\bullet$} } 
child {node {$\bullet$} };\end{tikzpicture}.
In the first (resp. second) case, we say that $\ell_1$ and $\ell_2$ \emph{induce the sign} $\oplus$ (resp. $\ominus$). 

A detailed example of the induced tree construction is given in \cref{fig:ExampleCanonicalTree}.
\begin{figure}[htbp]
	\begin{center}
		\includegraphics[width=12cm]{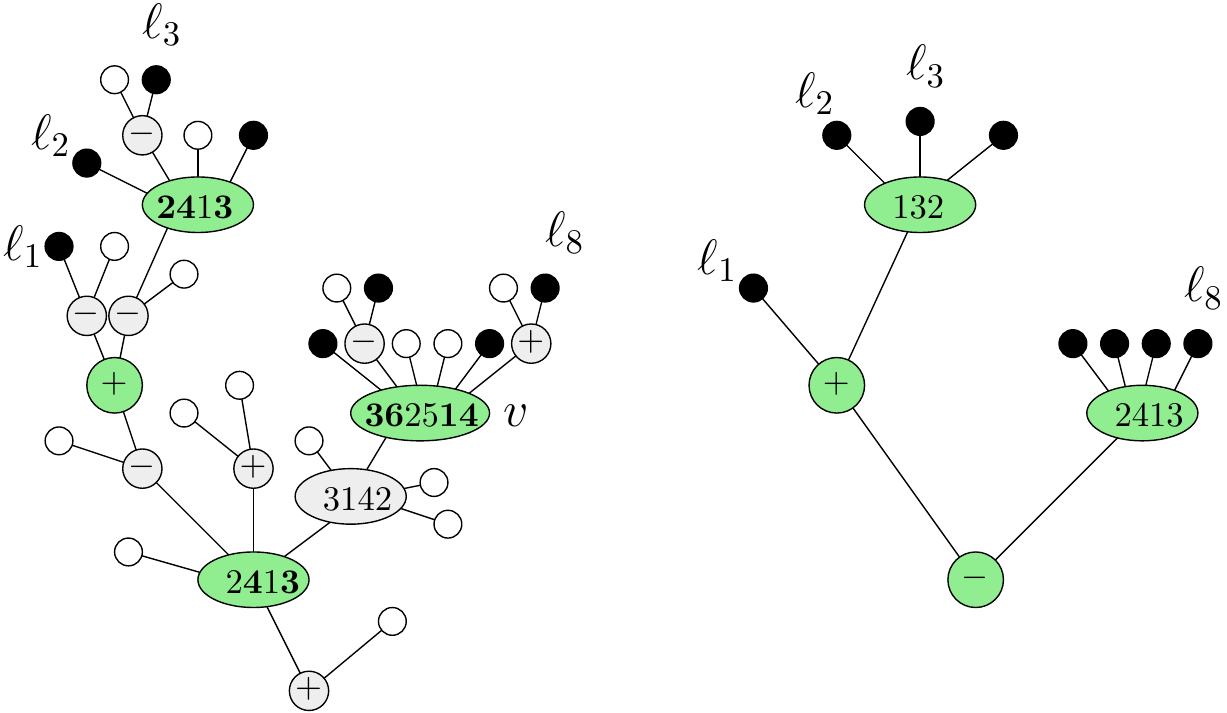}
	\end{center}
	\caption{On the left: A substitution tree $t$ of size $n=24$ (which happens to be a standard tree), where leaves are indicated both by $\circ$ and $\bullet$. 
		Among these $24$ leaves, $|\SsEnsemble|=8$ leaves are marked and indicated by $\bullet$. 
		In green are shown the internal nodes of $t$ which are first common ancestors of these $8$ marked leaves.
		On the right: The substitution tree induced by the $8$ marked leaves.
		Observe that the node $v$ labeled by $362514$ in $t$ is labeled by $2413$ in $t_\SsEnsemble$. 
		This is because only the first, second, fifth and sixth children of $v$ have descendants that belong to $\SsEnsemble$, and $\pat_{\{1,2,5,6\}}(362514)=2413$.
		The induced tree is not standard since $132$ is not simple.}
	\label{fig:ExampleCanonicalTree}
\end{figure}

\begin{remark}
The definition of induced trees can be extended in the case when $\SsEnsemble$ is a subset of nodes (not necessarily leaves),
but in this case $t_\SsEnsemble$ is not necessarily a substitution tree and its number of leaves may be less than $|\SsEnsemble|$.
\end{remark}

For a substitution tree with $n$ leaves, it is convenient to identify the leaves of $t$ from left to right with $[n]=\{1\dots n\}$.

\begin{observation}
	By definition, for any substitution tree $t$ with $n$ leaves and subset $\SsEnsemble$ of $[n]$, $t_\SsEnsemble$ is a substitution tree.
	However, if $t$ is a standard tree, $t_\SsEnsemble$ is a substitution tree which is not necessarily standard
	(see for example \cref{fig:ExampleCanonicalTree}).
\end{observation}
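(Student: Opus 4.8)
The statement has two parts, and both amount to unwinding definitions; I do not expect a real obstacle, only one point that deserves care.

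For the first part, the plan is to verify that the object $t_\SsEnsemble$ constructed in \cref{dfn:induced_subtree} satisfies the three requirements in the definition of a substitution tree (\cref{def:SubstitutionTree}). That $t_\SsEnsemble$ is a rooted plane tree is immediate: its node set is prescribed, its ancestor–descendant relation is inherited from $t$, and so is the left-to-right order among siblings. The delicate step is the identification of the leaf set. On one hand, every $\ell\in\SsEnsemble$ is a leaf of $t$, hence has no proper descendant in $t$ and a fortiori none in $t_\SsEnsemble$, so $\ell$ is a leaf of $t_\SsEnsemble$. On the other hand, every other node $v$ of $t_\SsEnsemble$ is, by construction, the first common ancestor in $t$ of two distinct marked leaves $u_1\neq u_2$; by \cref{dfn:common_ancestor}, $u_1$ and $u_2$ lie in two \emph{distinct} child-subtrees $C_1\neq C_2$ of $v$ in $t$, and each $C_i$ contains a marked leaf, hence contains a node of $t_\SsEnsemble$, whose highest such node is a child of $v$ in $t_\SsEnsemble$; these two children are distinct. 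Thus $v$ is internal and has degree at least $2$ in $t_\SsEnsemble$, which simultaneously shows that $t_\SsEnsemble$ has exactly $|\SsEnsemble|$ leaves and has no internal node of degree $1$. It remains to check the labels: an internal node $v$ of $t_\SsEnsemble$ is internal in $t$, hence carries a permutation label $\theta$ whose size is the degree of $v$ in $t$; by \cref{dfn:induced_subtree} its label in $t_\SsEnsemble$ is the pattern of $\theta$ on the set of children of $v$ having a descendant in $\SsEnsemble$, and this set indexes precisely the children of $v$ in $t_\SsEnsemble$, so the induced label is a permutation whose size equals the degree of $v$ in $t_\SsEnsemble$. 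This exhausts the axioms of \cref{def:SubstitutionTree}.

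For the second part, it suffices to exhibit one standard tree $t$ and one subset $\SsEnsemble$ of its leaves for which $t_\SsEnsemble$ violates the constraints of \cref{defintro:StandardTree}. This is exactly what \cref{fig:ExampleCanonicalTree} provides: there $t_\SsEnsemble$ contains a node whose induced label is $132$ — neither $\oplus$, nor $\ominus$, nor a simple permutation — arising as the pattern, on its marked descendants, of a simple node-label of $t$; hence $t_\SsEnsemble$ is not standard. The conceptual reason worth recording is that the set of simple permutations is not closed under taking patterns, so inducing can destroy simplicity; one could further note that the degree restriction and the ``no $\oplus$ as left child of $\oplus$'' restriction may also fail in $t_\SsEnsemble$, but a single counterexample already justifies the ``not necessarily standard'' claim.

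The only genuinely non-mechanical point is the leaf-set identification in the first part, and more precisely the passage from ``$u_1$ and $u_2$ sit in distinct child-subtrees of $v$ in $t$'' to ``$v$ has two distinct children in $t_\SsEnsemble$'': one must observe that each of those child-subtrees has a \emph{unique} highest node of $t_\SsEnsemble$, namely the first common ancestor (in $t$) of all marked leaves it contains, so that it contributes exactly one child of $v$ in $t_\SsEnsemble$. Once this is settled, everything else is a direct translation of \cref{dfn:induced_subtree} into the requirements of \cref{def:SubstitutionTree}.
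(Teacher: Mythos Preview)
Your proposal is correct and, in fact, more detailed than what the paper does: the paper gives no proof of this Observation at all, treating the first claim as immediate from \cref{dfn:induced_subtree} (``By definition\dots'') and justifying the second claim solely by pointing to \cref{fig:ExampleCanonicalTree}. Your careful verification that every non-leaf node of $t_\SsEnsemble$ has degree at least $2$ and carries a label of the right size is exactly the content that the phrase ``by definition'' is meant to cover, and your use of the figure's $132$-labeled node for the counterexample matches the paper's intent.
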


Moreover, we have the following important feature (illustrated by \cref{fig:DiagrammeCommutatif}).

\begin{lemma}\label{lem:DiagrammeCommutatif} 
	Let $t$ be a substitution tree with a subset $\SsEnsemble$ of marked leaves.
	We have 
	$$
	\pat_\SsEnsemble(\perm(t)) = \perm(t_\SsEnsemble).
	$$
\end{lemma}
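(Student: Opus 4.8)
The statement asserts that the permutation pattern extracted from $\perm(t)$ at the positions coming from a marked leaf-set $\SsEnsemble$ coincides with the permutation encoded by the induced tree $t_\SsEnsemble$. The natural approach is induction on the size of the tree $t$ (equivalently, on $|\Internal{t}|$ or on the number of leaves). First I would dispose of the base cases: if $|\SsEnsemble|\le 1$ both sides are the permutation of size $|\SsEnsemble|$ (the empty permutation or $1$), and if $t$ is a single leaf there is nothing to prove. One should also observe that $\SsEnsemble$ may fail to ``reach'' every subtree at the root; the induced tree construction handles this by passing to the pattern of the root label induced by the relevant children, which is exactly the phenomenon that has to be tracked.

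\textbf{Inductive step.} Suppose the root of $t$ is labeled by a permutation $\theta$ of size $r$, with fringe subtrees $t_1,\dots,t_r$ from left to right, so that $\perm(t)=\theta[\perm(t_1),\dots,\perm(t_r)]$. Write $\SsEnsemble_j = \SsEnsemble \cap \Leaves{t_j}$ for the marked leaves falling in the $j$-th block, and let $J=\{j : \SsEnsemble_j\neq\varnothing\}$, say $J=\{j_1<\dots<j_s\}$. By \cref{dfn:induced_subtree}, $t_\SsEnsemble$ has root labeled by $\pat_{J}(\theta)$, with fringe subtrees $(t_j)_\SsEnsemble = (t_j)_{\SsEnsemble_j}$ for $j\in J$ (the subtrees $t_j$ with $\SsEnsemble_j=\varnothing$ contribute no node). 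Hence $\perm(t_\SsEnsemble) = \pat_J(\theta)\big[\perm\big((t_{j_1})_{\SsEnsemble_{j_1}}\big),\dots,\perm\big((t_{j_s})_{\SsEnsemble_{j_s}}\big)\big]$, and by the induction hypothesis applied to each $t_{j_i}$ (which is strictly smaller than $t$), this equals $\pat_J(\theta)\big[\pat_{\SsEnsemble_{j_1}}(\perm(t_{j_1})),\dots,\pat_{\SsEnsemble_{j_s}}(\perm(t_{j_s}))\big]$.

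\textbf{The combinatorial identity to close the induction.} It remains to identify this last expression with $\pat_\SsEnsemble(\perm(t)) = \pat_\SsEnsemble\big(\theta[\perm(t_1),\dots,\perm(t_r)]\big)$. This is the computational heart of the proof, but it is a purely elementary ``compatibility of substitution with pattern extraction'': taking a sub-pattern of a substitution $\theta[\pi^{(1)},\dots,\pi^{(r)}]$ on a subset $\SsEnsemble$ of its positions amounts to first restricting $\theta$ to the blocks that $\SsEnsemble$ meets, then restricting each $\pi^{(j)}$ to $\SsEnsemble_j$, then substituting. Concretely, one checks that the relative order of two marked positions in $\perm(t)$ is governed by $\theta$ if they lie in different blocks (and the blocks in $J$ appear in $\perm(t_\SsEnsemble)$ in the order dictated by $\pat_J(\theta)$, matching $\theta$) and by $\perm(t_j)$ if they lie in the same block $j$ (matching $\pat_{\SsEnsemble_j}(\perm(t_j))$). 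I would phrase this either as a direct check on pairs of elements, or — cleaner — first prove as a standalone sublemma that $\pat_\SsEnsemble(\theta[\pi^{(1)},\dots,\pi^{(r)}]) = \pat_J(\theta)[\pat_{\SsEnsemble_1}(\pi^{(1)}),\dots]$ (restricted to $j\in J$), then feed it the values $\pi^{(j)}=\perm(t_j)$. The main obstacle is notational rather than conceptual: keeping the bookkeeping of indices straight when $\SsEnsemble$ misses some blocks, so that the indices of $\pat_J(\theta)$, of the surviving children of the induced tree, and of the $\SsEnsemble_j$ all line up correctly. Once the sublemma is in place, the induction closes immediately.
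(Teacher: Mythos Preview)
The paper does not prove this lemma; it is stated without proof (with an illustrative figure and a remark that the same lemma was essential in the authors' earlier work~\cite{Nous2}). So there is no paper-side argument to compare against.

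Your inductive argument is the standard and correct one. The only point worth tightening is the case $|J|=1$: when all marked leaves lie in a single subtree $t_{j_1}$, the root of $t$ is \emph{not} a first common ancestor of two marked leaves, hence by \cref{dfn:induced_subtree} it does not appear in $t_\SsEnsemble$; rather $t_\SsEnsemble=(t_{j_1})_{\SsEnsemble_{j_1}}$ directly. Your description ``$t_\SsEnsemble$ has root labeled by $\pat_J(\theta)$'' is literally false in that case (substitution trees forbid unary internal nodes), though the conclusion survives under the convention $1[\pi]=\pi$ and your sublemma still gives $\pat_\SsEnsemble(\perm(t))=\pat_{\SsEnsemble_{j_1}}(\perm(t_{j_1}))$. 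A one-line case split (``if $|J|=1$ then $t_\SsEnsemble=(t_{j_1})_{\SsEnsemble}$ and we conclude by induction; otherwise $|J|\ge 2$ and the root of $t$ is the root of $t_\SsEnsemble$'') removes the ambiguity. Otherwise the proof is complete.
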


As in our previous work~\cite{Nous2}, this lemma is essential, 
since it allows to replace the counting of the number of occurrences of a given pattern in some family of permutations 
by that of induced trees equal to a given tree $\patterntree$ in the corresponding family of standard trees.

\begin{figure}[htbp]
	\begin{center}
		\includegraphics[width=10cm]{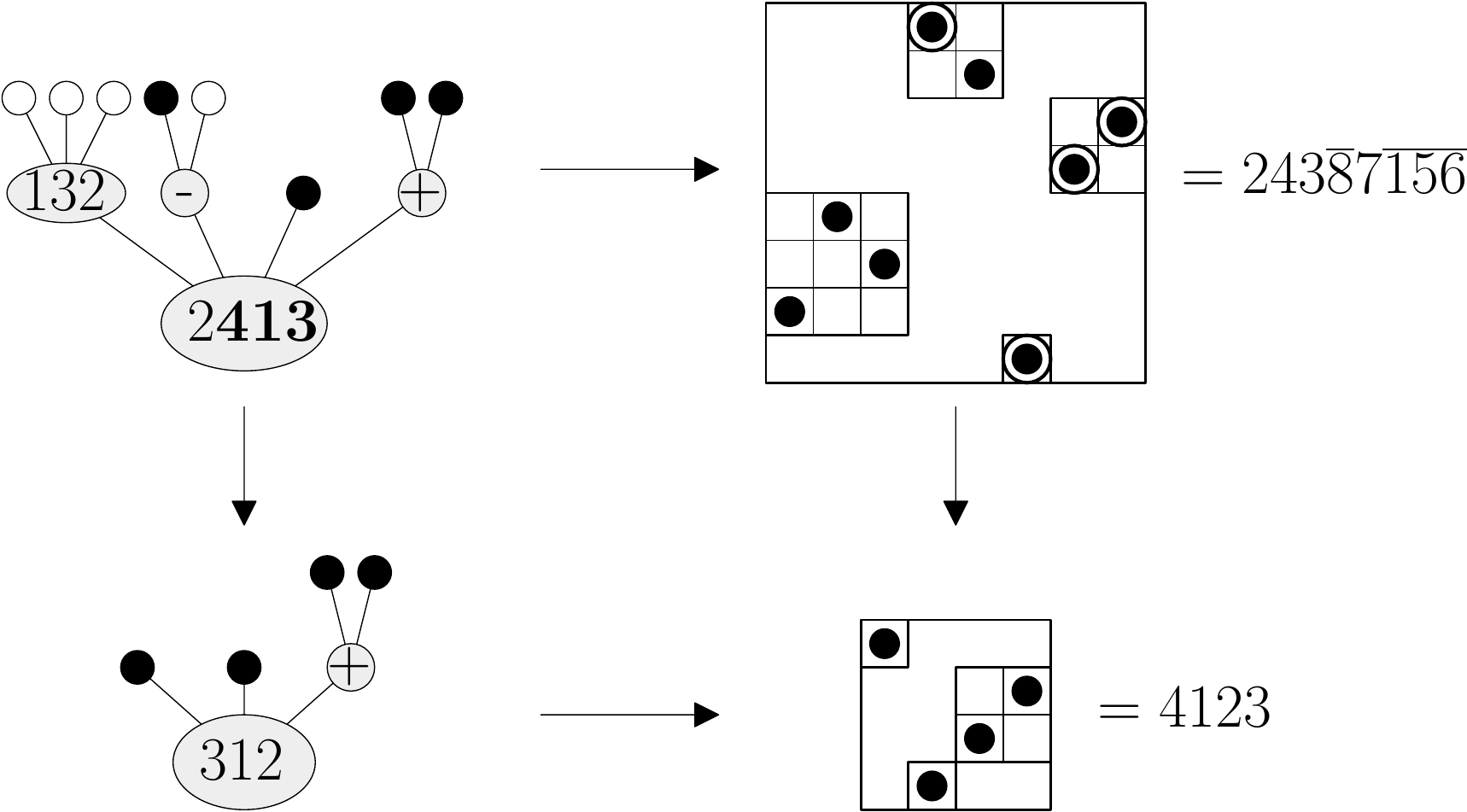}
	\end{center}
	\caption{Illustration of \cref{lem:DiagrammeCommutatif}. 
		Top: A substitution tree $t$ with marked leaves (in this example $\SsEnsemble=\{4,6,7,8\}$), and the permutation $\perm(t)$ it encodes, with the corresponding $|\SsEnsemble|$ marked elements (at positions in $\SsEnsemble$). 
		Bottom: The induced tree $t_\SsEnsemble$ and the induced pattern $\pat_\SsEnsemble(\perm(t)) = \perm(t_\SsEnsemble)$.}
	\label{fig:DiagrammeCommutatif}
\end{figure}

\subsection{Type of a node}

A tree-specification like \eqref{eq:Specif} allows to build the elements of the families $\TTT_i$ recursively in a canonical way.
In this recursive construction of a tree $t$ of $\TTT_i$,
every fringe subtree is taken in one of the $\TTT_j$. 
We will say that the subtree, or equivalently its root, is of type $j$. 
More formally, the type of a node in a tree $t$ in $\TTT_i$ can be recursively defined as follows.

\begin{definition}[Type of a node]
Consider a specification of the form of \eqref{eq:Specif} (see p.\pageref{eq:Specif}). 
Let $t$ be a tree in some $\TTT_i$, and let $v$ be a node in $t$.
The \emph{type} of $v$ in $t$ in $\TTT_i$ is defined as follows.
\begin{itemize}
\item If $v$ is the root of $t$, then the type of $v$ in $t$ in $\TTT_i$ is $i$.
\item Otherwise, there is a unique $\pi \in\SSS_{\TTT_i} \uplus \{\oplus, \ominus\}$ 
  and a unique $|\pi|$-tuple $(k_1,\dots,k_{|\pi|})\in K^i_\pi$ such that $t$ can be decomposed as:
\begin{center}
 \includegraphics[width=3cm]{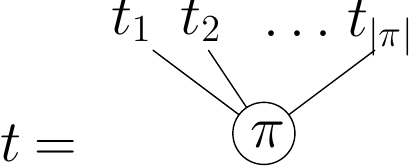},
\end{center}
where each $t_j \in\TTT_{k_j}$. 
Let $\ell\leq |\pi|$ be such that $v\in t_\ell$, 
then the type of $v$ in $t$ in $\TTT_i$
is the type of $v$ in $t_\ell$ in $\TTT_{k_\ell}$.
\end{itemize}
\end{definition}

\begin{remark}
It may happen that $\TTT_i \cap \TTT_j\neq \emptyset$. 
For example, in the specification \eqref{eq:SpecifClosedClasses} p.\pageref{eq:SpecifClosedClasses} for substitution-closed classes,
all trees whose root is labeled by a simple permutation
belong to all three classes. 
In such a case, caution is needed: the type of a node $v$ in a tree $t\in \TTT_i \cap \TTT_j$ 
is defined differently depending on whether $t$ is seen as a tree of $\TTT_i$ or of $\TTT_j$.
\end{remark}

\begin{example}
  Consider a substitution-closed class $\TTT$ with its tree-specification given by \eqref{eq:SpecifClosedClasses}.
  The three families of trees $\TTT$, $\TTT^{\nonp}$ and $\TTT^{\nonm}$ appear in this specification. 
  Let $t$ be a tree in any of $\TTT$, $\TTT^{\nonp}$ or $\TTT^{\nonm}$. 
  The type of the node of $t$ is either $\emptyset$, $\nonp$, or $\nonm$.
  Moreover, it is easy to see that the type of a non-root node $v$ in $t$ 
  is $\nonp$ (resp. $\nonm$) if the node is the left child of a node labeled with $\oplus$ (resp. $\ominus$), 
  and is $\emptyset$ otherwise.
  Only the type of the root of $t$ depends on which family $t$ is (considered to be) an element of.
  The type of the root of $t$ is by definition $\emptyset$ (resp. $\nonp$, $\nonm$) 
  when $t$ is (considered as) a tree of $\TTT$ (resp. $\TTT^{\nonp}$, $\TTT^{\nonm}$).
\end{example}

\subsection{Critical part of a tree}\label{Seq:Crit}

Consider again a tree-specification as in \eqref{eq:Specif}. 
Recall that a family $\TTT_i$ is critical (resp. subcritical) if $\rho_i=\min_j\{\rho_j\}$ (resp. $\rho_i>\min_j\{\rho_j\}$). 
For the asymptotic analysis,
it will be important to identify in a tree the set of nodes
of critical types. This is the purpose of the next definition.
\begin{definition}\label{def:CriticalSpine}
Consider a specification of the form \eqref{eq:Specif} and let $t$ be a standard tree in $\TTT_i$, for some $i$.
We denote by $\Crit_i(t)$  the set
 of nodes $v$ in $t$ such that the type of $v$ in $t$ in $\TTT_i$ is critical.
\end{definition}

Note that from \cref{lem:MonotonieDesRho}, $\Crit_i(t)$ is empty if $i \notin I^\star$. 
Again from \cref{lem:MonotonieDesRho}, if $i \in I^\star$, $\Crit_i(t)$ is a connected subset of $t$ (hence a tree) which contains the root. 
This allows to refer to the set $\Crit_i(t)$ as the \emph{critical subtree} of $t$ and to define, for every node $v$ of $t$, its \emph{first critical ancestor}: 
it is the first node met on the unique path from $v$ to the root of $t$ whose type is critical.

Furthermore, in the essentially linear case, for any tree $t$ in $\TTT_i$ with $i \in I^\star$, 
the critical subtree of $t$ is actually a chain from the root to a node $v$ of $t$. 
We alternatively call $\Crit_i(t)$ the \emph{critical spine} of $t$ in this case, 
and the node $v$ is referred to as the \emph{head} of $t$.

\subsection{Blossoming trees}

In both the essentially linear and the essentially branching cases, we derive asymptotics from an exact combinatorial result (\cref{prop:EnumerationCaterpillar} or \cref{prop:EnumerationBranchingCase}) that gives an expression for the generating function of trees of type $\TTT_i$ with $k$ marked leaves which induce a given subtree $\patterntree$. 
This expression results from a decomposition of the families $\TTT_i$ into some families of \emph{blossoming trees}, that we now define.

\begin{definition}\label{def:Tfleche}
	For $0\leq i,j\leq d$, we define $\TTT_{\to i}^j$ as the 
	family of trees $t$ with one marked leaf $\ell$, called the {\em blossom} and represented by $\ast$, 
	such that the tree obtained by replacing $\ast$
	by a tree of $\TTT_j$ belongs to $\TTT_i$, with the additional condition that the type in $\TTT_i$ of the node that used to be the blossom is $j$.
\end{definition}
Observe that in general, a tree in  $\TTT_{\to i}^j$ does not belong to  $\TTT_{i}$.

In the following proposition, we show that families $\TTT_{\to i}^j$'s inherit a combinatorial specification
from the one of the $\TTT_{i}$'s.

\begin{proposition}[Specification of the $\TTT_{\to i}^j$'s]\label{Prop:SpecifArbres}
	Assume that the equation for $\TTT_i$ in the specification \eqref{eq:Specif} is
	$$
	\TTT_i= \eps_i\{\bullet \} \ \uplus \ \biguplus_{\pi\in\SSS_{\TTT_i}\uplus\set{\oplus,\ominus}} \biguplus_{(k_1,\dots,k_{|\pi|})\in K_\pi^i} \pi[\TTT_{k_1}, \TTT_{k_2}, \ldots, \TTT_{k_{|\pi|}}]        \qquad (0\leq i\leq d),
	$$
	where $\bullet$ is the trivial tree made of just one leaf.
	Then we have:
\begin{equation}
  \TTT_{\to i}^j= \mathbf 1_{i=j}\{\ast \} \ \uplus \ \biguplus_{\pi\in\SSS_{\TTT_i} \uplus\set{\oplus,\ominus}} \biguplus_{(k_1,\dots,k_{|\pi|})\in K_\pi^i} \biguplus_{\ell=1}^{|\pi|}  \pi[\TTT_{k_1}, \ldots, \TTT_{\to k_\ell}^j, \ldots, \TTT_{k_{|\pi|}}] 
  \quad (0\leq i,j\leq d),
		\label{Eq:SpecifArbres}
\end{equation}
		where $\ast$ is the trivial tree reduced to the blossom.
\end{proposition}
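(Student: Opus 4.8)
The statement is a structural identity between families of decorated trees, so the plan is to prove it by a bijective (term-by-term) argument, decomposing a blossoming tree according to where its blossom sits. First I would fix $i$ and $j$ and take an arbitrary tree $t \in \TTT_{\to i}^j$, i.e.\ a tree with a blossom $\ast$ such that grafting any tree of $\TTT_j$ at $\ast$ yields a tree of $\TTT_i$ in which the grafted root has type $j$. There are two cases depending on whether the blossom is the root of $t$ or not.

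If $\ast$ is the root of $t$, then $t$ is just the single-node tree $\{\ast\}$, and the grafting condition forces $j=i$ (the type of the root of a tree in $\TTT_i$ is $i$ by definition of type). This produces exactly the term $\mathbf 1_{i=j}\{\ast\}$. If $\ast$ is not the root, the root of $t$ carries some label $\pi \in \SSS_{\TTT_i}\uplus\{\oplus,\ominus\}$ and has fringe subtrees $t_1,\dots,t_{|\pi|}$. Replacing $\ast$ by a tree $s\in\TTT_j$ gives a tree of $\TTT_i$, and by the recursive definition of the type, this tree of $\TTT_i$ must decompose at the root as $\pi[\,\cdot\,,\dots,\,\cdot\,]$ with children in some $\TTT_{k_1},\dots,\TTT_{k_{|\pi|}}$ for a tuple $(k_1,\dots,k_{|\pi|})\in K_\pi^i$; hence this tuple is an invariant of $t$ (independent of the choice of $s$) because the specification \eqref{eq:Specif} is a disjoint union. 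Let $\ell$ be the index of the child subtree containing $\ast$. Then for $m\neq\ell$ we get $t_m\in\TTT_{k_m}$, and $t_\ell$ with its blossom satisfies exactly the defining property of $\TTT^j_{\to k_\ell}$: grafting any $\TTT_j$-tree at $\ast$ yields a tree of $\TTT_{k_\ell}$ with the grafted root of type $j$ (this last point uses the recursive clause ``the type of $v$ in $t$ in $\TTT_i$ is the type of $v$ in $t_\ell$ in $\TTT_{k_\ell}$''). So $t$ lies in the summand $\pi[\TTT_{k_1},\dots,\TTT^j_{\to k_\ell},\dots,\TTT_{k_{|\pi|}}]$ of the right-hand side of \eqref{Eq:SpecifArbres}.

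Conversely, any tree in a summand on the right-hand side of \eqref{Eq:SpecifArbres} — built from $\pi$, a tuple in $K_\pi^i$, a distinguished child index $\ell$, a blossoming tree in $\TTT^j_{\to k_\ell}$ in position $\ell$ and ordinary trees of $\TTT_{k_m}$ in the other positions — has the property that grafting a $\TTT_j$-tree at its blossom produces an element of $\pi[\TTT_{k_1},\dots,\TTT_{k_{|\pi|}}]\subseteq\TTT_i$, with the grafted root of type $j$ by the same recursive type clause; so it belongs to $\TTT^j_{\to i}$. The decomposition of $t$ according to $(\text{root label }\pi,\ \text{tuple},\ \text{position }\ell\text{ of the blossom})$ is clearly unique, matching the fact that the right-hand side is a genuine disjoint union, so the two sides are equal as combinatorial classes.

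I do not expect a serious obstacle here: the argument is essentially the same induction that proves the original specification \eqref{eq:Specif} is well-formed, now carried over to marked trees. The only point requiring a little care is the bookkeeping of \emph{types}: one must check that the type of the (former) blossom is tracked correctly through the recursion, i.e.\ that ``type $j$ in $\TTT_i$'' for a node in a child subtree $t_\ell\in\TTT_{k_\ell}$ is the same as ``type $j$ in $\TTT_{k_\ell}$'', which is exactly the recursive clause in the definition of the type of a node; and that the tuple $(k_1,\dots,k_{|\pi|})\in K^i_\pi$ is well-defined for $t\in\TTT^j_{\to i}$, which follows because \eqref{eq:Specif} is a disjoint union so the decomposition at the root of the grafted tree is unique and independent of the choice of graft. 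Writing this up amounts to organizing the above case analysis cleanly.
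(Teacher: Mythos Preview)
Your proposal is correct and follows essentially the same approach as the paper: a case distinction on whether the blossom is at the root (yielding the $\mathbf 1_{i=j}\{\ast\}$ term) or not (yielding the recursive decomposition at the root via the specification), followed by the reverse inclusion. If anything, you are slightly more careful than the paper in explicitly noting that the tuple $(k_1,\dots,k_{|\pi|})$ is well-defined independently of the graft (by disjointness of the specification) and in tracking the type of the blossom through the recursive clause of the definition of type.
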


\begin{proof}
  Trivially, the class $\TTT_{\to i}^j$ contains the tree reduced to a blossom
  if and only if $i=j$.
  This explains the term $\mathbf 1_{i=j}\{\ast\}$.

  Let $t \in \TTT_{\to i}^j$. 
We now restrict to the case where the blossom of $t$ is not at the root. Let $t_j \in \TTT_{j}$. Denote by $tt_j$ the tree obtained by replacing the blossom of $t$ with $t_j$. 
By definition of the class $\TTT_{\to i}^j$,
the tree $tt_j$ is in $\TTT_i$. As a result, $tt_j$ belongs to the union
	$$
	\TTT_i= \eps_i\{\bullet \} \ \uplus \ \biguplus_{\pi\in\SSS_{\TTT_i}\uplus\set{\oplus,\ominus}} \biguplus_{(k_1,\dots,k_{|\pi|})\in K_\pi^i} \pi[\TTT_{k_1}, \TTT_{k_2}, \ldots, \TTT_{k_{|\pi|}}]. $$
We cannot have $tt_j = \bullet $, because then necessarily the blossom of $t$ is its root.
Hence $tt_j$ belongs to a term of the form $\pi[\TTT_{k_1}, \ldots, \TTT_{k_{|\pi|}}]$ for $\pi\in\SSS_{\TTT_i} \uplus\set{\oplus,\ominus}$ and $(k_1,\dots,k_{|\pi|})\in K_\pi^i$. 
Then the blossom (and the copy of $t_j$) must be contained in one of the fringe subtrees rooted at a child of the root of $tt_j$, say the $\ell$-th one, with $1\leq \ell \leq |\pi|$. 
Hence $t$, which is recovered by removing the copy of $t_j$ in $tt_j$ and replacing it by a blossom, belongs to $\pi[\TTT_{k_1}, \ldots, \TTT^j_{\to k_\ell},\ldots , \TTT_{k_{|\pi|}}]$.

This proves the direct inclusion in the statement of the proposition. For the reverse inclusion, consider a tree $t$ belonging to the right hand side of \cref{Eq:SpecifArbres}, and replace the blossom by a tree $t_j$ of $\TTT_j$. This immediately yields a tree in $\TTT_i$. Hence $t\in \TTT_{\to i}^j$.
\end{proof}

For $0\leq i\leq d$, let $T_{\to i}^j$ be the generating function of the family $\TTT_{\to i}^j$, where trees are counted by the number of leaves (we take the convention that the blossom is not counted).

\cref{Prop:SpecifArbres} has the following consequence (recall that series $F_i$'s are defined by \eqref{eq:SystemeSeries} p.\pageref{eq:SystemeSeries}).

\begin{corollary}
	Let $\TT_\to(z)$ be the matrix of generating functions $\TT_\to=(T^j_{\to i})_{0\leq i,j\leq d}$. It holds that
	\begin{equation}
	\TT_\to(z) = \mathbb K(T_0(z),\ldots T_d(z))\cdot \TT_\to(z) + \Id, 
	\label{eq:TPasFleche}
	\end{equation}
	where $\mathbb K$ is the $(d+1)\times (d+1)$ matrix defined by 
$$
K_{i,j}(y_0,\ldots y_d) = \frac{\partial F_i(y_0,\ldots y_d)}{\partial y_j}.
$$	
	If we restrict to critical families and define $\mathbb T^\star_\to = (T_{\to i}^j)_{i,j\in I^\star}$, then we have
	\begin{equation}
	\mathbb T^\star_\to(z) = \mathbb M^\star(z,\mathbf T^\star(z)) \mathbb T^\star_\to(z) + \Id,
	\label{eq:Tfleche}
	\end{equation}
        where $\mathbb M^\star$ was defined in \cref{eq:LinearSystem} (p.\pageref{eq:LinearSystem}) for the essentially linear case, 
        and in \cref{eq:Def_Mstar} (p.\pageref{eq:Def_Mstar}) for the essentially branching case. 
\end{corollary}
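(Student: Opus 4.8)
The plan is to derive \eqref{eq:TPasFleche} directly by translating the combinatorial specification \eqref{Eq:SpecifArbres} of \cref{Prop:SpecifArbres} into generating functions, and then to obtain \eqref{eq:Tfleche} by restricting to critical families. First I would fix $j$ and read off \eqref{Eq:SpecifArbres}: the family $\TTT_{\to i}^j$ is a disjoint union of the single-blossom tree (present iff $i=j$) and, for each $\pi \in \SSS_{\TTT_i} \uplus \{\oplus,\ominus\}$, each tuple $(k_1,\dots,k_{|\pi|})\in K_\pi^i$, and each $\ell \in \{1,\dots,|\pi|\}$, the family $\pi[\TTT_{k_1},\dots,\TTT_{\to k_\ell}^j,\dots,\TTT_{k_{|\pi|}}]$. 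Since the blossom is not counted, a substitution $\pi[\TTT_{k_1},\dots,\TTT_{\to k_\ell}^j,\dots,\TTT_{k_{|\pi|}}]$ has generating function $T_{k_1}(z)\cdots T_{\to k_\ell}^j(z)\cdots T_{k_{|\pi|}}(z)$, where the factor in position $\ell$ is $T_{\to k_\ell}^j$ and all others are ordinary series $T_{k_m}$. Summing over all of the above and using that $\uplus$ turns into $+$ on series yields
\begin{equation*}
T_{\to i}^j(z) = \mathbf 1_{i=j} + \sum_{\pi} \sum_{(k_1,\dots,k_{|\pi|})\in K_\pi^i} \sum_{\ell=1}^{|\pi|} \Big(\prod_{m\neq \ell} T_{k_m}(z)\Big)\, T_{\to k_\ell}^j(z).
\end{equation*}

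The key observation is then that the coefficient of $T_{\to k}^j(z)$ in the above sum, for each fixed $k$, is exactly $\partial F_i/\partial y_k$ evaluated at $(T_0(z),\dots,T_d(z))$. Indeed, $F_i(y_0,\dots,y_d)$ is by construction (see \eqref{eq:SystemeSeries}) the sum, over the same $\pi$ and the same tuples $(k_1,\dots,k_{|\pi|})\in K_\pi^i$, of the monomials $y_{k_1}\cdots y_{k_{|\pi|}}$; differentiating such a monomial with respect to $y_k$ and using the product rule produces precisely one term $\prod_{m\neq\ell} y_{k_m}$ for each position $\ell$ with $k_\ell = k$, and then evaluating at $y_m = T_m(z)$ gives the claimed coefficient $K_{i,k}(T_0(z),\dots,T_d(z))$. (The term $\eps_i z$ in $F_i$'s equation contributes nothing, as it is linear in $z$ and independent of the $y$'s, consistent with the fact that a blossom cannot replace the unique leaf $\bullet$.) Hence the displayed relation reads, in matrix form over indices $0 \le i,j \le d$,
\begin{equation*}
\TT_\to(z) = \mathbb K\big(T_0(z),\dots,T_d(z)\big)\cdot \TT_\to(z) + \Id,
\end{equation*}
which is \eqref{eq:TPasFleche}.

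For \eqref{eq:Tfleche} I would restrict the $i$-th equation above to $i \in I^\star$ and keep only the columns $j \in I^\star$. The point is that when $i$ is critical and $k \notin I^\star$, the coefficient $K_{i,k}(T_0(z),\dots,T_d(z))\, T_{\to k}^j(z)$ still appears, but by \cref{lem:MonotonieDesRho} all predecessors of a critical vertex in the dependency graph are critical, so in fact $K_{i,k}$ can only be nonzero when $\TTT_k \to \TTT_i$ is an edge, forcing $k$ critical; thus for $i \in I^\star$ the sum over $k$ already ranges only over $k \in I^\star$, and no truncation is lost. The resulting block is $\mathbb T^\star_\to(z) = \big(K_{i,k}(\dots)\big)_{i,k\in I^\star}\,\mathbb T^\star_\to(z) + \Id$, and by \eqref{eq:MDeriveesSecondes}/\eqref{eq:Def_Mstar} the matrix $\big(K_{i,k}(T_0(z),\dots,T_d(z))\big)_{i,k\in I^\star}$ is exactly $\mathbb M^\star(z,\mathbf T^\star(z))$ (in the essentially linear case it does not depend on the critical arguments and reduces to $\mathbb M^\star(z)$, consistently with the remark following \eqref{eq:Def_Mstar}). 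This gives \eqref{eq:Tfleche}.

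The only genuinely delicate point, and the one I would be most careful about, is the bookkeeping in the product rule: a single monomial $y_{k_1}\cdots y_{k_{|\pi|}}$ may have several positions $\ell$ with $k_\ell = k$ (the same family can appear as several children of $\pi$), and each such position must be matched with exactly one summand $\pi[\TTT_{k_1},\dots,\TTT_{\to k_\ell}^j,\dots,\TTT_{k_{|\pi|}}]$ in \eqref{Eq:SpecifArbres}. \cref{Prop:SpecifArbres} is stated with the sum $\biguplus_{\ell=1}^{|\pi|}$ precisely so that these coincide, so the multiplicities on the combinatorial side and on the differentiation side agree termwise; making this correspondence explicit is the crux, and everything else is a routine translation.
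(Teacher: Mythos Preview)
Your derivation of \eqref{eq:TPasFleche} is correct and is exactly the routine translation the paper has in mind: differentiate the polynomial/series $F_i$ via the product rule and match each summand with the corresponding term of the specification \eqref{Eq:SpecifArbres}.

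Your argument for \eqref{eq:Tfleche}, however, contains a genuine error. You claim that for $i$ critical and $k$ subcritical, $K_{i,k}$ must vanish because ``by \cref{lem:MonotonieDesRho} all predecessors of a critical vertex in the dependency graph are critical''. This is the wrong direction: \cref{lem:MonotonieDesRho} says that an edge $\TTT_k \to \TTT_i$ gives $\rho_i \le \rho_k$, so critical \emph{successors} (not predecessors) are forced. In particular, a critical $\TTT_i$ can perfectly well depend on a subcritical $\TTT_k$ (this happens in the $\Av(132)$ example, where the critical $\TTT$ depends on the subcritical $\TTT_{\langle 21\rangle}$), so $K_{i,k}$ need not vanish.

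The fix is to kill the \emph{other} factor in the product $K_{i,k}(T_0,\dots,T_d)\, T_{\to k}^j$. Use \cref{lem:MonotonieDesRho} in the correct direction: if $j$ is critical and there were a node of type $j$ in a tree of $\TTT_k$, then there would be a directed path $\TTT_j \to \cdots \to \TTT_k$ in the dependency graph, forcing $k$ critical. Hence for $j \in I^\star$ and $k \notin I^\star$ the family $\TTT_{\to k}^j$ is empty (this is exactly the observation in \cref{Seq:Crit} that $\Crit_k(t)=\emptyset$ for $k\notin I^\star$), so $T_{\to k}^j = 0$. The cross terms therefore drop out, and the restricted system \eqref{eq:Tfleche} follows.
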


\section{The essentially linear case}\label{Sec:ProofsLinear}

This section is to devoted to the proof of \cref{Th:linearCase} through the asymptotic analysis of 
the systems \eqref{eq:SystemeSeries} and \eqref{eq:TPasFleche} in the essentially linear case. 
In this case, an important consequence of the specification is that standard trees can be decomposed along a critical spine (\cref{def:CriticalSpine}).

To help with the reading of this section, we summarize here the different generating series which we will use throughout \cref{Sec:ProofsLinear}: 

\begin{center}
\begin{tabular}{| l l l l |}
\hline
Series & Counts for... & Defined in... & Counted by... \\
\hline
$T_{\to i}^j$ & Blossoming trees & \cref{def:Tfleche} & Number of leaves (without the blossom)\\
$\DDD^{\gauche,+}_{j,i}$ & Marked blossoming trees & \cref{Defi:D_gauche_plus} & Number of unmarked leaves\\
$T_{i,\patterntree}$ & Trees inducing $\patterntree$ & \cref{Defi:T_i_t0} & Number of unmarked leaves\\
\hline
\end{tabular}
\end{center}

\subsection{Caterpillar and associated permutations}

Because of the existence of a critical spine, some particular trees will play a significant role in the analysis: these are the \emph{caterpillars}. 

We say that a tree is \emph{binary} when every internal node has \emph{exactly} $2$ children. 

\begin{definition}\label{def:caterpillar}
	A {\em caterpillar} of size $k$ is a binary plane tree with
	\begin{itemize}
		\item $k$ internal nodes 
		labeled by either $\oplus$ or $\ominus$;
		\item a special leaf, called the {\em head}; 
	\end{itemize}
	such that 
	all internal nodes are on the path from the root to the head.

Leaves different from the head are called {\em regular}.
\end{definition}
A caterpillar is drawn in \cref{Fig:Caterpillar}.
Since a caterpillar is binary, there is exactly one regular leaf 
branching on each internal node
and, therefore, the number of regular leaves in a caterpillar of size $k$ is $k$.

We take the following convention:
\begin{itemize}
	\item internal nodes are ordered from the first node $v_1$ to the $k$-th node $v_k$ according to their distance to the root (namely, $v_r$ is at distance $r-1$ from the root);
	\item leaves are ordered by the breadth-first traversal: for $1\leq r\leq k$, the $r$-th leaf $\ell_r$ is a child of the $r$-th internal node $v_r$.
\end{itemize}
To a caterpillar $\patterntree$ of size $k\geq 1$ we associate its code word $(e_1,\eps_1)\ldots(e_k,\eps_k)$, defined as follows: for each $1\leq r\leq k$
\begin{itemize}
	\item $e_r\in\{\gauche,\droite\}$  
	indicates whether $\ell_r$ is a left or a right child of $v_r$, 
	is the sign of the internal node $v_r$ of $\patterntree$.
\end{itemize}
Remark that a caterpillar is completely determined by its code word.
\begin{remark}\label{rem:caterpillar}
	In the literature, caterpillars are usually trees seen as unrooted graphs
	whose internal  nodes form a path. 
	Our caterpillars are, on the contrary, always rooted and binary, that is, every internal node has \emph{exactly} 2 children.
\end{remark}

With a caterpillar $\patterntree$ (of size $k$), 
we associate a substitution tree $\Reduc(\patterntree)$
as follows: erase the head of $\patterntree$,
merge its parent (the internal node $v_k$) and its sibling (the leaf $\ell_k$) 
into a new leaf, also denoted by $\ell_k$. 
Of course, this substitution tree encodes the permutation $\perm(\Reduc(\patterntree))$.
\cref{Fig:Caterpillar} shows an example of caterpillar,
with its associated substitution tree and permutation.
\begin{figure}[thbp]
	\begin{center}
		\includegraphics[width=9cm]{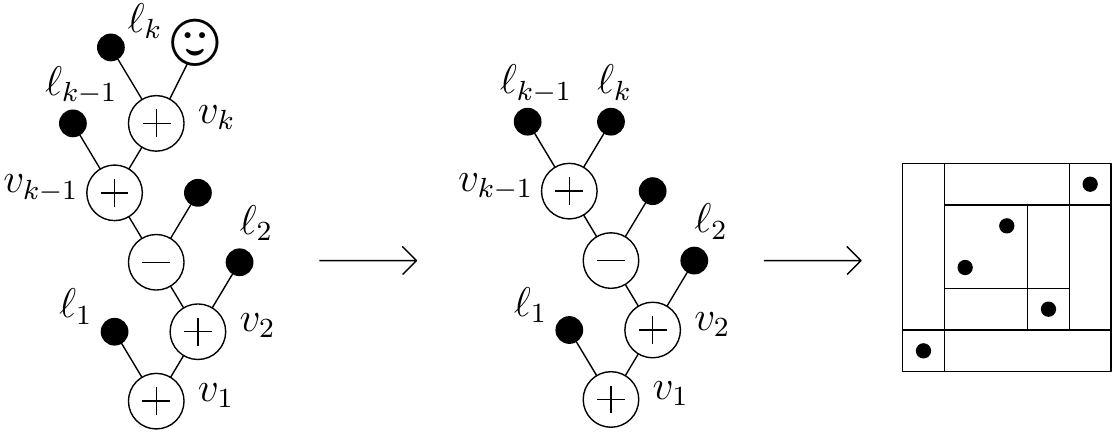}
		\caption{
Left: A caterpillar $\patterntree$ with $k=5$ regular leaves and one head. Its code word is $(\text{left},+)(\text{right},+)(\text{right},-)(\text{left},+)(\text{left},+)$.
Middle: The associated substitution tree $\Reduc(\patterntree)$. 
Right: The permutation $\perm(\Reduc(\patterntree))$.
			\label{Fig:Caterpillar}}
	\end{center}
\end{figure}

\subsection{Extracting a caterpillar}
In this section, we consider standard trees in a critical family $\TTT_i$, with $k$ marked leaves.
Recall from \cref{Seq:Crit} that in the essentially linear case,
the set of critical nodes in each tree $t$ in $\TTT_i$ forms the critical spine of $t$, whose node furthest away from the root is called the head of $t$.

\begin{definition}\label{Defi:T_i_t0}
	Fix a caterpillar $\patterntree$ of size $k$.
	For $i\in I^\star$, the family $\TTT_{i,\patterntree}$ is the set of pairs $(t,\SsEnsemble)$ where $t$ is a tree in $\TTT_i$ and $\SsEnsemble$ is a subset of $k$ leaves in $t$ 
	(called \emph{marked} leaves, and taken without any order on them) such that 
	\begin{itemize}
		\item the $k$ marked leaves together with the head of $t$ 
		induce the subtree $\patterntree$;
		\item moreover, in this induction, the head of $t$
		should correspond to the head of $\patterntree$.
	\end{itemize}
	We denote by $T_{i,\patterntree}$ the corresponding counting series (where the size is the number of unmarked leaves).
\end{definition}
\begin{remark}
  The reader might be surprised that we consider the subtree induced by the head and $k$ random leaves,
  while we announced in \cref{Subsec:outline_proof} that we would be interested in that induced by only the $k$ random leaves.
  Clearly, the former contains more information than the latter.
  Moreover, this refinement will prove useful,
  because it makes easier the decomposition of $\TTT_{i,\patterntree}$ used in the proof of \cref{prop:EnumerationCaterpillar}.
\end{remark}

Our next step towards the enumeration of $\TTT_{i,\patterntree}$ (\cref{prop:EnumerationCaterpillar}) is to decompose $\TTT_{i,\patterntree}$ in terms of smaller classes.
For this, we need to define yet another family of marked trees.
\begin{definition}\label{Defi:D_gauche_plus}
Let $\DDD^{\gauche,+}_{i,j}$ be the combinatorial class of trees $t$ in $\TTT_{\to i}^{j}$ with one additional marked leaf such that
\begin{itemize}
	\item the blossom is a child of the root of $t$;
	\item the additional marked leaf is to the left of the blossom;
	\item the blossom and the marked leaf induce the sign $\oplus$ (see definition in \cref{Sec:InducedTrees}).
\end{itemize}
A schematic view of a tree in $\DDD^{\gauche,+}_{i,j}$ is provided in \cref{{Fig:D_+_left}}.
We define in an analogous way the combinatorial classes $\DDD^{\droite,+}_{i,j}$,
$\DDD^{\gauche,-}_{i,j}$ and $\DDD^{\droite,-}_{i,j}$.\\
We denote by  $D^{\gauche,+}_{i,j}$, $D^{\droite,+}_{i,j}$ $D^{\gauche,-}_{i,j}$ and $D^{\droite,-}_{i,j}$ the associated generating functions. 
In these series, the power of $z$ is the number of leaves which are {\em neither blossom nor marked leaves}.
\end{definition}

\begin{figure}[thbp]
	\begin{center}
		\includegraphics[width=4cm]{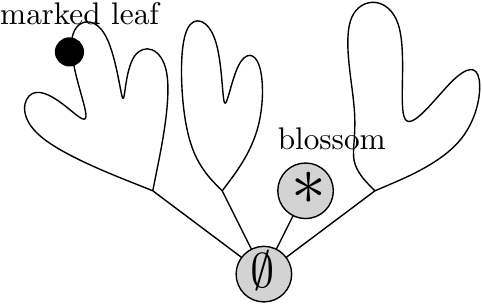}
		\caption{Left: A sketch of a tree in some family $\DDD^{\gauche,+}_{i,j}$ (assuming that the marked leaf and the blossom induce $\oplus$).
			\label{Fig:D_+_left}}
	\end{center}
\end{figure}

\begin{proposition}
	 For all $i,j \in I^\star$, we have
	\[\sum_{e,\eps} D^{\eps, e }_{i,j}(z) =   \frac{\partial }{\partial z} \mathbb M^\star_{i,j}(z) . \]
	If Hypothesis (RC) holds, this implies in particular that all $D^{\eps, e }_{i,j}(z)$ converges at $z=\rho$.
	\label{prop:SumOfD}
\end{proposition}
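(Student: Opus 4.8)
The plan is to read both sides of the identity as generating functions of one and the same family of blossoming trees carrying an extra pointed leaf, and then to deduce the convergence at $z=\rho$ by a comparison argument. I would first give a combinatorial meaning to $\mathbb M^\star_{i,j}$. Since every internal label $\pi$ occurring in \eqref{eq:Specif} has size $|\pi|\geq 2$, the series $F_i$ reads
\[
F_i(y_0,\dots,y_d)=\sum_{\pi\in\SSS_{\TTT_i}\uplus\{\oplus,\ominus\}}\ \sum_{(k_1,\dots,k_{|\pi|})\in K_\pi^i} y_{k_1}\cdots y_{k_{|\pi|}},
\]
so that, by \eqref{eq:MDeriveesSecondes},
\[
\mathbb M^\star_{i,j}(z)=\left.\frac{\partial F_i}{\partial y_j}\right|_{(T_0(z),\dots,T_d(z))}=\sum_{\pi}\ \sum_{(k_1,\dots,k_{|\pi|})\in K_\pi^i}\ \sum_{\ell\,:\,k_\ell=j}\ \prod_{m\neq\ell}T_{k_m}(z).
\]
Comparing with the specification of $\TTT^j_{\to i}$ in \cref{Prop:SpecifArbres}, this is exactly the generating function — counted by the number of leaves distinct from the blossom — of the subfamily of $\TTT^j_{\to i}$ consisting of trees whose blossom is a child of the root: such a tree corresponds to a term $\pi[\TTT_{k_1},\dots,\TTT^j_{\to k_\ell},\dots,\TTT_{k_{|\pi|}}]$ in which the slot $\TTT^j_{\to k_\ell}$ is occupied by the trivial blossoming tree $\{\ast\}$, which forces $k_\ell=j$, the other subtrees accounting for $\prod_{m\neq\ell}T_{k_m}(z)$.

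Next I would use that, as the variable $z$ marks leaves (the blossom excepted), differentiation with respect to $z$ amounts to distinguishing one leaf: thus $\frac{\partial}{\partial z}\mathbb M^\star_{i,j}(z)$ enumerates pairs $(t,\ell')$, where $t\in\TTT^j_{\to i}$ has its blossom as a child of the root and $\ell'$ is a leaf of $t$ other than the blossom, the exponent of $z$ recording the number of leaves of $t$ distinct from both the blossom and $\ell'$. I would then match this family with $\biguplus_{e,\eps}\DDD^{\eps,e}_{i,j}$: since the blossom is a child of the root and $\ell'\neq$ blossom, the first common ancestor of $\ell'$ and the blossom is necessarily the root; being an internal node it carries a permutation of size $\geq 2$, so the subtree induced by $\{\ell',\text{blossom}\}$ is well defined and equals $\oplus$ or $\ominus$; and $\ell'$ lies strictly to the left or strictly to the right of the blossom. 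Hence every pointed tree $(t,\ell')$ belongs to exactly one class $\DDD^{\eps,e}_{i,j}$, conversely every element of these classes is such a pointed tree, and the size statistics agree (on both sides a leaf is counted unless it is the blossom or the marked leaf). This yields $\sum_{e,\eps}D^{\eps,e}_{i,j}(z)=\frac{\partial}{\partial z}\mathbb M^\star_{i,j}(z)$.

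For the last assertion: by \eqref{eq:MDeriveesSecondes} and the second item of Hypothesis (RC), $\mathbb M^\star_{i,j}(z)$ has radius of convergence strictly larger than $\rho$; its derivative $\frac{\partial}{\partial z}\mathbb M^\star_{i,j}(z)$ then has the same radius of convergence, hence converges (absolutely) at $z=\rho$. This derivative has nonnegative coefficients and equals $\sum_{e,\eps}D^{\eps,e}_{i,j}(z)$; since each $D^{\eps,e}_{i,j}$ has nonnegative coefficients, bounded term by term by those of the sum, each $D^{\eps,e}_{i,j}(z)$ converges at $z=\rho$ as well.

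I do not expect any analytic difficulty; the work lies entirely in the bookkeeping of the two combinatorial steps — keeping precise track of which leaves are marked by $z$ (never the blossom, never the pointed leaf) so that the size statistics of the two sides coincide, and verifying that the partition of the pointed trees into the four classes $\DDD^{\eps,e}_{i,j}$ according to side ($\gauche$/$\droite$) and induced sign ($\oplus$/$\ominus$) is genuinely exhaustive and disjoint, which comes down to the elementary remark that the first common ancestor of the marked leaf and the blossom is always the root.
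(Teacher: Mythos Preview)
Your proof is correct and follows essentially the same approach as the paper: both identify $\biguplus_{e,\eps}\DDD^{\eps,e}_{i,j}$ with the family of blossoming trees in $\TTT^j_{\to i}$ whose blossom is a child of the root together with an extra marked leaf, and read this off as $\partial_z\,\mathbb M^\star_{i,j}(z)$ via the interpretation of $\partial/\partial y_j$ (replace a type-$j$ child of the root by a blossom) and $\partial/\partial z$ (mark a leaf). You have simply been more explicit than the paper in justifying that the four classes partition the pointed family and in spelling out the nonnegativity comparison for the convergence at $\rho$.
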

\begin{proof}
We have that 
	$\DDD^{\droite,+}_{i,j}\uplus \DDD^{\droite,-}_{i,j}\uplus \DDD^{\gauche,+}_{i,j}\uplus \DDD^{\gauche,-}_{i,j}$
 is the combinatorial class of trees in  $\TTT_{\to i}^{j}$  with one marked leaf such that the blossom is a child of the root.
From \eqref{eq:SystemeSeries}, 
 it is counted by

\[\sum_{e,\eps} D^{\eps, e }_{i,j} (z)= \frac{\partial}{\partial z}\left(\frac{\partial F_i(y_0,\ldots ,y_d)}{\partial y_j} \Big|_{(T_0(z),\dots,T_d(z))}\right).\]
Indeed the operator $\tfrac{\partial }{\partial y_j}$ indicates the replacement of one child of type $j$ of the root by a blossom; and the operator  $\tfrac{\partial }{\partial z}$ amounts to marking a leaf.
The equality $\sum_{e,\eps} D^{\eps, e }_{i,j} (z)=   \frac{\partial }{\partial z} \mathbb M^\star_{i,j}(z)$ then follows by definition of $\mathbb{M}^\star$ (see p.\pageref{eq:MDeriveesSecondes}) 
and Hypothesis (RC) ensures the convergence at $z=\rho$.
\end{proof}
\begin{proposition}[Enumeration of trees with marked leaves inducing a given caterpillar]
	\label{prop:EnumerationCaterpillar}
	Let $\patterntree$ be a caterpillar with $k$ regular leaves with code word $(e_1,\eps_1)\dots (e_k,\eps_k)$. 
    Then the vector $\mathbf T^\star_\patterntree=(T_{i,\patterntree})_{i \in I^\star}$ is given by
	\begin{equation}
      \mathbf{T}^\star_{\patterntree}= 	\mathbb{T}^\star_{\to }\, \mathbb{D}^{e_1,\eps_1}\, \mathbb{T}^\star_{\to }\, \mathbb{D}^{e_2,\eps_2} \dots \mathbb{T}^\star_{\to }\,  \mathbb{D}^{e_k,\eps_k}\, \mathbf{T}^\star,
	\label{eq:MasterEquation}
	\end{equation}
where $ \mathbb{D}^{e,\eps}$ denotes the matrix $ \left(D^{e,\eps}_{i,j}\right)_{i,j\in I^\star}$.\\
(\emph{Recall that in \eqref{eq:MasterEquation}, the trees of $\TTT_{i,\patterntree}$ are counted by the number of unmarked leaves.})
\end{proposition}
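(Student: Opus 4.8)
The plan is to prove \eqref{eq:MasterEquation} by decomposing a pair $(t,\SsEnsemble) \in \TTT_{i,\patterntree}$ along the critical spine of $t$, reading off the signs and directions dictated by the code word $(e_1,\eps_1)\dots(e_k,\eps_k)$ of $\patterntree$. First I would recall the key structural fact from \cref{Seq:Crit}: in the essentially linear case the critical nodes of $t \in \TTT_i$ (with $i \in I^\star$) form a chain from the root of $t$ to its head. The condition that the $k$ marked leaves together with the head induce $\patterntree$, with the head of $t$ mapped to the head of $\patterntree$, forces the first common ancestors of the marked leaves to lie on this spine. So the marked leaves are "hung" off the spine, one per internal node of $\patterntree$, and the $r$-th marked leaf $\ell_r$ branches off at a node $v_r$ of the spine whose first critical ancestor structure matches: $v_r$ has sign $\eps_r$, and $\ell_r$ sits on the side $e_r$ of $v_r$.

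Next I would make this decomposition precise. Reading from the root of $t$ down to its head: one first traverses a (possibly trivial) stretch of critical nodes none of which carries a marked leaf branching to its critical side — this stretch, with a blossom placed where the spine continues, is exactly an element of $\TTT^j_{\to i}$ for the appropriate critical types $i$ (top) and $j$ (where the spine resumes); this accounts for the first factor $\mathbb{T}^\star_\to$. Then one reaches the first node $v_1$ carrying the marked leaf $\ell_1$: by \cref{Defi:D_gauche_plus}, the tree rooted at $v_1$, truncated with a blossom at the child continuing the spine and with $\ell_1$ as its extra marked leaf, is exactly an element of $\DDD^{e_1,\eps_1}_{i_1,j_1}$ — note the direction $e_1$ records on which side of $v_1$ the marked leaf lies, and $\eps_1$ records the sign of $v_1$, which is precisely the label of the corresponding internal node of $\patterntree$ via \cref{lem:DiagrammeCommutatif} applied to the induced tree. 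Iterating this $k$ times and then observing that below $v_k$ the remaining fringe subtree hanging at the head is an arbitrary element of the critical family reached (contributing the final $\mathbf{T}^\star$), one obtains a type-compatible factorization of $(t,\SsEnsemble)$. Summing over all compatible sequences of intermediate critical types is exactly the matrix product in \eqref{eq:MasterEquation}, and the bookkeeping of the exponent of $z$ works out because the $\TTT^\star_\to$ blocks count leaves excluding their blossom, the $\DDD$ blocks count leaves excluding blossom and marked leaf, and $\mathbf{T}^\star$ counts all its leaves — so each unmarked leaf of $t$ is counted exactly once and each marked leaf not at all.

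Conversely, any tree assembled according to the right-hand side of \eqref{eq:MasterEquation} — plugging $\TTT^\star_\to$-trees, $\DDD^{e_r,\eps_r}$-trees and a final critical tree together along blossoms with matching types — yields a standard tree $t$ in the right critical family, with $k$ distinguished leaves, and by construction its critical spine passes through the $k$ branch points in order, so the induced tree on the $k$ marked leaves plus the head is $\patterntree$ with the head mapped correctly. This gives the reverse inclusion and hence a size-preserving bijection, proving the identity of generating series. I expect the main obstacle to be the careful verification that the induced-tree condition is \emph{equivalent} to the spine-decomposition condition: one must check both that every marked leaf's first critical ancestor is distinct (so there are genuinely $k$ branch points, matching the $k$ internal nodes of the caterpillar) and that no two marked leaves share a branch point — this uses that $\patterntree$ is a caterpillar (all internal nodes on one path) together with \cref{lem:MonotonieDesRho} (criticality propagates along edges, so the critical subtree is connected and, being essentially linear, a chain). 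The sign/direction matching is a direct application of \cref{lem:DiagrammeCommutatif} to the node labels but should be spelled out so that $\eps_r$ is identified with the label of $v_r$ and $e_r$ with the side on which the descendant of $\ell_r$ sits relative to the descendants of all later marked leaves and the head.
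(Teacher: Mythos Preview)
Your proposal is correct and follows essentially the same decomposition as the paper's proof: cut the tree along the critical spine at each first common ancestor $v_r$ and at its spine-child $w_r$, yielding alternating $\TTT^\star_{\to}$ pieces (the $B_\ell^{\mathrm{bottom}}$ in the paper) and $\DDD^{e_r,\eps_r}$ pieces (the $B_\ell^{\mathrm{top}}$), capped by a final $\mathbf T^\star$ piece at the head, and then observe that the resulting correspondence is a size-preserving bijection. One small imprecision to fix when you write it up: $v_r$ need not literally ``have sign $\eps_r$'' (it may be labeled by a simple permutation); what holds is that the pattern induced on the label of $v_r$ by the marked leaf and the blossom equals $\eps_r$, which is exactly the defining condition of $\DDD^{e_r,\eps_r}_{i_r,j_r}$ --- the paper invokes this directly from \cref{Defi:D_gauche_plus}, not via \cref{lem:DiagrammeCommutatif}.
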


\begin{proof}
\emph{(The main notation of the proof is summarized in \cref{Fig:SqueletteCompliqueCaterpillar}.)}\\
\begin{figure}[thbp]
	\begin{center}
		\includegraphics[width=13cm]{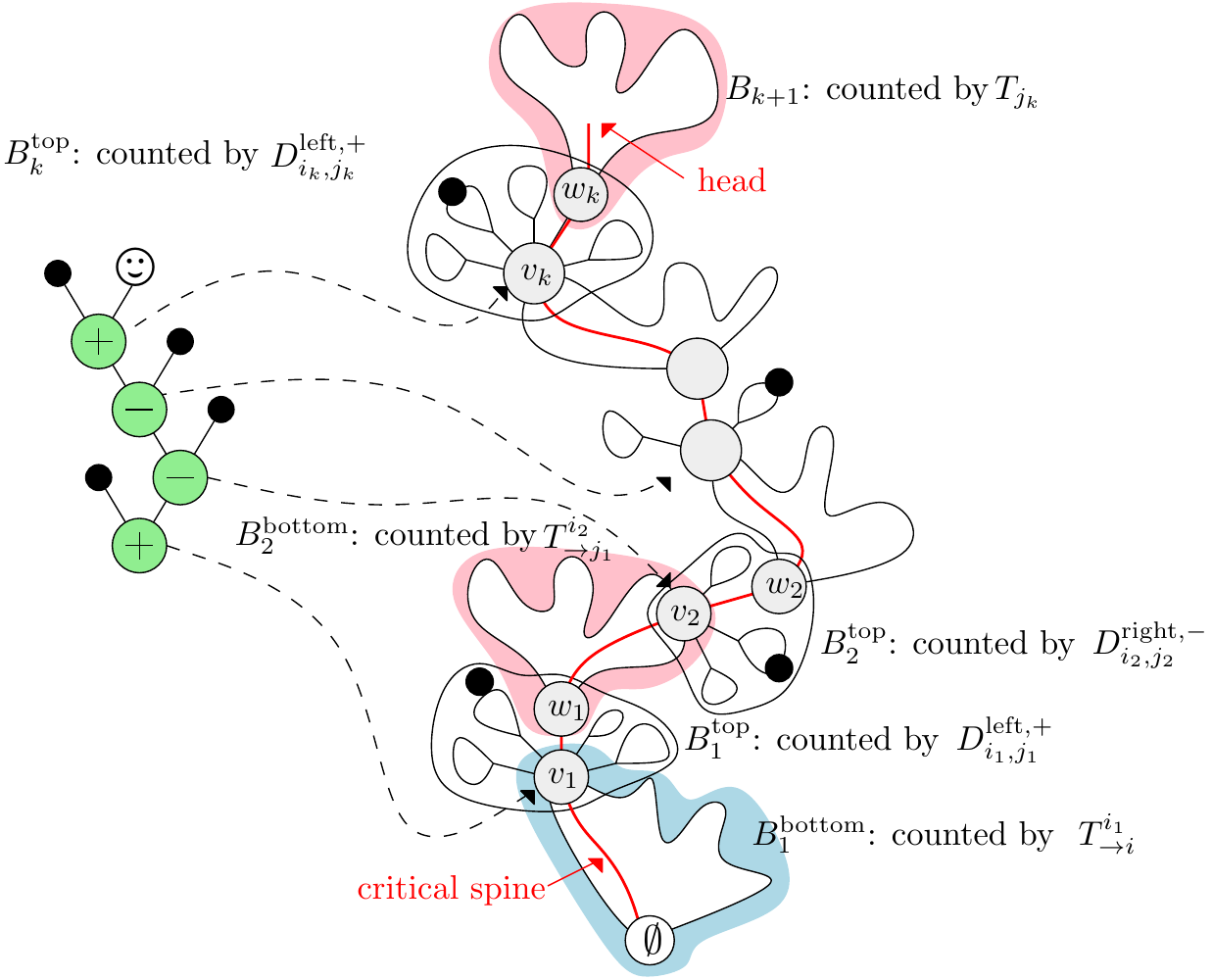}
		\caption{Left: A caterpillar tree $\patterntree$ with $k=4$ regular leaves and one head.
			Right: A schematic view of a tree with $k$ marked leaves in $\TTT_{i,\patterntree}$.
			\label{Fig:SqueletteCompliqueCaterpillar}}
	\end{center}
\end{figure}
We start by fixing some general convention to decompose trees.
Given a node $v$ in a tree, we can split the tree into two parts:
the top part (\emph{i.e.} the fringe subtree rooted at $v$) and the bottom part
(its complement in terms of edges). The node $v$ belongs to both parts.
To be able to reverse the operation without keeping extra information (such as the label of $v$),
we replace $v$ in the bottom part by a blossom.

Let $t\in \TTT_{i,t_0}$.
By definition, the $k$ marked leaves together with the head of $t$
induce the caterpillar $t_0$. 
Denote by $V$ the set of all first common ancestors of these $k+1$ nodes in $t$.
Because the head of $t$ corresponds to the head of $t_0$, all nodes of $V$ belong to the critical spine.
Therefore they can be totally ordered $v_1,\dots,v_k, v_{k+1}$ from the root to the head of $t$ (which is then $v_{k+1}$).
For $1\leq \ell \leq k$ we denote by $w_\ell$ the only child
of $v_\ell$ on the critical spine;
we also denote by $i_\ell$ (resp. $j_\ell$) the type of $v_\ell$ (resp. $w_\ell$) in $t$ in $\TTT_i$.
Then $i_\ell$ and $j_\ell$ belong to $I^\star$.
It is also convenient to set $w_0$ to be the root of $t$ and $j_0=i$ its type.

We now decompose $t$ successively with respect to the nodes $v_1,w_1, \dots, v_k, w_k$.
This results in $2k+1$ pieces that we denote respectively 
$B_1^{\mathrm{bottom}},B_1^{\mathrm{top}},\dots,B_k^{\mathrm{bottom}},B_k^{\mathrm{top}},B_{k+1}$ as follows:
$B_i^{\mathrm{bottom}}$ and $B_i^{\mathrm{top}}$ are the pieces rooted in $w_{i-1}$ and $v_i$, respectively,
while $B_{k+1}$ is the piece rooted at $w_k$.

By construction,
\begin{itemize}
\item $B_{k+1}$ is any tree in $\TTT_{j_k}$;
\item for $1\leq \ell\leq k$ the piece $B_\ell^{\mathrm{bottom}}$ is any tree in the family $\TTT_{\to j_{\ell-1}}^{i_\ell}$;
\item for $1\leq \ell\leq k$ the piece $B_\ell^{\mathrm{top}}$ is any tree in the family $\DDD^{e_\ell,\eps_\ell}_{i_\ell,j_\ell}$.
\end{itemize}
Only the last item needs a justification.
Recall that  $e_\ell\in\{\gauche,\droite\}$  is the position of the $\ell$-th leaf of $t_0$ with respect to its parent.
Since $t \in \TTT_{i,t_0}$, this forces the relative position of the marked leaf of $B_\ell^{\mathrm{top}}$ with respect to its blossom. Similarly, the pattern of the permutation labeling $v_\ell$ induced by the leaf and the blossom must match the sign $\eps_\ell\in\{+,-\}$.

Finally, this correspondence between  $t$ 
and $(B_1^{\mathrm{bottom}},B_1^{\mathrm{top}},\dots,B_k^{\mathrm{bottom}},B_k^{\mathrm{top}},B_{k+1})$ is one-to-one thanks to the unambiguous splitting/gluing procedure at blossoms. Moreover, this correspondence preserves the size (\emph{i.e.} the number of unmarked leaves), as the blossoms are not counted in families 
$\DDD^{e_\ell,\eps_\ell}_{i_\ell,j_\ell}$.

This decomposition translates on generating series as follows: for any $i \in I^\star$,
\begin{equation}\label{eq:MasterEquationPasMatricielle}
T_{i,t_0}=
\sum_{\substack{i_1,\dots,i_k\in I^\star \\ j_1,\dots,j_k\in I^\star}}
T_{\to i}^{i_1}\ D^{e_1,\eps_1}_{i_1,j_1}\ T_{\to j_1}^{i_2}\ D^{e_2,\eps_2}_{i_2,j_2}\ \dots\ T_{\to j_{k-1}}^{i_{k}}\  D^{e_{k},\eps_{k}}_{i_{k},j_{k}}\ T_{j_k}.
\end{equation}
Written in matrix notation this is exactly \eqref{eq:MasterEquation}.
\end{proof}

\subsection{Asymptotics of the main series} \label{Sec:AsymptMainSeriesLinear}

Our goal here is to describe the singular behavior of the series in $\mathbf{T}^\star_{\patterntree}$.
Hence (from \cref{prop:EnumerationCaterpillar}), we need information on  the singular behavior of the series that are the entries of $\mathbf T^\star(z)$ and $\mathbb T_\to^\star(z)$.

The following lemma is a consequence of a general result on linear systems proved in the appendix (\cref{Prop:AsymptotiqueLinear}). Recall that $\rho$ is the common radius of convergence of the critical series.

\begin{lemma}\label{lem:CheckerAppendice}
In the essentially linear case, the system we start from is 
\[\mathbf T^\star(z) = \mathbb M^\star (z) \mathbf T^\star(z) + \mathbf V^\star(z) \text{ with } \mathbb M^\star(z)=\left(\frac{\partial F_i(y_0,\ldots ,y_d)}{\partial y_j}\Big|_{(T_0(z),\dots,T_d(z))}\right)_{i,j\in I^\star}.
\]
Under Hypotheses (SC) and (RC) (p.\pageref{Hyp:RC}), assuming moreover that at least one subcritical series is aperiodic, we have the following results.

  All series in $\mathbb T_\to^\star(z)  = (\Id - \mathbb M^\star)^{-1}$ and $\mathbf T^\star$ are analytic on a $\Delta$-domain at $\rho$.

Moreover, the matrix $\mathbb M^\star(\rho)$ has Perron eigenvalue $1$.
Denoting $\mathbf u$ and $\mathbf v$ the corresponding left and right positive eigenvectors normalized so that $\transpose{\mathbf u} \mathbf v=1$ ($\transpose{\mathbf u}$ stands for the transpose of the vector $\mathbf u$), we also have the following asymptotics near $\rho$:
	\begin{align}
		\mathbb T_\to^\star(z)  = (\Id - \mathbb M^\star(z))^{-1} &\sim \left(\frac {1} {\transpose{\mathbf u}(\mathbb M^\star)'(\rho)\mathbf v}\right) \frac {1} {\rho - z}\ \mathbf v\transpose{\mathbf u}.
		\label{eq:Asymp_Tfleche_linear}
		\\
		\mathbf T^\star(z)  &\sim 
		\left(\frac {\transpose{\mathbf u} \mathbf V^\star(\rho)} {\transpose{\mathbf u}(\mathbb M^\star)'(\rho) \mathbf v }\right) 
		\frac {1} {\rho - z}\ \mathbf v.
		\label{eq:Asymp_T_linear}
	\end{align}
In the above equations $\sim$ stands for coefficient-wise asymptotic equivalence.
Observe that the factors preceding $\tfrac {1} {\rho - z}$ are real numbers.
\end{lemma}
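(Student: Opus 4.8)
The plan is to derive everything from the general result \cref{Prop:AsymptotiqueLinear} on linear systems proved in the appendix, after checking that the hypotheses of that result are met in our setting. First I would record the shape of the system: by \cref{eq:LinearSystem} and \cref{eq:MDeriveesSecondes}, $\mathbb M^\star(z)$ and $\mathbf V^\star(z)$ have nonnegative coefficients, and by Hypothesis (RC) their entries are analytic in a disk of radius strictly larger than $\rho$; so $\rho$ is determined solely by the singularity coming from inverting $\Id-\mathbb M^\star$. The key point to check is that the dependency graph of this linear system on the critical families is strongly connected: this is exactly Hypothesis (SC), since $G^\star$ is the dependency subgraph on the critical vertices. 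Combined with \cref{lem:MonotonieAperiodicite}, the assumption that one subcritical series is aperiodic propagates aperiodicity along edges into all critical families (every critical vertex is reachable, and the subcritical aperiodic series feeds into the system), so the relevant aperiodicity hypothesis of \cref{Prop:AsymptotiqueLinear} holds. This gives the existence of a $\Delta$-domain at $\rho$ on which all entries of $\mathbf T^\star$ and of $\mathbb T_\to^\star = (\Id - \mathbb M^\star)^{-1}$ extend analytically, with a single simple pole at $\rho$.

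Next I would extract the precise singular expansion. At $z=\rho$ the irreducible nonnegative matrix $\mathbb M^\star(\rho)$ has a Perron eigenvalue; the fact that it equals $1$ is forced because $\det(\Id-\mathbb M^\star(z))$ vanishes at $\rho$ (that is the source of the singularity, since $\mathbf V^\star$ is analytic past $\rho$) and, by monotonicity of the Perron eigenvalue of $\mathbb M^\star(z)$ in $z$ together with strong connectedness, $1$ is the Perron eigenvalue and not merely some eigenvalue. Writing $\mathbf u,\mathbf v$ for the left/right Perron eigenvectors normalized by $\transpose{\mathbf u}\mathbf v = 1$, I would expand $\mathbb M^\star(z) = \mathbb M^\star(\rho) + (z-\rho)(\mathbb M^\star)'(\rho) + O((z-\rho)^2)$ near $\rho$ and use the standard perturbation formula for the resolvent of a matrix near a simple eigenvalue: the dominant term of $(\Id - \mathbb M^\star(z))^{-1}$ as $z\to\rho$ is the rank-one matrix $\mathbf v \transpose{\mathbf u}$ divided by $(\rho - z)$ times the correction $\transpose{\mathbf u}(\mathbb M^\star)'(\rho)\mathbf v$, which is exactly \cref{eq:Asymp_Tfleche_linear}. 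Multiplying by the analytic (hence locally constant to leading order) vector $\mathbf V^\star(\rho)$ on the right yields \cref{eq:Asymp_T_linear}, with scalar prefactor $\transpose{\mathbf u}\mathbf V^\star(\rho)/(\transpose{\mathbf u}(\mathbb M^\star)'(\rho)\mathbf v)$. Since all matrices and vectors involved have real (indeed nonnegative) entries, the prefactors are real numbers, and positivity of the Perron eigenvectors ensures the prefactors are well-defined and nonzero; the coefficient-wise asymptotic equivalence then follows by transfer of the dominant simple-pole term.

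The main obstacle I anticipate is not any single computation but the careful matching of the hypotheses of the appendix statement \cref{Prop:AsymptotiqueLinear} with what (SC) and (RC) actually give us: in particular verifying that the analyticity window in (RC) is genuinely strictly larger than $\rho$ for \emph{every} entry of both $\mathbb M^\star$ and $\mathbf V^\star$, so that no spurious branch point of a subcritical series interferes with the pole at $\rho$; and confirming that aperiodicity, which is assumed only for \emph{one} subcritical series, does propagate (via \cref{lem:MonotonieAperiodicite} and the reachability reduction made just before Hypothesis (SC)) to make $\mathbb T_\to^\star$ have $\rho$ as its unique dominant singularity, so that a genuine $\Delta$-domain, and not merely a slit disk, is available for the transfer theorem. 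Once that bookkeeping is done, the eigenvalue-$1$ claim and the two asymptotic equivalences are immediate consequences of classical Perron--Frobenius theory and first-order resolvent perturbation, so I would keep that part brief.
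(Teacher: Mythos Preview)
Your approach is essentially the same as the paper's: verify the hypotheses of \cref{Prop:AsymptotiqueLinear} and then read off all the conclusions. Two small points are worth sharpening. First, the aperiodicity hypothesis of \cref{Prop:AsymptotiqueLinear} is that the g.c.d.\ of the periods of the entries of $\mathbb M^\star$ equals $1$, not that the critical series $T_i$ themselves are aperiodic; the paper's argument follows the path from the given aperiodic subcritical series to the critical component and stops at the \emph{last subcritical} vertex $T_{i_{\ell-1}}$ on that path, which (being subcritical) actually occurs inside some entry of $\mathbb M^\star$ at row $i_\ell$, making that entry aperiodic. Your propagation via \cref{lem:MonotonieAperiodicite} lands on the critical $T_i$'s, which is one step past what is needed and does not by itself say anything about the entries of $\mathbb M^\star$ in the essentially linear case (those entries involve only subcritical series). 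Second, the paper also checks the side conditions $\mathbb M^\star(0)=\mathbf 0$ (from the valuation of the $F_i$ being at least $2$) and $\mathbf V^\star\not\equiv\mathbf 0$, which you omit; and your second paragraph re-deriving the resolvent expansion is redundant, since \cref{Prop:AsymptotiqueLinear} already delivers \cref{eq:Asymp_Tfleche_linear,eq:Asymp_T_linear} directly.
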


\begin{proof}
We check that this system satisfies all hypotheses of \cref{Prop:AsymptotiqueLinear}. 

\begin{itemize}
\item By assumption the system is strongly connected\footnote{This notion on systems is defined in the Appendix only. It is however equivalent to 
the graph $G^\star$ being strongly connected, which is ensured by Hypothesis (SC).} and linear.

\item As the valuation of each $F_i$ is at least 2, $\mathbb M^\star(z)$ involves series of valuation at least $1$ in the $T_i(z)$'s.
Since $T_i(0)=0$ for every $i$, we also have $\mathbb M^\star(0)=\mathbf 0$. 
\item Since $\mathbb M^\star(0)=\mathbf 0$, the matrix $\mathrm{Id}-\mathbb M^\star (z)$ is invertible in the ring of formal series. By
\cref{eq:LinearSystem} we have $\mathbf V^\star (z) = (\mathrm{Id}-\mathbb M^\star (z))\mathbf{T}^\star(z) \neq \mathbf 0$ because $\mathbf{T}^\star(z)$ is not identically zero.
\item Hypothesis (RC) ensures that the radius of convergence of all entries of $\mathbb M^\star$  and $\mathbf V^\star$ is strictly larger than $\rho$.
\item By assumption, there is at least one subcritical series $T_{i_0}$ which is aperiodic. 
Moreover there is a path $T_{i_0}\to T_{i_1}\to \dots \to T_{i_\ell} $ in $G_{\eqref{eq:Specif}}$ from $T_{i_0}$ to the critical strongly connected component (see \cref{Sec:DefLinearBranching}). 
We choose this path such that $T_{i_{\ell-1}}$ is subcritical and $T_{i_{\ell}}$ is critical,
therefore the series $T_{i_{\ell-1}}$ is aperiodic thanks to \cref{lem:MonotonieAperiodicite}. 
And as $T_{i_{\ell-1}}$ appears in at least one coefficient of $\mathbb M^\star$ (at line $i_\ell$)
this ensures that the g.c.d. of the periods of the series in $\mathbb M^\star$ is $1$.
\item Moreover by \cref{eq:Tfleche} (p.\pageref{eq:Tfleche}),  $\mathbb T_\to^\star(z) = (\Id - \mathbb M^\star(z))^{-1}$.
\end{itemize}
\cref{Prop:AsymptotiqueLinear} gives us the desired result.
\end{proof}

\subsection{Probabilities of caterpillars}

For all $e\in \{\gauche,\droite\}$, $\eps\in\{+,-\}$, we set
\begin{equation}\label{eq:DefProbaCaterpillar}
p_\eps^{ e } = \frac{\transpose{\mathbf u} \mathbb{D}^{\eps, e }(\rho) \mathbf v}{ \transpose{\mathbf u} (\mathbb M^\star)'(\rho) \mathbf v},
\end{equation}
where the matrix $\mathbb{D}^{\eps, e }$ is defined according to \cref{Defi:D_gauche_plus}, $\mathbb M^\star$, $\mathbf u$ and $\mathbf v$ are given in \cref{lem:CheckerAppendice}.

Then from \cref{prop:SumOfD},
\begin{equation} \label{eq:SommeDesP}
p_+^{\gauche}+p_+^{\droite}+p_-^{\gauche}+p_-^{\droite} = 1. 
\end{equation}
Hence we can see $\mathbf p = (p_+^{\gauche},p_+^{\droite},p_-^{\gauche},p_-^{\droite})$ 
as a probability distribution on $\{\gauche,\droite\}\times\{+,-\}$. 
We will prove at the end of Section~\ref{Sec:ProofsLinear} that the limiting object of the class $\TTT_i$ (with $i \in I^\star$) 
is the $X$-permuton of parameter $\mathbf p$. 
An important step is the following proposition. 

\begin{proposition}[Occurrences of a given caterpillar] 
Fix $i\in I^\star$ and $k\geq 2$. Consider a uniform random tree with $n$ leaves in $\TTT_i$, in which $k$ leaves are marked, also chosen uniformly at random.
We denote by $\tkinhead$ the tree induced by these $k$ marked leaves and the head of the critical spine.

In the essentially linear case, under Hypotheses (SC) and (RC),
assuming moreover that at least one subcritical series is aperiodic, we have:
\begin{enumerate}
\item The probability that  $\tkinhead$ is a caterpillar tends to $1$ when $n$ tends to infinity.
\item Let $\patterntree$ be a caterpillar with $k$ regular leaves and  with code word $(e_1, \eps_1)\ldots (e_k,\eps_k)$. 
\begin{equation}\label{eq:ProbaCaterpillar}
\proba (\tkinhead = \patterntree) \stackrel{n\to +\infty}{\to} p_{\eps_1}^{e_1}p_{\eps_2}^{e_2}...p_{\eps_k}^{e_k},
\end{equation}
where $p_{\eps}^{e}$'s are defined by \cref{eq:DefProbaCaterpillar}. In particular, the limit does not depend on $i \in I^\star$.
\end{enumerate}
\label{prop:probaCaterpillars}
\end{proposition}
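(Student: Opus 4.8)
The plan is to turn the exact formula of \cref{prop:EnumerationCaterpillar} into coefficient asymptotics by singularity analysis, and then read off the limit laws as ratios of coefficients. The starting point is the elementary identity, valid for every caterpillar $\patterntree$ with $k$ regular leaves,
\[
\proba\big(\tkinhead = \patterntree\big) \;=\; \frac{[z^{\,n-k}]\,T_{i,\patterntree}(z)}{\binom{n}{k}\,[z^{\,n}]\,T_i(z)},
\]
which holds because $T_{i,\patterntree}$ counts, by the number $n-k$ of \emph{unmarked} leaves, exactly the pairs $(t,\SsEnsemble)$ with $|t|=n$ for which the $k$ marked leaves and the head induce $\patterntree$ (head onto head), while the denominator counts all the $\binom{n}{k}[z^n]T_i(z)$ ways to pick a tree of $\TTT_i$ with $n$ leaves together with a $k$-subset of its leaves. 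So everything reduces to the singular behaviour of $T_i$ and of $T_{i,\patterntree}$ near $z=\rho$.

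For the numerator I would substitute into the matrix product $\mathbf T^\star_{\patterntree} = \mathbb T^\star_\to\,\mathbb D^{e_1,\eps_1}\,\mathbb T^\star_\to\,\mathbb D^{e_2,\eps_2}\cdots\mathbb T^\star_\to\,\mathbb D^{e_k,\eps_k}\,\mathbf T^\star$ of \cref{prop:EnumerationCaterpillar} the singular expansions provided by \cref{lem:CheckerAppendice}, namely $\mathbb T^\star_\to(z)\sim C\,\mathbf v\transpose{\mathbf u}/(\rho-z)$ and $\mathbf T^\star(z)\sim C\,(\transpose{\mathbf u}\mathbf V^\star(\rho))\,\mathbf v/(\rho-z)$ with $C = 1/(\transpose{\mathbf u}(\mathbb M^\star)'(\rho)\mathbf v)$, together with the fact that each $\mathbb D^{e,\eps}(z)$ is analytic at $\rho$ (this follows from \cref{prop:SumOfD} under Hypothesis (RC): the entries of $\mathbb D^{e,\eps}$ have nonnegative coefficients and are coefficientwise dominated by $\tfrac{\partial}{\partial z}\mathbb M^\star_{i,j}$, whose radius of convergence exceeds $\rho$). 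The crucial simplification is that the singular part of every $\mathbb T^\star_\to$ factor is the \emph{same} rank-one matrix $\mathbf v\transpose{\mathbf u}$; telescoping the product through the scalars $\transpose{\mathbf u}\,\mathbb D^{e_\ell,\eps_\ell}(\rho)\,\mathbf v$, one gets, on a common $\Delta$-domain at $\rho$,
\[
T_{i,\patterntree}(z) \;=\; v_i\Big( C^{\,k+1}\,\transpose{\mathbf u}\mathbf V^\star(\rho)\,\textstyle\prod_{\ell=1}^{k}\transpose{\mathbf u}\,\mathbb D^{e_\ell,\eps_\ell}(\rho)\,\mathbf v\Big)\frac{1}{(\rho-z)^{k+1}} \;+\; O\!\Big(\frac{1}{(\rho-z)^{k}}\Big),
\]
the error term also covering the degenerate situation in which the displayed leading coefficient happens to vanish.

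A transfer theorem (\cite{Violet}) then yields $[z^{n-k}]T_{i,\patterntree}(z) = \frac{v_i}{k!}\big(C^{k+1}\transpose{\mathbf u}\mathbf V^\star(\rho)\prod_\ell \transpose{\mathbf u}\mathbb D^{e_\ell,\eps_\ell}(\rho)\mathbf v\big)\,n^{k}\rho^{-n-1}(1+o(1))$ (and $[z^{n-k}]T_{i,\patterntree}(z)=o(n^k\rho^{-n})$ when the leading coefficient is $0$), while $[z^n]T_i(z) \sim v_i\,C\,\transpose{\mathbf u}\mathbf V^\star(\rho)\,\rho^{-n-1}$ and $\binom{n}{k}\sim n^k/k!$. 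Plugging these into the identity above, the common factors $v_i$, $\transpose{\mathbf u}\mathbf V^\star(\rho)$, $n^k/k!$ and $\rho^{-n-1}$ all cancel, and I obtain $\proba(\tkinhead = \patterntree) \to C^{k}\prod_{\ell=1}^{k}\transpose{\mathbf u}\,\mathbb D^{e_\ell,\eps_\ell}(\rho)\,\mathbf v = \prod_{\ell=1}^{k} p_{\eps_\ell}^{e_\ell}$, the last equality being the definition \eqref{eq:DefProbaCaterpillar} of $p_{\eps}^{e}$; this proves assertion (ii). Assertion (i) then comes for free: summing \eqref{eq:ProbaCaterpillar} over the $4^k$ caterpillars of size $k$ (a finite sum),
\[
\proba\big(\tkinhead \text{ is a caterpillar}\big) \;=\; \sum_{\patterntree}\proba\big(\tkinhead = \patterntree\big)\;\xrightarrow[n\to\infty]{}\;\sum_{\patterntree}\prod_{\ell=1}^{k}p_{\eps_\ell}^{e_\ell} \;=\;\prod_{\ell=1}^{k}\Big(\sum_{e,\eps}p_\eps^{e}\Big)\;=\;1
\]
by \eqref{eq:SommeDesP}, and since the left-hand side is $\le 1$ it must tend to $1$.

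The step I expect to require the most care is the matrix-product asymptotics: one must check that multiplying the coefficientwise equivalences of \cref{lem:CheckerAppendice} is legitimate with a \emph{uniform} $O(1/(\rho-z)^{k})$ remainder on one common $\Delta$-domain — which relies on each of $\mathbb T^\star_\to$, $\mathbf T^\star$ and the $\mathbb D^{e,\eps}$ being, near $\rho$, $\Delta$-analytic with at most a simple pole there, information packaged in the appendix behind \cref{lem:CheckerAppendice} — and one must keep track of the degenerate case where the leading coefficient of $T_{i,\patterntree}$ collapses to $0$. The rest is routine singularity analysis.
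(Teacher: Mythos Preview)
Your proof is correct and follows essentially the same route as the paper: the paper writes the probability as $[z^{n-k}]T_{i,\patterntree}\big/[z^{n-k}]\tfrac{1}{k!}T_i^{(k)}$, which is exactly your $[z^{n-k}]T_{i,\patterntree}\big/\binom{n}{k}[z^n]T_i$, and then plugs the rank-one asymptotics of \cref{lem:CheckerAppendice} into the matrix product of \cref{prop:EnumerationCaterpillar} to extract the leading $(\rho-z)^{-(k+1)}$ term before applying transfer. Your explicit tracking of the $O((\rho-z)^{-k})$ remainder and of the degenerate case where some $p_\eps^e=0$ is a bit more careful than the paper's write-up, but the argument is the same.
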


\begin{proof}
Since the right-hand side of \cref{eq:ProbaCaterpillar}, summed among all code words of caterpillars of size $k$, add up to $1$ (see \cref{eq:SommeDesP}), the first item is an immediate consequence of the second item.

Let us prove \cref{eq:ProbaCaterpillar}.
	Fix a caterpillar $\patterntree$ with $k$ regular leaves and code word $(e_1, \eps_1)\ldots (e_k,\eps_k)$.
    We claim that
    \begin{equation}
      \proba (\tkinhead = \patterntree) = \frac {[z^{n-k}] T_{i,t_0}}{[z^{n-k}] \frac 1 {k!} T_i^{(k)}}.
      \label{eq:prob_t0_linear}
 \end{equation}
	Indeed, the numerator is the number of trees in $\TTT_i$ with $n$ leaves,
    among which $k$ unordered leaves are marked and induce, together with the head of the spine,
     the caterpillar $t_0$
     (recall that the exponent of $z$ in $T_{i,t_0}$ is the number of unmarked leaves,
     here $n-k$).
     Similarly,
     the denominator is the total number of trees in $\mathcal T_i$ with $n$ leaves including $k$ unordered  marked leaves.
The quotient is therefore the probability that $k$ unordered marked leaves
in a uniform random tree with $n$ leaves in $\TTT_i$ induce $t_0$, as claimed.

	We want to apply the transfer theorem (\cref{thm:transfert}) to the series $T_{i,\patterntree}$ and $\frac {T_i^{(k)}}{k!}$.\smallskip

    We first justify that $T_{i,t_0}$ and $T_i^{(k)}$ have radius of convergence $\rho$
    and are $\Delta$-analytic at $\rho$.
    For $T_i^{(k)}$, this follows from the first claim of \cref{lem:CheckerAppendice}.
   For $T_{i,t_0}$, we need to use this same lemma,
   together with \cref{eq:MasterEquationPasMatricielle} and the analyticity of $D^{e, \eps }_{i,j}$ 
   at $\rho$ (\cref{prop:SumOfD}).

	We now establish the asymptotics of these series near $\rho$.
	Recall \cref{eq:MasterEquation}:
    \[
		\mathbf T^\star_{\patterntree}
		=\TT_\to^\star \mathbb{D}^{e_1,\eps_1} \TT_\to^\star \mathbb{D}^{e_2,\eps_2} \dots \TT_\to^\star \mathbb{D}^{e_k,\eps_k} \mathbf T^\star \label{eq:ProduitDegueu}.
        \]

	We can plug in the value of the series $D^{ e ,\eps}_{i,j}$'s, since they converge at $\rho$ from \cref{prop:SumOfD},
	and the asymptotics  near $\rho$ of $\TT_\to^\star$ and $\mathbf T^\star$ (see \cref{eq:Asymp_Tfleche_linear,eq:Asymp_T_linear}). We get
	\begin{align}
      \mathbf T^\star_{\patterntree} &\stackrel{z\to\rho}{\sim}
		\frac {1} {(\rho-z)^{k+1}} 
		\left(\frac { 1}{\transpose{\mathbf u}  (\mathbb M^\star)'(\rho) \mathbf v}\right) \mathbf v\transpose{\mathbf u}\ 
		\mathbb{D}^{e_1,\eps_1}(\rho) \left( \frac {1}{\transpose{\mathbf u} (\mathbb M^\star)'(\rho) \mathbf v}\right) \mathbf v\transpose{\mathbf u}\ \mathbb{D}^{e_2,\eps_2}(\rho) \nonumber \\ 	
  &\ \hspace{9cm} \dots \mathbb{D}^{e_k,\eps_k}(\rho)\ \mathbf v \left(\frac {\transpose{\mathbf u}\mathbf V^\star(\rho)}{\transpose{\mathbf u} (\mathbb M^\star)'(\rho) \mathbf v}\right) \nonumber\\
		&= \frac {1} {(\rho-z)^{k+1}} 
		\left(\frac { 1}{\transpose{\mathbf u} (\mathbb M^\star)'(\rho) \mathbf v}\right) \mathbf v
		 \left(\prod_{\ell=1}^{k}\frac{ \transpose{\mathbf u}\mathbb{D}^{e_\ell,\eps_\ell}(\rho)\mathbf v}{\transpose{\mathbf u} (\mathbb M^\star)'(\rho) \mathbf v}\right)
		\transpose{\mathbf u} \mathbf V^\star(\rho)\nonumber\\
		&= \frac {1} {(\rho-z)^{k+1}} \frac {\transpose{\mathbf u} \mathbf V^\star(\rho)}{\transpose{\mathbf u} (\mathbb M^\star)'(\rho) \mathbf v}
		\left(\prod_{\ell=1}^{k} p^{e_\ell}_{\eps_\ell}\right) \ \mathbf v.
        \label{eq:Asympt_Tto_Linear}
	\end{align}

	We turn to $\frac {T_i^{(k)}}{k!}$. From \cref{eq:Asymp_T_linear},
 	applying singular differentiation \cite[Thm. VI.8 p. 419]{Violet} to each entry of $\mathbf{T}^\star$ we obtain
	\begin{align*}
	\frac 1 {k!} (\mathbf{T}^\star)^{(k)}(z)&\stackrel{z\to\rho}{\sim} \frac {1} {(\rho-z)^{k+1}} \left(\frac {\transpose{\mathbf u} \mathbf V^\star(\rho)}{\transpose{\mathbf u} (\mathbb M^\star)'(\rho) \mathbf v}\right) \ \mathbf{v}.
	\end{align*}
	\smallskip

Applying the transfer theorem (\cref{thm:transfert}) to $T_{i,t_0}$ and $\tfrac 1 {k!} T_i^{(k)}$ yields
\[\frac {[z^{n-k}] T_{i,t_0}}{[z^{n-k}] \frac 1 {k!} T_i^{(k)}} \xrightarrow[n\to\infty]{} \prod_{\ell=1}^{k} p^{e_\ell}_{\eps_\ell},\]
concluding the proof.
\end{proof}

\subsection{Permutations induced by the $X$-permuton}\label{SsSec:MarginalesXPermutons}

The $X$-permuton $\mu^{X}_{\mathbf p}$ was defined in \cref{Def:XPermuton}.
In this section we describe the permutations induced by the $X$-permuton,
\emph{i.e.}, for each $k \ge 1$, the random permutation formed by $k$
independent points in $[0,1]^2$ with common distribution $\mu^{X}_{\mathbf p}$. 

For a set $\{(x_i,y_i), 1\leq i \leq k\}$ of $k$ points in the unit square 
(assumed to have pairwise distinct $x$- (resp. $y$-)coordinates), we denote 
by $\perm(\{(x_i,y_i), 1\leq i \leq k\})$ the permutation whose diagram is the (suitably normalized) set of these points. 

We start by a lemma, illustrated in \cref{fig:XPermuton_CaterpillarInduit}.
\begin{lemma}
  \label{lem:PermPointsX}
	Let $(e_1,\eps_1)\ldots (e_k,\eps_k)$ be the code word of a caterpillar $\patterntree$.
    Fix $(a,b)\in (0,1)^2$, $0<u_1<\ldots<u_k<1$ and set
	\begin{equation}
	(x_i,y_i) = (1-u_i)z^{e_i}_{\eps_i} + u_i (a,b),
	\quad 1\leq i \leq k
	\label{eq:X_permuton}
	\end{equation}
	Then $\perm(\{(x_i,y_i), 1\leq i \leq k\}) = \perm(\Reduc(\patterntree))$.
\end{lemma}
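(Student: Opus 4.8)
The plan is to verify directly, by induction on $k$, that the points defined in \eqref{eq:X_permuton} realize exactly the permutation $\perm(\Reduc(\patterntree))$. First I would set up the geometry: the four corners $z_\eps^e$ together with the central point $(a,b)$ determine four open segments, and the point $(x_i,y_i)$ lies on the segment joining $z_{\eps_i}^{e_i}$ to $(a,b)$, at relative position $u_i$ (with $u_i=0$ being the corner and $u_i=1$ the center). The key elementary observations are: (i) on a fixed segment, the $x$-coordinate and $y$-coordinate are each affine (hence monotone) functions of $u$, so among points on the \emph{same} segment, their relative order in $x$ and in $y$ is governed by the order of the $u_i$'s and by whether that segment goes left-to-right or right-to-left, up or down — precisely the data $(e_i,\eps_i)$; (ii) among points on \emph{different} segments, the comparison of $x$-coordinates (resp. $y$-coordinates) is forced: a point on a "left" segment ($e=\gauche$, i.e.\ starting at a corner with first coordinate $0$) has smaller $x$-coordinate than a point on a "right" segment, since all four segments only meet at $(a,b)$ and the $u_i$ are strictly less than $1$; similarly "$+$'' versus "$-$'' controls the $y$-comparison across segments with the same $e$.

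Next I would translate these geometric facts into the combinatorial statement. Recall (from the discussion preceding \cref{Fig:Caterpillar}) that $\Reduc(\patterntree)$ is obtained from the caterpillar by contracting the head and its sibling into a single leaf $\ell_k$; concretely $\perm(\Reduc(\patterntree))$ is the substitution of points along the spine, where at the $r$-th internal node $v_r$ the regular leaf $\ell_r$ is placed left or right of the remaining subtree according to $e_r$, with the local pattern $\oplus$ or $\ominus$ according to $\eps_r$. Reading the code word $(e_1,\eps_1)\ldots(e_k,\eps_k)$ from the root, this describes exactly a nested sequence of "put point $i$ in the $e_i$-side, $\eps_i$-corner, relative to all points $i{+}1,\dots,k$''. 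I would then check that the point configuration \eqref{eq:X_permuton} satisfies the same nested description: because $u_1<\dots<u_k$, the point with the smallest index is the one "closest to its corner'', and observations (i)–(ii) above show that point $1$ sits in the $e_1$-half and $\eps_1$-half relative to the cluster of points $2,\dots,k$ (which all lie strictly between, in both coordinates), with the pattern it forms with that cluster being $\oplus$ or $\ominus$ as dictated by $\eps_1$. Removing point $1$ and rescaling, points $2,\dots,k$ form a configuration of the same type for the truncated code word $(e_2,\eps_2)\ldots(e_k,\eps_k)$, so the induction closes.

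For the base case $k=1$ there is a single point and $\perm(\Reduc(\patterntree))$ is the permutation $1$, which is trivially correct. The one point requiring a little care — and the place I expect to spend the most effort — is the cross-segment comparison, i.e.\ making observation (ii) fully rigorous: one must check that for $(x_i,y_i)$ on the segment to $z_{\eps_i}^{e_i}$ and $(x_j,y_j)$ on the segment to $z_{\eps_j}^{e_j}$ with $e_i=\gauche$, $e_j=\droite$, one always has $x_i<x_j$, using $0<a<1$ and $u_i,u_j\in(0,1)$; this is a short computation ($x_i=u_i a<a$ when... actually $x_i$ ranges in $[0,a)$ and $x_j$ in $(a,1]$, wait — $x_i = (1-u_i)\cdot 0 + u_i a = u_i a < a$ and $x_j = (1-u_j)\cdot 1 + u_j a = 1-u_j(1-a) > a$), but it must be done for each of the four corner/side combinations and for both coordinates. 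I would also note that the hypothesis $(a,b)\in(0,1)^2$ (strict) is exactly what guarantees all the relevant coordinates are distinct, so $\perm$ of the configuration is well-defined. Once these comparisons are tabulated, the induction is routine and the lemma follows.
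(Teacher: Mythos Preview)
Your induction is correct and gives a genuinely different proof from the paper's. The paper argues globally: it fixes the $x$-sorting permutation $\alpha$, then proves by case analysis the two pairwise equivalences
\[
\forall\, i<j,\quad (e_i=\gauche)\iff x_i<x_j,\qquad (\eps_{\min(\alpha(i),\alpha(j))}=-)\iff \tau(i)>\tau(j),
\]
and checks that the depth-first labeling of the leaves of $\Reduc(\patterntree)$ satisfies the identical pair of equivalences, forcing $\tau=\perm(\Reduc(\patterntree))$. Your argument instead peels off the root: because $u_1$ is smallest, point $1$ is the one closest to its corner $z_{\eps_1}^{e_1}$, hence extremal in both coordinates in exactly the direction that makes the configuration decompose as $\eps_1[\,\cdot\,,\,\cdot\,]$ with leaf $\ell_1$ on side $e_1$; the remaining points $2,\dots,k$ are literally of the form~\eqref{eq:X_permuton} for the truncated code word (same $(a,b)$, same $u_2<\dots<u_k$, so no rescaling is actually needed), and the induction closes. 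Your route mirrors the recursive definition of $\perm$ and of $\Reduc$, which makes the identification with $\perm(\Reduc(\patterntree))$ immediate; the paper's route is non-recursive and instead makes the role of $\min(i,j)$ (the index of the first common ancestor on the spine) explicit.

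One small point to tighten: your observation (ii) as stated only covers $y$-comparisons across segments with the \emph{same} $e$, but the extremality of $y_1$ also requires the cross-$e$ cases. For instance $(\gauche,+)$ and $(\droite,-)$ both have second corner coordinate $0$, so both give $y=ub$ and the comparison genuinely uses $u_1<u_j$, not just the segment labels. You clearly intend to do the full four-corner, two-coordinate check anyway, so this is an imprecision in the summary rather than a gap in the argument.
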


\begin{figure}[htbp]
	\begin{center}
		\includegraphics[width=15cm]{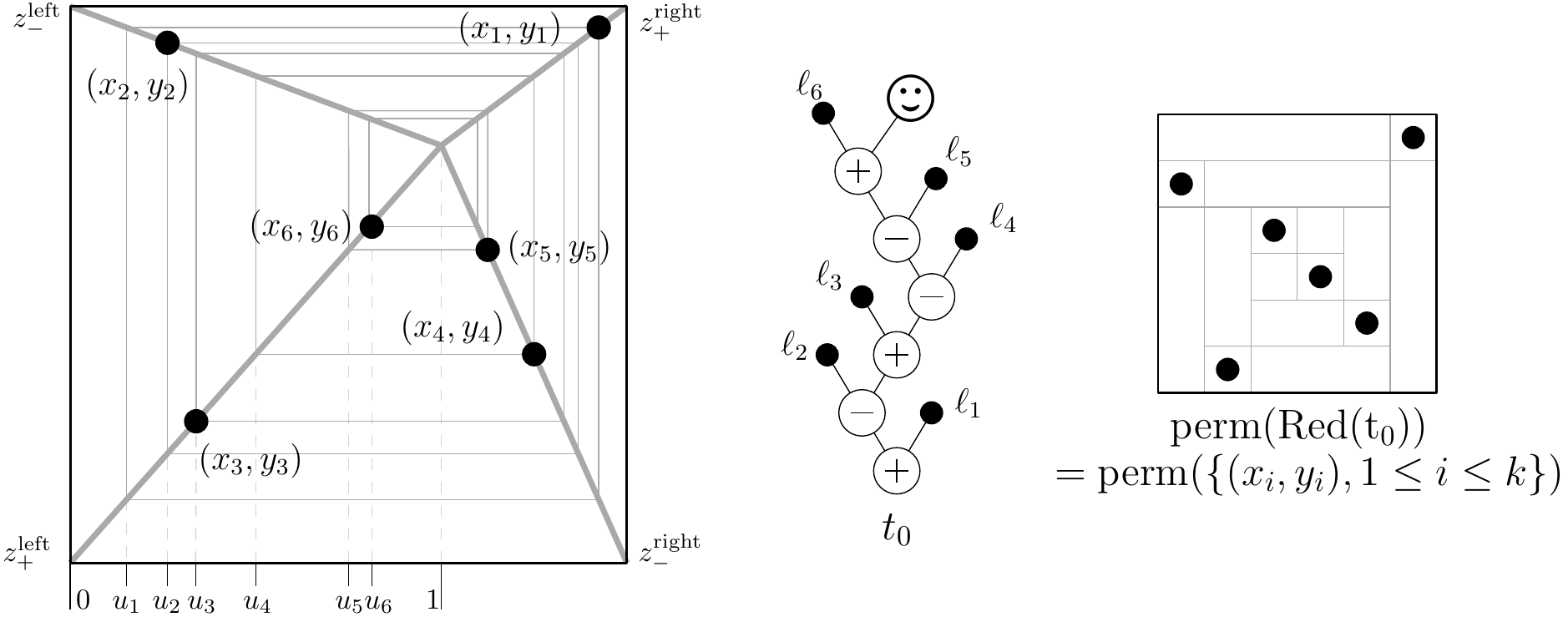}
	\end{center}
	\caption{An example illustrating \cref{lem:PermPointsX}, with a caterpillar of code word 
	 $((\droite,+), (\gauche,-), (\gauche,+), (\droite,-) ,(\droite,-), (\gauche,+))$.
	 \label{fig:XPermuton_CaterpillarInduit}}
\end{figure}
\begin{proof}
	Let $\tau = \perm(\{(x_i,y_i), 1\leq i \leq k\})$. Let $\alpha$ be the permutation such that $x_{\alpha(1)}<\ldots <x_{\alpha(k)}$. Then by definition 
	\[\forall 1\leq i<j \leq k,\quad \tau(i)>\tau(j) \iff y_{\alpha(i)}>y_{\alpha(j)}.\]
	By case analysis, from \cref{eq:X_permuton}, we can prove that 
	\begin{equation}
	\forall \ 1\leq i<j \leq k, \quad (e_i = \gauche)\iff x_i<x_j \iff \alpha^{-1}(i)<\alpha^{-1}(j).
	\label{eq:X_permuton_tech1}
	\end{equation}
	Similarly, and again by case analysis from \cref{eq:X_permuton}, we can prove that for 
    $1\leq i<j\leq k$, we have  $$(\eps_i = -) \iff (x_j-x_i)(y_j-y_i)<0.$$ Hence for $1\leq i<j\leq k$, $\eps_{\min(\alpha(i), \alpha(j))} = - $ if and only if $(x_{\alpha(j)}-x_{\alpha(i)})(y_{\alpha(j)}-y_{\alpha(i)})<0$, which reduces to  $y_{\alpha(j)}<y_{\alpha(i)}$. All in all, we have shown
	\begin{equation}
	\forall \ 1\leq i<j\leq k,\quad \eps_{\min(\alpha(i), \alpha(j))} = - \iff \tau(i)> \tau(j).
	\label{eq:X_permuton_tech2}
	\end{equation}
	
	Now let $\pi = \perm(\Reduc(\patterntree))$ and denote $\ell_{\gamma(1)},\ldots, \ell_{\gamma(k)}$ the reordering of the leaves of $t_0$ according to the depth-first search. 
    By definition of $t_0$, for $1\leq i<j \leq k$, the following equivalence holds:
    $(\gamma^{-1}(i)<\gamma^{-1}(j)) \iff (e_i = \gauche)$. Together with \cref{eq:X_permuton_tech1}, this shows $\gamma = \alpha$.
	
	Finally, looking at the way the permutation $\pi$ is constructed, we see that for $1\leq i < j \leq k$, $\pi(j)<\pi(i)$ if and only if there is a sign $\ominus$ on the first common ancestor $v_{\min(\gamma(i), \gamma(j))}$ of $\ell_{\gamma(i)}$ and $\ell_{\gamma(j)}$, if and only if $\eps_{\min(\gamma(i), \gamma(j))} = -$. Since $\gamma = \alpha$, together with \cref{eq:X_permuton_tech2}, this shows $\pi = \tau$ , \emph{i.e.} the lemma.
\end{proof}
 
Recall from \cref{ssec:permutons_intro} some notation regarding permutons.
For a fixed permuton ${\mu}$ and a fixed integer $k$, we denote by $(\XX,\YY)$ a $k$-tuple of i.i.d. points distributed according to ${\mu}$. This $k$-tuple, seen as a \emph{set} of points in the unit square, induces a permutation $\perm(\{(\bm x_i,\bm y_i), 1\leq i \leq k\})$ that we denote  $\InducedPerm_k(\mu)$.

\begin{proposition}
\label{prop:MarginalsXPermuton}
For every $k\geq 1$, 
we have
$$
\InducedPerm_k(\mu^{X}_{\mathbf p}) \stackrel{\mathrm{(d)}}{=}\perm(\Reduc(\bm t_0)),
$$
where $\bm t_0$ is a random caterpillar whose code word is a $k$-uple of i.i.d. random variables of distribution $\mathbf p$.
\end{proposition}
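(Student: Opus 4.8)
The plan is to compute the distribution of $\InducedPerm_k(\mu^{X}_{\mathbf p})$ directly from the definition of the $X$-permuton and compare it termwise with the distribution of $\perm(\Reduc(\bm t_0))$. First I would sample $k$ i.i.d.\ points $(\bm x_i,\bm y_i)$ from $\mu^{X}_{\mathbf p}$. By \cref{Def:XPermuton}, each point lies (a.s.)\ on exactly one of the four segments $[z^{e}_{\eps},(a,b)]$, and it falls on the segment indexed by $(e,\eps)$ with probability $p^{e}_{\eps}$; conditionally on the segment, its position along it is governed by an absolutely continuous law on $[0,1]$ (the pushforward of the one-dimensional Lebesgue measure under the affine parametrization). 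The key observation is that the choice of segment for point $i$ and the position along it are independent across $i$, and the vector of segment-labels $((\bm e_1,\bm\eps_1),\dots,(\bm e_k,\bm\eps_k))$ is i.i.d.\ of law $\mathbf p$ — this matches exactly the code word of the random caterpillar $\bm t_0$.

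Next I would condition on the labels $((e_i,\eps_i))_{1\le i\le k}$ and on the parameters $0<\bm u_1<\dots<\bm u_k<1$ obtained by sorting the (a.s.\ distinct) position-parameters of the $k$ points. In this conditional world, the points are precisely of the form \eqref{eq:X_permuton}, i.e.\ $(\bm x_i,\bm y_i)=(1-\bm u_i)z^{e_i}_{\eps_i}+\bm u_i(a,b)$ — here one has to be slightly careful about the correspondence between "position along the segment $[z^e_\eps,(a,b)]$" and the parameter $\bm u_i$, checking that after relabeling the points so that their $\bm u$-values are increasing, the tuple has exactly the shape required by \cref{lem:PermPointsX}. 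Then \cref{lem:PermPointsX} applies verbatim and gives that the induced permutation equals $\perm(\Reduc(\patterntree))$, where $\patterntree$ is the caterpillar with code word $(e_1,\eps_1)\dots(e_k,\eps_k)$. Crucially the conclusion of \cref{lem:PermPointsX} does not depend on the actual values of $a$, $b$ or the $\bm u_i$'s, so the conditioning on those variables is irrelevant and can be integrated out.

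Putting the two steps together: $\InducedPerm_k(\mu^{X}_{\mathbf p})$ equals $\perm(\Reduc(\bm t_0))$ where $\bm t_0$ is the caterpillar whose code word is the i.i.d.-$\mathbf p$ label vector, which is exactly the claimed identity in distribution. The only genuine subtlety — the step I expect to be the main obstacle — is the bookkeeping at the boundary: one must justify that almost surely no point lands at the common center $(a,b)$ or at a corner $z^{e}_{\eps}$, that the $\bm x_i$'s (resp.\ $\bm y_i$'s) are pairwise distinct so that $\perm$ is well defined, and that the affine parametrization really sends the relevant one-dimensional Lebesgue measure to a non-atomic law on the parameter; all of these follow from $\mathbf p$ summing to $1$, the segments being genuine (nondegenerate) segments when the corresponding $p^e_\eps>0$, and a union-bound over the finitely many exceptional sets, but they need to be stated. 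Everything else is a direct unwinding of \cref{lem:PermPointsX} and the definition of the Brownian-style sampling of $\bm t_0$.
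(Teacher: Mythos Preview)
Your proposal is correct and follows essentially the same route as the paper: represent each sampled point as a pair (segment label in $\{\gauche,\droite\}\times\{+,-\}$, position parameter in $[0,1]$), sort by the position parameter, and apply \cref{lem:PermPointsX}. The one point the paper makes explicit that you leave implicit is why the code word obtained \emph{after} reordering by increasing $\bm u$-values is still an i.i.d.-$\mathbf p$ sequence: this holds because the sorting permutation depends only on the $\bm u_i$'s, which are independent of the labels, so shuffling an i.i.d.\ sequence by an independent permutation preserves its law.
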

The fact that $\InducedPerm_k(\mu^{X}_{\mathbf p})$ is a permutation encoded by the reduced tree of a caterpillar is illustrated in \cref{fig:XPermuton_CaterpillarInduit}.

\begin{proof}
	Because of the construction of ${\mu}^X_{\mathbf p}$, an i.i.d sequence $((\bm x_1,\bm y_1),\ldots, (\bm x_k,\bm y_k))$ drawn according to ${\mu}^X_{\mathbf p}$ can be represented as 
	\[(\bm x_i,\bm y_i) = (1-\bm u_i)z^{\bm e_i}_{\bm \eps_i} + \bm u_i (a,b), \quad 1\leq i \leq k,\]
	where $\bm u_1, \ldots,\bm u_k$ are uniform in $[0,1]$,  
    $(\bm e_1, \bm \eps_1),\ldots ,(\bm e_k, \bm \eps_k)$ are random variables 
    according to the measure $\mathbf p$, all of these being independent from each other.
    By definition  $\InducedPerm_k(\mu^X_{\mathbf p})$ is distributed like the permutation $\perm(\{(\bm x_i,\bm y_i), 1\leq i \leq k\})$.
    
    Consider the permutation $\bm \sigma$ such that $\bm u_{\bm \sigma(1)}<\ldots < \bm u_{\bm \sigma(k)}$.
    Clearly,
    \[\perm(\{(\bm x_i,\bm y_i), 1\leq i \leq k\})=\perm(\{(\bm x_{\bm \sigma(i)},\bm y_{\bm \sigma(i)}), 1\leq i \leq k\}),\]
    and from \cref{lem:PermPointsX}, this is the permutation associated to the caterpillar whose code word is
    $(\bm e_{\bm \sigma(1)}, \bm \eps_{\bm \sigma(1)})\ldots (\bm e_{\bm \sigma(k)}, \bm \eps_{\bm \sigma(k)})$.
But the sequence $((\bm e_{\bm \sigma(i)},\bm \eps_{\bm \sigma(i)}))_{1\leq i \leq k}$ is 
    an i.i.d. sample of the measure $\mathbf p$.
    Indeed, it is a shuffling of an i.i.d. sequence by the independent random permutation $\bm \sigma$.
    This concludes the proof.
\end{proof}

\subsection{Back to permutations and conclusion of the proof of \cref{Th:linearCase}}
  We can now conclude the proof of the main theorem for the essentially linear case. 
\begin{proof}[Conclusion of the proof of \cref{Th:linearCase}]
Consider a tree specification \eqref{eq:Specif} satisfying the hypotheses of \cref{Th:linearCase}. 
Let $i\in I^\star$ be the index of a critical family and let $k\geq 1$.
Finally, we let
$\tkin$ the random subtree induced by $k$ uniform random leaves
in a uniform random tree with $n$ leaves in $\TTT_i$.
Comparing with the notation of \cref{prop:probaCaterpillars}, 
we have $\tkin=\Reduc(\tkinhead)$.

Moreover, we denote by $\bm \sigma_n$ a uniform permutation of size $n$ in $\TTT_i$
and $\bm I_{n,k}$ an independent uniform subset of $[1,n]$ of size $k$.
Thanks to \cref{lem:DiagrammeCommutatif}, we have
\[\pat_{ {\bm I}_{n,k}}(\bm\sigma_n)= \perm(\tkin).\]

According to \cref{prop:probaCaterpillars}, as $n\to\infty$,
$\tkinhead$ 
 converges in distribution to the caterpillar $\bm t_0$,
whose code word is given by a $k$-tuple of i.i.d. random variables of distribution $\mathbf p$.
Therefore we have the following convergence in distribution
$$
\pat_{{\bm I}_{n,k}}(\bm\sigma_n)= \perm(\tkin) = \perm(\Reduc(\tkinhead))
\stackrel{n\to +\infty}{\longrightarrow} \perm(\Reduc(\bm t_0)).
$$
\cref{Th:linearCase} then follows from \cref{thm:randompermutonthm} (characterization of convergence of random permutons) and \cref{prop:MarginalsXPermuton} (giving the distribution of $\InducedPerm_k(\mu^{X}_{\mathbf p})$).
\end{proof}

\section{The essentially branching case}\label{sec:branching}

\subsection{Tree decomposition}
Following the same strategy as in Section~\ref{Sec:ProofsLinear}, 
we are first aiming at an analogue of \cref{prop:EnumerationCaterpillar}, 
which gives the generating function of trees of type $\TTT_i$ 
with $k$ marked leaves inducing a given subtree. 
This will be obtained in \cref{prop:EnumerationBranchingCase} below.
To state it, we need to consider \emph{doubly blossoming trees} 
in addition to the (simply) blossoming trees already defined for the essentially linear case. 

\begin{definition}\label{def:doublyBlossomingTree}
	For $0\leq i,j,j'\leq d$, we define $\HHH_{\to i}^{j,j'}$ as the 
	family of trees $t$ with an ordered pair of marked leaves $(\ell_1, \ell_2)$, called the {\em first} and {\em second blossoms}, 
	which are required to be children of the root of $t$, 
	such that the tree obtained by replacing $\ell_1$ by a tree of $\TTT_j$ and $\ell_2$ 
	by a tree of $\TTT_{j'}$ belongs to $\TTT_i$, 
	with the additional condition that the type in $\TTT_i$ of the node that used to be $\ell_1$ (resp. $\ell_2$) is $j$ (resp. $j'$).
\end{definition}
Similarly to the case of (simply) blossoming trees, 
in general, a tree in  $\HHH_{\to i}^{j,j'}$ does not belong to  $\TTT_{i}$.

\begin{definition}\label{def:Eij}
	Let $i,j,j' \in I^\star$ and $\eps\in\{+,-\}$. The class $\mathcal E_{ijj'}^{\eps}$ is the class of doubly blossoming trees in the class $\mathcal H_{\to i}^{j,j'}$ with the following additional conditions:
	\begin{itemize}
		\item the first blossom is to the left of the second blossom;
		\item the pattern induced by the two blossoms on the permutation labeling the root is $12$ if $\eps=+$ and $21$ if $\eps=-$.
	\end{itemize}
We denote by $E_{ijj'}^{\eps}(z)$ the corresponding generating series.
\end{definition}

\begin{proposition}
	\label{prop:SumOfE}
For every $i,j,j'\in I^\star$, we have that
	\begin{equation*}
	E_{ijj'}^{+}(z) + E_{ijj'}^{-}(z) + E_{ij'j}^{+}(z) + E_{ij'j}^{-}(z)\ =\ H_{\to i}^{j,j'}(z)\ =\
        \frac {\partial^2 F_i(y_0, \dots, y_d)}{\partial y_j \partial y_{j'}} \Big|_{(T_0(z), \dots, T_d(z))}.
	\end{equation*}
\end{proposition}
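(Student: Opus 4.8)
The plan is to establish both equalities by combinatorial bijections, mirroring the argument used for \cref{prop:SumOfD}. For the left equality, I would argue that $\mathcal E_{ijj'}^{+}$, $\mathcal E_{ijj'}^{-}$, $\mathcal E_{ij'j}^{+}$, $\mathcal E_{ij'j}^{-}$ form a partition of $\mathcal H_{\to i}^{j,j'}$. Indeed, by \cref{def:doublyBlossomingTree} a tree $t$ in $\mathcal H_{\to i}^{j,j'}$ has its two blossoms $\ell_1,\ell_2$ as children of the root, so the root is necessarily an internal node labeled by some $\pi\in\SSS_{\TTT_i}\uplus\{\oplus,\ominus\}$. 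The two blossoms occupy two distinct positions among the children of the root, hence either $\ell_1$ is to the left of $\ell_2$ or vice versa, and the pattern induced by $\pi$ on this pair of positions is either $12$ or $21$. These four mutually exclusive and exhaustive cases correspond exactly to $\mathcal E_{ijj'}^{+}$ (first blossom left, pattern $12$), $\mathcal E_{ijj'}^{-}$ (first blossom left, pattern $21$), $\mathcal E_{ij'j}^{+}$ (second blossom left — equivalently, swapping the roles of $j$ and $j'$, the left blossom requires a $\TTT_{j'}$-tree — pattern $12$), and $\mathcal E_{ij'j}^{-}$. One must be slightly careful here: the definition of $\mathcal E_{ij'j}^{\eps}$ reverses the roles of $j$ and $j'$, so the left blossom of a tree in $\mathcal E_{ij'j}^\eps$ expects a tree of $\TTT_{j'}$; this matches precisely the case where, in the original $\mathcal H_{\to i}^{j,j'}$ tree, the second blossom (the one expecting a $\TTT_{j'}$-tree) sits to the left. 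Since these classes are counted by the same size statistic (number of leaves that are neither blossom nor marked — here simply the number of non-blossom leaves), the generating series add.

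For the right equality, I would interpret $\frac{\partial^2 F_i}{\partial y_j\,\partial y_{j'}}\big|_{(T_0,\dots,T_d)}$ combinatorially. Recall from \eqref{eq:SystemeSeries} that $F_i(y_0,\dots,y_d)$ is the series obtained from the equation defining $\TTT_i$ in \eqref{eq:Specif} after removing the atom $\eps_i z$, so $F_i$ is a multivariate series with nonnegative integer coefficients encoding the non-trivial roots of trees in $\TTT_i$ together with the types of their children. Applying $\partial/\partial y_j$ marks one child of type $j$ of the root and replaces it by a blossom; applying $\partial/\partial y_{j'}$ afterwards marks a \emph{second}, distinct child of type $j'$ and replaces it by a second blossom. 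After substituting $y_\ell = T_\ell(z)$ for all $\ell$, the remaining children are filled by arbitrary trees of their prescribed types, exactly as in \cref{def:doublyBlossomingTree}. The two derivatives are taken in a fixed order, which corresponds to the ordered pair $(\ell_1,\ell_2)$ of first and second blossoms; since $\partial^2 F_i/\partial y_j\partial y_{j'} = \partial^2 F_i/\partial y_{j'}\partial y_j$, this is consistent regardless of which blossom is declared "first". This yields precisely $H_{\to i}^{j,j'}(z)$, the generating series of $\mathcal H_{\to i}^{j,j'}$ counted by the number of non-blossom leaves.

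I expect the main subtlety — rather than a genuine obstacle — to be bookkeeping around the two potential coincidences: first, the case $j=j'$, where $\partial^2/\partial y_j^2$ chooses an \emph{ordered} pair of distinct children of type $j$ (the factor from differentiating twice correctly counts ordered pairs, matching the ordered pair of blossoms), and second, the case where the root of the tree in $\mathcal H_{\to i}^{j,j'}$ is labeled by $\oplus$ or $\ominus$ so that it has exactly two children, both of which are blossoms — here one must check that the child-position constraints in \cref{def:doublyBlossomingTree} (both blossoms are children of the root) are automatically enforced by the degree-two condition and do not introduce or lose configurations. In both cases the verification is routine once the combinatorial meaning of the partial derivatives is made explicit. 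A brief remark connecting this statement to \cref{prop:SumOfD} (which is the "one-derivative" analogue) and to the essentially branching condition $\partial^2 F_i/\partial y_j\partial y_{j'}\neq 0$ from \cref{Def:EssBranching} would round out the proof.
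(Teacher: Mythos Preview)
Your proposal is correct and follows essentially the same approach as the paper: partition $\mathcal H_{\to i}^{j,j'}$ into four parts according to the relative position of the blossoms and the induced pattern for the first equality, and interpret the second partial derivative combinatorially (as in \cref{prop:SumOfD}) for the second. Your discussion of the $j=j'$ case and of degree-two roots is more careful than the paper's terse treatment, but the core argument is identical.
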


In addition, in the essentially branching case (see \cref{Def:EssBranching}) it holds that at least one of the $H_{\to i}^{j,j'}$ for $i,j,j'\in I^\star$ is a nonzero series. 

\begin{proof}
The first equality is obtained by partitioning $\mathcal H_{\to i}^{j,j'}$ into four parts,
depending on the position (left or right) of the first blossom w.r.t. the second blossom, and on the pattern ($12$ or $21$) induced by the blossoms.

The second equality comes from the definition of $\mathcal H_{\to i}^{j,j'}$ and the specification \eqref{eq:Specif} (the arguments are similar to the proof of \cref{prop:SumOfD}).
\end{proof}

Let us fix a signed binary tree $\patterntree$ with $k$ leaves. 
Recall that for us, binary indicates that every internal node has degree \emph{exactly} 2. 
Recall that $\Internal{\patterntree}$ (resp. $\Leaves{\patterntree}$) 
denotes the set of internal nodes (resp. leaves) of $\patterntree$. 
For $v\in \Internal{\patterntree}$ we set
\begin{itemize}
	\item $\eps(v)$ the sign labeling the node $v$ in $\patterntree$;
	\item $\mathfrak l(v) \in \Internal{\patterntree} \uplus \Leaves{\patterntree}$ the left child of $v$;
	\item $\mathfrak r(v) \in \Internal{\patterntree} \uplus \Leaves{\patterntree}$ its right child.
\end{itemize}
We also use the convention that $\varnothing\in \Internal{\patterntree}$ denotes the root of $\patterntree$.

\begin{definition}
	For $i\in I^\star$, let $\TTT_{i,\patterntree}$ be the class of trees in $\TTT_i$ with $k$ unordered marked leaves, such that
	\begin{itemize}
		\item they induce the subtree $\patterntree$,
		\item for every marked leaf $\ell$, the first critical ancestor of $\ell$ is strictly closer to $\ell$ than any first common ancestor of $\ell$ and another marked leaf.
	\end{itemize}
	\label{Def:TTT_patterntreeBranching}
\end{definition}

Note that if a marked tree $(t,(\ell_1,\dots,\ell_k))\in \TTT_{i,\patterntree}$, then $\pat_{\ell_1,\dots,\ell_k}(\perm(t)) = \perm(\patterntree)$.

\begin{proposition}
\label{prop:EnumerationBranchingCase}
	We have, for every $i_0\in I^\star$,
	\begin{multline}
      T_{i_0,\patterntree} = \sum_{
	i,j,j' \in {I^\star}^{\Internal{\patterntree}}
	}  \left[ T_{\to i_0}^{i(\varnothing)}
	\prod_{\substack{v\in \Internal{\patterntree}\\\mathfrak l(v)\in \Internal{\patterntree}}} T_{\to j(v)}^{i(\mathfrak l(v))}
	\prod_{\substack{v\in \Internal{\patterntree}\\\mathfrak r(v)\in \Internal{\patterntree}}} T_{\to j'(v)}^{i(\mathfrak r(v))} \right.\\
    \left. \cdot
	\prod_{\substack{v\in \Internal{\patterntree}\\\mathfrak l(v)\in \Leaves{\patterntree}}} T_{j(v)}'
	\prod_{\substack{v\in \Internal{\patterntree}\\\mathfrak r(v)\in \Leaves{\patterntree}}} T_{j'(v)}'
	\prod_{v\in \Internal{\patterntree}}E^{\eps(v)}_{i(v)j(v)j'(v)}\right].
	\label{eq:MasterEquationBranching}
	\end{multline}
	The above sum runs over all triples $(i,j,j')$ of functions from $\Internal{\patterntree}$ to $ I^\star$.
\end{proposition}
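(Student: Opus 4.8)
The plan is to follow the same route as in the essentially linear case (proof of \cref{prop:EnumerationCaterpillar}): decompose a marked tree of $\TTT_{i_0,\patterntree}$ along its critical part by repeatedly splitting at a node and recording a blossom. The new feature is that here $\Crit_{i_0}(t)$ is a genuinely branching tree rather than a spine, so each \emph{internal} node of $\patterntree$ will give rise to a ``local'' factor (a doubly blossoming tree of type $\EEE^{\bullet}_{\bullet\bullet\bullet}$), while each \emph{leaf} of $\patterntree$ will give rise to a one-marked-leaf tree. I will assume $k\ge 2$ so that $\Internal{\patterntree}\neq\varnothing$ (for $k\le1$ there is nothing branching to decompose).

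\textbf{Step 1: the combinatorial skeleton of a marked tree.} Let $(t,\SsEnsemble)\in\TTT_{i_0,\patterntree}$ with $\SsEnsemble=\{\ell_1,\dots,\ell_k\}$. For each $v\in\Internal{\patterntree}$ let $\hat v\in t$ be the first common ancestor in $t$ of the marked leaves that are descendants of $v$ in $\patterntree$. From the definition of induced trees, $v\mapsto\hat v$ is an order- and label-compatible embedding of $\Internal{\patterntree}$ into $t$: the two children-directions of $\hat v$ carrying marked leaves correspond to $\mathfrak l(v)$ and $\mathfrak r(v)$ (the former to the left of the latter), and since $\patterntree$ is binary the pattern induced on the permutation labelling $\hat v$ by these two directions is exactly $\eps(v)$. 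The crucial point, and the place where the separation clause of \cref{Def:TTT_patterntreeBranching} is used, is that every $\hat v$ is a \emph{critical} node of $t$: picking a marked leaf $\ell$ below $\mathfrak l(v)$ and one below $\mathfrak r(v)$, their first common ancestor is exactly $\hat v$, so the separation condition forces the first critical ancestor of $\ell$ (which exists, since the root of $t$ has type $i_0\in I^\star$) to be a \emph{strict} descendant of $\hat v$; by \cref{lem:MonotonieDesRho}, criticality propagates from a descendant to all its ancestors, hence $\hat v$ is critical. In particular all the $\hat v$'s lie on $\Crit_{i_0}(t)$.

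\textbf{Step 2: the decomposition and its translation into series.} Cut $t$ at the nodes $\hat v$ and at the entry points of the branches joining them, using the standard splitting-at-a-node / gluing-at-a-blossom operation. The portion between the root of $t$ and $\hat\varnothing$ (with $\hat\varnothing$ turned into a blossom) is a tree of $\TTT_{\to i_0}^{i(\varnothing)}$, where $i(v)$ denotes the type of $\hat v$. At each $\hat v$, keeping all its subtrees intact except replacing by blossoms the two children-subtrees leading to the marked leaves below $\mathfrak l(v)$ and $\mathfrak r(v)$, produces a doubly blossoming tree which, by the sign remark of Step 1, belongs to $\EEE^{\eps(v)}_{i(v)j(v)j'(v)}$, where $j(v),j'(v)\in I^\star$ are the (critical) types of the two blossom nodes. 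Below the left blossom: if $\mathfrak l(v)\in\Internal{\patterntree}$, the portion down to $\hat{\mathfrak l(v)}$ (again blossomed) is a tree of $\TTT_{\to j(v)}^{i(\mathfrak l(v))}$ and one recurses on the fringe subtree at $\hat{\mathfrak l(v)}$; if $\mathfrak l(v)\in\Leaves{\patterntree}$, the subtree below the left blossom is an arbitrary tree of $\TTT_{j(v)}$ carrying exactly one marked leaf, and it automatically satisfies the separation requirement because its root (the left-blossom node) is critical and a strict descendant of $\hat v$. The right blossom is treated identically with $j'(v)$ and $\mathfrak r(v)$. I would check that this recursive cutting is a size-preserving bijection between $\TTT_{i_0,\patterntree}$ and the set of data consisting of type functions $i,j,j'\colon\Internal{\patterntree}\to I^\star$ together with one tree in $\TTT_{\to i_0}^{i(\varnothing)}$, one tree in each $\EEE^{\eps(v)}_{i(v)j(v)j'(v)}$, one tree in $\TTT_{\to j(v)}^{i(\mathfrak l(v))}$ resp. $\TTT_{\to j'(v)}^{i(\mathfrak r(v))}$ when $\mathfrak l(v)$ resp. $\mathfrak r(v)$ is internal, and one one-marked-leaf tree of $\TTT_{j(v)}$ resp. $\TTT_{j'(v)}$ when $\mathfrak l(v)$ resp. $\mathfrak r(v)$ is a leaf; the inverse glues the pieces at the blossoms, and one verifies that the outcome lies in $\TTT_{i_0}$, has exactly $k$ marked leaves inducing $\patterntree$, and satisfies the separation condition. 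Since a one-marked-leaf tree of $\TTT_j$ counted by unmarked leaves is enumerated by $T_j'$, and blossoms carry no variable, translating this bijection into generating functions gives precisely \eqref{eq:MasterEquationBranching}.

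\textbf{Main obstacle.} The delicate point is Step 1, the assertion that each first common ancestor $\hat v$ is a critical node: this is exactly what the (technical-looking) separation clause in \cref{Def:TTT_patterntreeBranching} is designed to guarantee, and it has to be combined correctly with the monotonicity of the radii of convergence along the dependency graph. A secondary, more routine, difficulty is keeping track of node types so that the families $\TTT_{\to\,\cdot}^{\,\cdot}$ and $\EEE^{\cdot}_{\cdot\cdot\cdot}$ glue consistently, and arguing that the separation condition is captured \emph{exactly} — neither more nor less — by allowing arbitrary one-marked-leaf $\TTT_j$-trees below the leaves of $\patterntree$, which is why it matters that those blossom nodes carry critical types.
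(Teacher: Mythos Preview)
Your proposal is correct and follows essentially the same decomposition as the paper: cut a marked tree $t\in\TTT_{i_0,\patterntree}$ at the first common ancestors $\hat v=\varphi(v)$ and at the two children $\psi(v),\psi'(v)$ of each $\hat v$ lying on the branches towards the marked leaves, producing one $\TTT_{\to i_0}^{i(\varnothing)}$-piece at the root, one $\EEE^{\eps(v)}_{i(v)j(v)j'(v)}$-piece per internal node, one $\TTT_{\to j(v)}^{i(\mathfrak l(v))}$- (resp.\ $T'_{j(v)}$-) piece per internal (resp.\ leaf) left child, and symmetrically on the right. Your justification of criticality via the separation clause of \cref{Def:TTT_patterntreeBranching} combined with \cref{lem:MonotonieDesRho} is in fact more explicit than the paper's, which simply asserts ``those types are necessarily critical because of the second assumption in \cref{Def:TTT_patterntreeBranching}''; note that the same argument you give for $\hat v$ immediately yields criticality of the blossom nodes $\psi(v),\psi'(v)$ as well (the first critical ancestor of a marked leaf below $\mathfrak l(v)$ lies strictly below $\hat v$, hence at $\psi(v)$ or deeper).
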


\begin{figure}[thbp]
	\begin{center}
		\includegraphics[width=14cm]{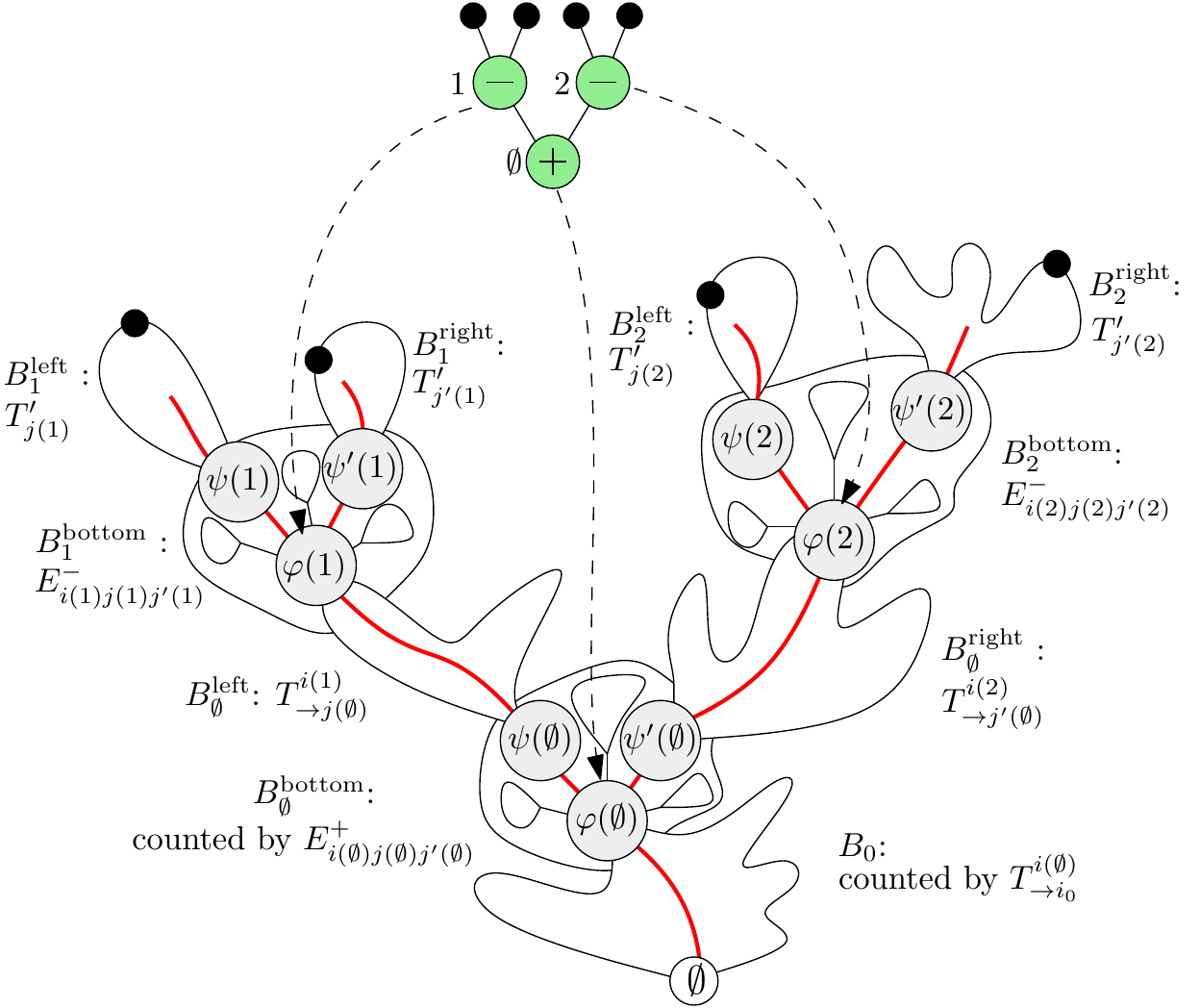}
		\caption{Top: A signed binary tree $\patterntree$ with $k=4$ leaves.
			Bottom: A schematic view of a tree with $k$ marked leaves in $\TTT_{i,\patterntree}$.
			As in \cref{Fig:SqueletteCompliqueCaterpillar}, the red paths consist only of nodes of critical type.
			\label{Fig:SqueletteCompliqueBinaire}}
	\end{center}
\end{figure}

\begin{proof}\emph{(The main notation of the proof is summarized in \cref{Fig:SqueletteCompliqueBinaire}.)}\\
	Consider a marked tree $t\in \TTT_{i_0,\patterntree}$.
	Then every interval node $v$ (resp. leaf $\ell$) in $t_0$ is in correspondence 
	with some interval node (resp. some marked leaf) of $t$, 
    which we denote by $\varphi(v)$ (resp. $\varphi(\ell)$).
	By definition of induced subtree, when $v$ is an internal node,
	$\varphi(v)$ is the first common ancestor of 
	$\varphi(\mathfrak l(v))$ and $\varphi(\mathfrak r(v))$.
	Denote by $\psi(v)$ the child of $\varphi(v)$ which is an ancestor of (and possibly equal to) $\varphi(\mathfrak l(v))$. 
	Similarly, $\psi'(v)$ is the child of $\varphi(v)$ which is an ancestor of $\varphi(\mathfrak r(v))$. 
	By definition of induced subtree, the pattern of $\varphi(v)$ induced by the elements corresponding to $\psi(v)$ and $\psi'(v)$ is $\eps(v)$ 
	(here and in what follows we identify $12$ and $+$ on one hand 
    and $21$ and $-$ on the other hand).
	
	Now for every $v\in \Internal{\patterntree}$, let $i(v)$ (respectively $j(v)$, $j'(v)$)
	be the type of $\varphi(v)$ (respectively $\psi(v)$, $\psi(v')$) in $t$.
	Those types are necessarily critical because of the second assumption in \cref{Def:TTT_patterntreeBranching}.
	
	We now decompose $t$ successively, cutting at all nodes $\varphi(v)$, $\psi(v)$ and $\psi'(v)$ for $v\in \Internal{\patterntree}$. 
	This is similar to the construction in the proof of  \cref{prop:EnumerationCaterpillar}, and we use the same notational conventions. 
	Since there are $3(k-1)$ cuts, we end up with $3(k-1)+1$ pieces.
	
	\begin{enumerate}
		\item We denote by $B_0$ the piece containing the root of $t$. 
          Concretely, $B_0$ is obtained from $t$ (which has type $i_0$) by replacing
          the fringe subtree rooted at $\varphi(\varnothing)$ (which has type $i(\varnothing)$ in $t$)
          by a blossom. 
		 Hence $B_0 \in \TTT_{\to i_0}^{i(\varnothing)}$. 
		\item For all $v\in \Internal{\patterntree}$, we denote by $B_{v}^{\gauche}$ the piece rooted at $\psi(v)$. There are then two possible cases. 
		\begin{enumerate}
			\item Either $\mathfrak l(v) \in \Internal{\patterntree}$, in which case $B_{v}^{\gauche}$ is 
			the fringe subtree of $t$ rooted at $\psi(v)$ in which the fringe subtree rooted at $\varphi(\mathfrak l(v))$ has been replaced by a blossom. 
			Hence $B_{v}^{\gauche} \in \TTT_{\to j(v)}^{i(\mathfrak l(v))}$.
			\item Or $\mathfrak l(v) \in \Leaves{\patterntree}$, in which case $B_{v}^{\gauche}$ 
			is simply the fringe subtree of $t$ rooted at $\psi(v)$;
            this tree contains one marked leaf, namely $\varphi(\mathfrak l(v))$.
			Hence $B_{v}^{\gauche}$ belongs to the family of marked trees of type $j(v)$;
            this family is counted by the series  $T_{j(v)}'$.
		\end{enumerate}
		\item Similarly for $v\in \Internal{\patterntree}$, we denote by $B_{v}^{\droite}$ 
          the piece rooted at $\psi'(v)$. Then, 
		\begin{enumerate}
			\item either $\mathfrak r(v) \in \Internal{\patterntree}$ and $B_{v}^{\droite} \in \TTT_{\to j'(v)}^{i(\mathfrak r(v))}$,
			\item or $\mathfrak r(v) \in \Leaves{\patterntree}$ and $B_{v}^{\droite} \in \TTT_{j'(v)}'$.
		\end{enumerate}
		\item For all $v\in \Internal{\patterntree}$, 
          we denote by $B_{v}^{\mathrm{bottom}}$ the piece rooted at $\varphi(v)$. 
		This piece is exactly
		the fringe subtree of $t$ rooted at $\varphi(v)$ in which the fringe subtrees rooted at $\psi(v)$ and $\psi'(v)$ have been replaced by blossoms. 
		Hence it has type $i(v)$ at the root, and contains two blossoms that are children of the root, the left one being of type $j(v)$ and the right one of type $j'(v)$. These two blossoms induce the permutation $\eps(v)$ on the root. So $B_{v}^{\mathrm{bottom}} \in \mathcal E^{\eps(v)}_{i(v)j(v)j'(v)}$.
	\end{enumerate}
	Summing up, we have associated to each tree $t\in \TTT_{i_0,\patterntree}$ the data consisting of 
	$(i,j,j')$ (where $i$, $j$ and $j'$ are three functions from $\Internal{\patterntree}$ to $ I^\star$)
	and the tuple of pieces 
	\begin{equation*}
	\Big(B_0,\;
	(B_{v}^{\gauche})_{\substack{v\in \Internal{\patterntree}\\
			\mathfrak l(v)\in \Internal{\patterntree}}},\;
	(B_{v}^{\droite})_{\substack{v\in \Internal{\patterntree}\\
			\mathfrak r(v)\in \Internal{\patterntree}}},\;
	(B_{v}^{\gauche})_{\substack{v\in \Internal{\patterntree}\\
			\mathfrak l(v)\in \Leaves{\patterntree}}},\;
	(B_{v}^{\droite})_{\substack{v\in \Internal{\patterntree}\\
			\mathfrak r(v)\in \Leaves{\patterntree}}},\;
	(B_{v}^{\mathrm{bottom}})_{v\in \Internal{\patterntree}} 
	\Big), 
	\end{equation*}  
	The map associating to $t$ its tuple of pieces is size-preserving, because each unmarked leaf in $t$ becomes an unmarked leaf in one of the pieces,
	and no other unmarked leaf is created (recall that blossoms and marked leaves do not contribute to the size).
	
	We denote by $\Omega$ the disjoint union, over triples $(i,j,j')$ of functions from $\Internal{\patterntree}$ to $ I^\star$, 
	of $\Omega_{i,j,j'}$. 
	In the above described procedure which ``cuts'' $t$ into pieces, no information is lost. 
	Namely, any $t\in \TTT_{i_0,\patterntree}$ can be recovered unambiguously from its associated tuple of trees by the simple inverse ``gluing'' procedure.
Moreover, performing this gluing procedure
from an arbitrary element of $\Omega$ yields a tree in $\mathcal T_{i_0}$
with $k$ marked leaves which induce $t_0$.
This tree belongs to $\TTT_{i_0,t_0}$:
indeed, the second condition in \cref{Def:TTT_patterntreeBranching} is satisfied,
because the pieces in  
	\[(B_{v}^{\gauche})_{\substack{v\in \Internal{\patterntree}\\
    \mathfrak l(v)\in \Leaves{\patterntree}}} \text{ and }
	(B_{v}^{\droite})_{\substack{v\in \Internal{\patterntree}\\
			\mathfrak r(v)\in \Leaves{\patterntree}}}\]
	have a critical type at the root.

	This shows that $\TTT_{i_0,\patterntree} \to \Omega$ is a size-preserving bijection, of which \cref{eq:MasterEquationBranching} is the translation in terms of series.
\end{proof}

\subsection{Asymptotics of the main series} 
We want the asymptotic behavior of the series that are the entries of $\mathbf{T}^\star(z)$,$(\mathbf T^\star)'(z)$ and $\mathbb T_\to^\star(z)$.
Recall from \cref{eq:generic_system} that the entries of $\mathbf T^\star$ are solutions of the system $\mathbf T^\star(z) = \mathbf \Phi(z,\mathbf T^\star(z))$. 
Recall also from \cref{eq:Tfleche} the identity  $\mathbb T_\to^\star(z) = (\Id - \mathbb M^\star(z,\mathbf T^\star(z)))^{-1}$.

The following lemma is a consequence of a general result on nonlinear systems proved in the appendix (\cref{Thm:DLW}). Recall that $\rho$ is the common radius of convergence of the critical series.

\begin{lemma}\label{lem:CheckerAppendiceBranching}
 Assume that the specification \eqref{eq:Specif} is essentially branching and satisfies hypotheses (SC) and (AR). Assume also that one of the series $T_i$, critical or subcritical, is aperiodic.

Then all entries of $(\Id - \mathbb M^\star(z,\mathbf T^\star(z)))^{-1}$ and $\mathbf T^\star(z)$ are analytic on a $\Delta$-domain at $\rho$.
Moreover, the matrix $\mathbb M^\star(\rho,  \mathbf T^\star(\rho))$ is irreducible and has Perron eigenvalue $1$, 
and denoting $\mathbf u$ and $\mathbf v$ the corresponding left and right positive eigenvectors  normalized so that $\transpose{\mathbf u} \mathbf v=1$,
we have the following asymptotics\footnote{In the above equations $\sim$ stands for coefficient-wise asymptotic equivalence.} near $\rho$:
\begin{align}
\mathbf T^\star(z) &= \mathbf T^\star(\rho) - \frac{ \beta \mathbf v}{ \zeta} \sqrt{\rho - z} + o(\sqrt{\rho - z})
\label{eq:AsympTstarBranching}\\
(\mathbf T^\star)'(z) &\sim \frac{ \beta \mathbf v}{2 \zeta \sqrt{\rho - z}}
\label{eq:AsympTstar'Branching}\\ 
\mathbb T_\to^\star(z)=(\Id - \mathbb M^\star(z,\mathbf T^\star(z)))^{-1} &\sim \frac{ \mathbf v\transpose{\ \mathbf u}}{2 {\beta \zeta} \sqrt{\rho - z}}
\label{eq:AsympTstarflecheBranching}
\end{align}
where
\begin{equation*}
Z = \frac 1 2 \sum_{i,j,j'\in I^\star} u_iv_jv_{j'} H_{\to i}^{j,j'}(\rho), \quad 
\zeta = \sqrt{Z}, 
\end{equation*}
and $\beta>0$ is some computable constant.
\end{lemma}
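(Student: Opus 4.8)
The strategy is to mirror the proof of \cref{lem:CheckerAppendice}: we check that the restricted system $\mathbf T^\star(z) = \mathbf\Phi(z,\mathbf T^\star(z))$ of \cref{eq:generic_system} satisfies all the hypotheses of the Drmota--Lalley--Woods-type statement \cref{Thm:DLW} of the complex-analysis toolbox, and then we read off the announced expansions from its conclusion, together with the identity $\mathbb T_\to^\star(z)=(\Id - \mathbb M^\star(z,\mathbf T^\star(z)))^{-1}$ from \cref{eq:Tfleche}.

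\textbf{Checking the hypotheses.} First, the system is strongly connected: the dependency graph of the subsystem obtained by treating subcritical series as coefficients coincides with $G^\star$, which is strongly connected by Hypothesis~(SC). Second, it is genuinely nonlinear as required in the branching regime: since the specification is essentially branching, there are $i,j,j'\in I^\star$ with $\partial^2 F_i/\partial y_j\partial y_{j'}\neq 0$, i.e. $H_{\to i}^{j,j'}\not\equiv 0$ by \cref{prop:SumOfE}. Third, the valuation conditions: each $F_i$ has valuation at least $2$ in all the $y$'s, so $\Phi_i(0,\mathbf 0)=0$ and, since $T_\ell(0)=0$ for subcritical $\ell$, also $\mathbb M^\star(0,\mathbf 0)=\mathbf 0$; and $\mathbf T^\star\neq\mathbf 0$ because every $\TTT_i$ is nonempty. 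Fourth, analyticity on a sufficiently large domain is exactly Hypothesis~(AR), which guarantees that each $\Phi_i$ is analytic in a neighbourhood of $\big(\rho,(T_j(\rho))_{j\in I^\star}\big)$. Finally, aperiodicity: by assumption some $T_i$ is aperiodic; following a directed path in $G_{\eqref{eq:Specif}}$ from that vertex into the critical strongly connected component (such a path exists by our standing assumptions) and applying \cref{lem:MonotonieAperiodicite} along it, we obtain an aperiodic series appearing in the equation of a critical family, which forces the relevant g.c.d.\ of periods to be $1$.

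\textbf{Reading off the conclusions.} With all hypotheses verified, \cref{Thm:DLW} applies. It gives that each critical series is $\Delta$-analytic at $\rho$ with a square-root singularity there, that the Jacobian $\mathbb M^\star(\rho,\mathbf T^\star(\rho))$ is irreducible (this uses strong connectivity) with Perron eigenvalue $1$ and left/right positive eigenvectors $\mathbf u,\mathbf v$ normalized by $\transpose{\mathbf u}\mathbf v=1$, and that $\mathbf T^\star$ admits the expansion \cref{eq:AsympTstarBranching} with singular coefficient colinear with $\mathbf v$. The expansion \cref{eq:AsympTstar'Branching} for $(\mathbf T^\star)'$ then follows by singular differentiation \cite[Thm.~VI.8, p.~419]{Violet}. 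For \cref{eq:AsympTstarflecheBranching}, I would substitute \cref{eq:AsympTstarBranching} into the entries of $\mathbb M^\star(z,\mathbf T^\star(z))$, which produces a $\sqrt{\rho-z}$-expansion $\mathbb M^\star(z,\mathbf T^\star(z))=\mathbb M^\star(\rho,\mathbf T^\star(\rho))+\Theta(\sqrt{\rho-z})$; a first-order perturbation computation on the Perron eigenvalue then shows that $\mathbb M^\star(z,\mathbf T^\star(z))$ has an eigenvalue $1-\Theta(\sqrt{\rho-z})$, so that $(\Id-\mathbb M^\star(z,\mathbf T^\star(z)))^{-1}$ has a simple pole in the variable $\sqrt{\rho-z}$ whose residue is the rank-one matrix proportional to $\mathbf v\transpose{\mathbf u}$ (using $\transpose{\mathbf u}\mathbf v=1$), which is the stated form.

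\textbf{The delicate point.} The part requiring real care is pinning down the constants, namely identifying $\zeta=\sqrt{Z}$ with $Z=\tfrac12\sum_{i,j,j'\in I^\star}u_iv_jv_{j'}H_{\to i}^{j,j'}(\rho)$ and matching all three expansions through the single positive constant $\beta$. This comes from expanding the system to second order around the singular point: writing $\mathbf T^\star(z)=\mathbf T^\star(\rho)-\sqrt{\rho-z}\,\mathbf w+(\rho-z)\,\mathbf w'+\cdots$, the $O(\sqrt{\rho-z})$ balance forces $(\Id-\mathbb M^\star(\rho,\mathbf T^\star(\rho)))\mathbf w=\mathbf 0$, hence $\mathbf w\parallel\mathbf v$; the $O(\rho-z)$ balance, projected onto $\transpose{\mathbf u}$ to annihilate the image of $\Id-\mathbb M^\star$, yields a scalar identity in which the quadratic contribution is exactly $\tfrac12\sum_{i,j,j'\in I^\star}u_iv_jv_{j'}\,\partial^2_{y_jy_{j'}}F_i\big|_{\mathbf T^\star(\rho)}=Z$ by \cref{prop:SumOfE}, while the $\partial_z$ contribution produces $\beta$. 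This bookkeeping is essentially internal to \cref{Thm:DLW}; everything else reduces to the routine verification above followed by transfer-type manipulations.
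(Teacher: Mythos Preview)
Your proposal is correct and follows essentially the same route as the paper: verify hypotheses (i)--(iii) and (v) of \cref{Thm:DLW} for the restricted system $\mathbf T^\star=\mathbf\Phi(z,\mathbf T^\star)$, propagate aperiodicity to the critical series via \cref{lem:MonotonieAperiodicite} and (SC), and then read off all three asymptotics directly from \cref{Thm:DLW} (together with \cref{prop:SumOfE} to rewrite $Z$ in terms of the $H_{\to i}^{j,j'}$). Two small imprecisions to clean up: hypothesis~(i) of \cref{Thm:DLW} asks for $\mathbf\Phi(z,\mathbf 0)\neq\mathbf 0$ rather than $\mathbf T^\star\neq\mathbf 0$, and the aperiodicity condition there is that \emph{all critical series} are aperiodic (not a g.c.d.\ statement on the entries of $\mathbb M^\star$ as in the linear case)---your path argument plus (SC) does give this, you just need to state the right conclusion.
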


\begin{proof}[Proof of \cref{lem:CheckerAppendiceBranching}]
In order to apply \cref{Thm:DLW}, we check that hypotheses (i)-(iii) and (v) of this theorem hold.
\begin{itemize}
\item Assumption (i) is granted by the form of the specification \eqref{eq:Specif}. If $\mathbf \Phi(z,\mathbf 0)$ were zero, then the specification would be empty, negating for instance the aperiodicity assumption.
\item Assumption (ii) is the essentially branching assumption.
\item Assumption (iii) is Hypothesis (SC).
\item Assumption (v) is Hypothesis (AR).
\end{itemize}
We also have to check aperiodicity of all critical series. By assumption at least one series $T_i$ (critical or subcritical) is aperiodic.
By \cref{lem:MonotonieAperiodicite} this ensures that every critical series is aperiodic since Hypothesis (SC) holds.

We conclude the proof applying \cref{Thm:DLW}, using also \cref{prop:SumOfE} to obtain the expression of $Z$ involving the $H_{\to i}^{j,j'}$.
\end{proof}

\begin{corollary}
  \label{corol:SumOfE}
 Under hypothesis (AR), 
 each of the series $E_{ijj'}^{\eps}$ and $H_{\to i}^{j,j'}$ 
 have radius of convergence $\rho$,
 are convergent at $\rho$
 and are $\Delta$-analytic at $\rho$. 
\end{corollary}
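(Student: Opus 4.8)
The plan is to derive all three properties at once, for each of the series in question, by exhibiting it as a function that is analytic in a neighbourhood of $\bigl(\rho,\mathbf T^\star(\rho)\bigr)$, composed with the curve $z\mapsto\bigl(z,\mathbf T^\star(z)\bigr)$, which by \cref{lem:CheckerAppendiceBranching} is analytic on a $\Delta$-domain at $\rho$ and, by \eqref{eq:AsympTstarBranching}, extends continuously to $\rho$ with value $\bigl(\rho,\mathbf T^\star(\rho)\bigr)$. Consider first $H_{\to i}^{j,j'}$ for $i,j,j'\in I^\star$. By \cref{prop:SumOfE} one has $H_{\to i}^{j,j'}(z)=\frac{\partial^2 F_i}{\partial y_j\,\partial y_{j'}}\big|_{(T_0(z),\dots,T_d(z))}$; since $j$ and $j'$ are critical indices, differentiating $F_i$ with respect to $y_j$ and $y_{j'}$ does not touch the subcritical series $(T_\ell(z))_{\ell\notin I^\star}$ substituted inside $\Phi_i$, so this series equals $\frac{\partial^2\Phi_i}{\partial y_j\,\partial y_{j'}}\bigl(z,\mathbf T^\star(z)\bigr)$. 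Hypothesis (AR) asserts precisely that $\Phi_i$ is analytic near $\bigl(\rho,\mathbf T^\star(\rho)\bigr)$, hence so is its partial derivative $\frac{\partial^2\Phi_i}{\partial y_j\,\partial y_{j'}}$. Shrinking the $\Delta$-domain on which $\mathbf T^\star$ is analytic until the image of $z\mapsto\bigl(z,\mathbf T^\star(z)\bigr)$ lies inside that neighbourhood, we obtain that $H_{\to i}^{j,j'}$ is analytic on a $\Delta$-domain at $\rho$, i.e.\ $\Delta$-analytic there, with $H_{\to i}^{j,j'}(\rho)=\frac{\partial^2\Phi_i}{\partial y_j\,\partial y_{j'}}\bigl(\rho,\mathbf T^\star(\rho)\bigr)$ finite.

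For the individual series $E_{ijj'}^{\eps}$ the same strategy works after one additional step, since \cref{prop:SumOfE} only expresses their \emph{sum} $E_{ijj'}^{+}+E_{ijj'}^{-}+E_{ij'j}^{+}+E_{ij'j}^{-}=H_{\to i}^{j,j'}$ as a derivative of $F_i$. Unrolling the specification \eqref{eq:Specif} exactly as in the proof of \cref{prop:SumOfE}, I would write $E_{ijj'}^{\eps}(z)=P_{ijj'}^{\eps}\bigl(T_0(z),\dots,T_d(z)\bigr)$, where $P_{ijj'}^{\eps}$ is the power series with nonnegative integer coefficients obtained by keeping, among the monomials contributing to $\frac{\partial^2 F_i}{\partial y_j\,\partial y_{j'}}$, only those arising from a pair of root-children with the prescribed left/right position and $12/21$ pattern; in particular $P_{ijj'}^{\eps}\preceq\frac{\partial^2 F_i}{\partial y_j\,\partial y_{j'}}$ coefficient-wise. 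Substituting the subcritical series $(T_\ell(z))_{\ell\notin I^\star}$ for the corresponding variables preserves this coefficient-wise domination; and since the dominating series $\frac{\partial^2\Phi_i}{\partial y_j\,\partial y_{j'}}$ has nonnegative coefficients and is analytic at the point $\bigl(\rho,\mathbf T^\star(\rho)\bigr)$, all of whose coordinates are strictly positive reals, it converges absolutely at some point with strictly larger coordinates; hence so does $P_{ijj'}^{\eps}$ with the subcritical series substituted, which is therefore analytic in a neighbourhood of $\bigl(\rho,\mathbf T^\star(\rho)\bigr)$. Composing with $z\mapsto\bigl(z,\mathbf T^\star(z)\bigr)$ as before yields that $E_{ijj'}^{\eps}$ is $\Delta$-analytic at $\rho$ with finite value at $\rho$.

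Finally, every $\Delta$-domain at $\rho$ contains the open disk $\{|z|<\rho\}$, so each of these series, having nonnegative coefficients and being analytic there, has radius of convergence at least $\rho$; continuity at $\rho$ together with monotone convergence then gives $\sum_n[z^n](\cdot)\,\rho^n<\infty$, i.e.\ convergence at $\rho$; and a coefficient-wise comparison from below with a monomial $\prod_m T_m(z)^{n_m}$ in which some critical series $T_k$ genuinely appears forces the radius to be exactly $\rho$ (unless the relevant second derivative of $F_i$ degenerates to a constant, in which case the series is itself constant). I expect the only genuinely delicate point to be the coefficient-wise domination argument for the individual $E_{ijj'}^{\eps}$: it is what transfers analyticity at $\bigl(\rho,\mathbf T^\star(\rho)\bigr)$ from the full second derivative of $F_i$ down to each of its sub-parts, which is needed because only the sum of these sub-parts is available in closed form as a derivative of $F_i$.
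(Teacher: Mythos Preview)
Your proposal is correct and follows essentially the same approach as the paper: write $H_{\to i}^{j,j'}(z)$ as $\frac{\partial^2\Phi_i}{\partial y_j\partial y_{j'}}\bigl(z,\mathbf T^\star(z)\bigr)$, use hypothesis (AR) together with the $\Delta$-analyticity of $\mathbf T^\star$ from \cref{lem:CheckerAppendiceBranching}, and then handle each $E_{ijj'}^{\eps}$ by the coefficient-wise domination $R^\eps_{ijj'}\preceq Q_i^{j,j'}$ to transfer analyticity at $\bigl(\rho,\mathbf T^\star(\rho)\bigr)$ from the full second derivative to its sub-parts. The paper's proof is terser and does not spell out the ``radius exactly $\rho$'' point (which, as you note, may fail for degenerate $E_{ijj'}^{\eps}$ that are constant or involve only subcritical series), but this is inessential for the subsequent applications.
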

\begin{proof}
The second equality in \cref{prop:SumOfE}
shows that
$H_{\to i}^{j,j'}(z)$ is of the form $Q_{i}^{j,j'}(z,\mathbf T^{\star}(z))$,
where
\[Q_{i}^{j,j'}(z,\mathbf y^\star) = \frac {\partial^2 F_i(y_0, \dots, y_c,T_{c+1}(z),\ldots,T_d(z))}{\partial y_j \partial y_{j'}}. \]
From the previous lemma, $T^\star(\rho)$ has radius of convergence $\rho$,
is convergent at $\rho$ and
 $\Delta$-analytic at $\rho$.
From hypothesis (HR), the above function $Q_{i}^{j,j'}$ is analytic around $(\rho,\mathbf T^\star(\rho))$.
This proves the corollary for $H_{\to i}^{j,j'}$ (for all $i,j,j'$ in $I^\star$).

Using their combinatorial definition, we see that the series $E^\eps_{ijj'}$
are also of the form $R^\eps_{ijj'}(z,\mathbf T^{\star}(z))$,
where $R^\eps_{ijj'}$ is coefficient-wise dominated by $Q_{i}^{j,j'}$.
In particular $R^\eps_{ijj'}$ is analytic around $(\rho,\mathbf T^\star(\rho))$
and the same argument as above prove the corollary for $E^\eps_{ijj'}$.
\end{proof}

\subsection{Probabilities of tree patterns} \label{section:TreePatternsbranching}
We now set 
\begin{equation}\label{eq:DefParametrePermutonBrownien}
\begin{cases}
p_+ &= \frac 1 Z \sum_{i,j,j'\in I^\star} E^+_{ijj'}(\rho)u_iv_jv_{j'}\\
p_- &= \frac 1 Z \sum_{i,j,j'\in I^\star} E^-_{ijj'}(\rho)u_iv_jv_{j'},
\end{cases}
\end{equation}
where $E^\eps_{ijj'}$ are defined in \cref{def:Eij}
and $u_i, v_j$ and $Z$ are defined in \cref{lem:CheckerAppendiceBranching}.

Thanks to \cref{prop:SumOfE}, $p_+ + p_- = 1$.
\begin{proposition}\label{prop:ProbaBinaire}
We assume that we are in the essentially branching case, that Hypotheses (SC) and (AR) are satisfied,
and that at least one series (either critical or subcritical) is aperiodic.

Let $t_0$ be a signed binary tree with $k$ leaves.
For $i\in I^\star$, we consider a
uniform tree with $n$ leaves in $\TTT_i$, with $k$ uniform 
marked leaves, and denote $\tkin$ the tree induced by these $k$ marked leaves.
We have, for all $i \in I^\star$,
\[
\proba(\tkin =t_0) \stackrel{n\to +\infty}{\longrightarrow} \frac 1 {\Cat_{k-1}}\prod_{v\in \Internal{\patterntree}} p_{\eps(v)}.
\]
\end{proposition}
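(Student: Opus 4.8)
The plan is to mimic closely the structure of the proof of \cref{prop:probaCaterpillars} from the essentially linear case, replacing caterpillars by signed binary trees and the product formula $\eqref{eq:MasterEquation}$ by $\eqref{eq:MasterEquationBranching}$. First, I would express the probability combinatorially as a ratio of coefficients. Here one must be slightly careful because $\TTT_{i,\patterntree}$ (as in \cref{Def:TTT_patterntreeBranching}) counts marked trees with the extra ``critical-ancestor'' condition, whereas a uniform random choice of $k$ marked leaves need not satisfy that condition. So the first step is to show that, asymptotically, this condition holds with probability tending to $1$: a uniform marked tree in $\TTT_i$ with $n$ leaves lies in $\TTT_{i,t_0}$ (for its induced tree $t_0$) with probability $1-o(1)$. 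Intuitively this is because the ``bushes'' of subcritical type hanging off the critical subtree are of size $O(1)$, so two marked leaves whose first common ancestor is subcritical is an event of vanishing probability; I would make this precise by a counting argument (or cite the relevant estimate on fringe subtrees), since it is the analogue of item (i) of \cref{prop:probaCaterpillars}. Granting this, we get
\[
\proba(\tkin = t_0) = \frac{[z^{n-k}]\, T_{i,t_0}}{[z^{n-k}]\, \tfrac{1}{k!} T_i^{(k)}} + o(1).
\]

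Next I would determine the singular behavior of the numerator. Starting from $\eqref{eq:MasterEquationBranching}$, I plug in the asymptotics of the three kinds of factors near $\rho$: the doubly-blossoming series $E^{\eps(v)}_{i(v)j(v)j'(v)}(z) \to E^{\eps(v)}_{i(v)j(v)j'(v)}(\rho)$ converge (by \cref{corol:SumOfE}); the marked-leaf series $T'_{j(v)}(z) \sim \tfrac{\beta v_{j(v)}}{2\zeta\sqrt{\rho-z}}$ by $\eqref{eq:AsympTstar'Branching}$; the blossoming series $T^{a}_{\to b}(z)$, which are entries of $\mathbb T^\star_\to$, behave like $\tfrac{v_b u_a}{2\beta\zeta\sqrt{\rho-z}}$ by $\eqref{eq:AsympTstarflecheBranching}$; and the final pendant subtree $T^{i(\varnothing)}_{\to i_0}(z)$ likewise. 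A signed binary tree with $k$ leaves has $k-1$ internal nodes, hence $k-2$ ``internal edges'' of $t_0$ contributing a $T_{\to}$-factor, $k$ leaf-edges split between $T'$-factors ($k$ of them, since each leaf of $t_0$ is a child of an internal node) and one extra $T_\to$ from $B_0$, and $k-1$ bounded $E$-factors. Counting the total power of $(\rho-z)^{-1/2}$: $k-2$ (internal $T_\to$) $+\,1$ ($B_0$) $+\,k$ ($T'$) $= 2k-1$, so the numerator has a singularity of order $(\rho-z)^{-(2k-1)/2}$. The key miracle, exactly as in the linear case, is that the matrix products telescope: each consecutive pair of factors sharing an index is sandwiched via $\mathbf v\transpose{\mathbf u}$-type rank-one matrices, so summing over all the intermediate types $i,j,j'$ collapses the whole expression. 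The sum over types decouples node-by-node, and the per-node contribution involving $E^{\eps(v)}$ produces exactly $\sum_{i,j,j'} E^{\eps(v)}_{ijj'}(\rho) u_i v_j v_{j'} = Z\, p_{\eps(v)}$ after normalization by the appropriate power of $Z = \zeta^2$. After bookkeeping the constants (the $\beta$'s and $\zeta$'s from the various factors), one obtains
\[
T_{i_0,t_0}(z) \;\stackrel{z\to\rho}{\sim}\; C_{i_0}\Big(\prod_{v\in\Internal{\patterntree}} p_{\eps(v)}\Big)\,(\rho-z)^{-(2k-1)/2}
\]
for a constant $C_{i_0}$ independent of $t_0$ (but depending on $i_0$, $k$), where the product over signs is the only place the specific tree $t_0$ enters.

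For the denominator, I would apply singular differentiation ($k$ times, \cite[Thm.~VI.8]{Violet}) to $\eqref{eq:AsympTstarBranching}$: since $T_i^\star(z) = T_i^\star(\rho) - \tfrac{\beta v_i}{\zeta}\sqrt{\rho-z} + \cdots$, the $k$-th derivative behaves like a constant times $(\rho-z)^{1/2-k} = (\rho-z)^{-(2k-1)/2}$, with the same exponent as the numerator — this matching of exponents is exactly what one needs, and it is forced by the fact that $\patterntree$ has $k$ leaves. Then a transfer theorem (\cref{thm:transfert}) applied to both series gives that the ratio of $[z^{n-k}]$-coefficients converges to the ratio of the constants in front of $(\rho-z)^{-(2k-1)/2}$. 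The constant $C_{i_0}$ and the $i_0$-dependent constant from $T_i^{(k)}$ cancel, leaving precisely $\tfrac{1}{\Cat_{k-1}}\prod_{v}p_{\eps(v)}$; the Catalan factor $\Cat_{k-1}$ appears because there are $\Cat_{k-1}$ binary tree shapes on $k$ leaves and the constant in the denominator asymptotics, when compared with the sum of numerator constants over all signed binary trees $t_0$ with $k$ leaves, must reproduce total probability $1$ (using $p_+ + p_- = 1$). I would double-check the combinatorial factors by this normalization sanity check rather than tracking every $2$ and $\beta$ explicitly.

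The main obstacle I anticipate is twofold. First, the careful constant-chasing in the telescoping of the matrix product in $\eqref{eq:MasterEquationBranching}$: unlike the linear case where the product is a genuine matrix product, here the combinatorial structure is a tree, so one must argue that summing over the type-functions $i,j,j'\colon \Internal{\patterntree}\to I^\star$ factorizes along the tree and that each rank-one sandwich $\mathbf v_{\bullet}\transpose{\mathbf u_{\bullet}}$ produces the right scalar $\transpose{\mathbf u}(\cdot)\mathbf v$; getting the exponent of $Z=\zeta^2$ right (one power of $Z$ per internal node, versus the $\beta$'s which should cancel between $T'$, $T_\to$ and the differentiated $T^\star$) requires care. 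Second, justifying the $o(1)$ correction, i.e.\ that the ``bad'' event (some pair of marked leaves has a subcritical first common ancestor, or a marked leaf's first critical ancestor is not strictly below its nearest marked-leaf-ancestor) has vanishing probability; this is the branching analogue of item (i) of \cref{prop:probaCaterpillars} and, while morally clear from the ``bushes are $O(1)$'' heuristic, needs a genuine enumeration estimate comparing $[z^{n-k}]$ of the ``bad'' marked trees to $[z^{n-k}]\tfrac{1}{k!}T_i^{(k)} \asymp n^{(2k-3)/2}\rho^{-n}$. Everything else is a routine — if lengthy — transcription of the linear-case argument.
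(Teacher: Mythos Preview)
Your plan for the singular analysis of $T_{i,t_0}$ and of $T_i^{(k)}/k!$ is exactly what the paper does, including the power count $(\rho-z)^{-(2k-1)/2}$ and the node-by-node factorization of the sum over type functions via the rank-one asymptotics of $\mathbb T^\star_\to$. Where you diverge is in handling the extra ``critical-ancestor'' condition in \cref{Def:TTT_patterntreeBranching}. You propose to prove directly that this condition holds with probability $1-o(1)$ (your second anticipated obstacle), and only then write $\proba(\tkin=t_0)=\text{ratio}+o(1)$.

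The paper sidesteps this entirely. Instead of an equality it starts from the trivial \emph{inequality}
\[
\proba(\tkin=t_0)\ \ge\ \frac{[z^{n-k}]\,T_{i,t_0}}{[z^{n-k}]\,\tfrac{1}{k!}T_i^{(k)}},
\]
valid because $\TTT_{i,t_0}$ only counts a subset of the marked trees inducing $t_0$. It then carries out precisely the asymptotic computation you describe (with explicit constant tracking, arriving at $\tfrac{v_i\beta}{2^{2k-1}\zeta}\prod_v p_{\eps(v)}$ for the numerator and $\tfrac{v_i\beta}{2^{2k-1}\zeta}\Cat_{k-1}$ for the denominator), applies the transfer theorem, and obtains
\[
\liminf_{n\to\infty}\ \proba(\tkin=t_0)\ \ge\ \frac{1}{\Cat_{k-1}}\prod_{v\in\Internal{t_0}} p_{\eps(v)}.
\]
Summing the right-hand side over all signed binary trees $t_0$ with $k$ leaves gives $1$; since the left-hand sides are probabilities of disjoint events whose sum is at most $1$ for each $n$, the $\liminf$ is forced to be a limit and the inequality an equality. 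In other words, your ``normalization sanity check'' is not merely a check but the actual argument, and your second obstacle disappears without any separate enumeration of the bad event. (This is also how item~(i) of \cref{prop:probaCaterpillars} was obtained in the linear case: as a consequence of item~(ii) via the same summation, not by a direct estimate.)
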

In the above expression the limiting probabilities do not depend on $i$
and add up to $1$ (summing over all signed binary trees $t_0$ with $k$ leaves).
We deduce that $k$ marked leaves in a large uniform tree in $\TTT_i$ 
induce a binary tree with high probability\footnote{Throughout the paper,
we say that an event holds {\em with high probability} if its probability tends to $1$.}
when $n$ goes to infinity,
and that this signed binary tree is asymptotically distributed like a uniform binary tree with i.i.d. signs of bias $p_+$ (independently of the critical type $i$ that we consider).

\begin{proof}
  We fix $i$ throughout the proof.
  Similarly to the linear case (see \eqref{eq:prob_t0_linear}), we have,
  for any signed binary tree $t_0$,
  \begin{equation}
    \proba (\tkin = \patterntree) \ge 
          \frac {[z^{n-k}] T_{i,t_0}}{[z^{n-k}] \frac 1 {k!} T_i^{(k)}}.
          \label{eq:Proba_t0_Branching}
     \end{equation}
    Note that we only have an inequality.
    Indeed, because of the second item in \cref{Def:TTT_patterntreeBranching},
    the numerator only counts a subset of trees in $\mathcal T_i$
    with marked leaves inducing $t_0$.

    We want to apply the transfer theorem to the series $T_{i,\patterntree}$ and $T_i^{(k)}$.
	
    We first check that those series are analytic on a $\Delta$-domain at $\rho$.
    It is the case of $T_i$ (and all its derivatives) by \cref{lem:CheckerAppendiceBranching}.
    In addition, for all critical types $i$, $j$ and $j'$, the series $T_{\to j}^{i}$ and $E^\eps_{ijj'}$ 
    also are analytic on a $\Delta$-domain at $\rho$ (by \cref{lem:CheckerAppendiceBranching} and \cref{corol:SumOfE} respectively). 
    Hence by multiplication the same holds for $T_{i,t_0}$.
    
    We now look for asymptotic equivalents $T_{i,\patterntree}$ and $T_i^{(k)}$
    in a $\Delta$-neighborhood of $\rho$.
    For the former, we take \cref{eq:MasterEquationBranching} (p.\pageref{eq:MasterEquationBranching}),
	and plug in the values at $\rho$ of the convergent series 
    ($E^{\eps}_{ijj'}$ is convergent thanks to \cref{corol:SumOfE}) 
	and the asymptotics near $\rho$ of the divergent series given by \cref{eq:AsympTstar'Branching,eq:AsympTstarflecheBranching}, yielding
	\begin{align*}
	T_{i,\patterntree}(z) &\sim \frac {1}{(\sqrt{\rho-z})^{2k-1}}
	\sum_{i,j,j' \in {I^{\star \Internal{\patterntree}}}} \left[
	\frac{v_iu_{i(\varnothing)}}{2\beta\zeta} \prod_{\substack{v\in \Internal{\patterntree}\\l(v)\in \Internal{\patterntree}}}
    \frac{ v_{j(v)}u_{i(l(v))}}{2 {\beta \zeta}} \right. \\
	& \qquad \prod_{\substack{v\in \Internal{\patterntree}\\r(v)\in \Internal{\patterntree}}} \frac{ v_{j'(v)}u_{i(r(v))}}{2 {\beta \zeta}} 
	\left.\prod_{\substack{v\in \Internal{\patterntree}\\l(v)\in \Leaves{\patterntree}}} \frac{ \beta v_{j(v)}}{2 \zeta}
	\prod_{\substack{v\in \Internal{\patterntree}\\r(v)\in \Leaves{\patterntree}}} \frac{ \beta v_{j'(v)}}{2 \zeta}
    \prod_{v\in \Internal{\patterntree}}E^{\eps(v)}_{i(v)j(v)j'(v)}(\rho) \right]. 
  \end{align*}
    This can be simplified as
	\begin{align}
    \label{eq:Asympt_Tto_Branching}
	T_{i,\patterntree}(z)&=\frac {v_i\, {\beta}^{k-(k-1)}}{(\sqrt{\rho-z})^{2k-1}(2 {\zeta})^{2k-1}}
	\sum_{i,j,j' \in {I^{\star\Internal{\patterntree}}}}
	\prod_{v\in \Internal{\patterntree}}E^{\eps(v)}_{i(v)j(v)j'(v)}(\rho)\,u_{i(v)}v_{j(v)}v_{j'(v)}\\
	&=(\rho - z)^{1/2 - k}\ \frac {v_i {\beta}}{2^{2k-1} {\zeta Z^{k-1}}}
	\prod_{v\in \Internal{\patterntree}} \sum_{\substack{(i,j,j')\in I^{\star 3}}}
	E^{\eps(v)}_{ijj'}(\rho)\,u_{i}v_{j}v_{j'}\nonumber\\
	&=(\rho - z)^{1/2 - k}\ \frac {v_i {\beta}}{2^{2k-1} {\zeta}}
	\prod_{v\in \Internal{\patterntree}} p_{\eps(v)}.\nonumber
	\end{align}
    For $T_i^{(k)}$, we simply use singular differentiation of \cref{eq:AsympTstar'Branching}:
	\begin{align*}
	\frac {T_i^{(k)}}{k!} \sim (\rho - z)^{1/2 - k}\frac {v_i {\beta}}{2{\zeta}} \frac { \frac 12 \frac 32 \dots \frac {2k-3}{2}}{k!}
	&= (\rho - z)^{1/2 - k}\frac {v_i {\beta}}{{\zeta}} \frac {(2k-2)!}{2^k(k-1)!2^{k-1}k!}\\
	&= (\rho - z)^{1/2 - k}\frac {v_i {\beta}}{2^{2k-1}{\zeta}} {\Cat_{k-1}}.
	\end{align*}
    Applying the transfer theorem and using \cref{eq:Proba_t0_Branching} yields 
    \begin{equation}
      \liminf_{n \to \infty} \proba (\tkin = \patterntree)
    \ge \frac 1 {\Cat_{k-1}}\prod_{v\in \Internal{\patterntree}} p_{\eps(v)}. 
    \label{eq:Tech}
 \end{equation}
    Consider the sum over all signed binary tree $t_0$.
    The right-hand side sums to $1$ (recall that $p_++p_-=1$).
    On the other hand, for each fixed $n$, 
    the sum of $\proba (\tkin = \patterntree)$ over $t_0$ is at most $1$.
    This forces the infimum limit in \eqref{eq:Tech} to be an actual limit and the inequality
    to be an equality, proving the proposition.
\end{proof}

\subsection{Back to permutations and conclusion of the proof of \cref{Th:branchingCase}}
  We can now conclude the proof of the main theorem for the essentially branching case.  
\begin{proof}[Conclusion of the proof of \cref{Th:branchingCase}]
Consider a tree specification \eqref{eq:Specif} satisfying the hypotheses of \cref{Th:branchingCase}. 
Let $i\in I^\star$ be the index of a critical family and let $k\geq 1$.
We use the notation of \cref{prop:ProbaBinaire}, \emph{i.e.}
$\tkin$ the random subtree induced by $k$ uniform random leaves
in a uniform random tree with $n$ leaves in $\TTT_i$.

Moreover, we denote by $\bm \sigma_n$ a uniform permutation of size $n$ in $\TTT_i$
and $\bm I_{n,k}$ an independent uniform subset of $[1,n]$ of size $k$.
Thanks to \cref{lem:DiagrammeCommutatif}, we have
\[\pat_{ {\bm I}_{n,k}}(\bm\sigma_n)= \perm(\tkin ).\]

According to \cref{prop:ProbaBinaire},
$\tkin$ is binary with high probability as $n\to\infty$.
More precisely, $\tkin$ converges in distribution to $\bm b_k$,
where $\bm b_k$ is a uniform binary tree of size $k$ whose internal nodes carry i.i.d. signs with bias $p_+$.

Therefore we have the following convergence in distribution:
$$
\pat_{{\bm I}_{n,k}}(\bm\sigma_n)= \perm(\tkin) \stackrel{n\to +\infty}{\longrightarrow} \perm(\bm b_k).
$$

\cref{Th:branchingCase} then follows, thanks to \cref{thm:randompermutonthm} (characterization of convergence of random permutons) and \cref{Def:BrownianPermuton} (definition of the Brownian separable permuton).
\end{proof}

\section{Beyond the strongly connected case}\label{Sec:CouteauSuisse}

The goal of this section is to provide some tools
to describe the typical behavior of permutations in some families $\mathcal T_0$
having a tree-specification which does not satisfy Hypothesis (SC).
We do not provide general theorems, because of the many possible situations
that can occur. Instead, we present a method with some generic lemmas,
and illustrate it on examples.

Recall that $G^\star$ denotes the dependency graph of the tree-specification
restricted to the critical families.
We first find its strongly connected components with no edge pointing towards them.
Such a component has a vertex set $\{\mathcal T_i\}_{i \in J}$, for some $J \subset I^\star$.
Restricting the tree-specification to $\{\mathcal T_i\}_{i \in J} \uplus \{\mathcal T_i\}_{i \notin I^\star}$,
we obtain a new tree-specification satisfying Hypothesis (SC).
Then \cref{Th:linearCase} or \cref{Th:branchingCase}
gives us the limiting permuton of uniform permutations in any of the families $(\mathcal T_i)_{i \in J}$.

We now discuss the case of a strongly connected component $C=\{\mathcal T_i\}_{i \in J}$ of $G^\star$ that has some incoming edges,
originating from the strongly connected components $C_1,\dots,C_h$ of $G^\star$.
Consider a family $\mathcal T$ in $C$ and a tree in $\mathcal T$.
This tree consists of a root and fringe subtrees whose type are either subcritical 
or in one of the $C_j$'s or in $C$.
Recursively, we may assume that we know the limiting permuton of trees with types
in $C_1$, \ldots, $C_h$.
To deduce from there a limiting result for trees in $\mathcal T$,
we need to know if one of the fringe subtrees is giant 
or whether there are typically several macroscopic ones.

\subsection{Sufficient conditions for having a giant subtree}\label{Sec:SubTree}
  Let $\mathcal T_0, \mathcal T_1, \ldots, \mathcal T_r$ be combinatorial classes whose generating series have
 the same radius of convergence $\rho$ and are analytic on a $\Delta$-domain.
  We assume that $\mathcal T_0$ is related to $\mathcal T_1,\ldots,\mathcal T_r$ 
  through an equation $\mathcal T_0= \mathcal F(\mathcal Z,\mathcal T_1,\dots\mathcal ,T_r)$. 
  Here, $\mathcal Z$ is the class with a single combinatorial structure, of size $1$, classically called \emph{atom}; 
  in this paper, we rather refer to the atoms which constitute a combinatorial structure as its \emph{elements}. 
  In combinatorial terms, a structure in $\mathcal T_0$ is an $\mathcal F$-structure of size $s$
  and a list of $s$ substructures that are either atoms or belong to one of $\mathcal T_i$.
  This translates on generating series as $T_0= F(z,T_1,\dots,T_r)$.

  We now present two results which ensure, under appropriate assumptions, that $k$ uniformly marked
  elements in a large random uniform structure in $\mathcal T_0$  belong with high probability to the same $\mathcal T_i$
  substructure; in this case we speak of a \emph{giant substructure}.
  \begin{itemize}
    \item In our first lemma, the singularities of the $T_i$'s are simple poles 
      and $F$ is linear in the $T_i$'s (with coefficients depending on $z$).
    \item In our second lemma, the $T_i$'s have square-root singularities
      and $F$ is analytic on a neighborhood of $(\rho,T_1(\rho),\dots,T_r(\rho))$.
  \end{itemize}

\bigskip

Let us set up notation for the first lemma. We assume that the singularities of the generating series $T_1,\dots,T_r$
  are simple poles, namely, that for some reals $\delta_i$, 
  \begin{equation}
    T_i(z) = \frac{\delta_i}{\rho - z} + \O(1)\,\ ,\ 1 \leq i \leq r.
    \label{eq:asymp_Ti_polar}
  \end{equation}
  Assume in addition that 
  \begin{equation}
     F(z,T_1,\dots,T_r) = \sum_{i=1}^r G_i(z)\,T_i + G(z), 
     \label{eq:F_Affine}
   \end{equation}
  where $G(z)$ and the $G_i(z)$'s are convergent in $\rho$
  (they may be subcritical, or critical and convergent in $\rho$,
  {\em e.g.} with a square-root singularity in $\rho$).

From a combinatorial point of view, 
this identity of generating series means the following. 
There exist combinatorial classes $\mathcal G$ and $\mathcal G_i$ (for $1 \leq i \leq r$), 
whose generating functions are $G$ and the $G_i$'s, respectively, 
and such that 
a $\mathcal T_0$-structure is either a pair of structures in $\mathcal{G}_i\times \mathcal T_i$, for some $i$, or a $\mathcal{G}$-structure.

\begin{lemma}[Giant component: the simple pole case]
   Let $\mathcal T_0, \mathcal T_1, \ldots, \mathcal T_r$ be combinatorial classes whose generating series have the same radius of convergence $\rho$ and are  analytic on a $\Delta$-domain. Assume that $T_0= F(z,T_1,\dots,T_r)$
  and \cref{eq:asymp_Ti_polar,eq:F_Affine} hold.

Let $\mathbf{t}_n$ be a uniform random structure of size $n$ in $\mathcal T_0$, 
with a set of $k$ marked elements, chosen uniformly at random.
For $j \in \{1,\dots,r\}$, we call 
$E^{(n)}_{j}$ the event that $\mathbf{t}_n$ is a pair of substructures in $\mathcal{G}_{j}\times \mathcal T_{j}$
{\em and} that all $k$ marked elements belong to the $\mathcal{T}_{j}$-substructure. Then, we have
\begin{equation}
   \proba (E^{(n)}_{j}) \stackrel{n\to +\infty}{\longrightarrow} \frac {\delta_{j} G_{j}(\rho) } 
	   {\sum_{i=1}^r \delta_i G_i(\rho)}.
       \label{eq:lim_probEj}
  \end{equation}
  \label{lem:giantComponent_Linear}
\end{lemma}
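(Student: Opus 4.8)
The plan is to translate the probability in question into a ratio of coefficients of generating series and then apply the transfer theorem, exactly as was done for caterpillars in \cref{prop:probaCaterpillars}. First I would observe that a marked structure counted by $E^{(n)}_j$ corresponds to a pair $(g,t)\in \mathcal G_j\times \mathcal T_j$ together with a choice of $k$ marked elements all lying inside the $\mathcal T_j$-part; if $t$ has size $m$ then there are $\binom{m}{k}$ such choices. Hence the number of structures of size $n$ realizing $E^{(n)}_j$ has generating function (in $z$, marking by total size) equal to $G_j(z)\cdot \frac{1}{k!}\, T_j^{(k)}(z)\, z^k$ up to the usual bookkeeping, or more precisely: writing $\widehat T_j(z)=\sum_m \binom{m}{k}[z^m]T_j(z)\, z^{m}$ for the series counting $\mathcal T_j$-structures with $k$ marked elements by total size, the numerator is $[z^n]\big(G_j(z)\,\widehat T_j(z)\big)$. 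Similarly, the denominator — the total number of size-$n$ structures in $\mathcal T_0$ with $k$ marked elements — is $[z^n]\big(\sum_{i=1}^r G_i(z)\,\widehat T_i(z) + \widehat G(z)\big)$, where $\widehat G$ counts $\mathcal G$-structures with $k$ marked elements. So $\proba(E^{(n)}_j) = \dfrac{[z^n]\big(G_j\,\widehat T_j\big)}{[z^n]\big(\sum_i G_i\,\widehat T_i + \widehat G\big)}$.

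Next I would determine the singular behaviour of each piece near $\rho$. From \cref{eq:asymp_Ti_polar}, $T_i(z)\sim \delta_i/(\rho-z)$, so by singular differentiation (\cite[Thm. VI.8 p. 419]{Violet}) the $k$-th derivative satisfies $T_i^{(k)}(z)\sim k!\,\delta_i/(\rho-z)^{k+1}$, and hence $\widehat T_i(z)\sim \rho^k\,\delta_i/(\rho-z)^{k+1}$ (the factor $z^k$ inside the marking contributes $\rho^k$ at the singularity, and this factor is the same for every $i$). Since $G_i(z)$ is convergent at $\rho$ by the hypothesis following \cref{eq:F_Affine}, the product satisfies $G_i(z)\,\widehat T_i(z)\sim \rho^k\,\delta_i\,G_i(\rho)/(\rho-z)^{k+1}$. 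The term $\widehat G(z)$: one must check that $G$ being convergent at $\rho$ forces $\widehat G$ to be of strictly smaller order than $(\rho-z)^{-(k+1)}$ — indeed if $G$ has a square-root singularity $\widehat G$ behaves like $(\rho-z)^{1/2-k}$, which is negligible, and more generally $\widehat G$ cannot reach order $k+1$ unless $G$ already has a simple pole, which it does not since it is convergent at $\rho$. All the relevant series are $\Delta$-analytic at $\rho$ since $\mathcal T_0,\dots,\mathcal T_r$ are assumed so, and $\widehat T_i$, $G_i\widehat T_i$, $\widehat G$ inherit this by singular differentiation, multiplication and addition. Therefore the transfer theorem (\cref{thm:transfert}) applies to numerator and denominator, the common factors $\rho^k$, $\Gamma(k+1)$ and $\rho^{-(n-k-1)}n^k$ cancel in the ratio, the $\widehat G$ contribution vanishes in the limit, and we obtain exactly \cref{eq:lim_probEj}.

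The only genuinely delicate point is the last one: controlling the contribution of $\widehat G$ (and, more generally, making sure that no hidden higher-order singularity sneaks in through the $G_i$'s or through the combinatorial subtleties of marking). The hypothesis is that $G$ and the $G_i$'s are \emph{convergent at} $\rho$; convergence at $\rho$ of a $\Delta$-analytic series means its singular expansion starts at a non-negative (possibly fractional) power of $(\rho-z)$, or it is even analytic there, so after marking $k$ elements the worst singularity is of order at most $(\rho-z)^{-(k-1/2)}$ (if $G_i$ has a square-root singularity) or $(\rho-z)^{-k}$ in a borderline case — in all cases strictly weaker than the $(\rho-z)^{-(k+1)}$ coming from $\widehat T_i$. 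I expect this comparison of exponents, carried out carefully for each admissible type of singularity allowed by ``convergent at $\rho$'', to be the main obstacle, though it is routine. Once it is settled, the cancellation of asymptotic prefactors is immediate and the stated limit follows.
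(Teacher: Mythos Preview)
Your approach is the same as the paper's --- write $\proba(E_j^{(n)})$ as a ratio of coefficients and apply the transfer theorem --- and your numerator is correct. There is, however, a slip in your denominator: when a $\mathcal T_0$-structure lies in $\mathcal G_i\times\mathcal T_i$, the $k$ marked elements are allowed to land in the $\mathcal G_i$-component too, so the marking of a product is governed by the Leibniz rule and $\widehat{G_iT_i}\neq G_i\,\widehat T_i$ in general; your expression $\sum_i G_i\widehat T_i+\widehat G$ omits all the cross-terms with at least one mark in the $\mathcal G_i$-part. The paper sidesteps this entirely by not decomposing the denominator: it writes the denominator simply as $[z^n]\tfrac{1}{k!}T_0^{(k)}$, reads off $T_0(z)=\big(\sum_i\delta_iG_i(\rho)\big)/(\rho-z)+O(1)$ directly from \cref{eq:asymp_Ti_polar,eq:F_Affine}, and then applies singular differentiation once to $T_0$. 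This automatically packages the contribution of $G$ (and of the cross-terms you omitted) into the $O((\rho-z)^{-k})$ remainder, so your entire last paragraph --- the case analysis on the possible singularity types of $G$ and the $G_i$'s --- becomes unnecessary.
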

Note that the right-hand side of \cref{eq:lim_probEj} above sums to $1$.
Informally, the lemma says that, with high probability,
the structure $\mathbf{t}_n$ has a giant substructure of some type $\mathcal{T}_{j}$.
This type (\emph{i.e.}~the value of $j$) is however random and \cref{eq:lim_probEj}
gives the limiting probabilities. 
When the $\mathcal{T}_i$  are families of permutations
and assuming that we know the limiting permutons of the $\mathcal{T}_{j},j>0$, 
we can conclude that the limiting permuton of $\mathcal{T}_{0}$ is
taken at random among those of the $\mathcal{T}_{j}$ 
with probabilities given by \cref{eq:lim_probEj}.

\begin{proof}
       We fix $j \in \{1,\dots,r\}$.
  The generating series of structures in $\mathcal T_0$ with a set of $k$ marked elements
  is given by  $T_0^{(k)}/k!$.
  On the other hand, the generating series of structures in $\mathcal{G}_{j}\times \mathcal T_{j}$
  with a set of $k$ marked elements, {\em all in the $\mathcal T_{j}$-substructure},
  is $G_{j}(z) T_{j}^{(k)}(z)/k!$.
  Therefore
  \begin{equation}
    \label{eq:PEj}
    \proba (E^{(n)}_{j}) = 
 \frac{[z^n]G_{j}(z) T_{j}^{(k)}(z)}{[z^n]T_0^{(k)}(z) }. 
 \end{equation}
We now evaluate the limit of the above quantity when $n$ tends to infinity using singularity analysis.
  From the assumptions \eqref{eq:asymp_Ti_polar} 
  and \eqref{eq:F_Affine}, we get that,
  for $z$ in a $\Delta$-neighborhood of $\rho$,
  \[T_0(z) = \frac{1}{\rho-z} \left( \sum_{i=1}^r \delta_i G_i(\rho) \right)
       + \O(1). \]
  By singular differentiation, in a $\Delta$-neighborhood of $\rho$,
  \[T_0^{(k)}(z) = \frac{k!}{(\rho- z)^{k+1}} \left( \sum_{i=1}^r \delta_i                   
      G_i(\rho) \right) + \O\bigg(\frac1{(\rho-z)^{k}}\bigg). \]
  Similarly, 
  \[T_j^{(k)}(z) = \frac{k!\, \delta_j}{(\rho- z)^{k+1}} 
  + \O\bigg(\frac1{(\rho-z)^{k}}\bigg). \]
    By the transfer theorem (\cref{thm:transfert}), we obtain
\begin{align*}
[z^n]\left(T_0^{(k)}(z)\right) &\sim \frac{n^k}{\rho^{n+k+1}}\sum_{i=1}^r \delta_i G_i(\rho);\\
[z^n]\left(G_j(z) T_j^{(k)}(z)\right) &\sim \frac{n^k}{\rho^{n+k+1}}\delta_j G_j(\rho).
\end{align*}
Plugging these estimates back into \eqref{eq:PEj}, we have
\[
\proba (E^{(n)}_{j}) = 
\frac{[z^n]G_{j}(z) T_{j}^{(k)}(z)}{[z^n]T_0^{(k)}(z) } \stackrel{n\to +\infty}{\longrightarrow} 
\frac {\delta_{j} G_{j}(\rho) }{\sum_{i=1}^r \delta_i G_i(\rho)}.\qedhere
\]
\end{proof}

We now give a similar statement when all $T_i$ have square-root singularities.

\begin{lemma}[Giant component: the square-root case]
   Let $\mathcal T_0, \mathcal T_1, \ldots, \mathcal T_r$ be combinatorial classes
  whose generating series have the same radius of convergence $\rho$ and are analytic on a $\Delta$-domain.
  We assume that $T_0= F(z,T_1,\dots,T_r)$ for some function $F$ which is analytic 
  on a neighborhood of $\{|z| \le \rho, |y_i| \le T_i(\rho)\}$ and that there exist $\beta_i$'s such that
\begin{equation}
  T_i(z) = T_i(\rho) - \beta_i \sqrt{\rho - z} + \O(\rho - z)\,\ ,\ 1 \leq i \leq r.
  \label{eq:asymp_Ti}
\end{equation}
Let $\mathbf{t}_n$ be a uniform random structure of size $n$ in $\mathcal T_0$, 
with a set of $k$ marked elements, chosen uniformly at random. 
Let $E^{(n)}_{j}$ be the event that all $k$ marked elements belong to the same $\mathcal{T}_{j}$-substructure.
Then
\begin{equation}
   \proba (E^{(n)}_{j}) \stackrel{n\to +\infty}{\longrightarrow} \beta_{j}
   \frac{\partial F (y_0, \dots, y_d)}{\partial y_{j}} \Big|_{(\rho, T_1(\rho),\dots,T_r(\rho))}
\times\left(\sum_{i=1}^r \beta_i \frac{\partial F (y_0, \dots, y_d)}{\partial y_{j}} \Big|_{(\rho, T_1(\rho),\dots,T_r(\rho))}\right)^{-1}. 
\label{eq:lim_probEj2}
\end{equation}
  \label{lem:giantComponent_Branching}
\end{lemma}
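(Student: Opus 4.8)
The plan is to mimic closely the proof of \cref{lem:giantComponent_Linear}, replacing the simple-pole analysis by a square-root singularity analysis. First I would set up the combinatorial translation. Since $T_0 = F(z, T_1, \dots, T_r)$ with $F$ analytic on a neighborhood of $\{|z|\le\rho, |y_i|\le T_i(\rho)\}$, a structure in $\mathcal T_0$ of size $s$ is an $\mathcal F$-structure of size $s$ together with $s$ substructures, each an atom or an element of some $\mathcal T_i$. Marking $k$ elements of a $\mathcal T_0$-structure and requiring that they all lie in a single $\mathcal T_j$-substructure corresponds, on generating series, to applying $\partial/\partial y_j$ to $F$ (which selects one distinguished $\mathcal T_j$-substructure and leaves an ``$F$ with a hole'' of generating function $\frac{\partial F}{\partial y_j}(z, T_1, \dots, T_r)$) and then replacing that distinguished $\mathcal T_j$-substructure by a $\mathcal T_j$-structure with $k$ marked elements, whose series is $T_j^{(k)}(z)/k!$. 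Hence
\begin{equation*}
  \proba\big(E^{(n)}_{j}\big) = \frac{[z^n]\, \frac{\partial F}{\partial y_j}(z,T_1(z),\dots,T_r(z))\; \tfrac{1}{k!}T_j^{(k)}(z)}{[z^n]\, \tfrac{1}{k!}T_0^{(k)}(z)}.
\end{equation*}
Here I am implicitly using that the events $E^{(n)}_j$ are essentially disjoint up to lower-order terms (the probability that two marked elements sit in distinct substructures of positive type, or in the $\mathcal F$-skeleton itself, will turn out negligible); this is forced a posteriori by the fact that the right-hand sides of \eqref{eq:lim_probEj2} sum to $1$, exactly as in the linear case.

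Next I would carry out the singularity analysis. Write $\mathbf y(z) = (z, T_1(z), \dots, T_r(z))$ and $\mathbf y(\rho) = (\rho, T_1(\rho), \dots, T_r(\rho))$. Since $F$ is analytic at $\mathbf y(\rho)$ and each $T_i(z) = T_i(\rho) - \beta_i\sqrt{\rho-z} + \O(\rho-z)$ by \eqref{eq:asymp_Ti}, a first-order Taylor expansion of $F$ along the curve $\mathbf y(z)$ gives
\begin{equation*}
  T_0(z) = F(\mathbf y(\rho)) - \Big(\sum_{i=1}^r \beta_i \tfrac{\partial F}{\partial y_i}\big|_{\mathbf y(\rho)}\Big)\sqrt{\rho-z} + \O(\rho-z),
\end{equation*}
so $T_0$ has itself a square-root singularity with leading coefficient $\beta_0 := \sum_i \beta_i \frac{\partial F}{\partial y_i}|_{\mathbf y(\rho)}$. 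Applying singular differentiation ($k$ times) -- legitimate because all series involved are $\Delta$-analytic at $\rho$ -- yields
\begin{equation*}
  T_0^{(k)}(z) \sim \beta_0 \cdot c_k \cdot (\rho-z)^{1/2-k}, \qquad T_j^{(k)}(z) \sim \beta_j \cdot c_k \cdot (\rho-z)^{1/2-k},
\end{equation*}
for the same explicit constant $c_k = \frac{1}{2}\cdot\frac{3}{2}\cdots\frac{2k-3}{2}$ (the $k=1$ case being read off directly). Also $\frac{\partial F}{\partial y_j}(\mathbf y(z)) \to \frac{\partial F}{\partial y_j}|_{\mathbf y(\rho)}$ as $z\to\rho$ by continuity (it is analytic, hence convergent, at $\rho$), so it contributes only its value at $\rho$ to the dominant asymptotics. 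Therefore both $\frac{\partial F}{\partial y_j}(\mathbf y(z))\, T_j^{(k)}(z)$ and $T_0^{(k)}(z)$ are $\sim$ (constant)$\cdot(\rho-z)^{1/2-k}$, and the transfer theorem (\cref{thm:transfert}) gives
\begin{align*}
  [z^n]\, T_0^{(k)}(z) &\sim \frac{c_k}{\Gamma(k-1/2)}\, \beta_0\, n^{k-3/2}\,\rho^{-n}, \\
  [z^n]\, \tfrac{\partial F}{\partial y_j}(\mathbf y(z))\, T_j^{(k)}(z) &\sim \frac{c_k}{\Gamma(k-1/2)}\, \beta_j\, \tfrac{\partial F}{\partial y_j}\big|_{\mathbf y(\rho)}\, n^{k-3/2}\,\rho^{-n}.
\end{align*}
Taking the ratio, the factors $c_k/\Gamma(k-1/2)$, $n^{k-3/2}$ and $\rho^{-n}$ cancel, and one is left with $\beta_j\frac{\partial F}{\partial y_j}|_{\mathbf y(\rho)} \big/ \sum_{i=1}^r \beta_i \frac{\partial F}{\partial y_i}|_{\mathbf y(\rho)}$, which is \eqref{eq:lim_probEj2}. (There is an evident typo-level issue in the statement, where the denominator repeats $\partial/\partial y_j$ instead of $\partial/\partial y_i$; the proof makes clear the intended index is $i$.)

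The main obstacle, and the only genuinely non-formal point, is justifying the reduction $\proba(E^{(n)}_j) \sim [z^n](\cdots)/[z^n] T_0^{(k)}$ together with the claim that these probabilities exhaust all the mass asymptotically -- i.e. that with high probability all $k$ marked elements fall into one and the same positive-type substructure, rather than being split among several substructures or landing on the $\mathcal F$-skeleton. The clean way around this is the same bookkeeping trick used in \cref{prop:ProbaBinaire}: establish the asymptotics above as they stand (an exact identity for $\proba(E^{(n)}_j)$, no inequality needed here since $E^{(n)}_j$ is literally the event described), observe that the limiting values sum to $1$, and conclude that no mass escapes. One should also check the minor points that $\frac{\partial F}{\partial y_j}|_{\mathbf y(\rho)} > 0$ for at least one $j$ (so the denominator is positive and nonzero) -- which holds because $T_0$ is not a polynomial / the class is infinite -- and that $F$ being analytic at $\mathbf y(\rho)$ indeed makes $\frac{\partial F}{\partial y_j}(\mathbf y(z))$ $\Delta$-analytic at $\rho$, so that the product $\frac{\partial F}{\partial y_j}(\mathbf y(z))\,T_j^{(k)}(z)$ is amenable to the transfer theorem.
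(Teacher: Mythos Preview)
Your proposal is correct and follows essentially the same approach as the paper: express $\proba(E^{(n)}_j)$ as the ratio $[z^n]\big(\tfrac{\partial F}{\partial y_j}(z,T_1(z),\dots,T_r(z))\,T_j^{(k)}(z)\big)\big/[z^n]\,T_0^{(k)}(z)$, Taylor-expand $F$ to get the square-root expansion of $T_0$, apply singular differentiation to $T_0$ and $T_j$, use the convergence of $\tfrac{\partial F}{\partial y_j}$ at $\rho$, and finish with the transfer theorem. Your observation that the identity for $\proba(E^{(n)}_j)$ is exact (so no ``sum to $1$'' argument is needed to close the proof) and your flagging of the index typo in the denominator of \eqref{eq:lim_probEj2} are both correct.
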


Contrary to the simple pole case, we do not assume that $F$ is linear.
Consequently, a structure in $\mathcal T_0$ might be composed of an $\mathcal F$-structure with several $\mathcal T_i$-substructures. 
Since the limiting probabilities in \cref{eq:lim_probEj2} sum to one,
the above lemma states that, with high probability, the structure has a giant substructure of some type $\mathcal T_j$.
\cref{eq:lim_probEj2} gives us the limiting distribution of this random type $\mathcal T_j$. 
As for \cref{lem:giantComponent_Linear},
when the $\mathcal T_j$ are families of permutations,
this lemma can be used to infer the limiting permuton
of $\mathcal{T}_{0}$ from those of the $\mathcal{T}_{j}$.

\begin{proof}
  We fix $\{1,\dots,r\}$. Similarly to the proof of \cref{lem:giantComponent_Linear}, we can express
  $\proba (E^{(n)}_{j})$ as a quotient of coefficients of generating series: in this case,
\[\proba (E^{(n)}_{j}) = \frac1{[z^n]T_0^{(k)}(z) } \, \cdot \,  [z^n]  
\left(T_j^{(k)}(z)\,\frac{ \partial F (y_0, \dots, y_d)}{\partial y_{j}} \Big|_{(z, T_1(z),\dots,T_r(z))}  \right).
  \]
 From assumption \eqref{eq:asymp_Ti} and the analyticity of $F$, we get that,
  for $z$ in a $\Delta$-neighborhood of $\rho$,
  \[T_0(z) = T_0(\rho) - \sqrt{ \rho- z} \left( \sum_{i=1}^r \beta_i 
      \frac{\partial F (y_0, \dots, y_d)}{\partial y_{j}} \Big|_{(\rho, T_1(\rho),\dots,T_r(\rho))} \right)
       + \O(\rho-z). \]
  By singular differentiation, we have, on a $\Delta$-neighborhood of $\rho$,
  \[T_0^{(k)}(z) = (\rho- z)^{1/2-k} \, C_k\, \left( \sum_{i=1}^r \beta_i                   
      \frac{\partial F (y_0, \dots, y_d)}{\partial y_{j}} \Big|_{(\rho, T_1(\rho),\dots,T_r(\rho))} \right)
      +\O\big((\rho-z)^{1-k}\big), \]
  where $C_1 = 1/2$ and $C_k=1 \cdot 3 \dots (2k-3)/2^{k}$ for $k \geq 2$.
  Similarly, 
  \[T_j^{(k)}(z) = (\rho- z)^{1/2-k} \, C_k \beta_j
  +\O\big(( \rho-z)^{1-k}\big). \]
Since $F$ is analytic in $\big( \rho, T_1(\rho),\dots,T_r(\rho) \big)$, 
the series $\frac{ \partial F (y_0, \dots, y_d)}{\partial y_{j}} \Big|_{(z, T_1(z),\dots,T_r(z))} $ converge in $\rho$
and we have
\begin{multline*}
T_j^{(k)}(z)\,\frac{ \partial F (y_0, \dots, y_d)}{\partial y_{j}} \Big|_{(z, T_1(z),\dots,T_r(z))}  
= (\rho- z)^{1/2-k} \, C_k \beta_j \frac{ \partial F (y_0, \dots, y_d)}{\partial y_{j}} \Big|_{(\rho, T_1(\rho),\dots,T_r(\rho))} \\
+ \O\big(  (\rho-z)^{1-k}\big). 
\end{multline*}
We conclude using the transfer theorem, as in the proof of \cref{lem:giantComponent_Linear}.
\end{proof}

\cref{lem:giantComponent_Linear,lem:giantComponent_Branching}
can also be applied in the particular situation where
one $\mathcal  T_i$ is equal to $\mathcal T_0$.
In such cases, the lemma yields the existence of a giant substructure
that is of type $\mathcal T_0$ with a probability $p$, typically in $(0,1)$.
When this occurs, we apply recursively 
\cref{lem:giantComponent_Linear} (or \ref{lem:giantComponent_Branching}) to this substructure.
After a random and almost surely finite number of iterations, we find a giant substructure
of a different type.
In the permutation case, this idea can be used to find the limiting permuton
of $\mathcal T_0$; see an example in \cref{ssec:couteau_suisse_union}.

\subsection{Several macroscopic substructures}\label{ssec:SeveralSubstructures}
We now describe a framework where several macroscopic substructures appear:
we assume that the generating series $ T_1,\dots, T_r$ have singularities which are simple poles 
and that $F$ is a polynomial.
Writing $F$ as a sum of monomials decomposes 
$\mathcal T_0$ into a disjoint union of subfamilies, 
one corresponding to each monomial.
We therefore focus on the case where $F$ is a monomial.

  We assume that the generating series $T_1,\dots,T_r$ have singularities which are simple poles, \emph{i.e.}, 
  \begin{equation}
    T_i(z) = \frac{\delta_i}{\rho - z} + \O(1).
    \label{eq:asymp_Ti_polar2}
  \end{equation}
  Assume in addition that 
  \begin{equation}
     F(z,T_1,\dots,T_r) = G(z) T_1 T_2\dots T_r, 
     \label{eq:F_Monome}
   \end{equation}
  where $G(z)$ is convergent at $\rho$; since there can be repetitions in the list $(T_1,\dots,T_r)$,
  this covers the case of a general monomial. 
  Let $\mathcal G$ be a combinatorial class with generating series $G$.

A structure in $\mathcal T_0$ can be identified with a list consisting of substructures in $\mathcal{G}, \mathcal T_1,\dots,\mathcal{T}_r$  (one structure from each class).

\begin{lemma}[Several macroscopic components: the monomial case]
\label{lem:SeveralComponentsMonomial}
   Let $\mathcal T_0, \mathcal T_1, \ldots, \mathcal T_r$ be combinatorial classes whose generating series have the same radius of convergence $\rho$ and are analytic on a $\Delta$-domain.
   We assume that $T_0= F(z,T_1,\dots,T_r)$ and \cref{eq:asymp_Ti_polar2,eq:F_Monome} hold.
We mark a set of $k$ elements, taken uniformly at random,  
   in a uniform random $\mathcal T_0$-structure of size $n$, and denote by $\ell_i$ ($1\leq i \leq r$) the (random) number of marked elements lying in the $\mathcal{T}_i$-substructure. 

Then $(\ell_1,\dots,\ell_r)$ is asymptotically uniformly distributed in the set $\{\ell_1 +\dots +\ell_r =k\}$.
\end{lemma}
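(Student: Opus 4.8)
The plan is to follow exactly the same template as in the proof of \cref{lem:giantComponent_Linear}, reducing the statement to an extraction of dominant coefficients, but now keeping track of how the $k$ marked elements split among the $r$ substructures. First I would fix a composition $(\ell_1,\dots,\ell_r)$ with $\ell_1+\dots+\ell_r=k$. Using \eqref{eq:F_Monome}, a $\mathcal T_0$-structure of size $n$ with a set of $k$ marked elements, of which exactly $\ell_i$ lie in the $\mathcal T_i$-substructure, is a tuple consisting of a $\mathcal G$-structure (unmarked), and for each $i$ a $\mathcal T_i$-structure carrying $\ell_i$ marked elements. Hence the bivariate generating series of such structures is $G(z)\,\prod_{i=1}^r \frac{1}{\ell_i!}T_i^{(\ell_i)}(z)$, and the probability we want is
\[
\proba\big((\ell_1,\dots,\ell_r) \text{ is the split}\big) = \frac{[z^n]\,G(z)\prod_{i=1}^r \frac{1}{\ell_i!}T_i^{(\ell_i)}(z)}{[z^n]\,\frac{1}{k!}T_0^{(k)}(z)}.
\]

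Next I would perform singularity analysis on numerator and denominator. From \eqref{eq:asymp_Ti_polar2}, singular differentiation gives $T_i^{(\ell_i)}(z) = \frac{\ell_i!\,\delta_i}{(\rho-z)^{\ell_i+1}} + \O\big((\rho-z)^{-\ell_i}\big)$ on a $\Delta$-neighborhood of $\rho$, so
\[
G(z)\prod_{i=1}^r \tfrac{1}{\ell_i!}T_i^{(\ell_i)}(z) = \frac{G(\rho)\prod_{i=1}^r \delta_i}{(\rho-z)^{k+r}} + \O\big((\rho-z)^{-(k+r)+1}\big),
\]
using that $G$ is convergent at $\rho$ and that $\sum_i(\ell_i+1)=k+r$. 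Crucially the leading constant $G(\rho)\prod_i\delta_i$ does \emph{not} depend on the composition $(\ell_1,\dots,\ell_r)$, only the exponent $k+r$ does, and that exponent is the same for every composition. For the denominator, $T_0=G(z)T_1\cdots T_r$ has a pole of order $r$ at $\rho$ with leading coefficient $G(\rho)\prod_i\delta_i$, so $\frac{1}{k!}T_0^{(k)}(z) = \frac{\binom{k+r-1}{r-1}\,G(\rho)\prod_i\delta_i}{(\rho-z)^{k+r}} + \O\big((\rho-z)^{-(k+r)+1}\big)$ by the Leibniz/singular-differentiation computation (the factor $\binom{k+r-1}{r-1}$ counts how the $k$ derivatives distribute, matching the number of compositions of $k$ into $r$ parts). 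Applying the transfer theorem (\cref{thm:transfert}) to both, the ratio converges to $1/\binom{k+r-1}{r-1}$, which is precisely the uniform distribution on the set of compositions of $k$ into $r$ nonnegative parts.

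The only mild subtlety, and the step I would be most careful about, is checking that the required $\Delta$-analyticity is available: the numerator $G(z)\prod_i T_i^{(\ell_i)}(z)$ must be analytic on a common $\Delta$-domain at $\rho$, which follows from the hypothesis that all the $T_i$ (hence their derivatives, by $\Delta$-analyticity being stable under differentiation) and $T_0$ are analytic on a $\Delta$-domain, together with the fact that $G = T_0/(T_1\cdots T_r)$ is then also $\Delta$-analytic and convergent at $\rho$ (so it contributes no singularity of its own). Once this is in place, everything is a routine coefficient asymptotics argument. I expect no real obstacle here; the statement is essentially a combinatorial identity in disguise — the product structure of $F$ forces the marked elements to be ``exchangeable'' across the $r$ slots in the limit, because each slot contributes the same polar order per marked element.
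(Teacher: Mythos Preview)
Your proof is correct and follows essentially the same approach as the paper: express the probability as a ratio of coefficients, apply singular differentiation to obtain the pole of order $k+r$ with the same leading constant in both numerator and denominator (up to the factor $\binom{k+r-1}{r-1}$), and conclude via the transfer theorem. The only minor point is your justification of the $\Delta$-analyticity of $G$ via $G=T_0/(T_1\cdots T_r)$, which is unnecessary and slightly delicate (the $T_i$ could vanish on the $\Delta$-domain); in the paper's framework $G$ is simply a combinatorial generating series assumed convergent at $\rho$, and its $\Delta$-analyticity is taken for granted from the context rather than derived.
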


\begin{proof}
  From the assumptions \eqref{eq:asymp_Ti_polar2} and \eqref{eq:F_Monome}, we get that,
  for $z$ in a $\Delta$-neighborhood of $\rho$,
  \[T_0(z) = G(\rho) \frac{\delta_1\dots \delta_r}{(\rho-z)^r}  + \O\bigg(\frac1{(\rho-z)^{r-1}}\bigg). \]
  By singular differentiation, on a $\Delta$-neighborhood of $\rho$, we have
  \[T_0^{(k)}(z) = G(\rho) \frac{(r+k-1)!}{(r-1)!} \frac{\delta_1\dots \delta_r}{(\rho-z)^{r+k}} + \O\bigg(\frac1{(\rho-z)^{r+k-1}}\bigg). \]
  Similarly, 
  \[T_{i}^{(\ell_i)}(z) = \frac{\ell_i!\ \delta_i}{(\rho- z)^{\ell_i+1}} 
  + \O\bigg(\frac1{(\rho-z)^{\ell_i}}\bigg). \]
  Combining both equations, we can write 
   \begin{align*}
G(\rho)\sum_{\ell_1 +\dots +\ell_r =k} \binom{k}{\ell_1,\dots,\ell_r} \prod_{i=1}^r T_{i}^{(\ell_i)}(z)\hspace{-3cm}&\\
&= G(\rho)\sum_{\ell_1 +\dots +\ell_r =k} \binom{k}{\ell_1,\dots,\ell_r} \prod_{i=1}^r\left( \frac{\ell_i!\, \delta_i}{(\rho- z)^{\ell_i+1}}  \right) + \O\bigg(\frac1{(\rho-z)^{r+k-1}}\bigg)\\
&= G(\rho) \frac{\delta_1\dots \delta_r}{(\rho-z)^{r+k}} \left(\sum_{\ell_1 +\dots +\ell_r =k} k!\right)
\O\bigg(\frac1{(\rho-z)^{r+k-1}}\bigg)\\
&= T_0^{(k)}(z) + \O\bigg(\frac1{(\rho-z)^{r+k-1}}\bigg),
\end{align*}
where in the last line we used that the number of $(\ell_1,\dots,\ell_r)$ such that $\ell_1 +\dots +\ell_r =k$ is $\binom{k+r-1}{r-1}$. 

By the transfer theorem, we obtain (for $\ell_1 +\dots +\ell_r =k$), 
\begin{align*}
[z^n]\left(G(z)\binom{k}{\ell_1,\dots,\ell_r} \prod_{i=1}^r T_{i}^{(\ell_i)}(z)\right) &\sim  G(\rho)\binom{k}{\ell_1,\dots,\ell_r} \frac{n^{r+k-1}}{\rho^{n+k+r}} \frac{1}{(k+r-1)!}\left(\prod_{i=1}^r \ell_i!\delta_i\right)\\
 &\sim  G(\rho) \frac{n^{r+k-1}}{\rho^{n+k+r}} \frac{k!}{(k+r-1)!}\left(\prod_{i=1}^r \delta_i\right).
\end{align*}
The right-hand side does not depend on $\ell_i$'s. Summing over the $\binom{k+r-1}{r-1}$ possible values for the $\ell_i$'s we obtain
$$
[z^n]\left(T_0^{(k)}(z)\right) \sim G(\rho) \frac{n^{r+k-1}}{\rho^{n+k+r}} \frac{1}{(r-1)!} \left(\prod_{i=1}^r\delta_i\right).
$$
Recall that we consider a uniform random structure $\mathbf{t}_n$ of size $n$ in $\mathcal T_0$ with a uniform set of $k$ marked elements.
Let $E^{(n)}_{\ell_1,\dots,\ell_r}$ denote the event that for every $1\leq i\leq r$, exactly $\ell_i$ of these marked elements lie in the $\mathcal T_i$-substructure.
Its probability can be computed by
$$
\proba (E^{(n)}_{\ell_1,\dots,\ell_r}) = 
\frac{[z^n]\left(G(z)\binom{k}{\ell_1,\dots,\ell_r} \prod_{i=1}^r T_{i}^{(\ell_i)}(z)\right)}{[z^n]\left(T_0^{(k)}(z)\right)} \to \frac{1}{\binom{k+r-1}{r-1}}.
$$
This concludes the proof.
\end{proof}
We now discuss briefly the more general case
where $T_0=F(z,T_1,\dots,T_r)$, with $F$ a polynomial in $T_1,\dots,T_r$ (not necessarily a monomial)
with coefficients converging at $z=\rho$ (the $T_i$'s are still assumed to have a simple pole in $\rho$).
Each monomial has a pole at the singularity, whose multiplicity equals the degree of the monomial.
Therefore, only monomials of maximal degree contribute to the limit.
We will use this principle to determine permuton limits of some families of permutations
in two different cases.
\begin{itemize}
  \item 
An example with exactly one monomial of maximal degree (namely one monomial of degree $2$ and one of degree $1$)
is given in \cref{ssec:compound}.
  \item 
When there are several monomial of maximal degree,
a random element in $\mathcal T_0$ belongs asymptotically with positive probability
to each of the classes corresponding to these monomials.
We will see an example of this kind of behavior in \cref{ssec:couteau_suisse_union}.
\end{itemize}

\subsection{Examples}

\subsubsection{Four classes $\TTT$ with a single strongly connected component pointing to $\TTT$}

We consider the $X$-class already analyzed in \cref{sec:ClasseX_debut,sec:ClasseX}.
As  explained in \cref{sec:ClasseX}, we can use \cref{Th:linearCase} to prove
that all critical classes except for $\mathcal T_0$, namely $\mathcal T_3$,
 $\mathcal T_4$, $\mathcal T_6$ and  $\mathcal T_7$, converge to an $X$-permuton.
We can prove that $\mathcal T_0$ has the same limit using \cref{lem:giantComponent_Linear} 
instead of the little trick used in \cref{sec:ClasseX}.
Indeed, the first equation of the specification \eqref{eq:SpecifClasseX} expresses 
$\mathcal T_0$ as a linear combination of $\mathcal T_3$,
 $\mathcal T_4$, $\mathcal T_6$ and  $\mathcal T_7$ (the coefficients involving subcritical classes). 
 Moreover, all series $\mathcal T_3$,
 $\mathcal T_4$, $\mathcal T_6$ and  $\mathcal T_7$  have a simple pole at $\rho = 1 - \sqrt{2}/2$.  
 Therefore, by \cref{lem:giantComponent_Linear}, with probability tending to $1$,
a uniform random tree in $\mathcal T_0$ has a giant substructure in either $\mathcal T_3$,
 $\mathcal T_4$, $\mathcal T_6$ or $\mathcal T_7$.
 Since the latter all tend to an $X$-permuton (with the same parameters), so does $\mathcal T_0$.
 \medskip

Similarly, we can replace our previous trick by \cref{lem:giantComponent_Linear} 
for the classes discussed in \cref{sec:ClasseXTilde,sec:ClasseV,sec:PinPerm}.

\subsubsection{A class with many strongly connected components}
\label{ssec:couteau_suisse_union}

The example that we consider now is the class 
\[\TTT= \Av(2413,3142,2314,3241,21453,45213).\]
This class is not substitution-closed and contains no simple permutation.

For this class, we obtain\footnote{See the \href{http://mmaazoun.perso.math.cnrs.fr/pcfs/} {companion Jupyter notebook}  \texttt{examples/Union.ipynb}} a specification with 13 families $\TTT=\TTT_0, \dots, \TTT_{9}, \TTT_{11},\TTT_{12},\TTT_{13}$ (the family $\TTT_{10}$ being empty, see \cref{rk:ClasseUnion} in Appendix).
The corresponding system on series can be explicitly solved,
showing that all series except $T_1$ and $ T_{11}$ are critical and have a common square-root singularity.
The complete specification and the explicit solution of the associated system
can be found in \cref{Ex:ClasseUnion}.
The dependency graph restricted to the critical $\mathcal T_i$ is shown in \cref{fig:DependendyGraphUnion}
and has nine strongly connected components.
\begin{figure}[htbp]
\includegraphics[width=9cm]{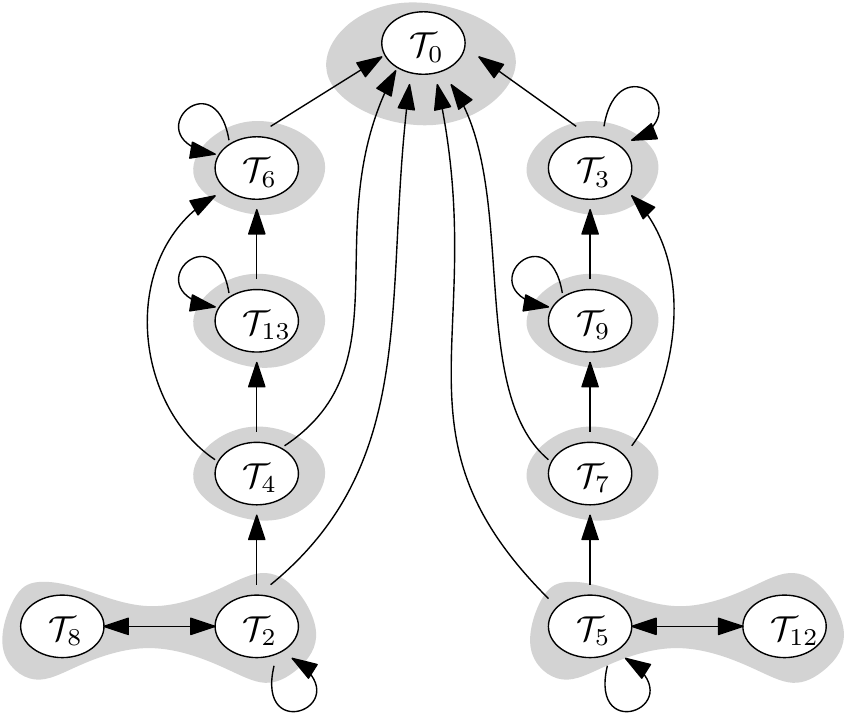}
\caption{The subgraph restricted to critical families $\mathcal T_i$, for the specification~\eqref{eq:SpecifClasseUnion} of the class $\Av(2413,3142,2314,3241,21453,45213)$. It has nine strongly connected components.}
\label{fig:DependendyGraphUnion}
\end{figure}

\begin{remark}
This example has been built on purpose to show a graph $G^\star$ with many strongly connected components. 
This has been ensured by considering the class $\Av(213) \cup \Av(231)$, for which it is easy to check that the basis is 
$\{2413,3142,2314,3241,21453,45213\}$ given above. 
We are aware that studying this class \emph{via} its tree-specification (given in Appendix) is neither the most natural nor the simplest thing to do. 
Our goal with this example is to illustrate that, even without the knowledge of the simple ``union'' structure of our class, 
our approach would still work. 
\end{remark}
\medskip

We now determine the limiting permuton of a uniform random permutation in $\mathcal T$,
using the specification; see \cref{fig:SimusUnion} for a simulation.
\begin{proposition}
 A uniform random permutation in the class $\Av(2413,3142,2314,3241,21453,45213)$
 converges in distribution to the random permuton,
which is the diagonal with probability $1/2$ and the antidiagonal with probability $1/2$. 
\end{proposition}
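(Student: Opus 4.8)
The proof proceeds by combining the two ``giant substructure'' lemmas of this section with the main theorems applied to the strongly connected components of $G^\star$ which have no incoming edges. First I would identify, in the explicit solved system of \cref{Ex:ClasseUnion}, the strongly connected components of $G^\star$ (there are nine, by \cref{fig:DependendyGraphUnion}), together with the partial order induced on them by the edges of $G^\star$. The minimal components (those with no incoming edge in $G^\star$) are strongly connected sub-specifications satisfying Hypothesis (SC); since the whole specification has a finite number of simple permutations, Hypotheses (RC) and (AR) hold automatically by \cref{obs:polynomial}, and aperiodicity of at least one series holds since the class is infinite, by \cite{DrmotaPierrot}. One then checks from the explicit equations which of these minimal components are essentially linear and which are essentially branching; in this example the relevant leaves will turn out to be essentially linear and to produce \emph{degenerate} $X$-permutons, namely the diagonal $\mu^X_{(1,0,0,0)}$ and the antidiagonal $\mu^X_{(0,0,1,1)}$ (compare \cref{ex:layered} for the diagonal case). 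Here I would invoke \cref{Th:linearCase} and compute the parameter $\mathbf p$ from \cref{eq:DefProbaCaterpillar}, reading off that the degenerate components $D^e_\eps$ vanish for all but one value of $(e,\eps)$, exactly as in \cref{ex:layered}.

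**Propagating the limit up the component poset.** Having the limiting permuton for the minimal components, I would go up the poset of strongly connected components one step at a time. For a component $C$ receiving edges from components whose permuton limits are already known, I would look at the equation $T_0 = F(z,T_1,\dots,T_r)$ defining a representative family of $C$ (with all lower-level critical series and all subcritical series treated as the $T_i$'s or absorbed into coefficients). Since all critical series in this example have a common square-root singularity, the relevant tools are \cref{lem:giantComponent_Branching} (when $F$ is analytic in a neighborhood of $(\rho,T_1(\rho),\dots)$, giving a single giant substructure whose type is random with probabilities \eqref{eq:lim_probEj2}) and \cref{lem:SeveralComponentsMonomial} together with the ``maximal degree monomial'' principle stated at the end of \cref{ssec:SeveralSubstructures} (when a monomial of $F$ is a product of two critical series, several macroscopic substructures coexist). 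When $C$ itself appears among the $T_i$'s (a self-loop), I would apply the giant-substructure lemma recursively, as explained after \cref{lem:giantComponent_Branching}, until a giant substructure of a strictly lower type is reached; this terminates almost surely. At each stage, knowing the limiting permutons of the substructures and whether there is one giant piece or several macroscopic pieces determines, via $\InducedPerm_k$ and \cref{thm:randompermutonthm}, the limiting permuton of $C$: a single giant piece copies the child's limit (mixed over the random choice of child), whereas several macroscopic pieces of sizes governed by the uniform-on-simplex statement of \cref{lem:SeveralComponentsMonomial} would assemble into a juxtaposition. I expect that the structure of this particular specification forces at every intermediate step a \emph{single} giant substructure, so no genuine juxtaposition appears, and the limit of $\TTT_0$ is simply a mixture: the diagonal with some probability $p$ and the antidiagonal with probability $1-p$.

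**Pinning down the probability $1/2$.** The last step is to compute the mixing probability, and I would do this by carefully tracking, component by component, the limiting probabilities coming from \eqref{eq:lim_probEj2} (and \eqref{eq:lim_probEj} wherever a simple-pole sub-block occurs), multiplying them along each root-to-leaf path in the component poset and summing the contributions landing on the diagonal leaf versus those landing on the antidiagonal leaf. The value $1/2$ should come out either from a direct computation of the square-root coefficients $\beta_i$ and partial derivatives $\partial F/\partial y_j$ at $\rho$ from the explicit formulas in \cref{Ex:ClasseUnion}, or — more conceptually — from the $\leftrightarrow$ symmetry: the class $\Av(2413,3142,2314,3241,21453,45213)$ equals $\Av(213)\cup\Av(231)$ (as noted in \cref{rk:ClasseUnion}), and these two Catalan classes are exchanged by the reverse-complement symmetry of the square, which swaps diagonal and antidiagonal and preserves the uniform distribution; since a uniform permutation of size $n$ in the union lies in each of $\Av(213)$, $\Av(231)$ with probability tending to $\tfrac12$ (the intersection $\Av(213)\cap\Av(231)$ being polynomially small), and $\Av(213)$ (resp. $\Av(231)$) has diagonal (resp. antidiagonal) limit, the mixture is $\tfrac12$--$\tfrac12$. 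The main obstacle, and the part requiring real care rather than routine bookkeeping, is the middle step: verifying that at \emph{every} intermediate strongly connected component exactly one substructure is macroscopic (i.e. that no monomial of the relevant $F$ is a product of two critical series, or if one is, that it is dominated), since it is only under this structural fact that the limit is a clean mixture of the two diagonals rather than some assembled permuton, and this must be read off honestly from the thirteen equations of the specification.
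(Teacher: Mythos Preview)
Your overall strategy---identify the strongly connected components of $G^\star$, determine the limiting permuton at the minimal ones via one of the main theorems, then propagate upward using the giant-substructure lemma (\cref{lem:giantComponent_Branching}) iterated through self-loops, and finally use symmetry to get the $1/2$--$1/2$ mixture---is exactly the paper's approach.

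There is, however, one concrete misidentification. You assert that the minimal components will ``turn out to be essentially linear'' and that you will apply \cref{Th:linearCase}. In fact the minimal component $\{\mathcal T_2,\mathcal T_8\}$ (and its symmetric partner $\{\mathcal T_5,\mathcal T_{12}\}$) is \emph{essentially branching}: the equation for $\mathcal T_2$ contains the term $\oplus[\mathcal T_8,\mathcal T_2]$, a product of two critical families. So hypothesis~(i) of \cref{Th:linearCase} fails, and the paper applies \cref{Th:branchingCase} instead, obtaining the diagonal as the degenerate Brownian separable permuton with $p_+=1$ (since the only quadratic term carries a $\oplus$). This is not fatal to your plan---the diagonal arises either way, as a degenerate $X$-permuton or a degenerate Brownian separable permuton---but if you tried to carry out your stated plan literally you would hit a contradiction when checking the linearity hypothesis. (Minor side point: your parameter $(0,0,1,1)$ for the antidiagonal does not sum to~$1$.)
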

\begin{figure}[htbp]
	\begin{tabular}{ccc}
		\includegraphics[width = 0.2\linewidth]{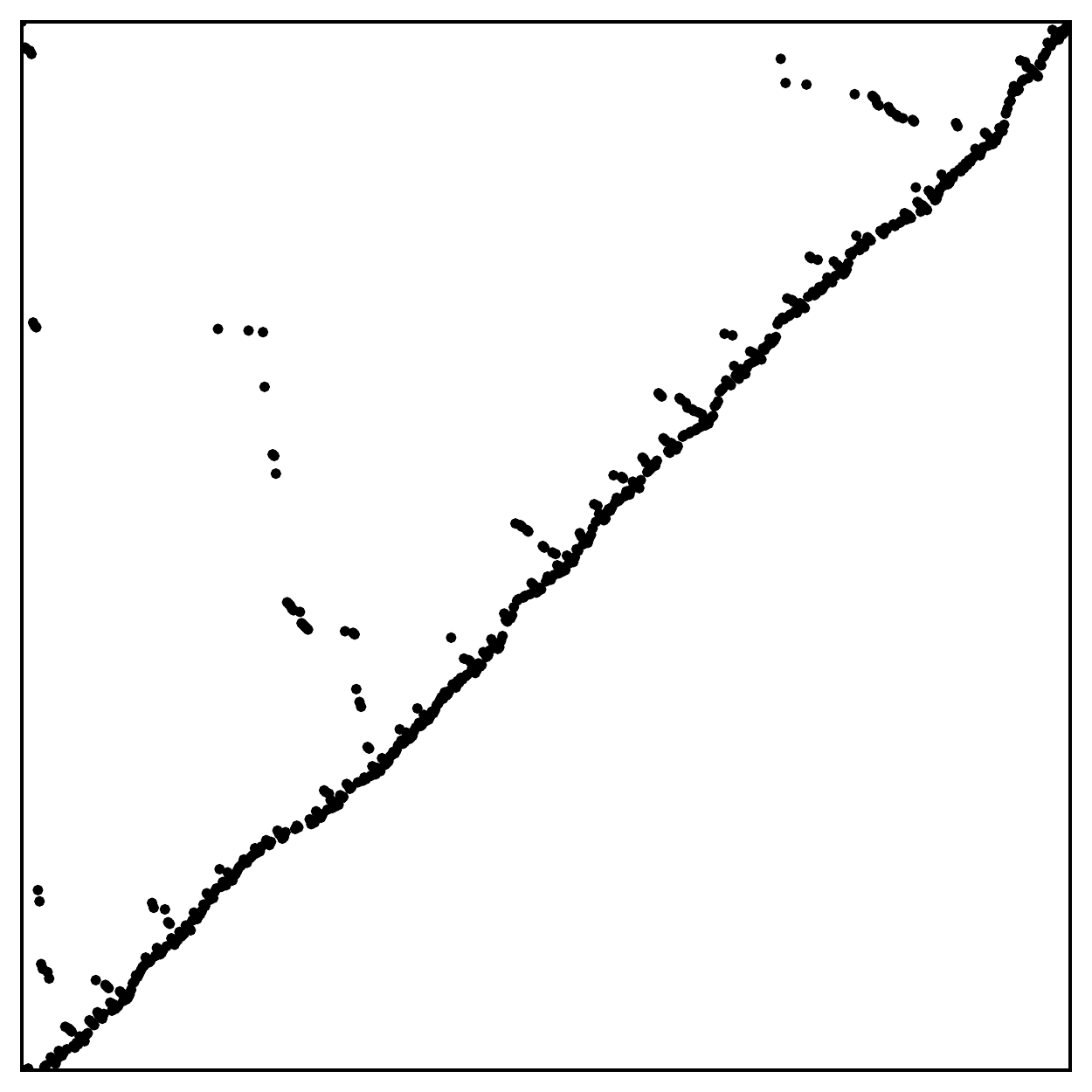} & \includegraphics[width = 0.2\linewidth]{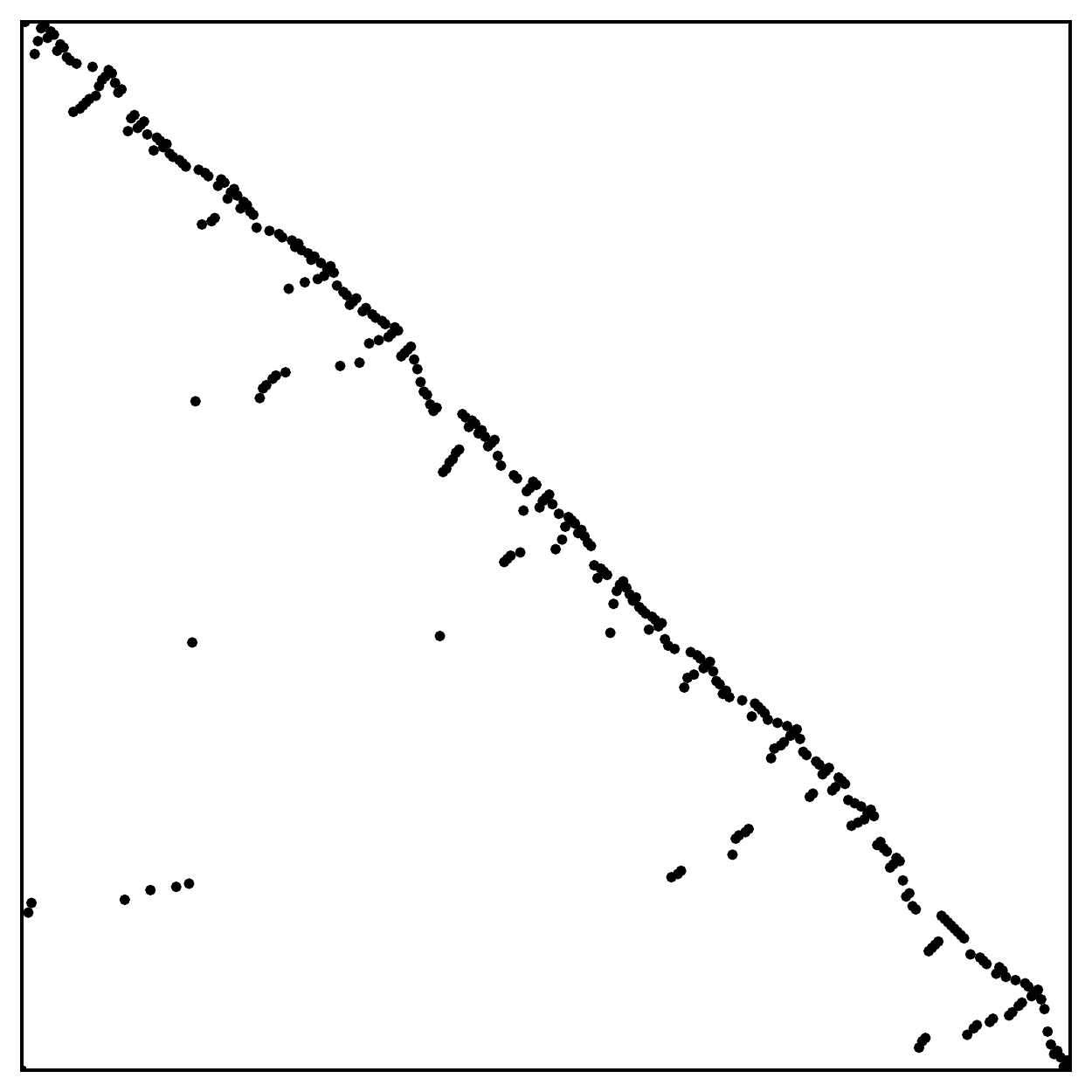} &\includegraphics[width = 0.2\linewidth]{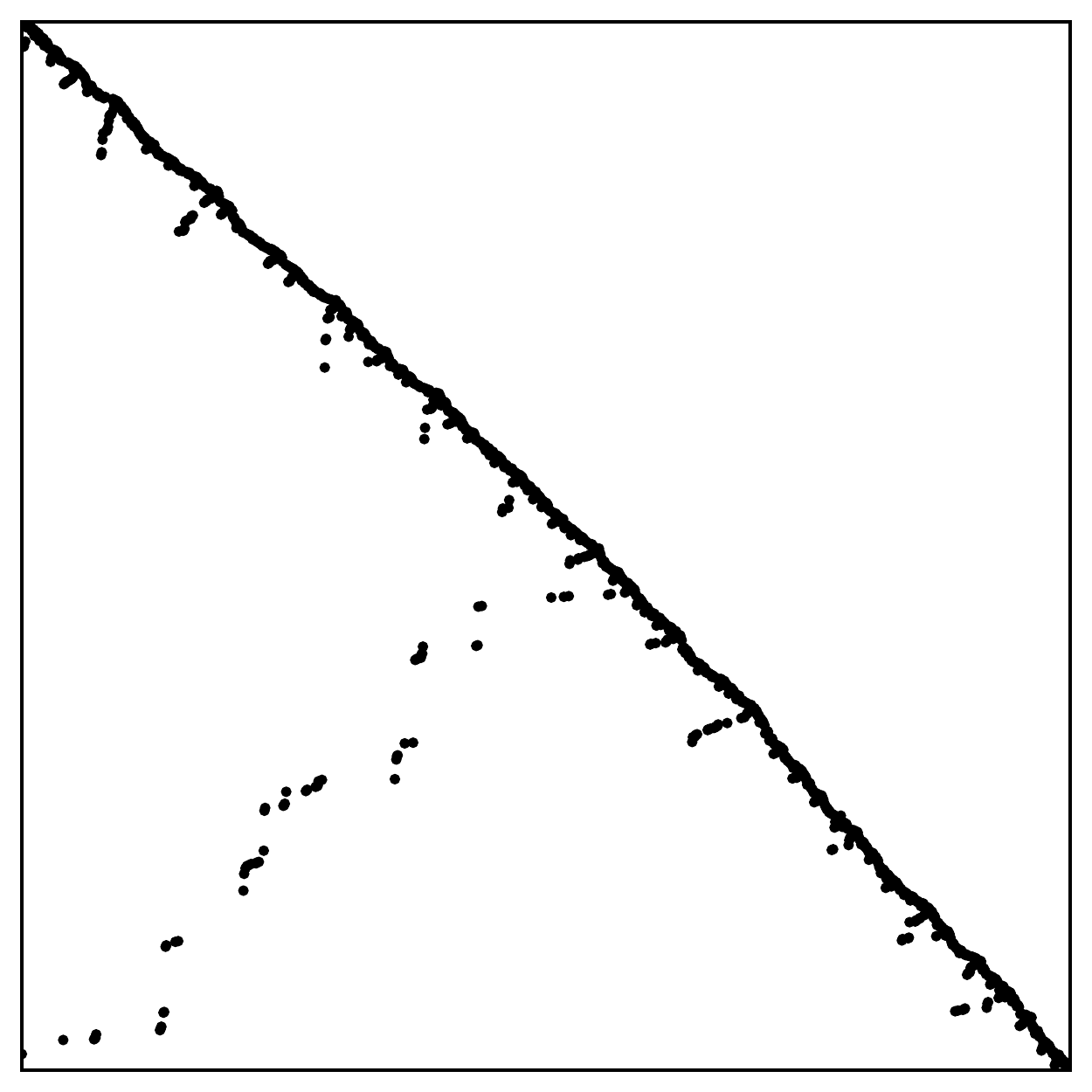}  
	\end{tabular}
	\caption{Three large permutations in $\mathcal T$, drawn uniformly at random.
	\label{fig:SimusUnion}}
\end{figure}
\begin{proof}
The strategy is to proceed step by step, determining the limiting permuton of uniform random permutations in each of the critical families,
navigating in the dependency graph of \cref{fig:DependendyGraphUnion} from bottom to top.

We first consider the strongly connected component $\{\mathcal T_2,\mathcal T_8\}$.
Taking the equations for $\mathcal T_1$, $\mathcal T_2$ and $\mathcal T_8$ in the specification \eqref{eq:SpecifClasseUnion} for $\mathcal T$
given in \cref{Ex:ClasseUnion}, we have a specification for $\mathcal T_2$.
This restricted specification satisfies Hypothesis (SC) and is essentially branching.
We can therefore apply \cref{Th:branchingCase} (the other hypotheses are straightforward to check)
and we get that a uniform random permutation in $\mathcal T_2$ converge to a biased Brownian separable permuton with some parameter $p$ in $[0,1]$.
Since the only quadratic term in the system is $\oplus[\mathcal T_8,\mathcal T_2]$, which corresponds to a $\oplus$ node,
we have $p_+=1$, which means that the limit is in fact the main diagonal of $[0,1]^2$.

We now consider $\mathcal T_4$. It is given by the equation $\mathcal T_4=\ominus[\mathcal T_1,\mathcal T_2]$.
The family $\mathcal T_1$ is subcritical, while $\mathcal T_2$ has a square-root singularity in $\rho$
(as easily seen on the explicit expression given in \cref{Ex:ClasseUnion}).
Applying \cref{lem:giantComponent_Branching}, we know that a uniform random permutation of $\mathcal T_4$ has 
a giant substructure in $\mathcal T_2$, and therefore, also converges to the diagonal permuton.

Moving on to $\mathcal T_{13}$, it is given by the equation
\[
\mathcal T_{13}= \oplus[\mathcal T_{4},\mathcal T_{13}]\uplus \oplus[\mathcal T_{1},\mathcal T_{13}]\uplus \oplus[\mathcal T_{4},\mathcal T_{11}]\uplus \ominus[\mathcal T_{1},\mathcal T_{13}]. 
\]
An important difference with the equation of $\mathcal T_4$ is that it involves also $\mathcal T_{13}$ itself on the right-hand side.
We can still apply \cref{lem:giantComponent_Branching} and conclude
that a uniform random permutation of $\mathcal T_{13}$ has a giant substructure in either $\mathcal T_4$ or $\mathcal T_{13}$.
Iterating this argument (see the discussion at the end of \cref{Sec:SubTree}),
after a finite number of steps, we find a giant substructure of type $\mathcal T_4$.
We conclude that a uniform random permutation in $\mathcal T_{13}$ has the same limiting permuton
as one in $\mathcal T_4$, \emph{i.e.}~ the diagonal permuton.
With the exact same reasoning, we prove that a uniform random permutation in $\mathcal T_6$ also converges to the diagonal permuton 
(which appears here as the Brownian separable permuton of parameter $p_+=0$).

On the other hand, and following the same steps, we show that a uniform random permutation in any of the classes $\mathcal T_5$, $\mathcal T_7$, $\mathcal T_9$ and $\mathcal T_3$
converges to the antidiagonal permuton.

Finally, we consider $\mathcal T_0$. It is given by the equation
  \[\mathcal T_{0}= \{ \bullet \} \uplus \oplus[\mathcal T_{1},\mathcal T_{2}]\uplus 
  \oplus[\mathcal T_{1},\mathcal T_{3}]\uplus \oplus[\mathcal T_{4},\mathcal T_{2}]\uplus \ominus[\mathcal T_{1},\mathcal T_{5}]\uplus \ominus[\mathcal T_{1},\mathcal T_{6}]\uplus \ominus[\mathcal T_{7},\mathcal T_{5}]. \]
In the above equation $T_1$ is convergent in $\rho$
and all other classes are critical (with square-root singularities).
By \cref{lem:giantComponent_Branching}, a uniform random permutation in $\mathcal T_0$
contains a giant substructure of type $\mathcal T_{\bm j}$, where ${\bm j}$ follows asymptotically some distribution on $\{2,3,4,5,6,7\}$.
For each $j_0$ in this set, we denote $p_{j_0}=\mathbb P(\bm j=j_0)$.
We can then conclude that a uniform random permutation in $\mathcal T_0$ converges in distribution to the random permuton,
which is the diagonal with probability $p_+:= p_2+p_4+p_6$ and the antidiagonal with probability $p_-:=p_3+p_5+p_7$.
Using the explicit expression of the $p_j$'s in \cref{lem:giantComponent_Branching} or observing the symmetry,
we see that $p_+=p_-=1/2$.
\end{proof}

\subsubsection{A ``compound'' class}
\label{ssec:compound}

Our goal here is to illustrate the emergence of several macroscopic substructures in the limit, 
as described in \cref{ssec:SeveralSubstructures}. 
To this effect, we consider the class $\mathcal{C}$ which can be defined as the downward closure of $\oplus[\mathcal{X},\mathcal{X}]$, 
where $\mathcal{X}$ denotes the $X$-class (see \cref{sec:ClasseX_debut,sec:ClasseX}). 
This class has no simple permutation and has therefore a tree-specification. 
We explain below an easy way to construct one such specification. 
However the obtained specification does not satisfy Hypothesis (SC) (p.\pageref{Hyp:StronglyConnected}). 
We explain here how to determine nevertheless the limiting permuton
of a uniform random permutation in $\mathcal C$.

We first define the limiting permuton.
\begin{definition}
  Let $\bm U$ be a uniform random variable in $[0,1]$.
  We construct the random permuton $\bm\mu^{\oplus[X,X]}$ as follows:
  \begin{itemize}
    \item on $[0,\bm U] \times [0,\bm U]$, we take a rescaled copy of $\mu^X_{(\frac14, \frac14, \frac14, \frac14)}$, \emph{i.e.}
      \[\bm\mu^{\oplus[X,X]} \big([\bm Ua,\bm Ub] \times [\bm Uc,\bm Ud]\big) = \bm U \cdot \mu^X_{(\frac14, \frac14, \frac14, \frac14)}\big([a,b] \times [c,d]\big);\]
   \item similarly, on $[1-\bm U,1] \times [1-\bm U,1]$, we take a rescaled copy of $\mu^X_{(\frac14, \frac14, \frac14, \frac14)}$;
    \item $\bm\mu^{\oplus[X,X]} \big([0,\bm U] \times [1-\bm U,1] \big)= \bm\mu^{\oplus[X,X]} \big([1-\bm U,1] \times [0,\bm U]\big) = 0$.
  \end{itemize}
\end{definition}

We now describe the distribution of the permutation constructed from $k$ random points in this permuton.
\begin{lemma}
  Let $(\bm \ell_1,\bm \ell_2)$ be a uniform random variable in the set $\{(\ell_1,\ell_2) \in \mathbb Z_{\ge 0}^2:\ell_1+\ell_2=k\}$.
  Conditionally on $(\bm \ell_1,\bm \ell_2)$, we take $\pi_i$ (for $i$ in $\{1,2\}$) to be independent random permutations
  distributed as $\Perm_{\ell_i}(\mu^X_{(\frac14, \frac14, \frac14, \frac14)})$.
  Then
  \[ \Perm_k(\bm\mu^{\oplus[X,X]}) \stackrel{(d)}= \oplus[\pi_1,\pi_2].\]
  \label{lem:subpermutations_XX}
\end{lemma}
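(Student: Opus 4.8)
The plan is to compute $\Perm_k(\bm\mu^{\oplus[X,X]})$ directly from its definition as the pattern induced by $k$ i.i.d. points under $\bm\mu^{\oplus[X,X]}$, conditioning first on the random variable $\bm U$ and then on how many of the $k$ points fall in each of the two diagonal blocks. First I would note that, conditionally on $\bm U$, a $\bm\mu^{\oplus[X,X]}$-distributed point lands in the lower-left square $[0,\bm U]^2$ with probability $\bm U$ and in the upper-right square $[1-\bm U,1]^2$ with probability $1-\bm U$, and in no other region (the off-diagonal blocks have mass zero). Hence if $\bm L$ denotes the number of the $k$ points landing in the lower-left block, then conditionally on $\bm U$ we have $\bm L \sim \mathrm{Binomial}(k,\bm U)$. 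Since $\bm U$ is uniform on $[0,1]$, integrating out $\bm U$ gives $\proba(\bm L = \ell) = \binom{k}{\ell}\int_0^1 u^\ell(1-u)^{k-\ell}\,du = \binom{k}{\ell} B(\ell+1,k-\ell+1) = \frac{1}{k+1}$ for each $\ell \in \{0,1,\dots,k\}$; that is, $\bm L$ is uniform on $\{0,\dots,k\}$, which matches the distribution of $\bm \ell_1$ in the statement.

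Next I would analyze the induced permutation conditionally on $\bm U$ and on $\bm L = \ell_1$ (so $\ell_2 = k-\ell_1$ points land in the upper-right block). Because the lower-left block sits entirely below and to the left of the upper-right block, in the $x$-ordered sample every point of the lower-left block precedes every point of the upper-right block, and likewise for $y$-values; therefore the induced pattern is exactly the direct sum $\oplus[\pi_1,\pi_2]$, where $\pi_i$ is the pattern induced by the $\ell_i$ points inside block $i$. Within the lower-left block $[0,\bm U]^2$, the conditional law of each point is, by the scaling relation in the definition of $\bm\mu^{\oplus[X,X]}$, precisely the image of $\mu^X_{(1/4,1/4,1/4,1/4)}$ under the affine rescaling $(a,b)\mapsto(\bm U a,\bm U b)$; since this rescaling is increasing in both coordinates, it does not change induced patterns, so $\pi_1 \stackrel{(d)}= \Perm_{\ell_1}(\mu^X_{(1/4,1/4,1/4,1/4)})$. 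The same argument applies verbatim to the upper-right block, and the two blocks use disjoint (hence independent) collections of the i.i.d. points, so $\pi_1$ and $\pi_2$ are independent conditionally on $(\bm U, \bm L)$, and in fact their conditional law depends only on $(\ell_1,\ell_2)$, not on $\bm U$.

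Putting these together: conditionally on $(\bm \ell_1, \bm \ell_2) := (\bm L, k-\bm L)$, which we have shown is uniform on $\{(\ell_1,\ell_2):\ell_1+\ell_2=k\}$, we have $\Perm_k(\bm\mu^{\oplus[X,X]}) = \oplus[\pi_1,\pi_2]$ with $\pi_1,\pi_2$ independent and $\pi_i \stackrel{(d)}= \Perm_{\ell_i}(\mu^X_{(1/4,1/4,1/4,1/4)})$, which is exactly the claimed description. The only mildly delicate points are measure-theoretic bookkeeping: checking that the off-diagonal rectangles genuinely carry zero mass (immediate from the definition), and that marginal subtleties at the boundaries $\{x=\bm U\}$ etc. occur with probability zero (the points are a.s. distinct and avoid these null sets). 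I expect the main obstacle — such as it is — to be stating cleanly the claim that an increasing affine change of coordinates preserves induced patterns, and organizing the nested conditioning so that the independence of $\pi_1,\pi_2$ is transparent; everything else is a short computation with the Beta integral.
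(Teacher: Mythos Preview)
Your proposal is correct and follows essentially the same approach as the paper: condition on $\bm U$, observe that the number of points falling in the lower-left block is uniform on $\{0,\dots,k\}$ (the paper states this without writing out the Beta integral, but it is the same computation), and then use the scaling relation to identify the pattern within each block with a sample from $\Perm_{\ell_i}(\mu^X_{(1/4,1/4,1/4,1/4)})$. Your write-up is somewhat more explicit about the measure-theoretic details (zero mass off the diagonal blocks, monotone rescalings preserving induced patterns), but there is no substantive difference in strategy.
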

\begin{proof}
Denote as in \cref{ssec:permutons_intro}  $(\xx_{1},\yy_{1}),\dots, (\xx_{k},\yy_{k})$ the coordinates of the $k$ i.i.d. points drawn with distribution $\bm\mu^{\oplus[X,X]}$ in order to define $\Perm_k(\bm\mu^{\oplus[X,X]})$. It suffices to notice that 
$$
\mathrm{card}\{1\leq i\leq k; \xx_i\leq \bm U\} 
$$
is uniformly distributed in $\{0,1,\dots,k\}$. Moreover, conditionally on $\bm U$ and on the event $\{\xx_i<\bm U\}$, $\xx_i$ is uniform in $(0,\bm U)$. Therefore the permutation induced by points $\{(\xx_{i},\yy_{i}); \xx_i\leq \bm U\}$ (resp. $> \bm U$) has the same distribution as $\pi_1$ (resp. $\pi_2$).
We conclude that the permutation induced by the whole set $\{(\xx_{i},\yy_{i}); 1 \le i \le k\}$
has the same distribution as $\oplus[\pi_1, \pi_2]$,
which is what we wanted to prove.
\end{proof}

We can now state and prove our convergence result, illustrated in \cref{Fig:exemple_XX}.
\begin{proposition}
  \label{prop:compound}
 Let $\mathcal C$ be the downward closure of $\oplus[\mathcal{X},\mathcal{X}]$ and $\bm \sigma_n$ be a uniform random permutation of size $n$ in $\mathcal C$.
 Then $\bm \sigma_n$ converges in distribution to the random permuton $\bm\mu^{\oplus[X,X]}$.
\end{proposition}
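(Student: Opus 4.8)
The plan is to exhibit $\mathcal C$ as the root family of a finite tree-specification built on top of the specification \eqref{eq:SpecifClasseX} of the $X$-class $\mathcal X=\Av(2413,3142,2143,3412)$, and then to run the machinery of \cref{Sec:CouteauSuisse}, in particular \cref{ssec:SeveralSubstructures}, since this specification violates Hypothesis~(SC). The crux is a combinatorial description of $\mathcal C$. Write any permutation as its direct-sum decomposition $\oplus[\gamma_1,\dots,\gamma_m]$ into $\oplus$-indecomposable blocks, and call a block \emph{trivial} if it has size $1$. One checks that, for $\pi_1,\pi_2\in\mathcal X$, the sum $\oplus[\pi_1,\pi_2]$ lies in $\mathcal X$ if and only if $\pi_1$ or $\pi_2$ is increasing; together with a short case analysis of $\mathcal C=\mathcal X\cup\oplus[\mathcal X,\mathcal X]$, this shows that a permutation belongs to $\mathcal C$ precisely when all its $\oplus$-indecomposable blocks lie in $\mathcal X$ and \emph{at most two} of them are non-trivial. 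In particular $\mathcal C=\mathcal X\uplus\mathcal D$ (a disjoint union), where $\mathcal D$ is the family of permutations with exactly two non-trivial blocks: an element of $\mathcal D$ is obtained from a pair $(\alpha_1,\alpha_2)$ of $\ominus$-rooted permutations of $\mathcal X$ by placing $\alpha_1$ then $\alpha_2$ in a direct sum and inserting increasing runs of arbitrary (possibly zero) length before, between and after them. Since $\mathcal C$ contains no simple permutation, this can be turned into a genuine tree-specification by adjoining to \eqref{eq:SpecifClasseX} a bounded number of families encoding $\mathcal D$, the auxiliary family of $\mathcal C$-elements with at most one non-trivial block, and the padding runs (one could alternatively invoke the algorithm of \cite{BBPPR}); let $\mathcal N$ denote the family of $\ominus$-rooted permutations of $\mathcal X$, which one recognizes as the critical family $\mathcal T_4$ of \eqref{eq:SpecifClasseX}.

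Translating into generating series and using the explicit solution of \eqref{eq:SpecifClasseX}, one gets $N(z)=T_4(z)=\frac{z^2(1-z)}{2z^2-4z+1}$, with a simple pole at $\rho=1-\sqrt{2}/2$, while the series counting $\mathcal D$ equals $N(z)^2/(1-z)^3$. As $1/(1-z)^3$ is analytic at $\rho$, the $\mathcal D$-series has a \emph{double} pole at $\rho$, whereas the $\mathcal X$-series $T_0$ has only a simple one; by singularity analysis $\proba(\bm\sigma_n\in\mathcal D)\to 1$, so it suffices to prove the convergence for $\bm t_n$ a uniform random permutation of $\mathcal D$ of size $n$ with $k$ uniformly chosen marked elements.

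Next I would apply \cref{lem:SeveralComponentsMonomial} with $r=2$, both factor-families equal to $\mathcal N$, and $G(z)=1/(1-z)^3$ for the padding (the lemma explicitly allows repeated factors): the numbers $(\bm\ell_1,\bm\ell_2)$ of marked elements lying in the two $\mathcal N$-blocks are asymptotically uniform on $\{\ell_1+\ell_2=k\}$, so with high probability no marked element falls in the padding. The same singularity computation (a product of two simple poles) shows that the two $\mathcal N$-blocks have sizes tending to infinity, with the size of the first divided by $n$ converging to a uniform variable on $[0,1]$, and that, conditionally on the block sizes, the two blocks are independent uniform elements of $\mathcal N=\mathcal T_4$. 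Since a uniform random permutation in $\mathcal T_4$ converges to the centered $X$-permuton $\mu^{X}_{(1/4,1/4,1/4,1/4)}$ (established in \cref{sec:ClasseX}), \cref{thm:randompermutonthm} gives that the pattern induced by the $\bm\ell_i$ marked elements inside the $i$-th block converges in distribution to $\Perm_{\bm\ell_i}(\mu^{X}_{(1/4,1/4,1/4,1/4)})$, the two being conditionally independent. Because the first $\mathcal N$-block sits entirely below and to the left of the second (they are separated by a $\oplus$), the pattern induced in $\bm t_n$ by the $k$ marked elements converges to $\oplus[\Perm_{\bm\ell_1}(\mu^{X}_{(1/4,1/4,1/4,1/4)}),\Perm_{\bm\ell_2}(\mu^{X}_{(1/4,1/4,1/4,1/4)})]$ with $(\bm\ell_1,\bm\ell_2)$ uniform on $\{\ell_1+\ell_2=k\}$ — which, by \cref{lem:subpermutations_XX}, is exactly the law of $\Perm_k(\bm\mu^{\oplus[X,X]})$. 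This holds for every $k$, so condition~(b) of \cref{thm:randompermutonthm} is satisfied and, since the laws of the $\Perm_k$'s characterize a random permuton, $\bm\sigma_n$ converges in distribution to $\bm\mu^{\oplus[X,X]}$.

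The main obstacle is the combinatorial Step above: identifying $\mathcal C$ with the family of direct sums of $\mathcal X$-blocks having at most two non-trivial summands, and hence obtaining the decomposition $\mathcal C=\mathcal X\uplus\mathcal D$. This is where the specific structure of $\mathcal X$ enters — namely that $\oplus$ of two of its elements remains in $\mathcal X$ only when one factor is increasing — and it is precisely what makes the dominant monomial of the top equation a product of two copies of one and the same critical family. Once this is in place, the singularity-analysis steps and the passage back to permutations are routine applications of the lemmas of \cref{Sec:CouteauSuisse} together with the already-established scaling limit of the $X$-class.
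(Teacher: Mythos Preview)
Your argument is correct and follows the same overall strategy as the paper: isolate the dominant part of $\mathcal C$ as a product of two simple-pole critical families, apply \cref{lem:SeveralComponentsMonomial} to obtain an asymptotically uniform split $(\bm\ell_1,\bm\ell_2)$ of the $k$ marks, invoke the centered $X$-permuton limit for each factor (established in \cref{sec:ClasseX}), and conclude via \cref{lem:subpermutations_XX} and \cref{thm:randompermutonthm}.

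The only substantive difference is the combinatorial decomposition. The paper writes $\mathcal C=\mathcal X^{\nonp}\uplus\oplus[\mathcal X^{\nonp},\mathcal X]$ and applies the monomial lemma to the asymmetric pair $(\mathcal X^{\nonp},\mathcal T_0)$; you instead prove that $\mathcal C$ consists precisely of direct sums of $\mathcal X^{\nonp}$-blocks with at most two non-trivial ones, peel off $\mathcal D=\mathcal C\setminus\mathcal X$ (exactly two non-trivial blocks), and apply the lemma to the symmetric product $\mathcal N\times\mathcal N$ with padding $G(z)=1/(1-z)^3$. Your route is in fact more careful: the paper's identity, read literally, misses permutations such as $13254=\oplus[1,21,21]\in\mathcal C$, whose first $\oplus$-block is trivial while the tail $2143$ lies outside $\mathcal X$. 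Your block characterization (driven by the observation that $\oplus[\pi_1,\pi_2]\in\mathcal X$ forces one $\pi_i$ to be increasing, because of the excluded pattern $2143$) sidesteps this and yields a clean series $D(z)=T_4(z)^2/(1-z)^3$. Either way one lands on a product of two families each converging to $\mu^X_{(1/4,1/4,1/4,1/4)}$, so the remaining analysis and the conclusion coincide.
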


\begin{figure}[htbp] 
	\centering
	\includegraphics[width = 0.3\linewidth]{simu_XX_n242.pdf} \hspace{0.5cm} \includegraphics[height = 0.29\linewidth]{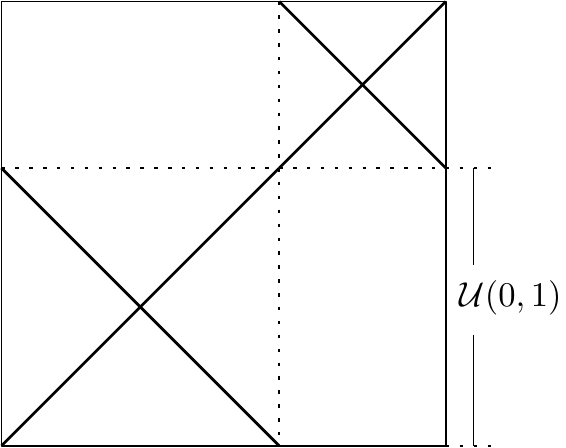}
	\caption{Left: A simulation of a uniform permutation of size 242 in $\mathcal C$. Right: The limiting permuton, as predicted by \cref{prop:compound} ($\mathcal U(0,1)$ stands for the uniform distribution on $(0,1)$).
		\label{Fig:exemple_XX}}
\end{figure}
\begin{proof}
Clearly, $\mathcal{C}$ can be written as $\mathcal{X} \cup \oplus[\mathcal{X},\mathcal{X}]$, 
but this equation is essentially ambiguous, hence does not fit in the tree-specification framework. 
Instead, writing that 
$$\mathcal{C} = \mathcal{X}^{\nonp} \uplus \oplus[\mathcal{X}^{\nonp},\mathcal{X}]$$
provides an unambiguous description of $\mathcal{C}$ (for the definition of $\mathcal{X}^{\nonp}$, see the third item in \cref{dfn:classesGeneralized}). 

We can therefore build a specification for $\mathcal{C}$, starting from that of the $X$-class,
\cref{eq:SpecifClasseX} (p.\pageref{eq:SpecifClasseX}).
Note that the families $\mathcal{X}$ and $\mathcal{X}^{\nonp}$ 
correspond
to $\mathcal{T}_0$ and $\mathcal{T}_1 \uplus \mathcal{T}_4$ in specification~\eqref{eq:SpecifClasseX}, respectively.
A specification for $\mathcal{C}$ can thus be obtained from 
the specification~\eqref{eq:SpecifClasseX} of the $X$-class, 
by adding to it the two equations
\begin{align}
 \mathcal C &= \mathcal X^{\nonp} \uplus \oplus[\mathcal X^{\nonp},\mathcal{T}_{0}] ;
 \label{eq:CompoundClass}\\
 \mathcal X^{\nonp} &= \mathcal{T}_1 \uplus \mathcal{T}_4.
\end{align}
These equations are not exactly of the form required in tree-specifications, 
but are easily modified to achieve a proper tree-specification.
The above form is however practical to apply the tools of this section.
In particular, we see that the series of $\mathcal X^{\nonp}$ and $\mathcal C$
both have the same radius of convergence $\rho$ as
the critical series of specification~\eqref{eq:SpecifClasseX}
(namely $\mathcal{T}_0$, $\mathcal{T}_3$, $\mathcal{T}_4$, $\mathcal{T}_6$ and $\mathcal{T}_7$)

We recall from \cref{sec:ClasseX} that a uniform random permutation
in any of the critical classes ($\mathcal{T}_0$, $\mathcal{T}_3$, $\mathcal{T}_4$, $\mathcal{T}_6$ and $\mathcal{T}_7$)
converges to the centered $X$-permuton.
We then note that $\mathcal{X}^{\nonp}$ is the disjoint union of a subcritical class and the critical class $\mathcal T_4$.
Therefore a uniform permutation in $\mathcal{X}^{\nonp}$ behaves asymptotically as one in $\mathcal T_4$,
and also converges to the centered $X$-permuton $\mu^X_{(\frac14, \frac14, \frac14, \frac14)}$.

We now focus on $\mathcal C = \mathcal X^{\nonp} \uplus \oplus[\mathcal X^{\nonp},\mathcal{T}_{0}]$.
The generating series of $\mathcal X^{\nonp}$ has a simple pole at $\rho$
(this follows from $T_4$ having a simple pole at $\rho$, see the equations p.\cref{eq:SpecifClasseX}).
On the contrary, the generating series of $\oplus[\mathcal X^{\nonp},\mathcal{T}_{0}]$
has a double pole at $\rho$, since both $\mathcal X^{\nonp}$ and $\mathcal{T}_{0}$
have a simple pole.
Using the transfer theorem, and up to multiplicative constants, 
the coefficients of the generating series of $\mathcal X^{\nonp}$ and $\oplus[\mathcal X^{\nonp},\mathcal{T}_{0}]$ behave asymptotically as
$\rho^{-n}$ and $n\, \rho^{-n}$ respectively.
Therefore a uniform random permutation of size $n$ in $\mathcal C$
is, with probability tending to $1$, in $\oplus[\mathcal X^{\nonp},\mathcal{T}_{0}]$.

Let us take a uniform random set of $k$ elements in a uniform random permutation $\bm \si_n$ in $\mathcal C$,
or equivalently, in $\oplus[\mathcal X^{\nonp},\mathcal{T}_{0}]$.
Then the number $\bm\ell_1$ (resp. $\bm\ell_2$) of these elements that are in the $\mathcal X^{\nonp}$- 
(resp. $\mathcal{T}_{0}$-)substructure is random.
Since the series of $\mathcal X^{\nonp}$ and $\mathcal{T}_{0}$ have both simple poles at $\rho$,
we can apply \cref{lem:SeveralComponentsMonomial} and $(\bm \ell_1,\bm \ell_2)$ is uniformly distributed
on the set $\{\ell_1+\ell_2=k\}$.
Since the permuton limit of elements in $\mathcal X^{\nonp}$ is $\mu^X_{(\frac14, \frac14, \frac14, \frac14)}$,
the $\bm \ell_1$ elements in the $\mathcal X^{\nonp}$-substructure induce a pattern $\bm \pi_1$,
which is asymptotically distributed
like $\Perm_{\bm \ell_1}(\mu^X_{(\frac14, \frac14, \frac14, \frac14)})$.
Similarly the $\bm \ell_2$ elements in the $\mathcal{T}_{0}$-substructure 
induce a pattern $\bm \pi_2$, 
which is asymptotically distributed
like $\Perm_{\bm \ell_2}(\mu^X_{(\frac14, \frac14, \frac14, \frac14)})$.

Comparing with \cref{lem:subpermutations_XX}, the pattern $\oplus[\bm \pi_1,\bm \pi_2]$ induced by the $k$ random elements in $\bm \si_n$
is asymptotically distributed as $\Perm_k(\bm\mu^{\oplus[X,X]})$.
We conclude with \cref{thm:randompermutonthm} that a uniform random permutation $\bm \si_n$ in $\mathcal C$
converges towards $\mu^{\oplus[X,X]}$.
\end{proof}

\appendix %
\section{Complex analysis toolbox} \label{sec:complex_analysis}

\subsection{Transfer theorem}

We start by defining the notion of $\Delta$-domain.
\begin{definition}[$\Delta$-domain and $\Delta$-neighborhood]\label{Def:DeltaDomaine}
A domain $\Delta$ is a {\em $\Delta$-domain at $1$} if there exist two real numbers  $R>1$ and $0<\phi<\tfrac{\pi}{2}$ such that
$$\Delta=\{z \in \mathbb{C} \mid |z|<R,\, z\neq 1,
|\arg(z-1)|>\phi\}.$$ By extension, for a complex number $\rho \neq
0$, a domain is a {\em $\Delta$-domain at $\rho$} if it the image by
the mapping $z\rightarrow \rho z$ of a $\Delta$-domain at $1$.  A
{\em $\Delta$-neighborhood} of $\rho$ is the intersection of a
neighborhood of $\rho$ and a $\Delta$-domain at $\rho$.
\end{definition}
We will make use of the following family of $\Delta$-neighborhoods: for $\rho \neq 0 \in \mathbb C$, $0<r<|\rho|$, $\varphi <\pi/2$, set $\Delta(\varphi,r,\rho) = \{z\in \mathbb C, |\rho-z|<r, \arg(\rho - z)>\varphi\}$.

When a function $A$ is analytic on a $\Delta$-domain at its radius of convergence $\rho$,
the asymptotic behavior of its coefficients is closely related to
the behavior of the function near the $\rho$.

The following theorem is a corollary of \cite[Theorem VI.3 p.~390]{Violet}.

\begin{theorem}[Transfer Theorem]\label{thm:transfert}
  Let $A$ be an analytic function whose radius of convergence is $\rho_A$. Assume moreover that $A$ is analytic on a $\Delta$-domain $\Delta$ at $\rho_A$, $\delta$ be an arbitrary real number in $\mathbb{R} \setminus \mathbb{Z}_{\geq 0}$ 
  and $C_A$ a constant possibly equal to $0$.
  
  Suppose $A(z) = (C_A + o(1))(1-\tfrac{z}{\rho_A})^{\delta}$ when $z$ tends to $\rho_A$ in $\Delta$.
  Then the  coefficient of $z^n$ in $A$, denoted by $[z^n]A(z)$ satisfies 
  $$[z^n]A(z) = (C_A +o(1)) \frac{1}{\rho_A^{n}} \ \frac{n^{-(\delta+1)}}{\Gamma(-\delta)},$$
  where $\Gamma$ is the gamma function.
\end{theorem}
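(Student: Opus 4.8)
The plan is to deduce this from the three standard pillars of singularity analysis collected in \cite{Violet}: the standard function scale for $(1-z)^{\delta}$, together with the $O$- and $o$-transfer theorems (Theorems VI.1 and VI.3 there), the latter being exactly the result the excerpt refers to. The statement is essentially a repackaging of those into a form convenient for this paper.

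First I would normalize the singularity to $1$. Setting $\tilde A(z) = A(\rho_A z)$, one has $[z^n]A(z) = \rho_A^{-n}\,[z^n]\tilde A(z)$, and $\tilde A$ is analytic on a $\Delta$-domain at $1$ (the image of $\Delta$ under $z \mapsto z/\rho_A$), with $\tilde A(z) = (C_A + o(1))(1-z)^{\delta}$ as $z \to 1$ inside that domain. So it suffices to treat the case $\rho_A = 1$ and then reinstate the factor $\rho_A^{-n}$ at the end.

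Next, I would write $\tilde A(z) = C_A (1-z)^{\delta} + E(z)$. The function $(1-z)^{\delta}$ is analytic on $\mathbb{C} \setminus [1,+\infty)$, hence on the $\Delta$-domain, so $E$ is $\Delta$-analytic. The hypothesis $\tilde A(z) = (C_A + o(1))(1-z)^{\delta}$ says precisely that $E(z) = g(z)(1-z)^{\delta}$ with $g(z) \to 0$ as $z \to 1$ in $\Delta$; and since $|\arg(1-z)|$ stays bounded on a $\Delta$-domain, $|(1-z)^{\delta}|$ and $|1-z|^{\delta}$ are of the same order of magnitude there, so $E(z) = o\big((1-z)^{\delta}\big)$ as $z \to 1$ in $\Delta$. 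The $o$-transfer of \cite[Thm. VI.3, p.~390]{Violet} then gives $[z^n]E(z) = o(n^{-\delta-1})$.

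Finally, since $\delta \notin \mathbb{Z}_{\geq 0}$, the number $\Gamma(-\delta)$ is finite and nonzero, and the standard scale \cite[Thm. VI.1]{Violet} gives $[z^n](1-z)^{\delta} \sim n^{-\delta-1}/\Gamma(-\delta)$. Adding the two contributions, $[z^n]\tilde A(z) = C_A\, n^{-\delta-1}/\Gamma(-\delta) + o(n^{-\delta-1}) = (C_A + o(1))\, n^{-\delta-1}/\Gamma(-\delta)$, which remains correct when $C_A = 0$ (both sides then being $o(n^{-\delta-1})$); reinstating the $\rho_A^{-n}$ factor yields the claim. There is no real obstacle here; the only point needing a touch of care is checking that the "$o$ as $z \to \rho_A$ in $\Delta$" of our hypothesis matches verbatim the hypothesis of the $o$-transfer theorem of \cite{Violet}, and the elementary comparison of $|(1-z)^{\delta}|$ with $|1-z|^{\delta}$ on a $\Delta$-domain used to pass to it.
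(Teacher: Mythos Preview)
Your proposal is correct and matches the paper's approach: the paper does not give a proof but simply cites \cite[Theorem VI.3 p.~390]{Violet}, and your argument is exactly the standard unpacking of that citation (normalize to $\rho_A=1$, split off $C_A(1-z)^{\delta}$, apply Theorem~VI.1 for the main term and the $o$-transfer Theorem~VI.3 for the remainder). Nothing further is needed.
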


\subsection{Generalities for systems of functional equations}
In this section and the subsequent one, we look at vectors of nonnegative series $\mathbf Y = (Y_1,\ldots Y_c)$ that satisfy systems of equations of the form
\begin{equation}
	\mathbf Y(z) = \mathbf \Phi(z,\mathbf Y(z)), 
	\label{eq:annex_generic_system}
\end{equation}
where $\mathbf \Phi(z,\mathbf y) = (\Phi_1(z,\mathbf y),\ldots ,\Phi_c(z,\mathbf y))$ is a vector of multivariate power series of $(z,\mathbf y)$ with nonnegative integer coefficients.

\begin{definition}\ 
The system $\mathbf \Phi$ is \emph{strongly connected} if the directed graph on $\{1,\ldots,c\}$ given by $j\to i$ whenever $\frac {\partial \Phi_i}{\partial y_j}$ is nonzero, is strongly connected. 
\end{definition}
This assumption guarantees that all series $Y_1,\ldots,Y_c$ have a common radius of convergence $\rho$ (see \cref{lem:MonotonieDesRho} for example).

\medskip
\subsection{Linear systems}
In this section we assume that $\mathbf \Phi$ is a linear function of its second argument, in the sense that \cref{eq:annex_generic_system} reduces to $\mathbf Y(z) = \mathbb M(z)\mathbf Y(z) + \mathbf V(z)$ where $\mathbf V(z) = \mathbf \Phi(z,\mathbf 0)$ and the $c \times c$-matrix $\mathbb M$ is the Jacobian of $\mathbf \Phi$ in its second argument.
Note that under the linear assumption $\mathbb M$ does not depend on $\mathbf y$.

The following proposition is an adaptation of known results: it extends Theorem V.7 (p.342) and Lemma V.1 (p.346) in \cite{Violet} (which establish that, when $\mathbb M(z)=z\mathbb M$,  then $\rho$ is a simple pole of $(\Id - \mathbb M(z))^{-1}$ and this quantity tends to $C / (z-\rho)$  where $C$ is a rank $1$ matrix), and Lemma 2 in \cite{BanderierDrmota} (where $\mathbb M(z)$ is a matrix with polynomial coefficients in $z$, but constants corresponding to dominating terms of the asymptotic behavior are not computed). The proof is mostly adapted from this last reference.

\begin{proposition}\label{Prop:AsymptotiqueLinear}
Consider the following system $$\mathbf Y(z) = \mathbb M(z)\mathbf Y(z) + \mathbf V(z)$$ where $\mathbf V(z) = \mathbf \Phi(z,\mathbf 0)$, $\mathbf \Phi$ being a linear function of its second argument, and the $c \times c$-matrix $\mathbb M$ is  the Jacobian of $\mathbf \Phi$ in its second argument.
Assume that the system is strongly connected, that all entries of $\mathbb M(z)$ and $\mathbf V(z)$ are series with nonnegative coefficients, that $\mathbb M(0) = \mathbf 0$ and that $\mathbf V$ is nonzero.

Then the unique solution $\mathbf Y(z) = (\Id - \mathbb M(z))^{-1}\mathbf V(z)$  is a formal power series with nonnegative coefficients. Moreover the common radius of convergence $\rho$ of the entries of $\mathbf Y$ is finite, and the following assertions are equivalent:
	\begin{enumerate}
		\item There exists $t\geq 0$ strictly smaller than the radius of convergence of all entries of $\mathbb M$ and $\mathbf V$, such that $\det(\Id - \mathbb M(t)) = 0$;
		\item The radius of convergence of all entries of $\mathbb M$ and $\mathbf V$ is strictly larger than $\rho$. 
	\end{enumerate}
	If they hold, then $\rho>0$ and
	\begin{enumerate}
		\setcounter{enumi}{2}
		\item $\rho$ is also the common radius of convergence of all entries of $(\Id-\mathbb M)^{-1}$;
		\item $\mathbb M(\rho)$ is an irreducible matrix with Perron eigenvalue $1$. We denote by $\mathbf u$ and $\mathbf v$ the corresponding left and right positive eigenvectors normalized so that $\transpose{\mathbf u}\mathbf{v}=1$ ;
		\item $(\Id-\mathbb M)^{-1}$ and $\mathbf Y(z)$ are analytic on a $\Delta$-neighborhood of $\rho$, and as $z\to \rho$, denoting coefficient-wise asymptotic equivalence by $\sim$,
	\begin{equation}
		(\Id - \mathbb M(z))^{-1} \sim \left(\frac {1} {\transpose{\mathbf u}\mathbb M'(\rho)\mathbf v}\right) \frac {\mathbf v\transpose{\mathbf u}} {\rho - z} .
		\label{eq:annexe_linear_asymp_inverse}
	\end{equation}
		Consequently,
		\begin{equation}
		\mathbf Y(z)  \sim 
		\left(\frac {\transpose{\mathbf u} \mathbf V(\rho)} {\transpose{\mathbf u}\mathbb M'(\rho) \mathbf v }\right) 
		\frac {\mathbf v} {\rho - z}.
		\label{eq:annexe_linear_asymp_Y}
		\end{equation}
\end{enumerate}
	Moreover, if the g.c.d. of the periods of the series in $\mathbb M$ is $1$, then there are no other singularities on the circle of convergence for the series in
	$\mathbf Y$ and $(\Id-\mathbb M)^{-1}$, and those series are analytic on a $\Delta$-domain at $\rho$.
\end{proposition}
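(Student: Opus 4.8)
The plan is to adapt the argument of \cite[Lemma~2]{BanderierDrmota}, while keeping track of the constants in the singular expansion. The first step is the formal and positivity part: since $\mathbb M(0)=\mathbf 0$, the matrix $\Id-\mathbb M(z)$ has constant term $\Id$, hence is invertible in the ring of formal power series matrices, so $(\Id-\mathbb M)\mathbf Y=\mathbf V$ admits the unique formal solution $\mathbf Y=(\Id-\mathbb M)^{-1}\mathbf V=\sum_{k\ge0}\mathbb M^{k}\mathbf V$, which has nonnegative coefficients. To see that the common radius $\rho$ (common by strong connectivity, as recalled after the definition of strongly connected systems) is finite, I would pick $i_0$ with $V_{i_0}\ne0$, use a closed walk through $i_0$ in the dependency graph together with $\mathbb M(0)=\mathbf 0$ to get a coefficient-wise inequality $Y_{i_0}\succcurlyeq z^{\ell}P(z)\,Y_{i_0}$ with $\ell\ge1$ and $P$ having positive constant term, and iterate it to produce infinitely many coefficients of $Y_{i_0}$ that are $\ge1$; hence $\rho<\infty$.

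Next I would introduce, for real $z\in[0,r)$ with $r$ the smallest radius of convergence of the entries of $\mathbb M$ and $\mathbf V$, the spectral radius $\lambda(z)$ of the nonnegative matrix $\mathbb M(z)$; by the strong connectivity assumption the support of $\mathbb M$ is a strongly connected graph, so $\mathbb M(z)$ is irreducible for $z\in(0,r)$ and $\lambda$ is continuous, nondecreasing, with $\lambda(0)=0$. The key observation is that $\det(\Id-\mathbb M(t))=0$ for some $t\in[0,r)$ if and only if $\lambda(z_0)\ge1$ for some $z_0\in[0,r)$, in which case there is a smallest $t^\star\in(0,r)$ with $\lambda(t^\star)=1$. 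For $(\mathrm{ii})\Rightarrow(\mathrm{i})$: if on the contrary $\lambda(z)<1$ on all of $[0,r)$, then for each such real $z$ the Neumann series $\sum_k\mathbb M(z)^{k}$ converges and, by Tonelli applied to $\mathbf Y=\sum_k\mathbb M^{k}\mathbf V$, one gets $\sum_n([z^n]\mathbf Y)z^n=(\Id-\mathbb M(z))^{-1}\mathbf V(z)<\infty$, so $\rho\ge r$, contradicting $(\mathrm{ii})$. For $(\mathrm{i})\Rightarrow(\mathrm{ii})$: the same Tonelli argument on $[0,t^\star)$ gives $\rho\ge t^\star$; and if $\rho>t^\star$ then $\mathbf Y(t^\star)$ is finite and satisfies $(\Id-\mathbb M(t^\star))\mathbf Y(t^\star)=\mathbf V(t^\star)$, so multiplying on the left by the positive left Perron eigenvector $\transpose{\mathbf u}$ of $\mathbb M(t^\star)$ (which satisfies $\transpose{\mathbf u}\mathbb M(t^\star)=\transpose{\mathbf u}$) yields $\transpose{\mathbf u}\mathbf V(t^\star)=0$, impossible since $\mathbf u>0$ and $\mathbf V(t^\star)$ is nonzero and nonnegative. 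Hence $\rho=t^\star\in(0,r)$, which is $(\mathrm{ii})$, and in particular $\rho>0$.

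Working now under $(\mathrm{i})/(\mathrm{ii})$, so with $\lambda(\rho)=1$ and $\rho\in(0,r)$: the matrix $\mathbb M(\rho)$ is irreducible with Perron eigenvalue $1$, which is $(\mathrm{iv})$, and $1$ is a simple eigenvalue, so $\Id-\mathbb M(\rho)$ has rank $c-1$ and $\operatorname{adj}(\Id-\mathbb M(\rho))=\alpha\,\mathbf v\transpose{\mathbf u}$ with $\alpha\ne0$. I would then show that $\delta(z):=\det(\Id-\mathbb M(z))$, which is analytic on $|z|<r$ and vanishes at $\rho$, has a simple zero there, since by Jacobi's formula $\delta'(\rho)=-\operatorname{tr}\!\big(\operatorname{adj}(\Id-\mathbb M(\rho))\,\mathbb M'(\rho)\big)=-\alpha\,\transpose{\mathbf u}\mathbb M'(\rho)\mathbf v\ne0$ (here $\mathbb M'(\rho)$ is a nonzero nonnegative matrix and $\mathbf u,\mathbf v>0$). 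Writing $(\Id-\mathbb M(z))^{-1}=\delta(z)^{-1}\operatorname{adj}(\Id-\mathbb M(z))$ then yields at once that this matrix is analytic on $|z|<r$ away from the zeros of $\delta$, hence on a $\Delta$-neighborhood of $\rho$, that $\rho$ is precisely its common radius of convergence (this is $(\mathrm{iii})$), and, letting $z\to\rho$, the equivalent $(\Id-\mathbb M(z))^{-1}\sim\tfrac1{\transpose{\mathbf u}\mathbb M'(\rho)\mathbf v}\,\tfrac{\mathbf v\transpose{\mathbf u}}{\rho-z}$, which is \eqref{eq:annexe_linear_asymp_inverse}; multiplying by $\mathbf V(z)\to\mathbf V(\rho)$ gives \eqref{eq:annexe_linear_asymp_Y}.

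Finally, for the aperiodicity addendum I would invoke the behaviour of irreducible nonnegative matrices on their circle of convergence: when the g.c.d.\ of the periods of the entries of $\mathbb M$ is $1$, the spectral radius of $\mathbb M(z)$ is strictly below $\lambda(\rho)=1$ for every $z$ with $|z|=\rho$ and $z\ne\rho$, so $\delta(z)\ne0$ there; by compactness $\delta$ is nonzero on $\{|z|=\rho\}\setminus\{\rho\}$, and the identity $(\Id-\mathbb M)^{-1}=\delta^{-1}\operatorname{adj}(\Id-\mathbb M)$ provides analytic continuation to a full $\Delta$-domain at $\rho$, hence also for $\mathbf Y$. I expect the main obstacle to be exactly this last step together with the transversality claim: proving that the zero of $\delta$ at $\rho$ is simple with leading coefficient precisely $\transpose{\mathbf u}\mathbb M'(\rho)\mathbf v$ — which is what makes \eqref{eq:annexe_linear_asymp_inverse} quantitative rather than merely qualitative — and establishing the strict decrease of the spectral radius on the rest of the circle of convergence, since both need the finer Perron--Frobenius theory beyond the basic statements; by contrast the formal-series part and the equivalence $(\mathrm{i})\Leftrightarrow(\mathrm{ii})$ essentially follow \cite{BanderierDrmota} and should be routine.
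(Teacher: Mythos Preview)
Your argument is correct and arrives at the same conclusions as the paper, but the two proofs differ in their computational core.

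For the equivalence $(\mathrm i)\Leftrightarrow(\mathrm{ii})$, the paper argues $(\mathrm{ii})\Rightarrow(\mathrm i)$ via Pringsheim and the cofactor formula (if $\lambda(\rho)<1$ the inverse would be analytic at $\rho$), and then, assuming $(\mathrm i)$, it sets $\alpha=\inf\{t:\lambda(t)=1\}$ and proves $\alpha=\rho$ only \emph{a posteriori}, by computing the singular expansion of $(\Id-\mathbb M(z))^{-1}$ at $\alpha$ and observing that it diverges. Your Tonelli argument together with the left-eigenvector trick $\transpose{\mathbf u}(\Id-\mathbb M(t^\star))\mathbf Y(t^\star)=0$ gives $\rho=t^\star$ directly, which is shorter and avoids this circularity.

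For the asymptotics \eqref{eq:annexe_linear_asymp_inverse}, the paper passes to a Jordan basis $\mathbb P^{-1}\mathbb M(\rho)\mathbb P=\diag(1,\mathbb J)$, expands $\mathbb P^{-1}(\Id-\mathbb M(z))\mathbb P$ in blocks, reads off $\det(\Id-\mathbb M(z))\sim C\det(\Id-\mathbb J)(\rho-z)$ with $C=\transpose{\mathbf u}\mathbb M'(\rho)\mathbf v$, estimates the cofactor matrix blockwise, and reconjugates. Your route via $\operatorname{adj}(\Id-\mathbb M(\rho))=\alpha\,\mathbf v\transpose{\mathbf u}$ and Jacobi's formula $\delta'(\rho)=-\alpha\,\transpose{\mathbf u}\mathbb M'(\rho)\mathbf v$ is a genuinely different and cleaner way to obtain the same leading term: the auxiliary constant $\alpha$ cancels and no change of basis is needed. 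The Jordan approach has the small advantage of making the $o(1)$ structure of the other entries explicit, but for the statement as written your adjugate argument suffices. The aperiodicity step is essentially identical in both proofs (Daffodil lemma plus strict monotonicity of the Perron eigenvalue).

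One small inaccuracy: your justification of $\rho<\infty$ (``infinitely many coefficients of $Y_{i_0}$ that are $\ge1$'') tacitly assumes integer coefficients, which the proposition does not. The idea is right, though: from the closed walk one gets $(\mathbb M^{k_0})_{i_0i_0}\succcurlyeq c_0 z^{\ell_0}$ with $c_0>0$, hence $[z^{n\ell_0}]\big((\Id-\mathbb M)^{-1}\big)_{i_0i_0}\ge c_0^{\,n}$, which bounds the radius by $c_0^{-1/\ell_0}$. The paper's proof is in fact silent on this point as well.
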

We start with a lemma that is used in the subsequent proof.
\begin{lemma}\label{lem:rcv}
  Let $\mathbb M$ be an irreducible matrix whose coefficients are series in $z$ with nonnegative coefficients.
	Assume $\mathbb M(0)= \mathbf 0$, then $\Id - \mathbb M(z)$ is invertible around $z=0$ 
	and all the coefficients of $(\Id - \mathbb M(z))^{-1}$ are positive analytic functions with the same radius of convergence.
\end{lemma}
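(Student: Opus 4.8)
The plan is to prove the lemma in three steps: invertibility and analyticity near $0$; strict positivity; and equality of the radii of convergence.

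First I would use that $\mathbb M(0)=\mathbf 0$ forces every entry of $\mathbb M(z)$ to have valuation at least $1$, so $\mathbb M(z)^k$ has entries of valuation at least $k$, and the Neumann series $N(z):=\sum_{k\ge 0}\mathbb M(z)^k$ converges in the $(z)$-adic topology; it satisfies $(\Id-\mathbb M(z))N(z)=\Id$, so it is $(\Id-\mathbb M(z))^{-1}$. Each entry $N_{ij}(z)=\sum_{k\ge 0}(\mathbb M^k)_{ij}(z)$ is a sum of products of entries of $\mathbb M$, hence a power series with nonnegative coefficients. For analyticity I would invoke Cramer's rule: $N_{ij}$ equals, up to sign, a $(c-1)\times(c-1)$ minor of $\Id-\mathbb M(z)$ divided by $\det(\Id-\mathbb M(z))$, a ratio of functions analytic near $0$ (the entries of $\mathbb M$ being, as usual, convergent power series) whose denominator takes the value $1$ at $z=0$. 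Thus each $N_{ij}$ is analytic in a neighbourhood of $0$, its Taylor series there being the nonnegative series above, so it has a well-defined radius of convergence $\rho_{ij}>0$.

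Second, for positivity: irreducibility of $\mathbb M$ means that for every ordered pair $(i,j)$ there is an integer $p\ge 0$ with $(\mathbb M^p)_{ij}\not\equiv 0$ (a path from $j$ to $i$ in the dependency graph of $\mathbb M$, with $p=0$ allowed when $i=j$), so $N_{ij}$ is a \emph{nonzero} series with nonnegative coefficients; any such series is strictly positive on the real interval $(0,\rho_{ij})$.

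Third — the only step that is not pure bookkeeping — I would show that all the $\rho_{ij}$ coincide by a coefficientwise sandwich argument. Fix two pairs $(i,j)$ and $(i',j')$ and, by irreducibility, choose $p,q\ge 0$ with $(\mathbb M^p)_{ii'}\not\equiv 0$ and $(\mathbb M^q)_{j'j}\not\equiv 0$. Starting from $N_{ij}=\sum_{k\ge 0}(\mathbb M^k)_{ij}\ge\sum_{k\ge p+q}(\mathbb M^k)_{ij}$, then writing $\mathbb M^{p+m+q}=\mathbb M^p\mathbb M^m\mathbb M^q$ and retaining a single intermediate index in each matrix product, one gets coefficientwise
\[
N_{ij}\ \ge\ \sum_{m\ge 0}(\mathbb M^{p+m+q})_{ij}\ \ge\ (\mathbb M^p)_{ii'}\,(\mathbb M^q)_{j'j}\,N_{i'j'}.
\]
Since each of the three factors on the right is a nonzero power series with nonnegative coefficients, the product has radius of convergence at most $\rho_{i'j'}$: for nonnegative series $fg$ coefficientwise dominates $f_v z^v g$, with $v$ the valuation of $f$, so $\rho(fg)\le\rho(g)$. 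Hence $N_{ij}$, which dominates this product, satisfies $\rho_{ij}\le\rho_{i'j'}$; exchanging the two pairs yields the reverse inequality, and all the $\rho_{ij}$ are equal. I expect this last step to be the main (mild) obstacle, essentially because one must set up the right domination and recall the elementary behaviour of radii of convergence under multiplication of nonnegative series; an alternative, slightly heavier route would be to apply Perron–Frobenius to $\mathbb M(x)$ for real $x\ge 0$ and read off the common radius and the pole structure from the simplicity of the Perron eigenvalue, but the sandwich argument is shorter and self-contained.
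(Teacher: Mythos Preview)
Your proof is correct and follows essentially the same approach as the paper's: both use the Neumann expansion $(\Id-\mathbb M)^{-1}=\sum_{k\ge 0}\mathbb M^k$ together with irreducibility to set up coefficientwise dominations between the entries $N_{ij}$, from which equality of the radii follows. The only cosmetic difference is that the paper uses the identity $(\Id-\mathbb M)^{-1}=\Id+\cdots+\mathbb M^{k-1}+\mathbb M^k(\Id-\mathbb M)^{-1}$ and varies the row and column indices in two separate steps, whereas you handle both at once via $N_{ij}\ge(\mathbb M^p)_{ii'}\,N_{i'j'}\,(\mathbb M^q)_{j'j}$; your version is slightly more explicit but not genuinely different.
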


\begin{proof}
	The invertibility of $\Id - \mathbb M(z)$ near zero follows from the fact that the spectral radius of $\mathbb M(z)$ is continuous in $z$ and $\mathbb M(0)= \mathbf 0$.
	
	Fix $1\leq i,j,l \leq c$. By the irreducibility condition, there exists $k$ such that $\mathbb M(z)^k_{i,j} \neq 0$. Moreover
	$$
	(\Id - \mathbb M(z))^{-1}=\Id+\mathbb M(z)+\dots + \mathbb M(z)^{k-1} +\mathbb M(z)^{k}(\Id - \mathbb M(z))^{-1}.
	$$
	As a result, $((\Id - \mathbb M(z))^{-1})_{i,l}$ depends positively on $((\Id - \mathbb M(z))^{-1})_{j,l}$.
	Since $\mathbb M(z)^{k}(\Id - \mathbb M(z))^{-1}= (\Id - \mathbb M(z))^{-1}\mathbb M(z)^{k}$, it also implies that $((\Id - \mathbb M(z))^{-1})_{l,j}$ depends positively on $((\Id - \mathbb M(z))^{-1})_{l,i}$.
        Denote $\rho_{ij}$, the radius of convergence of $((\Id - \mathbb M(z))^{-1})_{i,j}$ for all $i,j$. Then we have
        for all $i, j, k, l$,  $\rho_{ij}\leq \rho_{il}\leq \rho_{kl}$.
        Hence all entries of $(\Id - \mathbb M(z))^{-1}$ have the same radius of convergence.
\end{proof}
We move on to the proof of  \cref{Prop:AsymptotiqueLinear}.
\begin{proof}
  The uniqueness of the solution $\mathbf Y(z)$ directly comes from the resolution of the equation defining  $\mathbf Y(z)$.
  Since the system is strongly connected, then the matrix $\mathbb M$ is irreducible, \emph{i.e} for $i,j$, there is a positive integer $k$ such that $(\mathbb M^k)_{i,j}>0$ and my hypothesis $\mathbb M(0)=\mathbf 0$.
Therefore by \cref{lem:rcv} all entries of $(\Id-\mathbb M(z))^{-1}$ are nonzero series, with the same radius of convergence. As a result, since moreover the entries of $\mathbf V(z)$ are series with nonnegative coefficients, all entries of $\mathbf Y(z) = (\Id - \mathbb M(z))^{-1}\mathbf V(z)$ have the same radius of convergence.
	
	By Perron-Frobenius theorem, the spectral radius $\lambda(t) = \SpectralRadius_{\mathbb M(t)}$, called {\em the Perron eigenvalue}, is a simple eigenvalue of $\mathbb M(t)$ and forms a continuous and strictly increasing function of $t$ on $[0, R_{\mathbb M})$, where $ R_{\mathbb M}$ is the smallest radius of convergence of the entries of $\mathbb M$.
	
	Now assume statement ii). Since $\mathbf V(z)$ has nonnegative entries and a radius of convergence strictly larger than $\rho$, then  $\rho$ is necessarily the common radius of convergence of all the entries of $(\Id-\mathbb M(z))^{-1}$. If $\lambda(\rho)<1$, then $\Id -\mathbb M(z)$ would be analytically invertible around $\rho$ thanks to the comatrix formula, since the entries of $\mathbb M$ are analytic near $\rho$. But this negates Pringsheim's theorem \cite[Theorem IV.6 p.240]{Violet}. As a result $\lambda(\rho)\geq 1$ which implies statement i).
	
	Conversely assume statement i). Then $\alpha = \inf\{t\geq 0,  \lambda(t)=1\}$ is well-defined. Since $\lambda(0) = 0$, then $\alpha>0$, and by continuity, $\lambda(\alpha) = 1$.	Since the coefficients of $\mathbb M$ are series with nonnegative coefficients, then for $\mathbb |z|<\alpha$, $|\mathbb M(z)| \leq \mathbb M(|z|)$ coefficient-wise, hence $\SpectralRadius_{\mathbb M(z)} <1$.
	Because furthermore the radius of convergence of $\mathbb M$ and $\mathbf V$ is larger than $\alpha$, then $(\Id - \mathbb M(z))^{-1}$ and $\mathbf Y$ are defined and analytic on $D(0,\alpha)$ and $\rho \geq \alpha$. We will now compute their asymptotics as $z\to\alpha$. They will turn up to be divergent, which will imply $\alpha = \rho$ and hence statement ii).
	
	By hypothesis, the Perron eigenvalue of $\mathbb M(\alpha)$ is $1$. Denote by $\mathbf u$ and $\mathbf v$ the corresponding left and right positive eigenvectors normalized so that $\transpose{\mathbf u}\mathbf v=1$.
	Let $\mathbb P$ be a Jordanization basis for $\mathbb M(\alpha)$, so that $\mathbb P^{-1} \mathbb M(\alpha) \mathbb P = \diag(1, \mathbb J)$, where $\mathbb J$ is some $(c-1)\times(c-1)$ Jordan matrix that does not admit the eigenvalue $1$. (We write $\diag(A,B)$ for the block-diagonal concatenation of two square matrices $A,B$.)

	Necessarily $\mathbb P\mathbf \mathbf e_1 = \mathbf v$. Moreover, $\transpose{\mathbf e_1}\mathbb P^{-1}$ is a left eigenvector of $\mathbb M$ and $\transpose{\mathbf e_1}\mathbb P^{-1}\mathbf v = 1$.
	Therefore $\transpose{\mathbf u} = \transpose{\mathbf e_1}\mathbb P^{-1}$.
	
We also have that 
$$
\mathbb P^{-1} (\Id_c -\mathbb M(\alpha)) \mathbb P = \diag(0, \Id_{c-1}-\mathbb J)
$$
where $\Id_{d}$ is the identity matrix of size $d$.
Of course $\det( \Id_{c-1}-\mathbb J  ) \neq 0$. Recall that $\mathbb M$ is analytic at $\alpha$. Hence as $z\to \alpha$,
	\[
	\mathbb P^{-1} (\Id_c -\mathbb M(z)) \mathbb P
	=  \left[\begin{matrix} C (\alpha - z) + o(\alpha - z) & \mathcal{O}(\alpha - z)\\
	\mathcal{O}(\alpha - z) & (\Id_{c-1}-\mathbb J) + \mathcal{O}(\alpha - z)
	\end{matrix}\right],
	\]
where $C = (\mathbb P^{-1}\mathbb M'(\alpha)\mathbb P)_{11} = \transpose{\mathbf e_1} \mathbb P^{-1}\mathbb M'(\alpha)\mathbb P\mathbf e_1 = \transpose{\mathbf u} \mathbb M'(\alpha) \mathbf v$, $\mathbb M'(z)$ being the component-wise derivative of $\mathbb M(z)$.
	This last quantity is positive since $\mathbf u$ and $\mathbf v$ have positive coefficients and $\mathbb M'(\alpha)$ is a nonnegative matrix and is not equal to zero. 
	Now we deduce that
	\[
	\det(\Id_c - \mathbb M(z)) = C \det( \Id_{c-1}-\mathbb J)(\alpha - z) + o(\alpha - z).
	\]
	This implies that we can find a neighborhood $B(\rho,\epsilon)$ of $\rho$ such that $(\Id - \mathbb M(z))^{-1}$ can be analytically continued on $B(\rho,\epsilon) \setminus \{\rho\}$.
	We also estimate the transpose of the cofactor matrix as follows:	
	\[
	\Com(\mathbb P^{-1} (\Id_c -\mathbb M(z)) \mathbb P)^t = \left[\begin{matrix} \det( \Id_{c-1}-\mathbb J)+\mathcal{O}(\alpha - z) & \mathcal{O}(\alpha - z)\\
	\mathcal{O}(\alpha - z) & \mathcal{O}(\alpha - z)
	\end{matrix}\right].	\]
	Now we can estimate the inverse of our matrix:
	\[
	(\mathbb P^{-1} (\Id_c -\mathbb M(z)) \mathbb P)^{-1} = \frac {\Com(\mathbb P^{-1} (\Id_c -\mathbb M(z)) \mathbb P)^t}{	\det(\Id_c - \mathbb M(z))} \sim \frac {1} {C(\alpha - z)} \left[\begin{matrix} 
	1 + o(1)&  o(1)\\
	o(1) & o(1)
	\end{matrix}\right]
	\]
	And 
	\[(\Id_c -\mathbb M(z))^{-1} =  \frac {1} {C(\alpha - z)} \mathbb P^{-1}\left(\left[\begin{matrix} 1 &  0\\
	0 & 0
	\end{matrix}\right] + o(1)\right)\mathbb P = \frac {\mathbf v\transpose{\mathbf u}+o(1)}{C(\alpha - z)}.\]
	Consequently the entries are divergent series at $z=\alpha$, therefore $\alpha=\rho$. This gives the asymptotics in \cref{eq:annexe_linear_asymp_inverse} for $(\Id_c -  \mathbb M(z))^{-1}$ near $\rho$. Multiplying by $\mathbf V(z)$, which is analytic at $z=\rho$, gives \cref{eq:annexe_linear_asymp_Y}.

	We are left to show that the aperiodicity condition implies that there is no other singularity than $\rho$ on the circle of convergence for $(\Id_c -  \mathbb M(z))^{-1}$.
	Let $z\neq \rho$, $|z|=\rho$. We just need to show that $(\Id_c -  \mathbb M(z))$ is invertible. Since we only have positive series, we have the coefficient-wise inequality $|\mathbb M(z)| \leq \mathbb M(\rho)$. Since the g.c.d. of the periods of the coefficients of $\mathbb M$ is $1$, it follows from the Daffodil lemma \cite[Lem. IV.1]{Violet} the inequality is strict in at least one coefficient. Then from Perron-Frobenius theorem we know that $\SpectralRadius_{|\mathbb M(z)|}< \SpectralRadius_{\mathbb M(\rho)} = 1$. Using $\SpectralRadius_{\mathbb M}\leq \SpectralRadius_{|\mathbb M|}$ we conclude on the invertibility of $(\Id -  \mathbb M(z))$ around $z$.
	
	The existence of a $\Delta$-domain at $\rho$ follows from a classic compactness argument.	
\end{proof}

\smallskip

\subsection{Nonlinear systems and Drmota-Lalley-Woods theorem}~
	In this section we state and prove a version of the classical Drmota-Lalley-Woods theorem.
	In a classical form \cite[Theorem VII.6, p.489]{Violet}, it entails that polynomial, irreducible and nonlinear tree-specifications
	lead to a common square-root singularity for all series.
	Our result (\cref{Thm:DLW}) is based on a version by Drmota \cite[Theorem 2.33]{Drmota},
	which is stated for analytic specifications, under a suitable analyticity condition.
	We explicitly computed the constants of the square-root term $\sqrt{\rho - z}$ for the tree series,
	along with asymptotics written as a rank one matrix times $(\rho-z)^{-1/2}$ for the natural transfer matrix associated to the system.
	
	The version of Drmota considers series with an additional counting parameter, which we dropped as it is not needed for our purposes. Also, the combinatorial assumptions on the system that ensure uniqueness of the solution differ from ours, as will be discussed in the proof of \cref{Thm:DLW}.
	
\begin{theorem}\label{Thm:DLW}
	Consider the following system:
	\begin{equation}
	 \tag{\ref{eq:annex_generic_system}}
	 \mathbf Y(z) = \mathbf \Phi(z,\mathbf Y(z)), 	 
	\end{equation} where $\mathbf \Phi(z,\mathbf y) = (\Phi_1(z,\mathbf y),\ldots ,\Phi_c(z,\mathbf y))$ is a vector of multivariate power series of $(z,\mathbf y)$ with nonnegative integer coefficients.
	We consider the Jacobian matrix of the system in its second argument:
	\begin{equation*}
	\label{eq:annex_branching_def_M}
	\mathbb M(z,\mathbf y) = \operatorname{Jac}_{\mathbf y} \mathbf \Phi(z,\mathbf y), \quad \emph{i.e. }
	M_{i,j}(z,\mathbf y) = \frac {\partial \Phi_i(z,\mathbf y)} {\partial y_j}, 1\leq i,j \leq c.
	\end{equation*}
        Assume that 
	\begin{enumerate}
	\item $\mathbf \Phi(0,\mathbf 0) = 0$,  $\mathbb M(0,\mathbf 0)$ is the zero matrix
           and $\mathbf \Phi(z,\mathbf 0)$ is nonzero.
         	\item $\mathbf \Phi$ is not linear in its second argument,
		\item $\mathbf \Phi$ is strongly connected.
	\end{enumerate}
        
Then there is a unique solution $\mathbf Y$ of \eqref{eq:annex_generic_system} in the ring of formal power series with no constant term. All its entries have nonnegative coefficients and the same radius of convergence $\rho<\infty$ and the entries of $\mathbf Y(\rho)$ are finite.

The two following assertions are then equivalent:
\begin{enumerate}
	\setcounter{enumi}{3}
	\item There exists $(z,\mathbf y)$ in the region of convergence of $\mathbf \Phi$, such that $\mathbf y = \Phi(z,\mathbf y)$ and $\mathbb M(z,\mathbf y)$ has dominant eigenvalue $1$.
	\item $(\rho,\mathbf Y(\rho))$ belongs to the interior of the region of convergence of $\mathbf \Phi$.
\end{enumerate}

And if these conditions hold, then $\rho>0$ and
	\begin{enumerate}
		\setcounter{enumi}{5}
		\item all entries of $\mathbf Y$ and $(\Id - \mathbb M(z,\mathbf Y(z)))^{-1}$ have radius of convergence $\rho$ and are analytic on a $\Delta$-neighborhood of $\rho$.
		\item $\mathbb M(\rho, \mathbf Y(\rho))$ is an irreducible matrix with Perron eigenvalue $1$.
	\end{enumerate}
	Denote by $\mathbf u$ and $\mathbf v$ the left and right eigenvectors of $\mathbb M(\rho, \mathbf Y(\rho))$ for the eigenvalue 1, chosen positive and normalized so that $\transpose{\mathbf u}\mathbf{v}=1$. 
Let
\begin{equation*}
\forall \, 1 \leq i,j,j'\leq c, \quad  H_{i,j,j'}(z)= \frac {\partial \Phi_i}{\partial y_j \partial y_{j'}}(z,\mathbf y)\bigg|_{\mathbf y = \mathbf Y(z)}\quad  \text{ and } \quad 
\mathbf U(z) =\frac {\partial  \mathbf \Phi}{\partial z}(z,\mathbf y)\bigg|_{\mathbf y = \mathbf Y(z)}
\end{equation*}
	Defining the following positive constants,
		\begin{equation*}
		\beta = \sqrt{\transpose{\mathbf u} \mathbf U(\rho)}, \quad 
		Z = \frac 1 2 \sum_{i,j,j'\in I^\star} u_iv_jv_{j'} H_{i,j,j}(\rho), \quad \zeta = \sqrt{Z},
		\end{equation*}
	we then have the following asymptotics near $\rho$:
	\begin{align}
	\mathbf Y(z) &= \mathbf Y(\rho) - \frac{ \beta \mathbf v}{ \zeta} \sqrt{\rho - z} + o(\sqrt{\rho - z}),
	\label{eq:annexe_AsympYBranching}\\
		\mathbf Y'(z) &\sim \frac{ \beta \mathbf v}{2 \zeta \sqrt{\rho - z}},
	\label{eq:annexe_AsympY'Branching}\\ 
	(\Id - \mathbb M(z,\mathbf Y(z)))^{-1} &\sim \frac{ \mathbf v\transpose{\mathbf u}}{2 {\beta \zeta} \sqrt{\rho - z}}.
	\label{eq:annexe_AsympInverseBranching}
		\end{align}	
	Finally if all series $Y_i(z)$ are aperiodic, then $\rho$ is the unique dominant singularity of the $Y_i$'s and of the series in $(\Id - \mathbb M(z,\mathbf Y(z)))^{-1}$, and these series are analytic on a $\Delta$-domain at $\rho$.
\end{theorem}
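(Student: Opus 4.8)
The plan is to derive \cref{Thm:DLW} from the analytic version of the Drmota--Lalley--Woods theorem \cite[Theorem 2.33]{Drmota} (which itself refines \cite[Theorem VII.6, p.~489]{Violet}), and then to \emph{upgrade} its conclusions by making the constants $\beta,Z,\zeta$ explicit and by pinning down the rank-one equivalent of the resolvent $(\Id-\mathbb M(z,\mathbf Y(z)))^{-1}$. Three points need separate care: the hypotheses here are phrased combinatorially rather than analytically, the constants are not computed in \cite{Drmota}, and the precise equivalent of $(\Id-\mathbb M(z,\mathbf Y(z)))^{-1}$ is not stated there. \textbf{Step 1 (unconditional part).} Since $\mathbf\Phi(0,\mathbf 0)=0$ and $\mathbb M(0,\mathbf 0)=\mathbf 0$, the map $\mathbf Y\mapsto\mathbf\Phi(z,\mathbf Y)$ strictly increases the $z$-adic valuation of a difference of arguments, so the Picard iterates $\mathbf Y^{[0]}=\mathbf 0$, $\mathbf Y^{[n+1]}=\mathbf\Phi(z,\mathbf Y^{[n]})$ converge in the ring of formal power series to the unique solution with no constant term, whose coefficients are nonnegative because those of $\mathbf\Phi$ are. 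Strong connectivity together with the argument of \cref{lem:MonotonieDesRho} (nonnegative coefficients propagate radius-of-convergence inequalities along the dependency graph of $\mathbf\Phi$) gives a common radius $\rho$ for all $Y_i$; finiteness of $\rho$ follows by bounding $\mathbf Y$ from below, using a nonlinear monomial of some $\Phi_i$ together with strong connectivity to produce a scalar subseries dominating a solution of $y=c\,z^{k}y^{2}$. Finiteness of the entries of $\mathbf Y(\rho)=\lim_{z\uparrow\rho}\mathbf Y(z)$ (the limit exists in $[0,\infty]^{c}$ by monotonicity) is exactly the content of the first half of \cite[Theorem 2.33]{Drmota} for analytic, strongly connected, nonnegative systems with finite $\rho$, which I would simply cite.

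\textbf{Step 2 (dichotomy and algebraic conclusions).} The equivalence of (iv) and (v), the irreducibility of $\mathbb M(\rho,\mathbf Y(\rho))$ with Perron eigenvalue $1$, the fact that all entries of $\mathbf Y$ and of $(\Id-\mathbb M(z,\mathbf Y(z)))^{-1}$ have radius $\rho$, and their $\Delta$-analyticity, are all part of \cite[Theorem 2.33]{Drmota}; I would recall the mechanism rather than reprove it. As $z\uparrow\rho$ one has $\mathbf Y'(z)=(\Id-\mathbb M(z,\mathbf Y(z)))^{-1}\mathbf U(z)$; by Pringsheim's theorem \cite[Theorem IV.6, p.~240]{Violet} the left-hand side is singular at $\rho$, so if $(\rho,\mathbf Y(\rho))$ lies in the interior of the domain of $\mathbf\Phi$ (condition (v)), then $\det(\Id-\mathbb M(\rho,\mathbf Y(\rho)))=0$; since the spectral radius of $\mathbb M(z,\mathbf Y(z))$ is continuous and $<1$ on $[0,\rho)$, it equals $1$ at $\rho$, which is condition (iv), and the converse is the standard pinning argument. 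Irreducibility of $\mathbb M(\rho,\mathbf Y(\rho))$ follows from strong connectivity because the positivity of the coordinates of $\mathbf Y(\rho)$ preserves the support pattern of the partial derivatives; the statements about $(\Id-\mathbb M(z,\mathbf Y(z)))^{-1}$ on $[0,\rho)$ are analogous to \cref{lem:rcv} and \cref{Prop:AsymptotiqueLinear} with $\mathbb M(z)$ replaced by $\mathbb M(z,\mathbf Y(z))$.

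\textbf{Step 3 (the explicit constants — the genuinely new content).} Working at the (now fixed) value $\rho$, set $\mathbf W(z)=\mathbf Y(z)-\mathbf Y(\rho)\to\mathbf 0$. Taylor-expanding $\mathbf Y(z)=\mathbf\Phi(z,\mathbf Y(z))$ around $(\rho,\mathbf Y(\rho))$ to second order in $\mathbf W$ and first order in $\rho-z$, and using $\mathbf\Phi(\rho,\mathbf Y(\rho))=\mathbf Y(\rho)$ and $\partial_z\mathbf\Phi(\rho,\mathbf Y(\rho))=\mathbf U(\rho)$, gives
\begin{equation*}
(\Id-\mathbb M(\rho,\mathbf Y(\rho)))\,\mathbf W=-(\rho-z)\,\mathbf U(\rho)+\tfrac12\,\mathbf Q(\mathbf W)+\text{(h.o.t.)},\qquad \mathbf Q(\mathbf W)_i=\sum_{j,j'}H_{i,j,j'}(\rho)\,W_jW_{j'}.
\end{equation*}
Left-multiplying by $\transpose{\mathbf u}$ annihilates the left-hand side; since $\transpose{\mathbf u}\mathbf U(\rho)>0$, the Fredholm obstruction forbids $\mathbf W=\O(\rho-z)$, and a Newton-polygon balance forces $\mathbf W\sim-\kappa\sqrt{\rho-z}\,\mathbf v$ with $\kappa>0$, the leading direction lying in $\ker(\Id-\mathbb M(\rho,\mathbf Y(\rho)))=\langle\mathbf v\rangle$. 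Substituting and projecting yields $\transpose{\mathbf u}\mathbf U(\rho)=\tfrac12\kappa^{2}\transpose{\mathbf u}\mathbf Q(\mathbf v)=\kappa^{2}Z$ (using $\sum_{i,j,j'}u_iv_jv_{j'}H_{i,j,j'}(\rho)=2Z$), hence $\kappa=\beta/\zeta$ and \eqref{eq:annexe_AsympYBranching}; singular differentiation \cite[Theorem VI.8, p.~419]{Violet} gives \eqref{eq:annexe_AsympY'Branching}. For the resolvent, first-order perturbation of the simple Perron eigenvalue $\lambda(z)$ of $\mathbb M(z,\mathbf Y(z))$, together with $\mathbb M_{ij}(z,\mathbf Y(z))-\mathbb M_{ij}(\rho,\mathbf Y(\rho))\sim-\tfrac{\beta}{\zeta}\sqrt{\rho-z}\sum_k H_{i,j,k}(\rho)v_k$, gives
\begin{equation*}
1-\lambda(z)\sim-\transpose{\mathbf u}\,\bigl(\mathbb M(z,\mathbf Y(z))-\mathbb M(\rho,\mathbf Y(\rho))\bigr)\,\mathbf v\sim\tfrac{\beta}{\zeta}\sqrt{\rho-z}\sum_{i,j,j'}u_iv_jv_{j'}H_{i,j,j'}(\rho)=2\beta\zeta\sqrt{\rho-z};
\end{equation*}
the remaining eigenvalues stay bounded away from $1$ and the spectral projector onto the Perron eigenspace tends to $\mathbf v\transpose{\mathbf u}$ (trace $\transpose{\mathbf u}\mathbf v=1$), so $(\Id-\mathbb M(z,\mathbf Y(z)))^{-1}\sim\mathbf v\transpose{\mathbf u}/\bigl(2\beta\zeta\sqrt{\rho-z}\bigr)$, which is \eqref{eq:annexe_AsympInverseBranching} and is consistent with $\mathbf Y'=(\Id-\mathbb M)^{-1}\mathbf U$.

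\textbf{Step 4 (uniqueness of the dominant singularity) and main obstacle.} For $|z|=\rho$ with $z\neq\rho$, aperiodicity of the $Y_i$'s and the Daffodil lemma \cite[Lemma IV.1]{Violet} yield strict coefficient-wise domination $|\mathbf Y(z)|\prec\mathbf Y(\rho)$ in at least one entry, hence $\SpectralRadius_{|\mathbb M(z,\mathbf Y(z))|}<1$ and $\Id-\mathbb M(z,\mathbf Y(z))$ is invertible; the implicit function theorem then continues $\mathbf Y$ analytically across $z$, and a compactness argument on the circle $|z|=\rho$ produces a $\Delta$-domain at $\rho$, exactly as at the end of the proof of \cref{Prop:AsymptotiqueLinear}. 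The main obstacle is Step~2: proving that the singularity is genuinely of square-root \emph{type} (and not something more exotic) and that the expansion persists on a $\Delta$-domain requires the full Newton-polygon / Weierstrass-preparation analysis with the right periodicity bookkeeping, which is why I would invoke \cite[Theorem 2.33]{Drmota} rather than redo it. The new technical work is confined to Step~3, where the delicate bookkeeping is keeping the Fredholm projections and the eigenvalue perturbation straight so that the constants emerge in the stated normalized form $\beta=\sqrt{\transpose{\mathbf u}\mathbf U(\rho)}$, $\zeta=\sqrt Z$ with $\transpose{\mathbf u}\mathbf v=1$, and the resolvent acquires precisely the rank-one factor $\mathbf v\transpose{\mathbf u}$.
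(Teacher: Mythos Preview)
Your proposal is correct and follows the same global architecture as the paper: invoke \cite[Theorem 2.33]{Drmota} to obtain the square-root singularity with an unspecified positive vector $\mathbf c$, then compute the constants explicitly, and finally use the Daffodil lemma for the aperiodic case. One small omission: the paper explicitly notes that Drmota's hypothesis $\mathbf\Phi(0,\mathbf y)=0$ is stronger than the present $\mathbb M(0,\mathbf 0)=\mathbf 0$, and argues that this hypothesis is used by Drmota only to ensure uniqueness of the solution, which you have already secured in Step~1; you should make this point explicit when invoking \cite[Theorem 2.33]{Drmota}.

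The genuine difference lies in Step~3. The paper proceeds \emph{indirectly}: it first expands $\mathbb M(z,\mathbf Y(z))$ near $\rho$ and computes the resolvent via Jordanization and the comatrix formula (mirroring the linear case in \cref{Prop:AsymptotiqueLinear}), obtaining $(\Id-\mathbb M)^{-1}\sim \mathbf v\transpose{\mathbf u}/(C\sqrt{\rho-z})$ with $C=\sum_{i,j,j'}u_iv_jc_{j'}H_{i,j,j'}(\rho)$ still depending on the unknown $\mathbf c$; it then uses $\mathbf Y'=(\Id-\mathbb M)^{-1}\mathbf U$ and compares with the singular differentiation of the Drmota expansion to solve $\mathbf c=(2\beta^2/C)\mathbf v$, and finally reinjects to close the loop and obtain $C=2\beta\zeta$. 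Your route is more \emph{direct}: you Taylor-expand the fixed-point equation itself to second order, observe that the order-$\sqrt{\rho-z}$ term forces $\mathbf c\in\ker(\Id-\mathbb M(\rho,\mathbf Y(\rho)))=\langle\mathbf v\rangle$, and then the $\transpose{\mathbf u}$-projection at order $\rho-z$ immediately yields $\kappa=\beta/\zeta$; the resolvent is handled separately by first-order perturbation of the simple Perron eigenvalue. Your approach avoids the Jordanization bookkeeping and the back-and-forth through $\mathbf Y'$, at the cost of invoking analytic eigenvalue perturbation; the paper's approach has the advantage of reusing verbatim the comatrix machinery already developed for \cref{Prop:AsymptotiqueLinear}.
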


\begin{proof}
	First let us show that hypothesis i) implies existence and uniqueness of a solution with no constant term. Because $\mathbf \Phi$ itself has no constant term, the map $\mathbf Y \to \mathbf \Phi(z,\mathbf Y)$ sends the ring of series with no constant term to itself. Moreover, since there are no monomials of degree 1 involving just one $y_i$, this is a contraction mapping.  Therefore by the fixed point theorem a solution  exists and it is nonzero because of the assumption $\mathbf \Phi(z,0) \neq \mathbf 0$.
	
All entries of $\mathbf Y$ have the same radius of convergence. Indeed, iterating $\mathbf \Phi$ enough and using Hypotheses ii) and iii), we get that each $Y_i$ depends positively and nonlinearly on every other $Y_j$'s.
More precisely for each $Y_i$, there exist  $c>0$ and $k\geq 0$ such that $cz^kY_i^2$ is coefficient-wise dominated by $Y_i$. Hence $Y_i$ cannot be a polynomial, so $\rho<\infty$, and $Y_i(\rho)$ must be finite.
	
	For $0\leq t\leq \rho$, let us now set $\lambda(t) = \SpectralRadius_{\mathbb M(t, \mathbf Y(t))}$. By Perron-Frobenius theorem, this is an increasing, continuous function. We will show that statement v) implies statement iv).
	Assume that $\Phi$ is analytic at $(\rho, \mathbf Y(\rho))$, and suppose that the $\lambda(\rho)<1$. Then $\det(\Id - M(\rho, \mathbf Y(\rho))\neq 0$, and the analytic implicit function theorem would imply that $\mathbf Y$ could be continued on a neighborhood of $\rho$.
	Thanks to Pringsheim's theorem \cite[Thm IV.5]{Violet}, this is in contradiction with the fact that $\rho$ is the radius of convergence of $\mathbf Y$.
	Hence the $\lambda(\rho)\geq1$, and there exists $z_0\leq\rho$ such that  $\lambda(z_0)=1$ as stated in iv).

For the rest of the proof, we assume statement iv). We apply Theorem 2.33 of \cite{Drmota}. The hypotheses of this theorem are all verified, except (in our notation) $\mathbf \Phi(0,\mathbf y) = 0$, which we replaced by the weaker one $\mathbb M(0,\mathbf 0) = 0$. In the proof of Drmota, this hypothesis was only used to guarantee the uniqueness of the solution $\mathbf Y$ as a formal power series in $z$. However as we saw, our set of hypotheses still guarantees uniqueness of the solution, when restricted to series with no constant term. As a result, Theorem 2.33 of \cite{Drmota} guarantees that $z_0 = \rho$ (hence statement v)), and that $\mathbf Y$ can be continued on a $\Delta$-neighborhood of $\rho$. It also implies that there exists a positive vector $\mathbf c$ such that the following asymptotics holds:
	\begin{equation}
		\mathbf Y(z) = \mathbf Y(\rho) - (\mathbf c + o(1)) \sqrt{\rho-z}.
		\label{eq:Tstar_branching_nonexplicit_constant}
	\end{equation}

Since $\lambda(\rho) = 1$, the radius of convergence of $(\Id - \mathbb M(z,\mathbf Y(z)))^{-1}$ is at least $\rho$.
We will now compute the precise asymptotics of $(\Id - \mathbb M(z,\mathbf Y(z)))^{-1}$ and $\mathbf Y(z)$ when $z$ is near $\rho$. The fact that $(\Id - \mathbb M(z,\mathbf Y(z)))^{-1}$ can be analytically continued on a $\Delta$-neighborhood of $\rho$ will be obtained as a byproduct of this derivation.
	
Let us denote $\mathbb A = \mathbb M(\rho,\mathbf Y(\rho))$. This is an irreducible nonnegative matrix with Perron eigenvalue $1$. As in the linear case, the Perron-Frobenius theorem provides corresponding left and right positive eigenvectors $\mathbf u$ and $\mathbf v$ normalized so that $\transpose{\mathbf u}\mathbf v=1$. Let also $\mathbb P$ be a Jordanization basis for $\mathbb A$, so that $\mathbb P^{-1} \mathbb A \mathbb P = \diag(1, \mathbb T)$, and $\mathbb T$ is some Jordan matrix with spectral radius less than $1$. Necessarily $\mathbb P \mathbf e_1 = \mathbf v$ and $\transpose{\mathbf u} = \transpose{\mathbf e_1}\mathbb P^{-1}$.
	
	We get that $\mathbb P^{-1} (\Id_c -\mathbb A) \mathbb P = \diag(0, \Id_{c-1}-\mathbb T)$, and $\det(\Id_{c-1}-\mathbb T) \neq 0$. Recall that each coefficient of the matrix $\mathbb M(z,\mathbf y)$ is analytic at $(\rho, \mathbf Y(\rho))$. Hence as $z\to \rho$,
	\begin{align*}
	M_{i,j}(z,\mathbf Y(z)) = M_{i,j}(\rho,\mathbf Y(\rho)) 
	&- \frac {\partial M_{i,j}}{\partial z}(\rho,\mathbf Y(\rho)) (\rho-z)(1+o(1)) \\
	&- \sum_{j'=1}^c\frac {\partial M_{i,j}}{\partial
		y_{j'}}(\rho,\mathbf Y(\rho))( Y_{j'}(\rho) -  Y_{j'}(z))(1+o(1))
	\end{align*}
	The second term , which is linear, is dominated by the third one, whose square-root behavior is given by \cref{eq:Tstar_branching_nonexplicit_constant}. Also, we have 
	\[\frac {\partial  M_{i,j}}{\partial
		y_{j'}}(z,\mathbf Y(z)) = \frac {\partial  \Phi_i}{\partial
		y_{j}\partial
		y_{j'}}(z,\mathbf Y(z))= H_{i,j,j}(z).\]
        Note that the nonlinearity of $\mathbf \Phi$ implies that at least one of the series $H_{i,j,j}$ is nonzero.
        
	Collecting everything we get the following asymptotics near $\rho$ for the matrix $\mathbb M (z,\mathbf Y(z))$:
	\begin{equation*}
	M_{i,j}(z,\mathbf Y(z))= A_{ij} - \sqrt{\rho-z}\sum_{j'=1}^c H_{i,j,j}(\rho)c_{j'} +o(\sqrt{\rho-z}).
	\end{equation*}
	Hence as $\rho \to z$, we have the following asymptotics written in block-decomposition:
	\[
	\mathbb P^{-1} (\Id -\mathbb M(z,\mathbf Y(z)))\mathbb P
	=  \left[\begin{matrix} (C +o(1)) \sqrt{\rho-z}  & \mathcal{O}(\sqrt{\rho-z})\\
	\mathcal{O}(\sqrt{\rho-z}) & (\Id_{c-1}-\mathbb T) + \mathcal{O}(\sqrt{\rho-z})
	\end{matrix}\right],
	\]
	where 
	\[
	C 
	= \lim_{z\to\rho}(\mathbb P^{-1}\frac{\mathbb A-\mathbb M(z,\mathbf Y(z))}{\sqrt{\rho-z}}\mathbb P)_{11} 
	=\lim_{z\to\rho} \transpose{\mathbf u} \frac{\mathbb A
		-\mathbb M(z,\mathbf Y(z))}{\sqrt{\rho-z}} v 
	= \sum_{i,j,j'\in I^\star} u_iv_jc_{j'}H_{i,j,j}(\rho).\]
	We then proceed as in the linear case. The asymptotic estimate of the determinant near $\rho$
        \[
	\det(\Id_c - \mathbb M(z)) = C \det( \Id_{c-1}-\mathbb T)\sqrt{\rho - z} + o(\sqrt{\rho - z}).
	\]
shows it does not vanish on a punctured neighborhood of $\rho$. Hence $(\Id - \mathbb M(z,\mathbf Y(z)))$ is invertible on a (possibly smaller) $\Delta$-neighborhood of $\rho$. Then using the comatrix formula for the inverse, we obtain
	\begin{equation}
	(\Id - \mathbb M(z,\mathbf Y(z)))^{-1}\sim \frac{ \mathbf v\transpose{\mathbf u}}{C \sqrt{\rho - z}}.
	\label{eq:AsympTstarflecheBranching_nonexplicit_constant}
	\end{equation}
	We proceed to transfer this asymptotics into asymptotics for $\mathbf Y'(z)$. Differentiation of the relation \eqref{eq:annex_generic_system} yields
	\begin{align*}
	\mathbf Y' (z)
	&= \frac{\partial \mathbf \Phi}{\partial z}(z,\mathbf y)\Big|_{\mathbf y = \mathbf Y(z)}+ \mathrm{Jac}_{\mathbf y} \mathbf \Phi (z,\mathbf y)\Big|_{\mathbf y = \mathbf Y(z)} \cdot \mathbf Y'(z) \\
	&= \mathbf U(z) + \mathbb M(z,\mathbf Y(z))\mathbf Y'(z).
	\end{align*}
         Note that Hypotheses i) and iii) imply  that $\mathbf U(z)$ is nonzero too.
	Hence
	\begin{equation}\mathbf Y'(z) = (\Id - \mathbb M(z,\mathbf Y(z)))^{-1} \mathbf U(z).
	\label{eq:lien_U_Tstar'}
	\end{equation}
	Now, since $\mathbf U$ is convergent at $\rho$, with \cref{eq:AsympTstarflecheBranching_nonexplicit_constant}, we obtain
	\begin{equation}\mathbf Y'(z) \sim \frac{ \transpose{\mathbf u} \mathbf U(\rho)}{C }\frac{\mathbf v}{\sqrt{\rho - z}} = \frac{\beta^2}{C}\frac{\mathbf v}{\sqrt{\rho - z}} .
	\label{eq:AsympTstar'Branching_nonexplicit_constant}
	\end{equation}
	Since $\mathbf Y$ is analytic on a $\Delta$-neighborhood at $\rho$, singular differentiation \cite[Thm VI.8]{Violet} of \cref{eq:Tstar_branching_nonexplicit_constant} yields
	\[\mathbf Y'(z) \sim \frac {\mathbf c} {2\sqrt{\rho-z}}.\]
	We can identify the constants in the two expressions and get 
	$\mathbf c = \frac {2\beta^2}{C} \mathbf v$, which can be reinjected in the definition of $C$, yielding 
	$C^2 = 2\beta^2 \sum_{i,j,j'\in I^\star} u_iv_jv_{j'}H_{i,j,j}(\rho) = 4 \beta^2 Z $ and then $C = 2\beta\zeta$.
	Substituting this value for $C$ into \cref{eq:Tstar_branching_nonexplicit_constant,eq:AsympTstarflecheBranching_nonexplicit_constant,eq:AsympTstar'Branching_nonexplicit_constant} yields the desired asymptotics.
		
	We shall now show that there is no other singularity on the circle of convergence under the aperiodicity condition, in a similar fashion to the linear case. Let $z\neq \rho$ be such that $|z|=\rho$. By the Daffodil lemma \cite[Lem. IV.1]{Violet}, we have $|\mathbf Y(z)|<\mathbf Y(\rho)$. Hence $\SpectralRadius_{\mathbb M(z, \mathbf Y(z))} \leq \SpectralRadius_{\mathbb M(|z|, |\mathbf Y(z)|)} < \SpectralRadius_{\mathbb M(\rho, \mathbf Y(\rho))}=1$. By the analytic implicit function theorem \cite[Thm B.6]{Violet}, this implies that $\mathbf Y$ is analytic near $z$. And $(\Id-\mathbb M(w, \mathbf Y(w))$ is then invertible near $z$. The existence of a $\Delta$-domain at $\rho$ once again follows from a classic compactness argument.
\end{proof}

\section{Details on the examples}\label{Sec:AppendixExamples}
~\\

\subsection{The class $\Av(2413,3142,2314,3241,21453,45213)$}\label{Ex:ClasseUnion}

The algorithm of \cite{BBPPR} gives for this class a specification\footnote{See the \href{http://mmaazoun.perso.math.cnrs.fr/pcfs/} {companion Jupyter notebook}  \texttt{examples/Union.ipynb}} with 14 equations, for families $\TTT=\TTT_0, \dots, \TTT_{13}$. 
The family $\TTT_{10}$ is however empty, as we will explain in \cref{rk:ClasseUnion} below. 
Removing it from the obtained specification yields the following one: 

{\small
\begin{equation}
\begin{cases}
\begin{array}{rl}
  \mathcal T =
  \mathcal T_{0}= &\{ \bullet \} \uplus \oplus[\mathcal T_{1},\mathcal T_{2}]\uplus \oplus[\mathcal T_{1},\mathcal T_{3}]\uplus \oplus[\mathcal T_{4},\mathcal T_{2}]\uplus \ominus[\mathcal T_{1},\mathcal T_{5}]\uplus \ominus[\mathcal T_{1},\mathcal T_{6}]\uplus \ominus[\mathcal T_{7},\mathcal T_{5}]\\
\mathcal T_{1}= &\{ \bullet \} \\
\mathcal T_{2}= &\{ \bullet \} \uplus \oplus[\mathcal T_{8},\mathcal T_{2}]\uplus \ominus[\mathcal T_{1},\mathcal T_{2}]\\
\mathcal T_{3}= &\oplus[\mathcal T_{1},\mathcal T_{3}]\uplus \ominus[\mathcal T_{7},\mathcal T_{9}]\uplus \ominus[\mathcal T_{1},\mathcal T_{9}]\uplus \ominus[\mathcal T_{7},\mathcal T_{11}]\\
\mathcal T_{4}= &\ominus[\mathcal T_{1},\mathcal T_{2}]\\
\mathcal T_{5}= &\{ \bullet \} \uplus \oplus[\mathcal T_{1},\mathcal T_{5}]\uplus \ominus[\mathcal T_{12},\mathcal T_{5}]\\
\mathcal T_{6}= &\oplus[\mathcal T_{4},\mathcal T_{13}]\uplus \oplus[\mathcal T_{1},\mathcal T_{13}] \uplus \oplus[\mathcal T_{4},\mathcal T_{11}]\uplus \ominus[\mathcal T_{1},\mathcal T_{6}]\\
\mathcal T_{7}= &\oplus[\mathcal T_{1},\mathcal T_{5}]\\
\mathcal T_{8}= &\{ \bullet \} \uplus \ominus[\mathcal T_{1},\mathcal T_{2}]\\
\mathcal T_{9}= &\oplus[\mathcal T_{1},\mathcal T_{9}]\uplus \ominus[\mathcal T_{7},\mathcal T_{9}]\uplus \ominus[\mathcal T_{1},\mathcal T_{9}]\uplus \ominus[\mathcal T_{7},\mathcal T_{11}]\\
\mathcal T_{11}= &\{ \bullet \} \uplus \oplus[\mathcal T_{1},\mathcal T_{11}]\uplus \ominus[\mathcal T_{1},\mathcal T_{11}]\\
\mathcal T_{12}= &\{ \bullet \} \uplus \oplus[\mathcal T_{1},\mathcal T_{5}]\\
\mathcal T_{13}= &\oplus[\mathcal T_{4},\mathcal T_{13}]\uplus \oplus[\mathcal T_{1},\mathcal T_{13}]\uplus \oplus[\mathcal T_{4},\mathcal T_{11}]\uplus \ominus[\mathcal T_{1},\mathcal T_{13}]. 
\end{array}
\end{cases}
\label{eq:SpecifClasseUnion}
\end{equation}
}

\begin{remark}~\label{rk:ClasseUnion}
In the specification obtained from the algorithm of~\cite{BBPPR} (not displayed), 
the family abbreviated $\mathcal T_{10}$ is actually $\mathcal T_{(213,231)}$, 
which consists of permutations of the class $\mathcal T$ forced to contain the patterns $213$ and $231$. 
From the characterization of $\mathcal T $ as $\Av(213) \cup \Av(231)$, it is clear $\mathcal T_{10}$ has to be empty. 
The algorithm of~\cite{BBPPR} is however not able to detect this simplification, 
and we had to perform this simplification by hand.
\end{remark}

Translating this specification into a system on the corresponding series, and solving this system, we get 
\[
\begin{cases}
\begin{array}{rl}
 T =  T_{0}= & \tfrac{-3z^2 - 2z\sqrt{-4z + 1} + 4z + \sqrt{-4z + 1} - 1}{z(2z - 1)}\\
 T_{1}= & z\\ %
 T_{2}=T_5= &\tfrac{-2z -\sqrt{-4z + 1} + 1}{2z}\\
 T_{3}= T_6= T_{9}=T_{13}= &\tfrac{-z^2 - z\sqrt{-4z + 1} + 2z + \sqrt{-4z + 1}/2 - 1/2}{z(2z - 1)}\\
 T_{4}=T_7= &-z - \tfrac{\sqrt{-4z + 1}}{2} + \tfrac{1}{2}\\
 T_{8}=T_{12}= &-\tfrac{\sqrt{-4z + 1}}{2} + \tfrac{1}{2}\\
 T_{11}= &\tfrac{-z}{2z - 1}
\end{array}
\end{cases}
\]

The dominant singularity is of square-root type, coming from $\sqrt{-4z + 1}$.
All series above except $T_1$ and $ T_{11}$ are critical, with radius of convergence  $\rho=1/4$. 
Due to the presence of (for instance) the term $T_4 T_2$ in the equation for $T_0$, 
the specification~\eqref{eq:SpecifClasseUnion} is essentially branching. 
Its dependency graph restricted to the critical $\mathcal T_i$ is shown in \cref{fig:DependendyGraphUnion}
(p.\pageref{fig:DependendyGraphUnion})
and has nine strongly connected components.
From this specification and this system, 
we obtained the limiting permuton of this class in \cref{ssec:couteau_suisse_union}.
\smallskip

\subsection{The class $\Av(2413,3142, 2143,34512)$}\label{sec:ClasseXTildeAnnexe}

The specification for this class that we obtain applying the algorithm of~\cite{BBPPR} is\footnote{See the \href{http://mmaazoun.perso.math.cnrs.fr/pcfs/} {companion Jupyter notebook}  \texttt{examples/AsymmetricX.ipynb}}
{\footnotesize
\begin{equation}
\begin{cases}
\begin{array}{rl}
\mathcal T = \mathcal T_{0}= &\{ \bullet \} \uplus \oplus[\mathcal T_{1},\mathcal T_{2}]\uplus \oplus[\mathcal T_{1},\mathcal T_{3}]\uplus \oplus[\mathcal T_{4},\mathcal T_{2}]\uplus \ominus[\mathcal T_{5},\mathcal T_{6}]\uplus \ominus[\mathcal T_{5},\mathcal T_{7}]\uplus \ominus[\mathcal T_{8},\mathcal T_{6}]\\
\mathcal T_{1}= &\{ \bullet \} \\
\mathcal T_{2}= &\{ \bullet \} \uplus \oplus[\mathcal T_{1},\mathcal T_{2}]\\
\mathcal T_{3}= &\oplus[\mathcal T_{1},\mathcal T_{3}]\uplus \oplus[\mathcal T_{4},\mathcal T_{2}]\uplus \ominus[\mathcal T_{5},\mathcal T_{6}]\uplus \ominus[\mathcal T_{5},\mathcal T_{7}]\uplus \ominus[\mathcal T_{8},\mathcal T_{6}]\\
\mathcal T_{4}= &\ominus[\mathcal T_{5},\mathcal T_{6}]\uplus \ominus[\mathcal T_{5},\mathcal T_{7}]\uplus \ominus[\mathcal T_{8},\mathcal T_{6}]\\
\mathcal T_{5}= &\{ \bullet \} \uplus \oplus[\mathcal T_{1},\mathcal T_{1}]\uplus \oplus[\mathcal T_{1},\mathcal T_{9}]\uplus \oplus[\mathcal T_{9},\mathcal T_{1}]\\
\mathcal T_{6}= &\{ \bullet \} \uplus \ominus[\mathcal T_{1},\mathcal T_{6}]\\
\mathcal T_{7}= &\oplus[\mathcal T_{1},\mathcal T_{2}]\uplus \oplus[\mathcal T_{1},\mathcal T_{3}]\uplus \oplus[\mathcal T_{4},\mathcal T_{2}]\uplus \ominus[\mathcal T_{10},\mathcal T_{6}]\uplus \ominus[\mathcal T_{10},\mathcal T_{7}]\uplus \ominus[\mathcal T_{1},\mathcal T_{7}]\uplus \ominus[\mathcal T_{8},\mathcal T_{6}]\\
\mathcal T_{8}= &\oplus[\mathcal T_{1},\mathcal T_{11}]\uplus \oplus[\mathcal T_{1},\mathcal T_{12}]\uplus \oplus[\mathcal T_{13},\mathcal T_{11}]\uplus \oplus[\mathcal T_{9},\mathcal T_{11}]\uplus \oplus[\mathcal T_{13},\mathcal T_{1}]\\
\mathcal T_{9}= &\ominus[\mathcal T_{1},\mathcal T_{6}]\\
\mathcal T_{10}= &\oplus[\mathcal T_{1},\mathcal T_{1}]\uplus \oplus[\mathcal T_{1},\mathcal T_{9}]\uplus \oplus[\mathcal T_{9},\mathcal T_{1}]\\
\mathcal T_{11}= &\oplus[\mathcal T_{1},\mathcal T_{2}]\\
\mathcal T_{12}= &\oplus[\mathcal T_{1},\mathcal T_{3}]\uplus \oplus[\mathcal T_{4},\mathcal T_{2}]\uplus \ominus[\mathcal T_{10},\mathcal T_{6}]\uplus \ominus[\mathcal T_{10},\mathcal T_{7}]\uplus \ominus[\mathcal T_{1},\mathcal T_{7}]\uplus \ominus[\mathcal T_{8},\mathcal T_{6}]\\
\mathcal T_{13}= &\ominus[\mathcal T_{10},\mathcal T_{6}]\uplus \ominus[\mathcal T_{10},\mathcal T_{7}]\uplus \ominus[\mathcal T_{1},\mathcal T_{7}]\uplus \ominus[\mathcal T_{8},\mathcal T_{6}].
\end{array}
\end{cases}
\label{eq:SpecifClasseXTilde}
\end{equation}
}

Solving the system on the series $(T_i)_{0 \leq i \leq 13}$ resulting from \cref{eq:SpecifClasseXTilde} gives 
\[
\begin{cases}
\begin{array}{rl}
 T =  T_{0}= &\tfrac{-z(z^3 - z^2 + 3z - 1)}{(z - 1)(z^3 - z^2 + 4z - 1)} \\
 T_{1}= &z \\
 T_{2}=T_6= &\tfrac{-z}{(z - 1)}\\
 T_{3}=T_7= &\tfrac{z^2}{(z - 1)(z^3 - z^2 + 4z - 1)}\\
 T_{4}= &\tfrac{z^2(z - 1)}{(z^3 - z^2 + 4z - 1)}\\
 T_{5}= &\tfrac{-z(z^2 + 1)}{(z - 1)}\\
 T_{8}= &\tfrac{z^3(z^3 - z^2 + 3z + 1)}{(z - 1)(z^3 - z^2 + 4z - 1)}\\
 T_{9}=T_{11}= &\tfrac{-z^2}{(z - 1)}\\
 T_{10}= &\tfrac{-z^2(z + 1)}{(z - 1)}\\
 T_{12}= &\tfrac{z^3(z^2 - z + 4)}{(z - 1)(z^3 - z^2 + 4z - 1)}\\
 T_{13}= &\tfrac{z^3(z^2 + 2)}{(z - 1)(z^3 - z^2 + 4z - 1)}.
\end{array}
\end{cases}
\]
The critical series are $T_0, T_3, T_4, T_7, T_8, T_{12}$ and $T_{13}$. Their common root $\rho$ is the only real root of the polynomial $z^3 - z^2 + 4z - 1$, namely
\[\rho=-\tfrac{(7/2 + 3\sqrt{597}/2)^{1/3}}{3} + \tfrac{1}{3} + \tfrac{11}{3(7/2 + 3\sqrt{597}/2)^{1/3}} \approx 0.26272.\]
It follows that the specification~\eqref{eq:SpecifClasseXTilde} is essentially linear. 
The dependency graph shows that the critical series are organized into two strongly connected components, 
one of which consists of the class $\TTT_0$ alone.
However, as for the $X$-class (see \cref{sec:ClasseX}), $\TTT_0 = \TTT_3 \uplus \{12\dots n \mid n \geq 1\}$
and we study the specification where the equation for $\TTT_0$ is removed. Again similarly to the $X$-class, the limit of a uniform random permutation of size $n$ in $\TTT_3$ will also be the limit of a uniform random permutation in $\TTT_0$.

From the specification we are able to compute the matrices $\mathbb M^\star$, $\mathbb D^{\gauche,+},\dots,\mathbb D^{\droite,-}$. Namely, 
\[
\mathbb M^\star(z) = \left(\begin{array}{cccccc}
	z & -\frac{z}{z - 1} & -\frac{z^{3} + z}{z - 1} & -\frac{z}{z - 1} & 0 & 0 \\
	0 & 0 & -\frac{z^{3} + z}{z - 1} & -\frac{z}{z - 1} & 0 & 0 \\
	z & -\frac{z}{z - 1} & z - \frac{z^{3} + z^{2}}{z - 1} & -\frac{z}{z - 1} & 0 & 0 \\
	0 & 0 & 0 & 0 & z & z - \frac{z^{2}}{z - 1} \\
	z & -\frac{z}{z - 1} & z - \frac{z^{3} + z^{2}}{z - 1} & -\frac{z}{z - 1} & 0 & 0 \\
	0 & 0 & z - \frac{z^{3} + z^{2}}{z - 1} & -\frac{z}{z - 1} & 0 & 0
\end{array}\right),
\]
\[
\mathbb D^{\gauche,+}=\left(\begin{array}{rrrrrr}
	1 & 0 & 0 & 0 & 0 & 0 \\
	0 & 0 & 0 & 0 & 0 & 0 \\
	1 & 0 & 0 & 0 & 0 & 0 \\
	0 & 0 & 0 & 0 & 1 & 0 \\
	1 & 0 & 0 & 0 & 0 & 0 \\
	0 & 0 & 0 & 0 & 0 & 0
\end{array}\right),
\qquad 
\mathbb D^{\gauche,-}=\left(\begin{array}{rrcrrr}
	0 & 0 & -\frac{3 \, z^{2} + 1}{z - 1} + \frac{z^{3} + z}{{\left(z - 1\right)}^{2}} & 0 & 0 & 0 \\
	0 & 0 & -\frac{3 \, z^{2} + 1}{z - 1} + \frac{z^{3} + z}{{\left(z - 1\right)}^{2}} & 0 & 0 & 0 \\
	0 & 0 & -\frac{3 \, z^{2} + 2 \, z}{z - 1} + \frac{z^{3} + z^{2}}{{\left(z - 1\right)}^{2}} + 1 & 0 & 0 & 0 \\
	0 & 0 & 0 & 0 & 0 & 0 \\
	0 & 0 & -\frac{3 \, z^{2} + 2 \, z}{z - 1} + \frac{z^{3} + z^{2}}{{\left(z - 1\right)}^{2}} + 1 & 0 & 0 & 0 \\
	0 & 0 & -\frac{3 \, z^{2} + 2 \, z}{z - 1} + \frac{z^{3} + z^{2}}{{\left(z - 1\right)}^{2}} + 1 & 0 & 0 & 0
\end{array}\right)
\]
\[
\mathbb D^{\droite,+}=\left(\begin{array}{rcrrrc}
	0 & -\frac{1}{z - 1} + \frac{z}{{\left(z - 1\right)}^{2}} & 0 & 0 & 0 & 0 \\
	0 & 0 & 0 & 0 & 0 & 0 \\
	0 & -\frac{1}{z - 1} + \frac{z}{{\left(z - 1\right)}^{2}} & 0 & 0 & 0 & 0 \\
	0 & 0 & 0 & 0 & 0 & -\frac{2 \, z}{z - 1} + \frac{z^{2}}{{\left(z - 1\right)}^{2}} + 1 \\
	0 & -\frac{1}{z - 1} + \frac{z}{{\left(z - 1\right)}^{2}} & 0 & 0 & 0 & 0 \\
	0 & 0 & 0 & 0 & 0 & 0\end{array}\right)
        \]
        
\[ \text{and } \qquad 
\mathbb D^{\droite,-}=\left(\begin{array}{rrrcrr}
	0 & 0 & 0 & -\frac{1}{z - 1} + \frac{z}{{\left(z - 1\right)}^{2}} & 0 & 0 \\
	0 & 0 & 0 & -\frac{1}{z - 1} + \frac{z}{{\left(z - 1\right)}^{2}} & 0 & 0 \\
	0 & 0 & 0 & -\frac{1}{z - 1} + \frac{z}{{\left(z - 1\right)}^{2}} & 0 & 0 \\
	0 & 0 & 0 & 0 & 0 & 0 \\
	0 & 0 & 0 & -\frac{1}{z - 1} + \frac{z}{{\left(z - 1\right)}^{2}} & 0 & 0 \\
	0 & 0 & 0 & -\frac{1}{z - 1} + \frac{z}{{\left(z - 1\right)}^{2}} & 0 & 0
\end{array}\right)
 \]
By performing the computations in the field $\mathbb Q(\rho)$,
we are able to compute those matrices at $z=\rho$. We verify that the dominant eigenvalue of $\mathbb M^\star(\rho)$ is 1 and compute the corresponding left and right eigenvectors.
and the vector $\mathbf p$:
\[\mathbf p = \frac 1{ 597} \left(51 \rho^{2} + 42 \rho + 105, 51 \rho^{2} + 42 \rho + 105, -113 \rho^{2} + 24 \rho + 259, 11 \rho^{2} - 108 \rho + 128\right).\]
A numerical approximation gives
\[ \mathbf p \approx (0.200258808255625,0.200258808255625,0.431332891374616,0.168149492114135).\]

Those numbers are algebraic of degree $3$ since $\rho$ is.

\subsection{The V-shape: $\Av(2413,1243,2341,531642,41352)$}\label{sec:ClasseVAnnexe}

The specification for this class that we obtain applying the algorithm of~\cite{BBPPR}\footnote{See the \href{http://mmaazoun.perso.math.cnrs.fr/pcfs/} {companion Jupyter notebook}  \texttt{examples/V.ipynb}} is 
\begin{equation*}
\begin{cases}
\begin{array}{rl}
\mathcal T_{0}= &\{ \bullet \} \uplus \oplus[\mathcal T_{1},\mathcal T_{2}]\uplus \oplus[\mathcal T_{1},\mathcal T_{3}]\uplus \oplus[\mathcal T_{4},\mathcal T_{2}]\uplus \ominus[\mathcal T_{5},\mathcal T_{0}]\uplus 3142[\mathcal T_{1},\mathcal T_{1},\mathcal T_{1},\mathcal T_{6}]\\
\mathcal T_{1}= &\{ \bullet \} \uplus \ominus[\mathcal T_{7},\mathcal T_{1}]\\
\mathcal T_{2}= &\{ \bullet \} \uplus \oplus[\mathcal T_{7},\mathcal T_{2}]\\
\mathcal T_{3}= &\oplus[\mathcal T_{8},\mathcal T_{2}]\uplus \ominus[\mathcal T_{9},\mathcal T_{6}]\\
\mathcal T_{4}= &\ominus[\mathcal T_{10},\mathcal T_{11}]\uplus \ominus[\mathcal T_{10},\mathcal T_{1}]\uplus \ominus[\mathcal T_{7},\mathcal T_{11}]\uplus 3142[\mathcal T_{1},\mathcal T_{1},\mathcal T_{1},\mathcal T_{6}]\\
\mathcal T_{5}= &\{ \bullet \} \uplus \oplus[\mathcal T_{1},\mathcal T_{1}]\uplus 3142[\mathcal T_{1},\mathcal T_{1},\mathcal T_{1},\mathcal T_{1}]\\
\mathcal T_{6}= &\{ \bullet \} \uplus \oplus[\mathcal T_{12},\mathcal T_{2}]\uplus \ominus[\mathcal T_{9},\mathcal T_{6}]\\
\mathcal T_{7}= &\{ \bullet \} \\
\mathcal T_{8}= &\ominus[\mathcal T_{9},\mathcal T_{6}]\\
\mathcal T_{9}= &\{ \bullet \} \uplus \oplus[\mathcal T_{1},\mathcal T_{7}]\\
\mathcal T_{10}= &\oplus[\mathcal T_{1},\mathcal T_{1}]\uplus 3142[\mathcal T_{1},\mathcal T_{1},\mathcal T_{1},\mathcal T_{1}]\\
\mathcal T_{11}= &\oplus[\mathcal T_{1},\mathcal T_{2}]\uplus \oplus[\mathcal T_{1},\mathcal T_{3}]\uplus \oplus[\mathcal T_{4},\mathcal T_{2}]\uplus \ominus[\mathcal T_{10},\mathcal T_{11}]\uplus \ominus[\mathcal T_{10},\mathcal T_{1}]\uplus \ominus[\mathcal T_{7},\mathcal T_{11}]\uplus 3142[\mathcal T_{1},\mathcal T_{1},\mathcal T_{1},\mathcal T_{6}]\\
\mathcal T_{12}= &\{ \bullet \} \uplus \ominus[\mathcal T_{9},\mathcal T_{6}]
\end{array}
\end{cases}
\end{equation*}
and the solutions of the associated system are
\begin{equation*}
\begin{cases}
\begin{array}{rl}
	T_0 &= -\frac{z^{7} - 7 \, z^{6} + 20 \, z^{5} - 28 \, z^{4} + 20 \, z^{3} - 7 \, z^{2} + z}{2 \, z^{7} - 13 \, z^{6} + 37 \, z^{5} - 62 \, z^{4} + 59 \, z^{3} - 32 \, z^{2} + 9 \, z - 1} \\
	T_1 =T_2 = T_9 &= -\frac{z}{z - 1} \\
	T_3 &= -\frac{z^{2}}{z^{3} - 4 \, z^{2} + 4 \, z - 1} \\
	T_4 &= \frac{z^{8} - 4 \, z^{7} + 11 \, z^{6} - 13 \, z^{5} + 8 \, z^{4} - 2 \, z^{3}}{2 \, z^{7} - 13 \, z^{6} + 37 \, z^{5} - 62 \, z^{4} + 59 \, z^{3} - 32 \, z^{2} + 9 \, z - 1} \\
	T_5 &= \frac{z^{5} - 2 \, z^{4} + 4 \, z^{3} - 3 \, z^{2} + z}{z^{4} - 4 \, z^{3} + 6 \, z^{2} - 4 \, z + 1} \\
	T_6 &= -\frac{z^{2} - z}{z^{2} - 3 \, z + 1} \\
	T_7 &= z \\
	T_8 &= \frac{z^{2}}{z^{2} - 3 \, z + 1} \\
	T_{10} &= \frac{2 \, z^{4} - 2 \, z^{3} + z^{2}}{z^{4} - 4 \, z^{3} + 6 \, z^{2} - 4 \, z + 1} \\
	T_{11} &= \frac{z^{8} - 5 \, z^{7} + 10 \, z^{6} - 14 \, z^{5} + 11 \, z^{4} - 5 \, z^{3} + z^{2}}{2 \, z^{8} - 15 \, z^{7} + 50 \, z^{6} - 99 \, z^{5} + 121 \, z^{4} - 91 \, z^{3} + 41 \, z^{2} - 10 \, z + 1} \\
	T_{12} &= \frac{z^{3} - 2 \, z^{2} + z}{z^{2} - 3 \, z + 1}
\end{array}
\end{cases}
\end{equation*}
The critical series are $T_0, T_4$ and $T_{11}$, 
whose radius of convergence $\rho$ is the only real root of the polynomial 
\[ 2z^5 - 7z^4 + 14z^3 - 13z^2 + 6z - 1. \] 
The graph of critical series is not strongly connected: $\{\TTT_4,\TTT_{11}\}$ forms a connected component which does not involve $\TTT_0$, hence we can study the specification where $\TTT_0$ is removed. 
It is essentially linear, verifies Hypotheses (SC) and (RC), and involves aperiodic subcritical series. Hence \cref{Th:linearCase} applies and there exists a parameter $\mathbf p$
such that uniform random permutations of size $n$ in
either $\TTT_4$ or $\TTT_{11}$ converges to the $X$-permuton with parameter $\mathbf p$. 

Furthermore, we know from the design of the algorithm of~\cite{BBPPR} that
all families appearing in the system are included in $\TTT_0$,
in particular $\TTT_{11} \subseteq \TTT_0$.
A quick computer-assisted computation (done in the companion notebook)
shows that $T_0-T_{11}=z/(1-z)$,
\emph{i.e.}, for each $n$, there is exactly one permutation of size $n$ in $\TTT_{0} \setminus \TTT_{11}$.
Hence, uniform random permutations of size $n$ in $\TTT_0$
also converge to the $X$-permuton with parameter $\mathbf p$.

We now turn to the computation of the parameter $\mathbf p$,
using \cref{eq:DefProbaCaterpillar}.
From the specification we directly compute
\begin{equation*}
\begin{array}{ll}
\mathbb M^\star(z) =
\left(\begin{array}{rr}
0 & z + \frac{2 \, z^{4} - 2 \, z^{3} + z^{2}}{z^{4} - 4 \, z^{3} + 6 \, z^{2} - 4 \, z + 1} \\
-\frac{z}{z - 1} & z + \frac{2 \, z^{4} - 2 \, z^{3} + z^{2}}{z^{4} - 4 \, z^{3} + 6 \, z^{2} - 4 \, z + 1}
\end{array}\right),
&\mathbb D^{\gauche,+} = \mathbb D^{\droite,-} = \mathbb O, \\
\mathbb D^{\gauche,-}
=\left(\begin{array}{rr}
0 & z + \frac{2 \, z^{4} - 2 \, z^{3} + z^{2}}{z^{4} - 4 \, z^{3} + 6 \, z^{2} - 4 \, z + 1} \\
-\frac{z}{z - 1} & z + \frac{2 \, z^{4} - 2 \, z^{3} + z^{2}}{z^{4} - 4 \, z^{3} + 6 \, z^{2} - 4 \, z + 1}
\end{array}\right),
&\mathbb D^{\droite,+}=
\left(\begin{array}{rr}
0 & 0 \\
-\frac{1}{z - 1} + \frac{z}{{\left(z - 1\right)}^{2}} & 0
\end{array}\right).
\end{array}
\end{equation*}
This implies that $p_{\gauche}^+ =  p_{\droite}^-=0$, hence $p_{\droite}^+ = 1 - p_{\gauche}^-$. As a result, the associated $X$-permuton will degenerate into a V shape based at the point $(p_{\gauche}^-,0)$. We can now perform computations in $\mathbb Q(\rho)$ to obtain that $p_{\gauche}^-=-\frac{192}{599} \rho^{4} + \frac{600}{599} \rho^{3} - \frac{1119}{599} \rho^{2} + \frac{1507}{1198} \rho + \frac{343}{599}$. This algebraic number is the only real root of the polynomial
\[19168 z^{5} - 86256 z^{4} + 155880 z^{3} - 141412 z^{2} + 64394 z - 11773\]
and a numerical evaluation gives $p_{\gauche}^- \approx 0.818632668576995$.

\subsection{The class of pin-permutations}\label{sec:PinPermAnnexe}

The recursive description given in \cite{PinPerm} can be translated into a tree-specification
as in \cref{dfn:tree_specification}.

As in \cite{PinPerm}, we denote by (see \cite{PinPerm} for the definitions):
\begin{itemize}
  \item $\mathcal S$ the set of all pin-permutations;
  \item $\mathcal E^+$ (resp. $\mathcal E^-$) the set of increasing (resp. decreasing) oscillations;
  \item $\mathcal N^{+}$ (resp. $\mathcal N^{-}$) the set
    of pin-permutations that are not increasing (resp. decreasing) oscillations,
and whose root is not $\oplus$ (resp. $\ominus$);
\item $\mathcal T_{\mathcal E^+}$ (resp. $\mathcal T_{\mathcal E^-}$)
  the set of direct sums of at least two increasing (resp. decreasing) oscillations;
\item $\mathcal T_{\mathcal E^+,\mathcal N^+}$ (resp. $\mathcal T_{\mathcal E^-,\mathcal N^-}$)
  the set of direct sums of at least two permutations, one being in $\mathcal N^+$,
  the others in $\mathcal E^+$ (resp. $\mathcal N^-$ and $\mathcal E^-$);
  \item $\Si$ the set of simple pin-permutations $\alpha$
    and $\Si^\star$ the set of pairs $(\alpha,a)$ where $\alpha$ is in $\Si$
    and $a$ an active point of $\alpha$;
  \item $\QE^+$ (resp. $\QE^-$) the set of triples $(\beta,m,a)$,
    where $\beta$ is an increasing (resp. decreasing) quasi-oscillation
    and $m$ and $a$ are its main and auxiliary substitution points, respectively.
\end{itemize}
The set of (marked) simple permutations 
$\Si^\star$, $\Si$, $\QE^+$ and $\QE^-$ in the above list
are characterized and enumerated in \cite{PinPerm}. 
\smallskip

Then there is a tree-specification for the following 19 families:
$\mathcal S$, $\mathcal S \backslash \{1\}$,
$\mathcal E^+$, $\mathcal E^+ \backslash \{1\}$, $\mathcal E^+ \backslash \{1,21\}$,
$\mathcal E^-$, $\mathcal E^- \backslash \{1\}$, $\mathcal E^- \backslash \{1,12\}$,
$\mathcal N^+$, $\mathcal N^-$,
 $\mathcal T_{\mathcal E^+}$, $\mathcal T_{\mathcal E^-}$,
 $\mathcal T_{\mathcal E^+,\mathcal N^+}$, $\mathcal T_{\mathcal E^-,\mathcal N^-}$,
 $\mathcal T_{\mathcal E^+}^\star:=\mathcal T_{\mathcal E^+} \setminus \{12,132,213\}$,
 $\mathcal T_{\mathcal E^-}^\star:=\mathcal T_{\mathcal E^-} \setminus \{21,231,312\}$,
 $\{12\}$, $\{21\}$ and $\{1\}$. 

 Below are the equations for $\mathcal S$, $\mathcal T_{\mathcal E^+}$, $\mathcal T_{\mathcal E^+,\mathcal N^+}$
 and $\mathcal N^+$. Some other follow by symmetry or by excluding small permutations.
 \begin{equation}
   \label{eq:system_Pin}
 \begin{cases}
 \begin{array}{rl}
   \mathcal S&=\{\bullet\} \uplus \mathcal T_{\mathcal E^+} \uplus \mathcal T_{\mathcal E^+,\mathcal N^+}
   \uplus \mathcal T_{\mathcal E^-} \uplus \mathcal T_{\mathcal E^-,\mathcal N^-}\\
   &\qquad \uplus\, \biguplus_{\alpha \in \Si} \alpha[1,\dots,1] 
   \uplus\, \biguplus_{(\alpha,i) \in \Si^\star} \alpha[1,\dots,1,\mathcal{S}\setminus \{1\} ,1,\dots,1]\\
   &\qquad \uplus \biguplus_{(\beta,m,a) \in \QE^+} \beta[1,\dots,1,\mathcal{S}\setminus \{1\},1,\dots,1,12,1,\dots,1]\\
   &\qquad
   \uplus \biguplus_{(\beta,m,a) \in \QE^-} \beta[1,\dots,1,\mathcal{S}\setminus \{1\},1,\dots,1,21,1,\dots,1]
   \\
   \mathcal T_{\mathcal E^+} &= \oplus[\mathcal E^+,\mathcal E^+] 
        \uplus \oplus[\mathcal E^+,\mathcal T_{\mathcal E^+}]\\
        \mathcal T_{\mathcal E^+,\mathcal N^+} &= \oplus[\mathcal N^+,\mathcal E^+] 
        \uplus \oplus[\mathcal N^+,\mathcal T_{\mathcal E^+}]
\uplus \oplus[\mathcal E^+,\mathcal N^+] 
        \uplus \oplus[\mathcal E^+,\mathcal T_{\mathcal E^+,\mathcal N^+}]\\ 
   \mathcal N^+&=
   \mathcal T^\star_{\mathcal E^-} \uplus \mathcal T_{\mathcal E^-,\mathcal N^-}\\
   &\qquad \uplus\, \biguplus_{\alpha \in \Si \setminus \mathcal E^+} \alpha[1,\dots,1] 
   \uplus\, \biguplus_{(\alpha,i) \in \Si^\star} \alpha[1,\dots,1,\mathcal{S}\setminus \{1\} ,1,\dots,1]\\
   &\qquad \uplus \biguplus_{(\beta,m,a) \in \QE^+} \beta[1,\dots,1,\mathcal{S}\setminus \{1\},1,\dots,1,12,1,\dots,1]\\
   &\qquad
   \uplus \biguplus_{(\beta,m,a) \in \QE^-} \beta[1,\dots,1,\mathcal{S}\setminus \{1\},1,\dots,1,21,1,\dots,1]
  \end{array}
 \end{cases}
\end{equation}
Finally, the families $\mathcal E^+$ and $\mathcal E^-$ are explicit sets of permutations, 
each consisting of $1$ permutation of size $1$, $1$ permutation of size $2$, and $2$ permutations of each size $n \geq 3$.

The corresponding system is solved explicitly in \cite{PinPerm}.
The critical families are 
$\mathcal S$, $\mathcal S \backslash \{1\}$,
$\mathcal N^+$, $\mathcal N^-$,
 $\mathcal T_{\mathcal E^+,\mathcal N^+}$, $\mathcal T_{\mathcal E^-,\mathcal N^-}$.
 From the equations, we see that the system is essentially linear.
 Here is the dependency graph of the system restricted to critical families.
\begin{figure}[thbp]
\begin{center}
 \includegraphics[width=8cm]{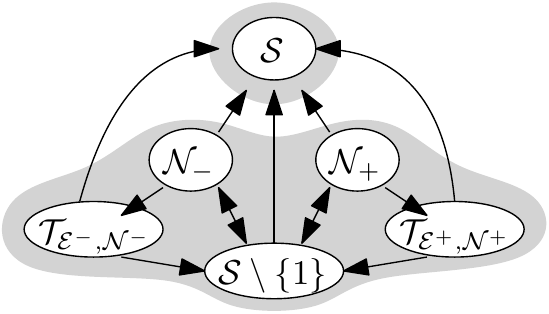}
\end{center}
\caption{The dependency graph of the pin-permutations class.}
\label{Fig:DependencyGraphPin}
\end{figure}

 As in other essentially linear examples, we observe that there are two strongly connected
 components, one constituted of $\mathcal S$ alone.
 The other one contains the family $\mathcal S \setminus \{1\}$, whose asymptotics
 is equivalent to that of $\mathcal S$.
 
 As this specification has infinitely many simple permutations, we need to argue that Hypothesis (RC) holds.
 It is easily observed from the equations that all entries of $\mathbf V^\star$ and $\mathbf M^\star$ are polynomials in the subcritical series
 and in the series $\Si$, $\Si^\star$, $\QE^+$, $\QE^-$ counting the families of simple permutations
 appearing in \eqref{eq:system_Pin}.
 It is shown in \cite{PinPerm} that the latter series are all analytic at the radius of convergence
 of $\mathcal S$, implying (RC).
 
 Moreover, the aperiodicity is clear, so that we can apply \cref{Th:linearCase}
 to the tree-specification without the class $\mathcal S$ and its equation.
 We conclude that a uniform random permutation of size $n$ in $\mathcal S \setminus \{1\}$
 (or equivalently in $\mathcal S$)
 tends to the $X$-permuton with some parameters $p_+^{\gauche},p_+^{\droite},p_-^{\gauche},p_-^{\droite}$.
 Since the class $\mathcal S$ has all symmetries of the square,
 we know without computation that 
 $p_+^{\gauche}=p_+^{\droite}=p_-^{\gauche}=p_-^{\droite}=1/4$.

\subsection{A non-degenerate essentially branching class}\label{sec:Exemple_branching_annexe}
We consider the class $\TTT$ of permutations avoiding the patterns $31452$ and $41253$
whose standard tree has nodes labeled only by $\oplus$, $\ominus$ and $3142$.
This class has the following tree-specification\footnote{See the \href{http://mmaazoun.perso.math.cnrs.fr/pcfs/} {companion Jupyter notebook}  \texttt{examples/Branching.ipynb}}:
\[
\begin{cases}
\begin{array}{rl}
\TTT = \mathcal T_{0}= &\{ \bullet \}\ \uplus\ \oplus[\mathcal T_{1},\mathcal T_{0}]\ \uplus\ \ominus[\mathcal T_{2},\mathcal T_{0}]\ \uplus\ 3142[\mathcal T_{0},\mathcal T_{3},\mathcal T_{3},\mathcal T_{0}]\\
\mathcal T_{1}= &\{ \bullet \}\ \uplus\ \ominus[\mathcal T_{2},\mathcal T_{0}]\ \uplus\ 3142[\mathcal T_{0},\mathcal T_{3},\mathcal T_{3},\mathcal T_{0}]\\
\mathcal T_{2}= &\{ \bullet \}\ \uplus\ \oplus[\mathcal T_{1},\mathcal T_{0}]\ \uplus\ 3142[\mathcal T_{0},\mathcal T_{3},\mathcal T_{3},\mathcal T_{0}]\\
\mathcal T_{3}= &\{ \bullet \}\ \uplus\ \ominus[\mathcal T_{4},\mathcal T_{3}]\\
\mathcal T_{4}= &\{ \bullet \}
\end{array}
\end{cases}
\]
Clearly, $T_4= z$ and $T_3 = \frac{z}{1-z}$.
Since $\mathcal T_0$ contains the separable permutations, the radius of convergence of $T_0$ is smaller than 1.
Hence $T_3$ and $T_4$ are subcritical.
Moreover, $T_0, T_1$ and $T_2$ form a connected component of the dependency graph.
Thus $T_0, T_1$ and $T_2$ are critical and Hypothesis (SC) is satisfied.
In addition, $\mathcal T_0$ and thus all $\mathcal T_i$ contain finitely many simple permutations, so that 
Hypothesis (AR) holds from \cref{obs:polynomial}.
One can see that the specification is essentially branching (\emph{e.g.}, the equation of $T_0$ involves a factor $T_1T_0$).
Finally, $T_3 = \frac{z}{1-z}$ is aperiodic.
We can therefore apply \cref{Th:branchingCase}:
there exists some parameter $p_+$ such that the limiting permuton of $\mathcal T_0$ is the Brownian separable permuton of parameter $p_+$.

We move on to the computation of the parameter $p_+$. We did not explicitly solve the system, but rather reduced it to a cubic equation in $T_0$, and, playing with Cardano's formulas, obtained that the radius of convergence $\rho$ of $T_0$ is the only real root of the equation 
\begin{equation*}
-4z^9 + 41z^8 - 230z^7 + 507z^6 - 582z^5 + 403z^4 - 186z^3 +58z^2 - 12z + 1
\end{equation*}
while the values of the critical series at the radius of convergence can be expressed in terms of $\rho$ as follows:
\begin{equation*}
T_0(\rho) = \frac{-21\rho^5 + 30\rho^4 + 12\rho^3 - 33\rho^2 + 15\rho - 3}{18\rho^5 - 78\rho^4 + 102\rho^3 - 66\rho^2 + 24\rho - 6},\quad T_1(\rho) = T_2(\rho) = \frac{T_0(\rho)}{1+T_0(\rho)}.
\end{equation*}

We obtain directly from the specification
\begin{equation*}
\mathbb M^\star(z,y_0,y_1,y_2) =
\left(\begin{array}{rrr}
y_1 + y_2 + 2y_0(\tfrac{z}{1-z})^2&y_0&y_0\\
y_2 + 2y_0(\tfrac{z}{1-z})^2&0&y_0\\
y_1+ 2y_0(\tfrac{z}{1-z})^2&y_0&0
\end{array}\right),
\end{equation*}
and
\begin{equation*}
E^+_{i,j,j'} = \begin{cases}  1 &\text{ if } i\in\{0,2\}, j=1, j'=0\\ 
0 & \text{ otherwise.} \end{cases}
\end{equation*}

\begin{equation*}
E^-_{i,j,j'} = \begin{cases} 1 &\text{ if } i\in\{0,1\}, j=2, j'=0\\ 
T_3^2 = (\tfrac {z}{1-z})^2 &\text{ if } i\in\{0,1,2\}, j=j'=0\\ 
0 & \text{ otherwise.} \end{cases}
\end{equation*}

We can now perform computations in $\mathbb Q(\rho)$ to find the dominant eigenvectors of the matrix $\mathbb M^\star(\rho,T_0(\rho),T_1(\rho),T_2(\rho))$ and use \cref{eq:DefParametrePermutonBrownien} to compute $p_+$. We get that $p_+ \approx 0.474869237650240$ is the only real root of the polynomial
\[z^{9} - 3 z^{8} + \frac{232819}{62348} z^{7} - \frac{78093}{31174} z^{6} + \frac{243697}{249392} z^{5} - \frac{54293}{249392} z^{4} + \frac{24529}{997568} z^{3} - \frac{125}{62348} z^{2} + \frac{45}{62348} z - \frac{2}{15587}.\]

\subsection*{Acknowledgments}
MB is partially supported by the Swiss National Science Foundation, under grant number 200021-172536. LG is partially supported by Grant ANR-14-CE25-0014 (ANR GRAAL).

\end{document}